\definecolor{darkred}{rgb}{0.7,0,0} 
\newcommand{\defn}[1]{{\color{darkred}\emph{#1}}} 
\definecolor{UQgold}{RGB}{196, 158, 54} 
\definecolor{UQpurple}{RGB}{73, 7, 94} 
\newcommand{\ds}{\displaystyle}
\newcommand{\fieldalt}{K}
\newcommand{\iso}{\cong}
\newcommand{\xx}{\mathbf{x}}
\newcommand{\yy}{\mathbf{y}}
\newcommand{\zz}{\mathbf{z}}
\newcommand{\zero}{\mathbf{0}}  
\newcommand{\abs}[1]{\lvert #1 \rvert}  
\newcommand{\tw}[1]{\widetilde{#1}}
\newcommand{\ov}[1]{\overline{#1}}
\newcommand{\wh}[1]{\widehat{#1}}
\newcommand{\bb}[1]{\mathbb{#1}}
\newcommand{\mb}[1]{\mathbf{#1}}
\newcommand{\mc}[1]{\mathcal{#1}}
\newcommand{\vp}{\varphi}
\newcommand{\ve}{\varepsilon}
\newcommand{\la}{\lambda}
\newcommand{\lx}[2]{x_{#1}^{(#2)}}
\newcommand{\lE}[2]{E_{#1}^{(#2)}}
\newcommand{\ls}[2]{s_{#1}^{(#2)}}
\newcommand{\lH}[2]{H_{#1}^{(#2)}}
\newcommand{\bracebelow}[2]{\underbrace{\hspace{#1 cm}}_{#2}}
\DeclareMathOperator{\Mat}{Mat}
\DeclareMathOperator{\Hom}{Hom}
\DeclareMathOperator{\sh}{sh}
\DeclareMathOperator{\gMin}{gMin}
\DeclareMathOperator{\gMax}{gMax}
\DeclareMathOperator{\RSK}{RSK}
\DeclareMathOperator{\gRSK}{gRSK}
\DeclareMathOperator{\SSYT}{SSYT}
\DeclareMathOperator{\Id}{Id}
\DeclareMathOperator{\SL}{SL}
\DeclareMathOperator{\GL}{GL}
\DeclareMathOperator{\GT}{GT}
\DeclareMathOperator{\Trop}{Trop} 
\DeclareMathOperator{\Frac}{Frac} 
\DeclareMathOperator{\Sym}{Sym} 
\DeclareMathOperator{\LSym}{LSym} 
\def\Inv{{\rm Inv}}
\def\Pol{{\rm Pol}}
\newcommand{\ZZ}{\mathbb{Z}}
\newcommand{\RR}{\mathbb{R}}
\newcommand{\CC}{\mathbb{C}}
\theoremstyle{plain}
\newtheorem{thm}{Theorem}[section]
\newtheorem{prob}[thm]{Problem}
\newtheorem{lemma}[thm]{Lemma}
\newtheorem{conj}[thm]{Conjecture}
\newtheorem{prop}[thm]{Proposition}
\newtheorem{cor}[thm]{Corollary}
\theoremstyle{definition}
\newtheorem{dfn-prop}[thm]{Definition-Proposition}
\newtheorem{dfn}[thm]{Definition}
\newtheorem{ex}[thm]{Example}
\newtheorem{remark}[thm]{Remark}
\numberwithin{equation}{section}
\newcommand{\vast}{\bBigg@{3}}
\newcommand{\Vast}{\bBigg@{4.5}}
\title{Crystal invariant theory I: Geometric RSK}
\author[B.~Brubaker]{Benjamin Brubaker}
\address[B. Brubaker]{School of Mathematics, University of Minnesota, 206 Church St. SE, Minneapolis, MN 55455}
\email{brubaker@math.umn.edu}
\urladdr{http://www-users.math.umn.edu/~brubaker/}
\author[G.~Frieden]{Gabriel Frieden}
\address[G. Frieden]{LaCIM, Universit\'e du Qu\'ebec \`a Montr\'eal, Montr\'eal, QC, Canada}
\email{gabriel.frieden@lacim.ca}
\urladdr{https://sites.google.com/a/umich.edu/gfrieden/}
\author[P.~Pylyavskyy]{Pavlo Pylyavskyy}
\address[P. Pylyavskyy]{School of Mathematics, University of Minnesota, 206 Church St. SE, Minneapolis, MN 55455}
\email{ppylyavs@math.umn.edu}
\urladdr{https://sites.google.com/site/pylyavskyy/}
\author[T.~Scrimshaw]{Travis Scrimshaw}
\address[T. Scrimshaw]{School of Mathematics and Physics, The University of Queensland, St. Lucia, QLD 4072, Australia}
\email{tcscrims@gmail.com}
\curraddr{5 Ch\=ome Kita 8 J\=onishi, Kita Ward, Sapporo, Hokkaid\=o 060--0808, Japan}
\urladdr{https://tscrim.github.io/}
\keywords{geometric crystal, RSK, invariant theory, tropicalization}
\subjclass[2010]{05E10, 14E05, 13A50, 14R20, 05E05}
\begin{document}

\begin{abstract}
Berenstein and Kazhdan's theory of geometric crystals gives rise to two commuting families of geometric crystal operators acting on the space of complex $m \times n$ matrices. These are birational actions, which we view as a crystal-theoretic analogue of the usual action of $\SL_m \times \SL_n$ on $m \times n$ matrices. We prove that the field of rational invariants (and ring of polynomial invariants) of each family of geometric crystal operators is generated by a set of algebraically independent polynomials, which are generalizations of the elementary symmetric polynomials in $m$ (or $n$) variables. We also give a set of algebraically independent generators for the intersection of these fields, and we explain how these fields are situated inside the larger fields of geometric $R$-matrix invariants, which were studied by Lam and the third-named author under the name loop symmetric functions. The key tool in our proof is the geometric RSK correspondence of Noumi and Yamada, which we show to be an isomorphism of geometric crystals.

In an appendix jointly written with Thomas Lam, we prove the fundamental theorem of loop symmetric functions, which says that the polynomial invariants of the geometric $R$-matrix are generated by the loop elementary symmetric functions.
\end{abstract}

\maketitle

\tableofcontents

\section{Introduction}
The original problem of classical invariant theory was to understand the invariants of the action of $\SL_m$ on the space of $m \times n$ matrices. The first fundamental theorem of invariant theory says that the ring of invariants is trivial if $n < m$, and generated by the $m \times m$ minors of the matrix if $n \geq m$. The second fundamental theorem describes the relations among these generators. We refer the reader to~\cite{Weyl46} for a classical exposition of this subject and to~\cite{GoodWall00,PopVin94} for more modern ones.

In this paper, we study the analogous problem for a ``crystallized'' version of the $\SL_m$-action. Specifically, we consider the invariants of the $\GL_m$-geometric crystal operators, which were introduced by Berenstein and Kazhdan~\cite{BK00} as ``de-tropicalizations'' of Kashiwara's crystal operators~\cite{K90,K91}. In contrast to the classical $\SL_m$-action, these operators act birationally on $m \times n$ matrices. We prove that the ring of polynomial invariants is generated by a set of algebraically independent polynomials, which arise as the entries of a certain $n \times n$ matrix.

\subsection{Main result}
\label{sec:intro main result}

Consider the space $\Mat_{m \times n}(\CC^*)$ of $m \times n$ matrices over $\CC^*$. We write $\xx = (x_i^j)_{i \in \{1, \ldots, m\}}^{j \in \{1, \ldots, n\}}$ for an element of $\Mat_{m \times n}(\CC^*)$, and we denote the $i$th row and $j$th column of $\xx$ by $\xx_i = (x_i^1, \ldots, x_i^n)$ and $\xx^j = (x_1^j, \ldots, x_m^j)$, respectively.\footnote{This is the reverse of the usual convention for covariant and contravariant indices in tensor calculus.}
For $i = 1, \ldots, m-1$, the geometric crystal operator $e_i$ is a rational $\CC^*$-action on $\Mat_{m \times n}(\CC^*)$, and we denote the action of $c \in \CC^*$ on $\xx$ by $e_i^c(\xx)$. The action is given by
\[
e_i^c \colon
\begin{pmatrix}
& \vdots \\
x_i^1 & \ldots & x_i^n \medskip\\
x_{i+1}^1 & \ldots & x_{i+1}^n \\
& \vdots
\end{pmatrix} \mapsto
\begin{pmatrix}
& \vdots \\
\dfrac{\sigma^1(\xx_i, \xx_{i+1}; c)}{\sigma^0(\xx_i, \xx_{i+1}; c)} x_i^1 & \ldots & \dfrac{\sigma^n(\xx_i, \xx_{i+1}; c)}{\sigma^{n-1}(\xx_i, \xx_{i+1}; c)} x_i^n \bigskip \\
\dfrac{\sigma^0(\xx_i, \xx_{i+1}; c)}{\sigma^1(\xx_i, \xx_{i+1}; c)} x_{i+1}^1 & \ldots & \dfrac{\sigma^{n-1}(\xx_i, \xx_{i+1}; c)}{\sigma^n(\xx_i, \xx_{i+1}; c)} x_{i+1}^n \\
& \vdots
\end{pmatrix},
\]
where
\[
\sigma^j\bigl((x^1, \ldots, x^n), (y^1, \ldots, y^n); c\bigr) = \sum_{r = 1}^n c^{\mathbbm{1}_{r \leq j}} y^1 \cdots y^{r-1} x^{r+1} \cdots x^n.
\]
One can view this as analogous to the action of the copy of $\SL_2$ inside $\GL_m$ corresponding to the $i$th simple root, whose generators add a multiple of row $i$ to row $i+1$ (or vice versa), or scale rows $i$ and $i+1$ by $t,t^{-1}$.

For example, when $m=3$ and $n=2$, the geometric crystal operator $e_1$ acts by
\begin{equation}
\label{eq:m2n2}
e_1^c \colon
\begin{pmatrix}
x_1^1 & x_1^2 \medskip \\
x_2^1 & x_2^2 \medskip \\
x_3^1 & x_3^2
\end{pmatrix} \mapsto
\begin{pmatrix}
c_1 x_1^1 & c_2 x_1^2 \medskip \\
c_1^{-1} x_2^1 & c_2^{-1} x_2^2 \medskip \\
x_3^1 & x_3^2
\end{pmatrix}, \quad \text{ where} \quad c_1 = \dfrac{cx_1^2 + x_2^1}{x_1^2+x_2^1}, \quad c_2 = \dfrac{cx_1^2 + cx_2^1}{cx_1^2+x_2^1}.
\end{equation}
When $m=2$ and $n=3$, $e_1$ acts by
\begin{equation}
\label{eq:m2n3}
e_1^c \colon
\begin{pmatrix}
x_1^1 & x_1^2 & x_1^3 \medskip\\
x_2^1 & x_2^2 & x_2^3
\end{pmatrix} \mapsto
\begin{pmatrix}
c_1 x_1^1 & c_2 x_1^2 & c_3 x_1^3 \medskip\\
c_1^{-1} x_2^1 & c_2^{-1} x_2^2 & c_3^{-1} x_2^3
\end{pmatrix},
\end{equation}
where
\[
c_1 = \dfrac{cx_1^2 x_1^3 + x_ 2^1 x_1^3 + x_2^1 x_2^2}{x_1^2 x_1^3 + x_ 2^1 x_1^3 + x_2^1 x_2^2}, \qquad c_2 = \dfrac{cx_1^2 x_1^3 + cx_ 2^1 x_1^3 + x_2^1 x_2^2}{cx_1^2 x_1^3 + x_ 2^1 x_1^3 + x_2^1 x_2^2}, \qquad c_3 = \dfrac{cx_1^2 x_1^3 + cx_ 2^1 x_1^3 + cx_2^1 x_2^2}{cx_1^2 x_1^3 + cx_ 2^1 x_1^3 + x_2^1 x_2^2}.
\]

Given $x_1, \ldots, x_n \in \CC^*$, let $W(x_1, \ldots, x_n)$ be the $n \times n$ matrix with entries $x_1, \ldots, x_n$ along the main diagonal, 1's directly beneath the main diagonal, and 0's elsewhere. Associate to $\xx \in \Mat_{m \times n}(\CC^*)$ the $n \times n$ matrix
\[
M(\xx) = W(\xx_1) \cdots W(\xx_m).
\]

\begin{thm}
\label{thm:main}
\
\begin{enumerate}
\item The non-trivial entries of $M(\xx)$ (\textit{i.e.}, those that are not 0 or 1) are algebraically independent generators of the subfield $\Inv_e \subset \CC(x_i^j)$ of invariants of the $\GL_m$-geometric crystal operators $e_1, \ldots, e_{m-1}$.
\item The non-trivial entries of $M(\xx)$ generate the subring $\Inv_e \cap \CC[x_i^j]$ of polynomial invariants of the $e_i$.
\end{enumerate}
\end{thm}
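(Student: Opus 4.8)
The plan is to deduce both parts from three ingredients: a local invariance identity for two adjacent factors of $M(\xx)$, the geometric RSK isomorphism established earlier, and a leading-term induction modeled on the fundamental theorem of symmetric polynomials. Write $N = \sum_{d=0}^{\min(m,n)-1}(n-d)$ for the number of nontrivial entries of $M(\xx)$, namely the band entries $M(\xx)_{ij}$ with $0 \le i-j \le m-1$ and $1 \le i,j \le n$; the entries with $i<j$ vanish, and those with $i-j=m$ equal $1$ because the only contributing lattice path runs entirely along the subdiagonals, so these are exactly the trivial entries. Let $\mu \colon \Mat_{m\times n}(\CC^*) \to \mathbb{A}^N$ denote $\xx \mapsto M(\xx)$, recording the nontrivial entries.

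For invariance I would reduce to two rows. Since $e_i^c$ rescales only rows $i$ and $i+1$, it alters only the factors $W(\xx_i)$ and $W(\xx_{i+1})$ in $M(\xx) = W(\xx_1)\cdots W(\xx_m)$, so it suffices to prove the local identity
\[
W\bigl((e_i^c\xx)_i\bigr)\, W\bigl((e_i^c\xx)_{i+1}\bigr) = W(\xx_i)\, W(\xx_{i+1}).
\]
This is exactly what the ratios $\sigma^j/\sigma^{j-1}$ are built to guarantee: the diagonal entries of the product are unchanged because they are the column products $x_i^k x_{i+1}^k$, while equality of the subdiagonal entries is the telescoping computation verified in the $m=3,n=2$ example. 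Granting this, every entry of $M(\xx)$ is fixed by every $e_i$, so the nontrivial entries lie in $\Inv_e \cap \CC[x_i^j]$, giving one inclusion in both parts.

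For algebraic independence and field generation I would invoke geometric RSK. The correspondence $\Phi$ is an isomorphism of geometric crystals that intertwines the $\GL_m$-operators $e_i$ with the standard crystal action on one tableau factor while fixing the complementary factor; concretely, $\Phi$ reads the two tableaux off the bidiagonal product, so the fixed factor is recorded precisely by the nontrivial entries of $M(\xx)$. Because, with the fixed factor held constant, the crystal action on the moving factor has a dense orbit, its only invariants are functions of the fixed factor, so $\Inv_e = \CC(\text{nontrivial entries of } M(\xx))$. As the fixed factor supplies $N$ free coordinates, this simultaneously shows that the $N$ entries are algebraically independent and that $\operatorname{trdeg} \Inv_e = N$, whence the generic $e$-orbit has the expected dimension $mn-N$; this proves part (1). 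One can corroborate the transcendence degree independently: $\mu$ is dominant because the rows $\xx_i$ are recovered from $M(\xx)$ by triangular elimination, and the generic fiber, of dimension $mn-N$, coincides with the $e$-orbit, so $\mu$ separates orbits.

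For the ring statement I would run a leading-term induction, the birational analogue of proving that elementary symmetric polynomials generate all symmetric polynomials. Fixing a monomial order refining the weight grading, one checks that the nontrivial entries of $M(\xx)$ have distinct leading monomials, and that under tropicalization (passing to initial terms) the action degenerates to combinatorial RSK, for which the corresponding statement is the classical fact that these leading monomials generate the monomial invariants. Thus, given a polynomial invariant $f$, its leading term agrees with that of an explicit polynomial in the entries; subtracting and inducting on the leading monomial expresses $f$ as a polynomial in the nontrivial entries, yielding part (2). The step I expect to be the main obstacle is the identification in the third paragraph: proving that $\Phi$ conjugates the $e_i$ exactly to the standard action on a single tableau factor, and that the preserved factor is encoded by the entries of $M(\xx)$. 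Once this is in place, the local identity, the dimension count, and the leading-term induction are comparatively routine, the last requiring only care that the initial terms of the entries match the combinatorial RSK invariants.
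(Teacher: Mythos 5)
Part (1) of your proposal follows essentially the paper's route, and your local identity $W\bigl((e_i^c\xx)_i\bigr)W\bigl((e_i^c\xx)_{i+1}\bigr) = W(\xx_i)W(\xx_{i+1})$ is correct and complete as a proof of invariance: since $W(\xx_i)W(\xx_{i+1})$ is supported on three diagonals, it reduces to the diagonal entries (column products, trivially preserved) and the subdiagonal entries, where it amounts to $(\sigma^{j+1}-\sigma^j)x_i^{j+1} = (\sigma^j - \sigma^{j-1})x_{i+1}^j$, both sides being $(c-1)x_{i+1}^1\cdots x_{i+1}^j x_i^{j+1}\cdots x_i^n$. This is a pleasant shortcut past the paper's derivation of invariance from Theorem~\ref{thm:gRSK isom}. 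However, you have misjudged where the difficulty lies. The dense-orbit statement you simply assert is Theorem~\ref{thm:connected}, the deepest input to part (1); it rests on Kanakubo--Nakashima's connectedness of the crystals $B_{\mathbf{i}}$ and on Kashiwara--Nakashima--Okado's theorem that a geometric crystal whose tropicalization is connected has a dense orbit. Note also that the ``moving'' $Q$-factor is not free given the $P$-factor --- they share the shape, which is preserved (Lemma~\ref{lem:shape preserved}) --- so the orbit closures have dimension $\abs{Q}-\min(m,n) = mn - \abs{P}$, not $\abs{Q}$. Moreover, ``dense orbit implies the only invariants are functions of the fixed factor'' is not automatic for birational actions: one only gets a transcendence-degree bound (Lemma~\ref{lem:orbits}, via generic fibers of the map cut out by finitely many invariants), and one must then exclude proper algebraic extensions of $\CC(z_{i,j})$ inside the purely transcendental field $\CC(Z)$ (Lemma~\ref{lem:no finite extensions}). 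Your corroborating aside is false as stated: the rows of $\xx$ are \emph{not} recoverable from $M(\xx)$ by triangular elimination --- the fibers of $\xx \mapsto M(\xx)$ contain the $e$-orbits and are positive-dimensional whenever $m \geq 2$.

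The genuine gap is part (2). Your leading-term induction works only once you know that the leading monomial of an \emph{arbitrary} polynomial $e$-invariant is of the special (dominant, $P$-type) form; in the classical case this is immediate because $S_m$ permutes monomials, but here $e_i^c$ acts birationally, the initial terms of an invariant are not invariant under any induced action on monomials, and your appeal to ``tropicalization degenerates the action to combinatorial RSK, for which the corresponding statement is classical'' is not an argument --- there is no such classical fact, and tropicalizing an identity of positive rational maps says nothing about initial terms of polynomial invariants. This dominance step is precisely the hard content of the fundamental theorem of loop symmetric functions (Theorem~\ref{thm:FTLSF}), to which the paper devotes Appendix~\ref{app:FTLSF}: one reduces to $m=2$ (Proposition~\ref{prop:reduction to m=2}), symmetrizes via $f = \tfrac{1}{2}(f + Rf)$ and shows $f + Rf \in \LSym[\kappa_1^{-1},\ldots,\kappa_n^{-1}]$ (Lemma~\ref{lem:symmetrization}), and then clears the $\kappa$-denominators while producing a dominant term by the delicate induction of Lemma~\ref{lem:H divisor}. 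Even granting all that, FTLSF yields generation of the polynomial $R$-invariants by \emph{all} $mn$ loop elementary symmetric functions; to descend to the $P$-type subset the paper needs Proposition~\ref{prop:R=Weyl} (so that $\Inv_e \subset \Inv_R$, since $s_i = e_i^{\ve_i(x)/\vp_i(x)}$ is a specialization of the crystal action), together with part (1) and the algebraic independence of the full family: a polynomial in all the $\lE{k}{r}$ that is simultaneously a rational function in the $P$-type ones must be a polynomial in the latter. None of these steps appears in your proposal, and I do not see how the tropicalization heuristic can be made to carry this weight.
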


For example, when $m=2$ and $n=3$, we have
\begin{equation}
\label{eq:m2n3 matrix}
M(\xx) = \begin{pmatrix}
x_1^1 & 0 & 0 \medskip\\
1 & x_1^2 & 0 \medskip\\
0 & 1 & x_1^3
\end{pmatrix}
\begin{pmatrix}
x_2^1 & 0 & 0 \medskip\\
1 & x_2^2 & 0 \medskip\\
0 & 1 & x_2^3
\end{pmatrix}
=
\begin{pmatrix}
x_1^1x_2^1 & 0 & 0 \medskip \\
x_1^2+x_2^1 & x_1^2x_2^2 & 0 \medskip \\
1 & x_1^3+x_2^2 & x_1^3x_2^3
\end{pmatrix}.
\end{equation}
Using~\eqref{eq:m2n3}, the reader may easily verify that each of the five non-trivial entries of this matrix is invariant under $e_1$. Theorem~\ref{thm:main} asserts that every polynomial invariant of $e_1$ has a unique expression as a polynomial in these five entries.

\subsection{Crystal operators}
\label{sec:intro crystal ops}

Let $\Sym^L \CC^m$ denote the $L$-fold symmetric power of the vector representation of $\GL_m(\CC)$. This representation has a basis consisting of the degree $L$ monomials $v_1^{a_1} \cdots v_m^{a_m}$, where $\{v_1, \ldots, v_m\}$ is a fixed basis of $\CC^m$. The set of all exponent vectors $\mb{a} = (a_1, \ldots, a_m) \in (\ZZ_{\geq 0})^m$ labels a basis of the representation $\Sym \CC^m = \bigoplus_{L \geq 0} \Sym^L \CC^m$. One can also represent the monomial $v_1^{a_1} \cdots v_m^{a_m}$ as a semistandard Young tableau consisting of a single row, in which the number $i$ appears $a_i$ times.

Now consider the $n$-fold tensor product $(\Sym \CC^m)^{\otimes n}$. A basis of this representation is labeled by $m \times n$ matrices of nonnegative integers $\mb{a} = (a_i^j)$, where the $j$th column $\mb{a}^j = (a_1^j, \ldots, a_m^j)$ represents a basis vector of the $j$th tensor factor. The set of such matrices is endowed with $\GL_m$-crystal operators $\widetilde{e}_1, \ldots, \widetilde{e}_{m-1}$, where $\widetilde{e}_i$ modifies the entries in rows $i$ and $i+1$ (see, \textit{e.g.},~\cite{K02book,BumpSchill}).

\begin{ex}
\label{ex:3x2 crystal}
Let $\mb{a} = \begin{pmatrix}
a_1^1 & a_1^2 \medskip \\
a_2^1 & a_2^2 \medskip \\
a_3^1 & a_3^2
\end{pmatrix}$
represent a basis element of the $\GL_3$-representation $\Sym \CC^3 \otimes \Sym \CC^3$. In tableau notation, $\mb{a}$ corresponds to
\begin{center}
\begin{tikzpicture}[scale=0.5]
\draw (0,0) rectangle (12,1);
\foreach \x in {1,3,4,5,7,8,9,11} {\draw (\x,0) -- (\x,1);}
\draw (1/2,1/2) node{$1$};
\draw (7/2,1/2) node{$1$};
\draw (9/2,1/2) node{$2$};
\draw (15/2,1/2) node{$2$};
\draw (17/2,1/2) node{$3$};
\draw (23/2,1/2) node{$3$};
\draw (4/2,1/2) node{$\cdots$};
\draw (12/2,1/2) node{$\cdots$};
\draw (20/2,1/2) node{$\cdots$};
\draw (2,-0.75) node{$\bracebelow{1.8}{a_1^1}$};
\draw (6,-0.75) node{$\bracebelow{1.8}{a_2^1}$};
\draw (10,-0.75) node{$\bracebelow{1.8}{a_3^1}$};
\begin{scope}[xshift=14cm]
\draw (0,0) rectangle (12,1);
\foreach \x in {1,3,4,5,7,8,9,11} {\draw (\x,0) -- (\x,1);}
\draw (1/2,1/2) node{$1$};
\draw (7/2,1/2) node{$1$};
\draw (9/2,1/2) node{$2$};
\draw (15/2,1/2) node{$2$};
\draw (17/2,1/2) node{$3$};
\draw (23/2,1/2) node{$3$};
\draw (4/2,1/2) node{$\cdots$};
\draw (12/2,1/2) node{$\cdots$};
\draw (20/2,1/2) node{$\cdots$};
\draw (2,-0.75) node{$\bracebelow{1.8}{a_1^2}$};
\draw (6,-0.75) node{$\bracebelow{1.8}{a_2^2}$};
\draw (10,-0.75) node{$\bracebelow{1.8}{a_3^2}$};
\end{scope}
\draw (13,1/2) node{$\otimes$};
\draw (26.5,0) node{$.$};
\end{tikzpicture}
\end{center}
The $\GL_3$-crystal operator $\widetilde{e}_1$ acts on $\mb{a}$ by the following piecewise-linear formula:
\[
\widetilde{e}_1 \colon \begin{pmatrix}
a_1^1 & a_1^2 \medskip \\
a_2^1 & a_2^2 \medskip \\
a_3^1 & a_3^2
\end{pmatrix}
\mapsto
\begin{pmatrix}
a_1^1+\widetilde{c}_1 & a_1^2+\widetilde{c}_2 \medskip \\
a_2^1-\widetilde{c}_1 & a_2^2-\widetilde{c}_2 \medskip \\
a_3^1 & a_3^2
\end{pmatrix},
\]
where
\[
\widetilde{c}_1 = \min(1 + a_1^2, a_ 2^1) - \min(a_1^2, a_ 2^1), \qquad \widetilde{c}_2 = \min(1 + a_1^2, 1+a_ 2^1) - \min(1+a_1^2, a_ 2^1).
\]
\end{ex}

The reader will observe that the piecewise-linear formula in Example~\ref{ex:3x2 crystal} is obtained from~\eqref{eq:m2n2} by tropicalizing---replacing the operations $(+,\cdot,\div)$ with the operations $(\min,+,-)$---and setting $c = 1$ (we also rename the variables to emphasize that they now represent integers rather than nonzero complex numbers). This is no coincidence, as the geometric crystal operators were defined by Berenstein and Kazhdan (in the context of an arbitrary reductive algebraic group) so that they tropicalize to piecewise-linear formulas for the Kashiwara operators acting on the crystal bases of finite-dimensional representations~\cite{BK00,BK07} (see \S \ref{sec:geom to comb} for more details).

\subsection{Commuting crystal actions, RSK, and gRSK}
\label{sec:intro RSK}

One way to prove the first and second fundamental theorems of invariant theory is to consider the $\GL_n$- and $\GL_m$-actions on $m \times n$ matrices simultaneously. These two actions commute, so one can ask how to decompose the polynomial ring $\CC[x_i^j]$ into $\GL_n \times \GL_m$-irreducible representations. Using this decomposition (and some knowledge of the representation theory of $\GL_m$), one can read off the first fundamental theorem by looking at the isotypic components in which the $\GL_m$-irreducible is trivial as an $\SL_m$-module. The second fundamental theorem can also be deduced from representation theory; for details of this approach to invariant theory, see~\cite[\S 9.2]{Fulton}.

Our proof of Theorem~\ref{thm:main} proceeds in a similar spirit. The decomposition of $\CC[x_i^j]$ into $\GL_n \times \GL_m$-irreducible representations has a beautiful analogue at the level of crystal bases. Just as there are $\GL_m$-crystal operators acting on adjacent rows of a matrix of nonnegative integers, there are $\GL_n$-crystal operators acting on adjacent columns. Although it is not obvious from the combinatorial definition, it turns out that these two sets of crystal operators commute~\cite{Lascoux03,DanilovKoshevoy05,vanLeeuwen06} (this can be viewed as a crystal version of $(\GL_n, \GL_m)$-Howe duality~\cite{Howe89}). The connected components of the resulting $\GL_n \times \GL_m$-crystal, which correspond to the irreducible components of $\CC[x_i^j]$ under the usual $\GL_n \times \GL_m$-action, are determined by the Robinson--Schensted--Knuth (RSK) correspondence (see, \textit{e.g.,}~\cite[Ch. 9.1]{BumpSchill}). RSK is a well-known bijection between $m \times n$ matrices of nonnegative integers and pairs $(P,Q)$ of semistandard Young tableaux of the same shape, where the entries of $P$ lie in $\{1, \ldots, n\}$, and the entries of $Q$ lie in $\{1, \ldots, m\}$~\cite{Schensted,Knuth}.

A geometric lifting of RSK was introduced by Kirillov~\cite{Kir01} and extensively developed by Noumi and Yamada~\cite{NoumiYamada}. This map was originally called the ``tropical RSK correspondence,'' but following the convention of more recent work on this map, we will call it the \defn{geometric RSK correspondence} and abbreviate it gRSK. Geometric RSK is a birational automorphism of the variety of $m \times n$ matrices over $\CC^*$. For example, when $m=2$ and $n=3$, gRSK has the form
\[
\gRSK \colon \begin{pmatrix}
x_1^1 & x_1^2 & x_1^3 \medskip \\
x_2^1 & x_2^2 & x_2^3
\end{pmatrix}
\mapsto
\begin{pmatrix}
\textcolor{blue}{z_{2,2}} & \textcolor{blue}{z_{2,3}} = \textcolor{red}{z'_{2,2}} & \textcolor{red}{z'_{1,1}} \medskip \\
\textcolor{blue}{z_{1,1}} & \textcolor{blue}{z_{1,2}} & \textcolor{blue}{z_{1,3}} = \textcolor{red}{z'_{1,2}}
\end{pmatrix} \; ,
\]
where $z_{a,b}$ and $z'_{a,b}$ are rational functions in the $x_i^j$ with positive integer coefficients. The rational functions $z_{a,b}$ and $z'_{a,b}$ tropicalize to piecewise-linear formulas for the semistandard tableaux $P$ and $Q$ (more precisely, for the entries of the corresponding Gelfand--Tsetlin patterns) associated to a matrix $\mb{a}$ by RSK. In particular, the rational functions $z_{k,n} = z'_{k,m}$ tropicalize to formulas for the entries of the common shape of $P$ and $Q$. We refer to the arrays consisting of the $z_{a,b}$ and $z'_{a,b}$ as the $P$-pattern and $Q$-pattern, respectively.

In addition to the $\GL_m$-geometric crystal operators $e_i$ acting on adjacent rows of $\xx \in \Mat_{m \times n}(\CC^*)$, there are $\GL_n$-geometric crystal operators acting on adjacent columns of $\xx$, which we denote by $\ov{e}_j$. These two families of actions are known to commute with each other~\cite{LP13II}, so the variety $\Mat_{m \times n}(\CC^*)$ has the structure of a $\GL_n \times \GL_m$-geometric crystal, which we denote by $X^{\Mat}$. Berenstein and Kazhdan's theory also provides a geometric lifting of the crystal operators on Gelfand--Tsetlin patterns. By viewing an $m \times n$ matrix as a $P$-pattern and a $Q$-pattern glued together along a common shape, we obtain a different $\GL_n \times \GL_m$-geometric crystal structure on $\Mat_{m \times n}(\CC^*)$, in which the $\GL_n$- (resp., $\GL_m$-) operators act on the $P$- (resp., $Q$-) pattern. We denote this geometric crystal by $X^{\GT}$.

We prove in Theorem~\ref{thm:gRSK isom} that
\[
\gRSK \colon X^{\Mat} \rightarrow X^{\GT}
\]
is an isomorphism of $\GL_n \times \GL_m$-geometric crystals. Since the $\GL_m$-geometric crystal operators on $X^{\GT}$ act only on the $Q$-pattern, this result implies that the entries $z_{i,j}$ of the $P$-pattern are invariant under the $\GL_m$-geometric crystal operators. It is straightforward to show that the rational functions $z_{i,j}$ generate the same subfield of $\CC(x_i^j)$ as the entries of the matrix $M(\xx)$ in Theorem~\ref{thm:main}. The key to proving that there are no other invariants is Theorem~\ref{thm:connected}, which says that the action of the $\GL_m$-geometric crystal operators on the $Q$-pattern has a dense orbit for each fixed shape. This result, in turn, relies on work of Kanakubo--Nakashima~\cite{KanNak} and Kashiwara--Nakashima--Okado~\cite{KNO10}.

By interchanging the roles of $m$ and $n$ in Theorem~\ref{thm:main}, one sees that the subfield $\Inv_{\ov{e}} \subset \CC(x_i^j)$ fixed by the $\GL_n$-geometric crystal operators is generated by the entries of the $m \times m$ matrix $M(\xx^t)$, where $\xx^t$ is the transpose of $\xx$. We prove in Corollary~\ref{cor:X generators} that the subfield fixed by both the $\GL_n$- and $\GL_m$-geometric crystal operators, $\Inv_e \cap \Inv_{\ov{e}}$, is generated by $\min(m,n)$ polynomials which are generalizations of Schur polynomials of rectangular shape. These polynomials are products of the rational functions $z_{k,n} = z'_{k,m}$ that describe the common shape of the $P$- and $Q$-patterns.

\subsection{Birational Weyl group actions}
\label{sec:intro R}
Berenstein and Kazhdan's primary motivation in developing the theory of geometric crystals was to construct birational actions of Weyl groups. In the case of the $\GL_m$-geometric crystal on $\Mat_{m \times n}(\CC^*)$ described in~\S \ref{sec:intro main result}, the resulting action of the Weyl group $S_m$ is generated by the action of the \defn{geometric} (or \defn{birational}) \defn{$R$-matrix} on adjacent rows. The geometric $R$-matrix was introduced by Yamada~\cite{Yamada01} as a ``de-tropicalization'' of the \defn{combinatorial $R$-matrix}, a map which comes from the theory of affine crystals~\cite{KKMMNN91,HKOTY99,HKOTT02}, and plays a central role in the study of solvable lattice models and the box-ball system~\cite{HHIKTT01,HKOTY02,HKT00,HKT01,KSY07,LS19,Takagi05,TNS99,TS90,Yamada01,Yamada04}.

The subfield $\Inv_R \subset \CC(x_i^j)$ consisting of geometric $R$-matrix invariants has been previously studied~\cite{LP12,LP13,LPS15, LPaff, Lamloop}. In particular, it is known that this field (and, in fact, its subring of polynomial invariants) is generated by a set of $mn$ algebraically independent polynomials called \defn{loop elementary symmetric functions}. When $n=1$, these polynomials are simply the elementary symmetric polynomials in $m$ variables; in general, there are $n$ loop elementary symmetric functions of degree $k$ for $k = 1, \ldots, m$, each of which can be viewed as a generalization of the $k$th elementary symmetric polynomial in $m$ variables.

The Weyl group action on a geometric crystal is generated by the geometric crystal operators $e_i^c$ for specific values of $c$, so $\Inv_e$ is a subfield of $\Inv_R$. The generators of $\Inv_e$ described in Theorem~\ref{thm:main} are in fact a subset of the loop elementary symmetric polynomials. For example, when $m=2$ and $n=3$, five of the six loop elementary symmetric polynomials appear as entries of $M(\xx)$ in~\eqref{eq:m2n3 matrix}, and the remaining loop elementary symmetric polynomial is $x_1^1 + x_2^3$. We emphasize that our proof of Theorem~\ref{thm:main}(1) does not make any reference to geometric $R$-matrix invariants. On the other hand, our proof of the stronger result that the entries of $M(\xx)$ generate the polynomial subring of $\Inv_e$ (Theorem~\ref{thm:main}(2)) relies on the fundamental theorem of loop symmetric functions, which says that the polynomial subring of $\Inv_R$ is generated by the loop elementary symmetric functions.

\subsection{Future work}
\label{sec:intro future}

In the sequel to this paper~\cite{BFPSII}, we study the fields $\Inv_R \cap \Inv_{\ov{e}}, \Inv_{\ov{R}} \cap \Inv_e,$ and $\Inv_R \cap \Inv_{\ov{R}}$, where $\Inv_{\ov{R}}$ is the field of invariants of the $S_n$-action generated by applying geometric $R$-matrices to adjacent columns of $\xx$. In particular, we give conjectural algebraically independent generating sets for each field. The field $\Inv_R \cap \Inv_{\ov{e}}$ contains two important functions: the \defn{central charge}~\cite{BK07} and the \defn{geometric energy function}~\cite{KKMMNN91, HKOTY99, HKOTT02, LP13}. One of our motivations for this project was to find formulas for these functions which simultaneously exhibit their $R$- and $\ov{e}$-invariance, and we succeed in expressing both functions as polynomials in the (conjectural) generators of $\Inv_R \cap \Inv_{\ov{e}}$. We also obtain a new derivation of a piecewise-linear formula for cocharge due to Kirillov and Berenstein~\cite{KB95}.

In a different direction, we propose the following general problem.

\begin{prob}
Given geometric crystals $X_1, \ldots, X_n$ associated to a fixed reductive group, describe the invariants of the geometric crystal operators (or the birational Weyl group action) as a subfield of the fraction field $\CC(X_1 \times \cdots \times X_n)$.
\end{prob}

The existence of commuting type $A$ crystal structures on $m \times n$ matrices is a crystal interpretation of Howe duality~\cite{Howe89} for the dual reductive pair $(\GL_n, \GL_m)$, which states that
\[
\Sym(\CC^n \boxtimes \CC^m) \iso \bigoplus_{\lambda \colon \ell(\lambda) \leq \min(m,n)} V_{\GL_n}(\lambda) \boxtimes V_{\GL_m}(\lambda)
\]
as $\GL_n \times \GL_m$-representations. (Here $V_{\GL_k}(\la)$ is the irreducible highest weight representation of $\GL_k$ with highest weight $\la$, and $\ell(\la)$ is the number of parts in $\la$.) It may be fruitful to investigate products of geometric crystals that correspond to other examples of Howe duality or skew Howe duality.

\subsection{Outline of paper}
\label{sec:intro outline}

In~\S \ref{sec:geometric crystals}, we review fundamental definitions and results in the theory of geometric crystals, and we explain how combinatorial crystals are obtained from decorated geometric crystals by tropicalization. We illustrate the theory with the example of the basic geometric crystal, the geometric analogue of the crystal structure on one-row tableaux.

In~\S \ref{sec:GT}, we construct a geometric crystal structure on the geometric analogue of a Gelfand--Tsetlin pattern. This is essentially a special case of a general construction due to Berenstein and Kazhdan~\cite{BK07}, but we derive many of its properties in an elementary manner, using only the Lindstr\"om/Gessel--Viennot Lemma. In particular, in~\S \ref{sec:dec} we obtain explicit formulas for the geometric crystal operators and decoration, and show that they tropicalize to a piecewise-linear description of the usual crystal structure on Gelfand--Tsetlin patterns.

In~\S \ref{sec:gRSK}, we present two definitions of the geometric RSK correspondence, one due to Noumi--Yamada~\cite{NoumiYamada}, and one due to O'Connell--Sepp\"al\"ainen--Zygouras~\cite{OSZ}. Using these two definitions, we derive several important properties of gRSK, including the new result that gRSK is an isomorphism of geometric crystals (Theorem~\ref{thm:gRSK isom}). Our presentation is self-contained, aside from the fact that we refer to~\cite{OSZ} for the proof that the two definitions agree. We caution the reader that our version of gRSK differs from the usual one by a birational involution that corresponds to interchanging $\min$ and $\max$ (see~\S \ref{sec:max min}). The usual version is not compatible with the geometric crystal structures.

As an application of the results in \S\S \ref{sec:GT}-\ref{sec:gRSK}, we show in~\S \ref{sec:central charge} that for $m$-fold products of the basic $\GL_n$-geometric crystal, Berenstein and Kazhdan's central charge~\cite{BK07} is equal to the decoration of the $Q$-pattern (plus an extra term when $m=n$). This implies, in particular, that the central charge is positive.

In~\S \ref{sec:e}, we prove part Theorem~\ref{thm:main}(1), and deduce the result about generators of $\Inv_e \cap \Inv_{\ov{e}}$ as a corollary.

In~\S \ref{sec:geometric R}, we show that the Weyl group action on products of the basic geometric crystal agrees with the geometric $R$-matrix. In~\S \ref{sec:LSym}, we review results of Thomas Lam and the third-named author about the ring of loop symmetric functions, and deduce Theorem~\ref{thm:main}(2) from Theorem~\ref{thm:main}(1) and the fundamental theorem of loop symmetric functions. The fundamental theorem of loop symmetric functions, which says that the ring of loop symmetric functions is equal to the ring of polynomial $R$-invariants, is proved in Appendix~\ref{app:FTLSF} (jointly written with Lam). In~\S \ref{sec:Schur}, we discuss loop Schur functions, a class of loop symmetric functions which contains the generators of $\Inv_e \cap \Inv_{\ov{e}}$. We conjecture that the ring $\Inv_e \cap \Inv_{\ov{e}} \cap \CC[x_i^j]$ has the same generating set.

\begin{remark}
The definitions and results in \S\S \ref{sec:comb crystal}-\ref{sec:geom to comb}, \ref{sec:dec}, and \ref{sec:dec again}-\ref{sec:central charge} are not needed for the proof of our main result. We have included these sections in the hope that this paper will be a convenient reference for readers interested in learning the basics of type $A$ geometric crystals and geometric RSK. In addition, some of the results in these sections (in particular those concerning the central charge) will be used in~\cite{BFPSII}.
\end{remark}

\subsection*{Notation}
For integers $a,b$, we write $[a,b]$ for the interval $\{k \in \ZZ \mid a \leq k \leq b\}$. We often abbreviate $[1,b]$ to $[b]$.

\subsection*{Acknowledgements}

We thank Thomas Lam, during discussions with whom some of the ideas in this paper were born. We thank Jake Levinson for his help with the proof of Lemma~\ref{lem:orbits}.

This work benefited from computations using {\sc SageMath}~\cite{sage,combinat}. B.B.~was supported in part by NSF grant DMS-2101392. G.F.~was supported in part by the Canada Research Chairs program. P.P.~was supported in part by NSF grant DMS-1949896. T.S.~was supported in part by Grant-in-Aid for JSPS Fellows 21F51028. This work was partly supported by Osaka City University Advanced Mathematical Institute (MEXT Joint Usage/Research Center on Mathematics and Theoretical Physics JPMXP0619217849).


\section{Background on geometric and unipotent crystals}
\label{sec:geometric crystals}

In~\S\S \ref{sec:basic}-\ref{sec:unipotent}, we recall the definitions of geometric and unipotent crystals in the case of $\GL_m$. These definitions are due to Berenstein and Kazhdan~\cite{BK00,BK07}. In \S\S \ref{sec:comb crystal}-\ref{sec:geom to comb}, we describe how geometric crystals give rise to the combinatorial crystals introduced by Kashiwara~\cite{K90,K91} via the tropicalization functor.

\subsection{Geometric crystals}
\label{sec:basic}

Let $T = (\CC^*)^m$ be the maximal torus of $\GL_m$. For $i \in [m-1]$, let $\alpha_i \colon T \rightarrow \CC^*$ and $\alpha_i^\vee \colon \CC^* \rightarrow T$ denote the character and co-character
\[
\alpha_i(x_1, \ldots, x_m) = \dfrac{x_i}{x_{i+1}} \qquad \text{ and } \qquad \alpha_i^\vee(c) = (1, \ldots, c, c^{-1}, \ldots, 1),
\]
where $c$ and $c^{-1}$ are in positions $i$ and $i+1$.

\begin{dfn}
\label{defn:geom crystal}
A \defn{$\GL_m$-geometric crystal} is an irreducible complex algebraic variety $X$, together with a rational map $\gamma \colon X \rightarrow T$, and for each $i \in [m-1]$, rational functions\footnote{Berenstein and Kazhdan's definition~\cite{BK00,BK07} differs from ours by replacing $\ve_i, \vp_i$ with $1/\ve_i, 1/\vp_i$. Our convention has the advantage that $\ve_i, \vp_i$ tropicalize to the corresponding functions $\tw{\ve}_i, \tw{\vp}_i$ on combinatorial crystals, rather than their negatives.} $\ve_i, \vp_i \colon X \rightarrow \CC$ and a rational $\CC^*$-action $e_i : \CC^* \times X \rightarrow X$. We write $e_i^c(x)$ for the action of $c \in \CC^*$ on $x \in X$. These maps must satisfy the following identities (when they are defined):
\begin{enumerate}
\item $\dfrac{\vp_i(x)}{\ve_i(x)} = \alpha_i\bigl(\gamma(x)\bigr)$;
\item
$\gamma\bigl(e_i^c(x)\bigr) = \alpha_i^\vee(c) \gamma(x), \qquad \ve_i\bigl(e_i^c(x)\bigr) = c^{-1} \ve_i(x), \qquad \vp_i\bigl(e_i^c(x)\bigr) = c \vp_i(x);$
\item
\begin{enumerate}
\item if $|i-j| > 1$, then $e_i^c e_j^{c'} = e_j^{c'} e_i^c$;
\item if $|i-j| = 1$, then $e_i^c e_j^{cc'} e_i^{c'} = e_j^{c'} e_i^{cc'} e_j^c$.
\end{enumerate}
\end{enumerate}
The maps $e_i^c$ are called \defn{geometric crystal operators}.

An \defn{isomorphism} of $\GL_m$-geometric crystals is a birational isomorphism of underlying varieties that commutes with the geometric crystal structures.
\end{dfn}

\begin{remark}
Axioms (1) and (2) are analogues of the axioms of a combinatorial crystal (see~\S \ref{sec:comb crystal}). The purpose of axiom (3) is to guarantee that the maps $s_i(\xx) = e_i^{1/\alpha_i(\gamma(\xx))}$ generate a birational action of the Weyl group $S_m$ on $X$. This action is discussed further in~\S \ref{sec:R}.
\end{remark}

The geometric analogue of the $\GL_m$-representation $\Sym \CC^m$ is the \defn{basic geometric crystal} $X_m$ introduced in~\cite{KOTY03}, which has underlying variety $X_m  = (\CC^*)^m$, and the following geometric crystal structure: for $\xx = (x_1, \ldots, x_m) \in X_m$,
\begin{equation}
\label{eq:basic geometric crystal}
\gamma(\xx) = \xx,
\qquad\quad
\varepsilon_i(\xx) = x_{i+1},
\qquad\quad
\varphi_i(\xx) = x_i,
\qquad\quad
e_i^c(\xx) = (x_1, \dotsc, c x_i, c^{-1} x_{i+1}, \dotsc, x_m).
\end{equation}

Given two geometric crystals $X$ and $X'$, Berenstein and Kazhdan~\cite{BK00} define
\begin{equation}
\label{eq:functions prod}
\gamma(x, x') = \gamma(x) \gamma(x'),
\quad
\varepsilon_i(x, x') = \frac{\varepsilon_i(x) \varepsilon_i(x')}{\varepsilon_i(x) + \varphi_i(x')},
\quad
\varphi_i(x, x') = \frac{\varphi_i(x) \varphi_i(x')}{\varepsilon_i(x) + \varphi_i(x')},
\end{equation}
and
\begin{equation}
\label{eq:e prod}
e_i^c(x, x') = \bigl( e_i^{c^+}(x), e_i^{c/c^+}(x') \bigr),
\qquad \text{ where } \,
c^+ = \frac{c \varphi_i(x') + \varepsilon_i(x)}{\varphi_i(x') + \varepsilon_i(x)}.
\end{equation}
This definition is associative, and $X \times X'$ satisfies the first two axioms of Definition~\ref{defn:geom crystal}. In order to guarantee that the third axiom is satisfied, however, additional structure on $X$ and $X'$ is needed. This additional structure is introduced in \S \ref{sec:unipotent}, and plays a crucial role throughout the paper.

We end this section with explicit formulas for the geometric crystal structure on $(X_m)^n$. For $j \in [0,n]$, $c \in \CC^*$, and $x = (x^1, \ldots, x^n), y = (y^1, \ldots, y^n) \in (\CC^*)^n$, define
\begin{equation}
\label{eq:sigma^j}
\sigma^j(x,y; c) = \sum_{r = 1}^n c^{\mathbbm{1}_{r \leq j}} y^1 \cdots y^{r-1} x^{r+1} \cdots x^n,
\end{equation}
where $\mathbbm{1}_{r \leq j}$ is the indicator function for $r \leq j$. Also define $\sigma(x,y) = \sigma^j(x,y;1)$ (this is independent of $j$).

\begin{lemma}
\label{lem:explicit formulas basic}
Suppose $\xx = (\xx^1, \ldots, \xx^n) \in (X_m)^n$, where $\xx^j = (x_1^j, \ldots, x_m^j)$. For $k \in [m]$, let $\xx_k = (x_k^1, \ldots, x_k^n)$ be the vector of $k$th coordinates of the $\xx^j$, and let $\pi_k(\xx) = \prod_{j=1}^n x_k^j$. The $\GL_m$-geometric crystal structure on $(X_m)^n$ is given by the following formulas:
\begin{gather*}
\textstyle \gamma(\xx) = (\pi_1(\xx), \ldots, \pi_m(\xx)), \qquad\quad
\displaystyle \ve_i(\xx) = \dfrac{\pi_{i+1}(\xx)}{\sigma(\xx_i, \xx_{i+1})}, \quad\qquad \vp_i(\xx) = \dfrac{\pi_i(\xx)}{\sigma(\xx_i, \xx_{i+1})},
\\
e_i^c(\xx) = (\xx'), \quad \text{ where } (x')_k^j = \begin{cases}
\dfrac{\sigma^j(\xx_i, \xx_{i+1}; c)}{\sigma^{j-1}(\xx_i, \xx_{i+1}; c)} x_i^j & \text{if } k = i, \\[10pt]
\dfrac{\sigma^{j-1}(\xx_i, \xx_{i+1}; c)}{\sigma^j(\xx_i, \xx_{i+1}; c)} x_{i+1}^j & \text{if } k = i+1, \\[10pt]
x_k^j & \text{otherwise.}
\end{cases}
\end{gather*}
\end{lemma}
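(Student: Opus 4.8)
The plan is to induct on the number $n$ of tensor factors, using associativity of the product \eqref{eq:e prod} to write $(X_m)^n \cong (X_m)^{n-1} \times X_m$ and splitting off the last factor $\xx^n$. Throughout I abbreviate $u^j = x_i^j$, $v^j = x_{i+1}^j$, write $\hat{\xx} = (\xx^1, \dotsc, \xx^{n-1}) \in (X_m)^{n-1}$, and let $\hat\sigma = \sum_{r=1}^{n-1} v^1 \cdots v^{r-1} u^{r+1} \cdots u^{n-1}$ be the analogue of $\sigma(\xx_i, \xx_{i+1})$ built from the first $n-1$ columns. The base case $n = 1$ is immediate: there $\sigma^0 = 1$ and $\sigma^1 = c$, and all three formulas reduce to \eqref{eq:basic geometric crystal}. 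The formula for $\gamma$ needs no work beyond the base case: since $\gamma(x, x') = \gamma(x)\gamma(x')$ and $\gamma$ is the identity on each basic factor, $\gamma(\xx)$ is the coordinatewise product of the columns, i.e. $(\pi_1(\xx), \dotsc, \pi_m(\xx))$.

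For $\ve_i$ and $\vp_i$ I would feed the inductive hypotheses $\ve_i(\hat{\xx}) = v^1 \cdots v^{n-1}/\hat\sigma$ and $\vp_i(\hat{\xx}) = u^1 \cdots u^{n-1}/\hat\sigma$ into \eqref{eq:functions prod}, together with $\ve_i(\xx^n) = v^n$ and $\vp_i(\xx^n) = u^n$. The only point to check is that the common denominator that appears equals $\sigma(\xx_i, \xx_{i+1})$, which is exactly the one-term recursion
\[
\sigma(\xx_i, \xx_{i+1}) = v^1 \cdots v^{n-1} + u^n \hat\sigma,
\]
obtained by separating the $r = n$ term from the rest in \eqref{eq:sigma^j} with $c = 1$. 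Clearing denominators then gives $\ve_i(\xx) = \pi_{i+1}(\xx)/\sigma(\xx_i,\xx_{i+1})$ and $\vp_i(\xx) = \pi_i(\xx)/\sigma(\xx_i,\xx_{i+1})$.

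The substantive step is $e_i^c$. Write $\sigma^j = \sigma^j(\xx_i, \xx_{i+1}; c)$ and $\hat\sigma^j = \sigma^j(\hat{\xx}_i, \hat{\xx}_{i+1}; c^+)$ for the truncated version with parameter $c^+$ (so $\hat\sigma^0 = \hat\sigma$). By \eqref{eq:e prod} the last factor transforms as $e_i^{c/c^+}(\xx^n)$, which by \eqref{eq:basic geometric crystal} scales $u^n$ by $c/c^+$ and $v^n$ by its inverse, while the first $n-1$ columns transform as $e_i^{c^+}(\hat{\xx})$, handled by induction. Using the $\ve_i, \vp_i$ formulas above, the scalar $c^+$ in \eqref{eq:e prod} simplifies to $c^+ = \sigma^{n-1}/\sigma^0$. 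For the last column I would check the two ingredient identities $\sigma^n = c\,\sigma^0$ and $\sigma^{n-1} = c u^n \hat\sigma + v^1 \cdots v^{n-1}$, from which $c/c^+ = \sigma^n/\sigma^{n-1}$ follows by substitution; this produces the claimed $j = n$ entries (and, by the reciprocal scaling on row $i+1$, its counterpart). For columns $j \in [1, n-1]$ the inductive hypothesis writes the entries of $e_i^{c^+}(\hat{\xx})$ through the ratios $\hat\sigma^j/\hat\sigma^{j-1}$, so it remains to match these with the full ratios $\sigma^j/\sigma^{j-1}$.

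The main obstacle is precisely this last matching: showing for $0 \le j \le n-1$ that $\hat\sigma^j$ differs from $\sigma^j$ only by a $j$-independent factor, so that the factor cancels in the ratio. I expect the clean route is to record every $\sigma^j$ in additive form $\sigma^j = \sigma^0 + (c-1) S_j$, where $S_j = \sum_{r=1}^j v^1 \cdots v^{r-1} u^{r+1} \cdots u^n$, and then to use the two elementary observations $S_j = u^n \hat S_j$ (valid for $j \le n-1$, where $\hat S_j$ is the truncated partial sum) and $c^+ - 1 = (c-1) u^n \hat\sigma/\sigma^0$. Substituting these reduces the target identity $\sigma^j = (\sigma^0/\hat\sigma)\, \hat\sigma^j$ to the single scalar identity $(c-1) u^n = (\sigma^0/\hat\sigma)(c^+ - 1)$, which is immediate. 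Once this is in hand the ratios $\sigma^j/\sigma^{j-1}$ and $\hat\sigma^j/\hat\sigma^{j-1}$ agree for $1 \le j \le n-1$, the $j = n$ case is the direct computation above, the rows other than $i, i+1$ are untouched because each basic-factor operator only moves those two rows, and the inductive step for $e_i^c$ closes.
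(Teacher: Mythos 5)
Your proposal is correct and follows essentially the same route as the paper: induction on $n$, splitting off the last tensor factor via \eqref{eq:functions prod} and \eqref{eq:e prod}, with your recursion $\sigma(\xx_i,\xx_{i+1}) = v^1\cdots v^{n-1} + u^n\hat\sigma$ and the identity $S_j = u^n \hat S_j$ being exactly the specializations of the paper's identity \eqref{eq:rec id} and its rewriting $A_k = A'_k\,\vp_i(y_n)$. The only (cosmetic) difference is that the paper first proves the product formulas for arbitrary geometric crystals $Y_1,\ldots,Y_n$ in terms of the quantities $A_k$ and then specializes $Y_j = X_m$, where $A_k$ becomes the $k$th summand of $\sigma$ and $c_j$ becomes $\sigma^j/\sigma^{j-1}$, whereas you carry out the same computation directly in the $\sigma$-coordinates.
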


The proof appears in Appendix~\ref{app:proofs}.

\subsection{Unipotent crystals}
\label{sec:unipotent}

Let $B^-,U \subset \GL_m(\CC)$ be the subgroups of lower triangular matrices and upper uni-triangular matrices, respectively. The subgroup $U$ is generated by the \defn{(upper) Chevalley generators}
\[
x_i(a) = I + aE_{i,i+1} \quad (i \in [m-1], a \in \CC),
\]
where $E_{i,j}$ is the matrix unit with 1 in position $(i,j)$ and 0's elsewhere. Define a rational action of $U$ on $B^-$ by $u.b = b'$ if $ub = b'u'$ for some $u' \in U, b' \in B^-$. If $ub$ does not have such a factorization, then the action of $u$ on $b$ is undefined. It is easy to see that the action of a Chevalley generator on $M \in B^-$ is given by
\begin{equation}
\label{eq:U action}
x_i(a).M = x_i(a) \cdot M \cdot x_i\left(\dfrac{-aM_{i+1,i+1}}{M_{i,i} + aM_{i+1,i}}\right).
\end{equation}

\begin{dfn}
A \defn{$U$-variety} is an irreducible complex algebraic variety $X$, together with a rational $U$-action $U \times X \rightarrow X$. A \defn{morphism} of $U$-varieties is a rational map which commutes with the $U$-actions.
\end{dfn}

We view $B^-$ as a $U$-variety with respect to the rational action defined above.

\begin{dfn}
A \defn{$\GL_m$-unipotent crystal} is a pair $(X,\iota)$, where $X$ is a $U$-variety, and $\iota \colon X \rightarrow B^-$ is a morphism of $U$-varieties such that for each $i \in [m-1]$, the function $\iota(x)_{i+1,i}$ is not identically zero. (We will omit $\GL_m$ from the name when there is no danger of confusion.)
\end{dfn}

\begin{ex}
\label{ex:unipotent crystal}
The pair $(B^-, \Id)$ is a unipotent crystal. More generally, define
\[
(B^-)^{\leq n} = \{A \in \GL_m \mid A_{ij} = 0 \text{ if } i < j \text{ or } i-j > n, \text{ and } A_{ij} = 1 \text{ if } i-j = n\}
\]
for $n \geq 1$ (note that $(B^-)^{\leq n} = B^-$ if $n \geq m$). It is clear that the rational action of $U$ on $B^-$ restricts to a rational action on $(B^-)^{\leq n}$, so the pair $((B^-)^{\leq n}, \iota)$ is a unipotent crystal, where $\iota$ is the inclusion of $(B^-)^{\leq n}$ into $B$.
\end{ex}

Given unipotent crystals $(X,\iota)$ and $(X',\iota')$, endow the product $X \times X'$ with the rational $U$-action
\[
u.(x,x') = (u.x, u'.x'),
\]
where $u' \in U$ is determined by $u\iota(x) = b'u'$ for some $b' \in B^-$ (if $u\iota(x)$ does not have such a factorization, then $u.(x,x')$ is undefined). Also define $\iota * \iota' \colon X \times X' \rightarrow B^-$ by
\[
(x,x') \mapsto \iota(x)\iota'(x').
\]
Berenstein and Kazhdan proved that the pair $(X \times X', \iota * \iota')$ is a unipotent crystal, and that this product of unipotent crystals is associative~\cite[Thm.~3.3, Prop.~3.4]{BK00}.

\begin{thm}[{\cite[Thm.~3.8, Lem.~3.9]{BK00}}]
\label{thm:unipotent to geometric}
Let $(X,\iota)$ be a $\GL_m$-unipotent crystal. Fix $i \in [m-1]$. For $x \in X$, set $M = \iota(x)$, and define
\[
\gamma(x) = (M_{1,1}, \ldots, M_{m,m}),
\qquad\quad
\ve_i(x) = \dfrac{M_{i+1,i+1}}{M_{i+1,i}},
\qquad\quad
\vp_i(x) = \dfrac{M_{i,i}}{M_{i+1,i}},
\]
\[
e_i^c(x) = x_i((c-1)\vp_i(x)).x \qquad \text{(here } . \text{ is the rational action of } U \text{ on } X \text{).}
\]
\begin{enumerate}
\item These maps make $X$ into a $\GL_m$-geometric crystal. We say that this geometric crystal is \defn{induced} from the unipotent crystal $(X,\iota)$.

\item If $(X',\iota')$ is another $\GL_m$-unipotent crystal, then the geometric crystal induced from the product $(X \times X', \iota * \iota')$ is the product of the geometric crystals induced from $(X,\iota)$ and $(X',\iota')$.
\end{enumerate}
\end{thm}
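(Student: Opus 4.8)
The plan is to verify the three axioms of Definition~\ref{defn:geom crystal} for part (1), and then check compatibility with products for part (2). The guiding principle throughout is that, since $\iota$ is a morphism of $U$-varieties, it intertwines the $U$-action: $\iota(u.x) = u.\iota(x)$. Because $\gamma, \ve_i, \vp_i$ are defined purely in terms of the matrix $M = \iota(x)$, and $e_i^c(x) = x_i((c-1)\vp_i(x)).x$ acts through a Chevalley generator whose parameter depends only on $M$, every identity we must check transports to a computation with $M$ via formula~\eqref{eq:U action}. Concretely, I would first compute $M' := \iota(e_i^c(x)) = x_i((c-1)\vp_i(x)).M$. Writing $a = (c-1)M_{i,i}/M_{i+1,i}$ and substituting into~\eqref{eq:U action}, a short calculation gives $M'_{i,i} = cM_{i,i}$, $M'_{i+1,i+1} = c^{-1}M_{i+1,i+1}$, and $M'_{i+1,i} = M_{i+1,i}$, with all other diagonal entries unchanged. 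Axiom (1) is then immediate from $\vp_i/\ve_i = M_{i,i}/M_{i+1,i+1} = \alpha_i(\gamma(x))$, and axiom (2) follows by reading $\gamma, \ve_i, \vp_i$ off these entries of $M'$. That $e_i$ is a $\CC^*$-action, with $e_i^{c} e_i^{c'} = e_i^{cc'}$ and $e_i^1 = \id$, falls out of $x_i(a)x_i(b) = x_i(a+b)$ together with $\vp_i(e_i^{c'}(x)) = c'\vp_i(x)$.

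For axiom (3) I would treat the two cases separately. When $|i-j| > 1$, the Chevalley generators $x_i(a)$ and $x_j(b)$ lie in commuting root subgroups, and one checks from~\eqref{eq:U action} that $e_i^c$ fixes $M_{j,j}, M_{j+1,j}, M_{j+1,j+1}$ (hence $\vp_j, \ve_j$) and conversely, so that $e_i^c e_j^{c'} = e_j^{c'} e_i^c$. The case $|i-j| = 1$, the braid relation, is the main obstacle. Here I would iterate the above: composing three operators expresses each side as $g(M).x$, where $g(M) \in U$ is an explicit product $x_i(a_3)x_{i+1}(a_2)x_i(a_1)$ (resp. $x_{i+1}(b_3)x_i(b_2)x_{i+1}(b_1)$) whose parameters arise by repeatedly applying $\vp_i$ and the transformation~\eqref{eq:U action} to $M$. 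Since only $x_i, x_{i+1}$ occur and~\eqref{eq:U action} involves only rows and columns $i, i+1, i+2$, proving the relation reduces to the group identity $g_{\mathrm{LHS}}(M) = g_{\mathrm{RHS}}(M)$ inside the rank-two unipotent subgroup, i.e. a computation with the relevant $3 \times 3$ block. I expect the bookkeeping of the six rational parameters $a_k, b_k$ to be the genuinely delicate part; the payoff is that once the two products of Chevalley generators agree \emph{as elements of $U$}, the relation $g_{\mathrm{LHS}}(M).x = g_{\mathrm{RHS}}(M).x$ holds for every unipotent crystal simultaneously, with no injectivity hypothesis on $\iota$.

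Finally, for part (2) I would compute the induced structure on $X \times X'$ directly from $\iota * \iota'(x,x') = \iota(x)\iota'(x') = MM' =: N$. Since $M, M'$ are lower triangular, so is $N$, with $N_{k,k} = M_{k,k}M'_{k,k}$ and $N_{i+1,i} = M_{i+1,i}M'_{i,i} + M_{i+1,i+1}M'_{i+1,i}$; substituting these into the definitions of $\gamma, \ve_i, \vp_i$ and clearing denominators reproduces exactly the product formulas~\eqref{eq:functions prod}. For the operators, I would track how $u = x_i((c-1)\vp_i(x,x'))$ splits across the two factors under the product action $u.(x,x') = (u.x, u'.x')$: the first factor becomes $e_i^{c^+}(x)$ with $c^+ = (c\vp_i(x') + \ve_i(x))/(\vp_i(x') + \ve_i(x))$, which drops out of the identity $\vp_i(x,x')/\vp_i(x) = \vp_i(x')/(\ve_i(x)+\vp_i(x'))$, and a parallel computation of the complementary parameter $u'$ via~\eqref{eq:U action} identifies the second factor as $e_i^{c/c^+}(x')$, matching~\eqref{eq:e prod}.
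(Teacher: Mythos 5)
Your proposal cannot be compared against an internal proof, because the paper does not prove this theorem: it is imported verbatim from Berenstein--Kazhdan \cite{BK00}, and the only argument the paper supplies in its vicinity is the two-line proof of Corollary~\ref{cor:unipotent property} --- which is precisely your opening computation. That computation is correct: with $a = (c-1)\vp_i(x)$ one gets $M_{i,i} + aM_{i+1,i} = cM_{i,i}$, so the correcting factor in \eqref{eq:U action} is $x_i\bigl((c^{-1}-1)\ve_i(x)\bigr)$, and the entry bookkeeping $M'_{i,i} = cM_{i,i}$, $M'_{i+1,i+1} = c^{-1}M_{i+1,i+1}$, $M'_{i+1,i} = M_{i+1,i}$, all other diagonal entries fixed, is right; axioms (1) and (2) of Definition~\ref{defn:geom crystal}, the $\CC^*$-action property via $x_i(a)x_i(b)=x_i(a+b)$, and the case $|i-j|>1$ of axiom (3) all follow as you say. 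Your part (2) is also verifiably correct: $N_{i+1,i} = M_{i+1,i}M'_{i,i} + M_{i+1,i+1}M'_{i+1,i}$ reproduces \eqref{eq:functions prod}, the first factor works because $(c-1)\vp_i(x,x') = (c^+-1)\vp_i(x)$, and for the second factor \eqref{eq:U action} gives $u' = x_i(a')$ with $a' = aM_{i+1,i+1}/(M_{i,i}+aM_{i+1,i}) = (1-1/c^+)\,\ve_i(x)$, which indeed equals $(c/c^+ - 1)\vp_i(x')$, matching \eqref{eq:e prod}.

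The one genuine gap is the braid case $|i-j|=1$ of axiom (3), which you reduce correctly but do not execute --- and this is where essentially all the content of the cited result lives. Your reduction is sound: each composite is $g.x$ for $g$ a product of three Chevalley generators whose parameters depend on $x$ only through $M = \iota(x)$, so by the rational action property $u.(u'.x) = (uu').x$ it suffices to prove $g_{\mathrm{LHS}}(M) = g_{\mathrm{RHS}}(M)$ in $U$ for generic $M \in B^-$, and your observation that this makes no injectivity demand on $\iota$ is exactly the right reason the relation holds for all unipotent crystals at once. But the required group identity is not automatic: equality of the two triple products is governed by the rank-two relation $x_i(a)x_{i'}(b)x_i(c) = x_{i'}\bigl(bc/(a+c)\bigr)\,x_i(a+c)\,x_{i'}\bigl(ab/(a+c)\bigr)$ for $|i-i'|=1$, so one must check that the six parameters --- each an explicit rational function of the entries $M_{k,l}$ with $\min(i,i') \leq l \leq k \leq \min(i,i')+2$, produced by iterating \eqref{eq:U action} (note that $M_{i+2,i}$ enters, since applying $e_i^c$ changes $M_{i+2,i+1}$ to $M_{i+2,i+1} + (c^{-1}-1)\ve_i(x) M_{i+2,i}$) --- match under this relation. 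As written, your argument asserts this matching rather than proving it; carrying out that finite $3 \times 3$ verification is the missing step, and it is precisely the computation done in the proof of \cite[Thm.~3.8]{BK00}.
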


Part (2) implies that we need not worry about the product of geometric crystals failing to be a geometric crystal, as long as we restrict our attention to geometric crystals induced from unipotent crystals. This is not a serious restriction in practice, as to the best of our knowledge, all examples of geometric crystals appearing in the literature are induced from unipotent crystals. In addition, most computations with geometric crystals are done at the level of unipotent crystals, largely because of the following property.

\begin{cor}
\label{cor:unipotent property}
If $X$ is the geometric crystal induced from a unipotent crystal $(X,\iota)$, then
\[
\iota(e_i^c(x)) = x_i\bigl( (c-1)\vp_i(x) \bigr) \cdot \iota(x) \cdot x_i \bigl( (c^{-1} - 1)\ve_i(x) \bigr).
\]
\end{cor}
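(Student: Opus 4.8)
The plan is to compute $\iota(e_i^c(x))$ directly from the definition of $e_i^c$ given in Theorem~\ref{thm:unipotent to geometric}, using the fact that $\iota$ is a morphism of $U$-varieties together with the explicit formula~\eqref{eq:U action} for the action of a Chevalley generator on $B^-$. Write $M = \iota(x)$ and $a = (c-1)\vp_i(x)$, so that $e_i^c(x) = x_i(a).x$ by definition. Since $\iota$ commutes with the $U$-actions, we have
\[
\iota(e_i^c(x)) = \iota(x_i(a).x) = x_i(a).\iota(x) = x_i(a).M.
\]
Now I would apply~\eqref{eq:U action}, which expresses the action of $x_i(a)$ on $M \in B^-$ as left multiplication by $x_i(a)$ and right multiplication by $x_i\!\left(\tfrac{-aM_{i+1,i+1}}{M_{i,i}+aM_{i+1,i}}\right)$. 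This immediately produces a factorization of the desired shape, and the whole content of the proof is to identify the two Chevalley-generator arguments with $(c-1)\vp_i(x)$ and $(c^{-1}-1)\ve_i(x)$ respectively.

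The left factor is $x_i(a) = x_i\bigl((c-1)\vp_i(x)\bigr)$ by our choice of $a$, which already matches the first factor in the claim. For the right factor, I would substitute $a = (c-1)\vp_i(x)$ into the argument $\tfrac{-aM_{i+1,i+1}}{M_{i,i}+aM_{i+1,i}}$ and simplify using the expressions $\vp_i(x) = M_{i,i}/M_{i+1,i}$ and $\ve_i(x) = M_{i+1,i+1}/M_{i+1,i}$ from the same theorem. Concretely, the denominator becomes
\[
M_{i,i} + (c-1)\vp_i(x)\,M_{i+1,i} = M_{i,i} + (c-1)M_{i,i} = c\,M_{i,i},
\]
while the numerator becomes $-(c-1)\vp_i(x)\,M_{i+1,i+1} = -(c-1)\,M_{i,i}\,\ve_i(x)$. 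Dividing, the $M_{i,i}$ factors cancel and the argument collapses to $-\tfrac{c-1}{c}\,\ve_i(x) = (c^{-1}-1)\ve_i(x)$, exactly the second factor in the claim.

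This is essentially a one-line verification once the setup is in place, so there is no genuine obstacle; the only point requiring care is the algebraic simplification of the right-hand Chevalley argument, where one must correctly cancel $M_{i,i}$ and track the sign to land on $(c^{-1}-1)\ve_i(x)$ rather than its negative. I would also note the standing caveat that all equalities hold as rational maps, i.e.\ wherever the relevant factorizations in $U \times X \to X$ and in~\eqref{eq:U action} are defined, which is guaranteed generically since $\iota(x)_{i+1,i} = M_{i+1,i}$ is not identically zero by the definition of a unipotent crystal.
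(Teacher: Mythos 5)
Your proof is correct and follows exactly the paper's argument: use the $U$-equivariance of $\iota$ to rewrite $\iota(e_i^c(x))$ as $x_i\bigl((c-1)\vp_i(x)\bigr).\iota(x)$, then apply~\eqref{eq:U action} and simplify the right-hand Chevalley argument using $\vp_i(x) = M_{i,i}/M_{i+1,i}$ and $\ve_i(x) = M_{i+1,i+1}/M_{i+1,i}$. The paper leaves the final algebraic simplification to the reader, and your computation carries it out correctly, including the sign and the cancellation of $M_{i,i}$.
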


\begin{proof}
Since $\iota$ commutes with the $U$-actions on $X$ and $B^-$, $\iota(e_i^c(x))$ is equal to the action of $x_i((c-1)\vp_i(x))$ on $\iota(x)$. Using~\eqref{eq:U action} and the definitions of $\vp_i(x)$ and $\ve_i(x)$, one obtains the result.
\end{proof}

We now explain how the basic geometric crystal $X_m$ arises from a unipotent crystal. Identify the variety $X_m = (\CC^*)^m$ with $(B^-)^{\leq 1} \subset \GL_m$ via the map
\[
(x_1, \ldots, x_m) \mapsto W(x_1, \ldots, x_m) = \begin{pmatrix}
x_1 & 0 & 0 & \cdots & 0 & 0 \\
1 & x_2 & 0 & \cdots & 0 & 0 \\
0 & 1 & x_3 & \cdots & 0 & 0 \\
\vdots& \ddots & \ddots& \ddots &\ddots& \vdots \\
0 & 0 & 0 & \ddots & x_{m-1} & 0 \\
0 & 0 & 0 & \cdots & 1 & x_m
\end{pmatrix}.
\]
By Example~\ref{ex:unipotent crystal}(1), this identification makes $X_m$ into a unipotent crystal, and one can easily verify that the geometric crystal structure on $X_m$ defined in the previous section is induced from this unipotent crystal. For example, the following calculation shows that the geometric crystal operator $e_2^c \colon X_4 \rightarrow X_4$ is induced from the action of $e_2^c$ on the unipotent crystal $(B^-)^{\leq 1}$:
\[
\begin{pmatrix}
1 & 0 & 0 & 0 \\
0 & 1 & (c-1)x_2 & 0 \\
0 & 0 & 1 & 0 \\
0 & 0 & 0 & 1
\end{pmatrix}
\begin{pmatrix}
x_1 & 0 & 0 & 0 \\
1 & x_2 & 0 & 0 \\
0 & 1 & x_3 & 0 \\
0 & 0 & 1 & x_4
\end{pmatrix}
\begin{pmatrix}
1 & 0 & 0 & 0 \\
0 & 1 & (c^{-1}-1)x_3 & 0 \\
0 & 0 & 1 & 0 \\
0 & 0 & 0 & 1
\end{pmatrix}
=
\begin{pmatrix}
x_1 & 0 & 0 & 0 \\
1 & cx_2 & 0 & 0 \\
0 & 1 & c^{-1}x_3 & 0 \\
0 & 0 & 1 & x_4
\end{pmatrix}.
\]

Given $\xx = (\xx^1, \ldots, \xx^n) \in (X_m)^n$, define
\[
M(\xx) = W(\xx^1) \cdots W(\xx^n).
\]
Theorem~\ref{thm:unipotent to geometric}(2) and Corollary~\ref{cor:unipotent property} imply that $\gamma \colon (X_m)^n \rightarrow T$ and $\ve_i, \vp_i \colon (X_m)^n \rightarrow \CC$ are computed by
\begin{equation}
\label{eq:basic functions M}
\gamma(\xx) = (M(\xx)_{1,1}, \ldots, M(\xx)_{m,m}),
\qquad\quad
\ve_i(\xx) = \dfrac{M(\xx)_{i+1,i+1}}{M(\xx)_{i+1,i}},
\qquad\quad
\vp_i(\xx) = \dfrac{M(\xx)_{i,i}}{M(\xx)_{i+1,i}},
\end{equation}
and that the geometric crystal operators on $(X_m)^n$ satisfy
\begin{equation}
\label{eq:basic crystal operators M}
M(e_i^c(\xx)) = x_i\bigl( (c-1)\varphi_i(\xx) \bigr) \cdot M(\xx) \cdot x_i\bigl( (c^{-1} - 1)\varepsilon_i(\xx) \bigr).
\end{equation}

\subsection{Combinatorial crystals}
\label{sec:comb crystal}

Let $\Lambda = \ZZ^m$ and $\Lambda^\vee = \Hom(\ZZ^m, \ZZ)$ denote the weight and coweight lattices associated to $\GL_m$. For $i \in [m-1]$, define the simple root $\tw{\alpha}_i \in \Lambda$ and simple coroot $\tw{\alpha}_i^\vee \in \Lambda^\vee$ by
\[
\tw{\alpha}_i = v_i - v_{i+1}, \qquad\qquad \tw{\alpha}_i^\vee(a_1 \ldots, a_m) = a_i - a_{i+1},
\]
where $v_i$ is the $i$th standard basis vector of $\ZZ^n$ and $a_i$ is the corresponding dual basis vector.

\begin{dfn}
\label{defn:comb crystal}
A \defn{$\GL_m$-combinatorial crystal} (or \defn{abstract crystal}) consists of a set $\mc{B}$, a \defn{weight map} $\tw{\gamma} \colon \mc{B} \rightarrow \Lambda$, and for each $i \in [m-1]$, functions $\tw{\ve}_i, \tw{\vp}_i \colon \mc{B} \rightarrow \ZZ$ and \defn{crystal operators} $\tw{e}_i, \tw{f}_i \colon \mc{B} \rightarrow \mc{B} \sqcup \{\zero\}$. Here the symbol $\zero$ represents an element which is not in $\mc{B}$, and one says that $\tw{e}_i(b)$ is \defn{undefined} if $\tw{e}_i(b) = \zero$. These maps must satisfy the following properties:
\begin{enumerate}
\item For all $b \in \mc{B}$, $\tw{\vp}_i(b) = \tw{\alpha}_i^\vee(\tw{\gamma}(b)) + \tw{\ve}_i(b)$;
\item If $b,b' \in \mc{B}$, then $\tw{e}_i(b) = b'$ if and only if $\tw{f}_i(b') = b$. In this case, one has
\[
\tw{\gamma}(b') = \tw{\gamma}(b) + \tw{\alpha}_i, \qquad\qquad \tw{\ve}_i(b') = \tw{\ve}_i(b) - 1, \qquad\qquad \tw{\vp}_i(b') = \tw{\vp}_i(b) + 1.
\]
\end{enumerate}
The combinatorial crystal $\mc{B}$ is said to be \defn{free} if the crystal operators $\tw{e}_i$ and $\tw{f}_i$ are mutually inverse bijections from $\mc{B} \rightarrow \mc{B}$, and \defn{regular} (or \defn{seminormal}) if for all $b \in \mc{B}$,
\begin{equation}
\label{eq:regular}
\tw{\ve}_i(b) = \max\{k > 0 \mid \tw{e}_i^k(b) \neq \zero\} \qquad \text{ and } \qquad \tw{\vp}_i(b) = \max\{k > 0 \mid \tw{f}_i^k(b) \neq \zero\}.
\end{equation}
\end{dfn}

\begin{ex}
\label{ex:one row crystal}
Let $\mc{B} = (\ZZ_{\geq 0})^m$. For $\mb{a} = (a_1, \ldots, a_m) \in \mc{B}$, define
\[
\tw{\gamma}(\mb{a}) = \mb{a}, \qquad\qquad \tw{\ve}_i(\mb{a}) = a_{i+1}, \qquad\qquad \tw{\vp}_i(\mb{a}) = a_i.
\]
Define $\tw{e}_i(\mb{a}) = (a_1, \ldots, a_i + 1, a_{i+1}-1, \ldots, a_m)$ if $a_{i+1} > 0$, and otherwise $\tw{e}_i(\mb{a}) = \zero$. Define $\tw{f}_i(\mb{a}) = (a_1, \ldots, a_i - 1, a_{i+1}+1, \ldots, a_m)$ if $a_i > 0$, and otherwise $\tw{f}_i(\mb{a}) = \zero$. These maps make $\mc{B}$ into a regular $\GL_m$-combinatorial crystal. Moreover, each of the subsets
\[
\mc{B}^L = \{(a_1, \ldots, a_m) \in (\ZZ_{\geq 0})^m \mid a_1 + \cdots + a_m = L\}
\]
is a finite regular combinatorial crystal.
\end{ex}

Kashiwara~\cite{K90,K91} proved that every finite-dimensional $\GL_m$-representation gives rise to a $\GL_m$-combinatorial crystal whose underlying set $\mc{B}$ is in bijection with a basis of the representation. The crystal operators $\tw{e}_i$ and $\tw{f}_i$ are a ``combinatorial approximation'' of the action of the Chevalley generators of the Lie algebra $\mathfrak{gl}_m$ on this basis. The map $\tw{\gamma}$ encodes the weight of a basis element with respect to the action of the Cartan subalgebra $\mathfrak{h} \subset \mathfrak{gl}_m$. The functions $\tw{\ve}_i, \tw{\vp}_i$ are defined by~\eqref{eq:regular} (so regularity is automatic). For example, the finite crystal $\mc{B}^L$ defined in Example~\ref{ex:one row crystal} arises from the $L$th symmetric power of the vector representation of $\GL_m$.

Given two $\GL_m$-combinatorial crystals $\mc{B}$ and $\mc{B}'$, Kashiwara defined the \defn{tensor product} $\mc{B} \otimes \mc{B}'$ to be the crystal on the set $\mc{B} \times \mc{B}'$, with
\begin{gather*}
\tw{\gamma}(b, b') = \tw{\gamma}(b) + \tw{\gamma}(b'),
\\
\tw{\ve}_i(b,b') = \tw{\ve}_i(b) + \tw{\ve}_i(b') - \min(\tw{\ve}_i(b), \tw{\vp}_i(b')), \qquad \tw{\vp}_i(b,b') = \tw{\vp}_i(b) + \tw{\vp}_i(b') - \min(\tw{\ve}_i(b), \tw{\vp}_i(b')),
\\
\tw{e}_i(b,b') = \begin{cases}
(\tw{e}_i(b), b') & \text{ if } \tw{\ve}_i(b) > \tw{\vp}_i(b') \\
(b, \tw{e}_i(b')) & \text{ if } \tw{\ve}_i(b) \leq \tw{\vp}_i(b') \\
\end{cases},
\qquad
\tw{f}_i(b,b') = \begin{cases}
(\tw{f}_i(b), b') & \text{ if } \tw{\ve}_i(b) \geq \tw{\vp}_i(b') \\
(b, \tw{f}_i(b')) & \text{ if } \tw{\ve}_i(b) < \tw{\vp}_i(b') \\
\end{cases}.
\end{gather*}
This definition has the property that if $\mc{B}, \mc{B}'$ arise from $\GL_m$-representations $V,V'$, then $\mc{B} \otimes \mc{B}'$ is the crystal arising from the tensor product $V \otimes V'$.
We follow the tensor product convention of~\cite{BumpSchill}, which is the opposite of Kashiwara's convention~\cite{K90,K91}.

\subsection{Tropicalization}
\label{sec:trop}

Let $x_1, \ldots, x_d$ be variables. For $I = (i_1, \ldots, i_d) \in (\ZZ_{\geq 0})^d$, let $x^I = x_1^{i_1} \cdots x_d^{i_d}$. A rational function $f \in \CC(x_1, \ldots, x_d)$ is \defn{positive} (or \defn{subtraction-free}) if it is nonzero, and has an expression of the form
\begin{equation}
\label{eq:positive expression}
f(x_1, \ldots, x_d) = \dfrac{\sum_I a_I x^I}{\sum_I b_I x^I}
\end{equation}
with $a_I,b_I \in \RR_{\geq 0}$, and $a_I,b_I = 0$ for all but finitely many $I$. If $f$ is positive, define the \defn{tropicalization} of $f$ to be the piecewise-linear function given by
\[
\Trop(f)(x_1, \ldots, x_d) = \min_{I \colon a_I \neq 0} (i_1x_1 + \cdots + i_dx_d) - \min_{I \colon b_I \neq 0} (i_1x_1 + \cdots + i_dx_d).
\]
We view $\Trop(f)$ as a map from $\ZZ^d \rightarrow \ZZ$. More generally, a rational map $f = (f_1, \ldots, f_k) \colon (\CC^*)^d \rightarrow (\CC^*)^k$ is positive if each $f_i$ is positive, and in this case its tropicalization is defined
by
\[
\Trop(f) = \bigl(\Trop(f_1), \ldots, \Trop(f_k)\bigr) \colon \ZZ^d \rightarrow \ZZ^k.
\]

\begin{ex}
The rational function $f = \dfrac{3x^2 + xy^3 + 5}{2x^5y^3 + 6xz}$ has tropicalization
\[
\Trop(f) = \min(2x, x + 3y, 0) - \min(5x + 3y, x+z).
\]
\end{ex}

It is straightforward to verify (see, \textit{e.g.},~\cite[Lem.~2.1.6]{BFZ}) that tropicalization is independent of the choice of positive expression~\eqref{eq:positive expression}, and that it satisfies
\begin{equation}
\label{eq:trop is a homom}
\Trop(f+g) = \min\bigl( \Trop(f),\Trop(g) \bigr), \qquad \Trop(fg^{\pm 1}) = \Trop(f) \pm \Trop(g).
\end{equation}
In other words, tropicalization is a homomorphism from the semi-field of positive rational functions over $\CC$ with operations $(+,\cdot,\div)$ to the semi-field of piecewise-linear functions over $\ZZ$ with operations $(\min,+,-)$.

We say that a positive rational function $f$ is a \defn{geometric lift} of a piecewise-linear function $F$ if $\Trop(f) = F$; note that every piecewise-linear function has many geometric lifts. We will make frequent use of a particular geometric lift of the maximum function. Given positive rational functions $f_1, \ldots, f_k$, define the \defn{geometric maximum} of $f_1, \ldots, f_k$ by
\begin{equation}
\label{eq:gMax}
\gMax(f_1, \ldots, f_k) = \dfrac{1}{f_1^{-1} + \cdots + f_k^{-1}}.
\end{equation}
Using~\eqref{eq:trop is a homom}, we see that
\[
\Trop\bigl( \gMax(f_1, \ldots, f_k) \bigr) = -\min\bigl( -\Trop(f_1), \ldots, -\Trop(f_k) \bigr) = \max\bigl( \Trop(f_1), \ldots, \Trop(f_k) \bigr).
\]

\subsection{Tropicalization of geometric crystals}
\label{sec:geom to comb}

Let $X$ be a $\GL_m$-geometric crystal of dimension $d$. A \defn{parametrization} of $X$ is a birational isomorphism $\Phi \colon (\CC^*)^d \rightarrow X$. Given a parametrization $\Phi$ of $X$, we call the pair $(X,\Phi)$ a \defn{positive $\GL_m$-geometric crystal} if for each $i \in [m-1]$, the rational functions $\gamma \Phi, \ve_i \Phi, \vp_i \Phi \colon (\CC^*)^d \rightarrow \CC$ are positive, and the rational map $\Phi^{-1}e_i\Phi \colon (\CC^*)^{d+1} \rightarrow (\CC^*)^d$ given by
\[
(c, x_1, \ldots, x_d) \mapsto \Phi^{-1} e_i^c \Phi(x_1, \ldots, x_d)
\]
is positive. Given a positive $\GL_m$-geometric crystal $(X,\Phi)$ of dimension $d$, set $\mc{B} = \ZZ^d$, and define $\tw{\gamma} \colon \mc{B} \rightarrow \ZZ^m$, $\tw{\ve}_i, \tw{\vp}_i : \mc{B} \rightarrow \ZZ$, and $\tw{e}_i, \tw{f}_i \colon \mc{B} \rightarrow \mc{B}$ by
\begin{equation}
\label{eq:free maps}
\tw{\gamma} = \Trop(\gamma \Phi), \qquad\quad \tw{\ve}_i = \Trop(\ve_i \Phi), \qquad\quad \tw{\vp}_i = \Trop(\vp_i \Phi),
\end{equation}
\[
\tw{e}^{{\rm free}}_i = \Trop(\Phi^{-1} e_i^c \Phi)|_{c = 1}, \qquad\quad \tw{f}^{{\rm free}}_i = \Trop(\Phi^{-1} e_i^c \Phi)|_{c=-1}.
\]
By comparing axioms (1) and (2) in Definitions~\ref{defn:geom crystal} and~\ref{defn:comb crystal}, one sees immediately that these maps make $\mc{B}$ into a free $\GL_m$-combinatorial crystal. We call $\mc{B}$ the \defn{tropicalization of $X$ (with respect to $\Phi$)}, and we denote it by $\Trop(X)$.

\begin{remark}
In general, a positive geometric crystal associated to a reductive group $G$ tropicalizes to a free combinatorial crystal for the Langlands dual group $G^\vee$. This is due to the fact that the roles of $\alpha_i$ and $\alpha_i^\vee$ in Definition~\ref{defn:geom crystal} are played by $\tw{\alpha}_i^\vee$ and $\tw{\alpha}_i$ in Definition~\ref{defn:comb crystal}. We are able to ignore this subtlety because $\GL_m$ is its own Langlands dual.
\end{remark}

In order to recover the crystals of finite-dimensional representations, Berenstein and Kazhdan introduced in~\cite{BK07} an elegant mechanism for ``cutting out'' finite vertex sets from $\ZZ^d$, which we now review. A \defn{decoration} on a $\GL_m$-geometric crystal $X$ is a rational function $F : X \rightarrow \CC$ such that
\begin{equation}
\label{eq:dec}
F(e_i^c(x)) = F(x) + (c-1)\vp_i(x) + (c^{-1}-1)\ve_i(x).
\end{equation}
A \defn{positive decorated $\GL_m$-geometric crystal} is a triple $(X,\Phi,F)$, where $(X,\Phi)$ is a positive geometric crystal, $F$ is a decoration on $X$, and $F \Phi$ is positive.

\begin{thm}[{\cite[Prop.~6.6, 6.7]{BK07}}]
\label{thm:geom to comb}
Let $(X,\Phi,F)$ be a positive decorated $\GL_m$-geometric crystal of dimension $d$. Define
\[
\mc{B}_F = \{b \in \ZZ^d \mid \Trop(F \Phi)(b) \geq 0\}.
\]
Let $\tw{\gamma}, \tw{\ve}_i, \tw{\vp}_i$ be the restrictions to $\mc{B}_F$ of the maps~\eqref{eq:free maps}, and define $\tw{e}_i, \tw{f}_i \colon \mc{B}_F \rightarrow \mc{B}_F \sqcup \{\zero\}$ by
\[
\tw{e}_i(b) = \begin{cases}
\tw{e}_i^{{\rm free}}(b) & \text{if } \tw{e}_i^{{\rm free}}(b) \in \mc{B}_F, \\
\zero & \text{otherwise,}
\end{cases}
\qquad\qquad
\tw{f}_i(b) = \begin{cases}
\tw{f}_i^{{\rm free}}(b) & \text{if } \tw{f}_i^{{\rm free}}(b) \in \mc{B}_F, \\
\zero & \text{otherwise.}
\end{cases}
\]
\begin{enumerate}
\item These maps make $\mc{B}_F$ into a regular $\GL_m$-combinatorial crystal. We call this crystal the \defn{tropicalization of $(X,F)$ (with respect to $\Phi$)}, and denote it by $\Trop(X,F)$.
\item Suppose $(X',\Phi',F')$ is another positive decorated $\GL_m$-geometric crystal. Define $\Phi \times \Phi' \colon (\CC^*)^d \times (\CC^*)^{d'} \rightarrow X \times X'$ and $F+F' \colon X \times X' \rightarrow \CC$ by
\[
(\Phi \times \Phi')(z,z') = \bigl(\Phi(z),\Phi(z')\bigr), \qquad\qquad (F+F')(x,x') = F(x) + F(x').
\]
The triple $(X \times X', \Phi \times \Phi', F+F')$ is a positive decorated $\GL_m$-geometric crystal, and $\Trop(X \times X', F+F') = \Trop(X,F) \otimes \Trop(X',F')$.
\end{enumerate}
\end{thm}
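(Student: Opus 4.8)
The plan is to build on the free combinatorial crystal structure already extracted from $(X,\Phi)$ via \eqref{eq:free maps}: comparing axioms (1)--(2) of Definitions~\ref{defn:geom crystal} and~\ref{defn:comb crystal} shows that $\mc{B} = \ZZ^d$ with $\tw\gamma, \tw\ve_i, \tw\vp_i$ and the bijections $\tw e_i^{\rm free}, \tw f_i^{\rm free}$ is a \emph{free} $\GL_m$-combinatorial crystal. Thus for part~(1) it remains to show three things: that the restricted maps still satisfy axioms (1)--(2) of Definition~\ref{defn:comb crystal} on $\mc B_F$, that the cut-off operators $\tw e_i, \tw f_i$ are mutually inverse partial bijections of $\mc B_F$, and that the resulting crystal is regular, i.e.\ satisfies \eqref{eq:regular}. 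Axiom (1) is a pointwise identity, hence survives restriction to any subset, so the whole of part~(1) reduces to a single computation: the behaviour of the decoration $\Trop(F\Phi)$ along each free $i$-string.

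First I would analyze $F$ on a single $i$-orbit. Combining the decoration law \eqref{eq:dec} with the identities $\ve_i(e_i^c x) = c^{-1}\ve_i(x)$ and $\vp_i(e_i^c x) = c\vp_i(x)$ from Definition~\ref{defn:geom crystal}(2), a direct manipulation gives
\begin{equation*}
F(e_i^c(x)) = A(x) + c\,\vp_i(x) + c^{-1}\ve_i(x), \qquad A(x) := F(x) - \vp_i(x) - \ve_i(x),
\end{equation*}
and one checks that $A$ is constant along the $i$-orbit of $x$. Since $(X,\Phi,F)$ is positive, the function $(c,b)\mapsto F(e_i^c\Phi(b))$ is subtraction-free, with $c$-support exactly $\{-1,0,1\}$ and with positive leading and trailing coefficients $\vp_i\Phi$ and $\ve_i\Phi$. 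Tropicalizing in the $c$-direction (which, by the conventions of \S\ref{sec:geom to comb}, is precisely the free $i$-string direction, with $C = k$ corresponding to $\tw e_i^{\,k}$) therefore yields a function $k \mapsto \tw F(\tw e_i^{\,k} b)$ that is concave and piecewise-linear, equal to $\tw\vp_i(b) + k$ for $k \ll 0$ and to $\tw\ve_i(b) - k$ for $k \gg 0$ by \eqref{eq:trop is a homom}. Consequently $\{k : \tw F(\tw e_i^{\,k} b) \geq 0\}$ is an interval, and for $b \in \mc B_F$ (so that this interval contains $0$) positivity forces $\tw A(b), \tw\vp_i(b), \tw\ve_i(b) \geq 0$; the concave function then meets $0$ exactly at its two outer linear branches, so the interval is $[-\tw\vp_i(b), \tw\ve_i(b)]$. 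This identifies the number of admissible $\tw e_i$- (resp.\ $\tw f_i$-) steps with $\tw\ve_i(b)$ (resp.\ $\tw\vp_i(b)$), which is \eqref{eq:regular}; it also shows the cut-off operators are mutually inverse partial bijections and that axiom (2) is inherited, completing part~(1). The main obstacle is exactly this endpoint identification: a priori the concave profile could dip below $0$ in its middle (flat) portion before reaching the branch crossings, and ruling this out is where the positivity of $F\Phi$ --- via the resulting bound $\tw A(b) \ge 0$ on $\mc B_F$ --- is essential.

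For part~(2), I would first verify that $F + F'$ is a decoration on the product geometric crystal: this is the identity \eqref{eq:dec} for the product, which follows from \eqref{eq:functions prod}--\eqref{eq:e prod} (equivalently from Berenstein--Kazhdan's associativity of the product). Positivity of $(X\times X', \Phi\times\Phi', F+F')$ is immediate, since all the product formulas are built from positive functions using $(+,\cdot,\div)$ and $(F+F')(\Phi\times\Phi')$ is a sum of positives. The identification $\Trop(X\times X',F+F') = \Trop(X,F)\otimes\Trop(X',F')$ then comes from tropicalizing the explicit formulas: applying \eqref{eq:trop is a homom} to \eqref{eq:functions prod} reproduces Kashiwara's tensor-product formulas for $\tw\gamma, \tw\ve_i, \tw\vp_i$ verbatim (e.g.\ $\Trop$ of $\ve_i(x)\ve_i(x')/(\ve_i(x)+\vp_i(x'))$ is $\tw\ve_i(b)+\tw\ve_i(b')-\min(\tw\ve_i(b),\tw\vp_i(b'))$), while tropicalizing the switching factor $c^+$ in \eqref{eq:e prod} at $C = \pm 1$ reproduces the case distinctions defining $\tw e_i, \tw f_i$ on $\mc B \otimes \mc B'$. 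Finally, since a sum of positive functions tropicalizes to a minimum, $\Trop((F+F')(\Phi\times\Phi')) = \min(\Trop(F\Phi), \Trop(F'\Phi'))$, whence $\mc B_{F+F'} = \mc B_F \times \mc B_{F'}$, matching the underlying set of the tensor product. Combining the matching of underlying sets, structure maps, and cut-off operators yields the claimed equality of combinatorial crystals.
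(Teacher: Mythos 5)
The paper does not prove Theorem~\ref{thm:geom to comb} at all: it is imported from Berenstein--Kazhdan \cite[Prop.~6.6, 6.7]{BK07} as a citation, so there is no internal proof to compare against, and your argument must be judged on its own terms. Your architecture is the right one (and is essentially BK07's): part (2) really is a routine termwise tropicalization of \eqref{eq:functions prod}--\eqref{eq:e prod} together with $\Trop\bigl((F+F')(\Phi\times\Phi')\bigr) = \min\bigl(\Trop(F\Phi), \Trop(F'\Phi')\bigr)$, and part (1) correctly reduces to showing that along each free $i$-string the set $\{k \in \ZZ : \Trop(F\Phi)(\tw{e}_i^{\,k}b) \geq 0\}$ equals $[-\tw{\vp}_i(b), \tw{\ve}_i(b)]$ whenever $b \in \mc{B}_F$.

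However, the step you yourself flag as the crux is not actually proved. You write $F(e_i^c(x)) = A(x) + c\,\vp_i(x) + c^{-1}\ve_i(x)$ and then ``tropicalize in the $c$-direction,'' concluding that the string profile is concave and that positivity forces $\tw{A}(b) \geq 0$. This is illegitimate as written: $A = F - \vp_i - \ve_i$ is extracted by subtraction, and the coefficients of a positive Laurent polynomial in $c$ need not be positive functions --- for instance $c - 1 + c^{-1} = (c^3+1)/(c^2+c)$ is subtraction-free with middle coefficient $-1$ --- so $\Trop(A\Phi)$, hence ``$\tw{A}(b)$,'' may not even be defined, and $\Trop$ cannot be distributed over your three-term decomposition (\eqref{eq:trop is a homom} applies only to sums of \emph{positive} functions). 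Concavity in the string direction is likewise not automatic: $\Trop$ of a positive function is in general a difference of two concave piecewise-linear functions. One might try to repair this formally by tropicalizing the genuinely subtraction-free rearrangement $F(e_i^c(x)) + \vp_i(x) + \ve_i(x) = F(x) + c\,\vp_i(x) + c^{-1}\ve_i(x)$, which yields
\[
\min\bigl(\tw{F}(\tw{e}_i^{\,k}b),\ \tw{\vp}_i(b),\ \tw{\ve}_i(b)\bigr) \;=\; \min\bigl(\tw{F}(b),\ k + \tw{\vp}_i(b),\ \tw{\ve}_i(b) - k\bigr), \qquad k \in \ZZ;
\]
this gives your asymptotics and, \emph{once} $\tw{\ve}_i(b), \tw{\vp}_i(b) \geq 0$ is known, the full endpoint identification. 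But it is strictly weaker than what you need: the piecewise-linear data $\tw{\ve}_i(b) = \tw{\vp}_i(b) = -t < 0$, $\tw{F}(b) \geq 0$, $\tw{F}(\tw{e}_i^{\,k}b) = \min(k,-k) - t$ for $k \neq 0$ satisfies this identity at every point of the string, yet violates regularity --- so the formal identity alone cannot rule out the middle dip, and genuinely more input from positivity is required. The standard repair (in essence BK07's argument) is analytic rather than formal: evaluate along monomial curves $c = t^{K}$, $x_j = t^{b_j}$ as $t \to 0^+$; positivity of $F(e_i^c\Phi)$ forces its leading coefficient along each such curve to be positive, and since the outer coefficients $\vp_i\Phi, \ve_i\Phi$ are positive, the profile for fixed $b$ takes the concave form $k \mapsto \min\bigl(\beta(b),\, \tw{\vp}_i(b)+k,\, \tw{\ve}_i(b)-k\bigr)$, where $\beta(b)$ is the valuation of $A$ along the curve --- well-defined even though $A$ is not positive. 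Evaluating at $k = 0$ then forces $\beta(b), \tw{\vp}_i(b), \tw{\ve}_i(b) \geq 0$ on $\mc{B}_F$, after which your interval argument, the mutual-inverse property of the cut-off operators, and \eqref{eq:regular} go through exactly as you state.
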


\begin{ex}
\label{ex:basic dec}
The function $F \colon X_m \rightarrow \CC$ given by $F(x_1, \ldots, x_m) = x_1 + \ldots + x_m$ is easily seen to be a decoration. The tropicalization of $(X_m,F)$ (with respect to $\Phi = \Id$) is the regular combinatorial crystal $\mc{B}$ defined in Example~\ref{ex:one row crystal}. Furthermore, Theorem~\ref{thm:geom to comb}(2) implies that the function
\[
F(\xx^1, \ldots, \xx^n) = \sum_{i \in [m], j \in [n]} x_i^j
\]
is a decoration on $(X_m)^n$, and $\bigl((X_m)^n,F\bigr)$ tropicalizes to the $n$-fold tensor product of $\mc{B}$.
\end{ex}

\section{Gelfand--Tsetlin geometric crystal}
\label{sec:GT}

\subsection{Gelfand--Tsetlin patterns}
\label{sec:GT patterns}

A \defn{Gelfand--Tsetlin (GT) pattern of height $n$} is a triangular array of nonnegative integers $(a_{i,j}), 1 \leq i \leq j \leq n,$ satisfying the inequalities 
\begin{equation}
\label{eq:GT def}
a_{i,j+1} \geq a_{i,j} \geq a_{i+1,j+1}
\end{equation}
whenever $j < n$. We will represent GT patterns as triangles whose entries weakly increase from right to left along every diagonal, as illustrated below in the case $n = 4$:
\[
\begin{array}{ccccccc}
&&& a_{11} \\
&& a_{12} && a_{22} \\
& a_{13} && a_{23} && a_{33} \\
a_{14} && a_{24} && a_{34} && a_{44}
\end{array} \, .
\]

GT patterns of height $n$ are in bijection with semistandard tableaux consisting of entries at most $n$: the partition $(a_{1,j}, \ldots, a_{j,j})$ in the $j$th row corresponds to the shape formed by the entries less than or equal to $j$ in the tableau. In particular, the bottom row $(a_{1,n}, \ldots, a_{n,n})$ is the shape of the tableau, so we define the \defn{shape} of the GT pattern to be this $n$-tuple. Another description of the bijection is that the number of $j$'s in the $i$th row of the tableau is $a_{i,j} - a_{i,j-1}$ (with $a_{i,i-1} = 0$).

\ytableausetup{centertableaux}
\begin{ex}
Here is a GT pattern of height $4$ and the corresponding semistandard tableau:
\[
\begin{array}{ccccccc}
&&& 3 \\
&& 6 && 1 \\
& 6 && 4 && 1 \\
8 && 5 && 3 && 0
\end{array}
\quad \longleftrightarrow \quad
\ytableaushort{11122244,23334,344} \;.
\]
\end{ex}

More generally, define a \defn{Gelfand--Tsetlin pattern of height $n$ and width $m$} to be a trapezoidal array of nonnegative integers $(a_{i,j})_{1 \leq i \leq m, i \leq j \leq n}$ which satisfy the inequalities~\eqref{eq:GT def} whenever both sides of an inequality are in the array. For example, a GT pattern of height 5 and width 3 looks like this:
\[
\begin{array}{ccccccc}
&&&& a_{11} \\
&&& a_{12} && a_{22} \\
&& a_{13} && a_{23} && a_{33} \\
& a_{14} && a_{24} && a_{34} \\
a_{15} && a_{25} && a_{35}
\end{array} \; .
\]
These patterns are in bijection with semistandard tableaux with entries at most $n$, and at most $m$ rows; again, we define the shape of the GT pattern to be the bottom row $(a_{1,n}, \ldots, a_{\min(m,n),n})$, since this is the shape of the corresponding tableau. Note that if $m \geq n$, then a GT pattern of height $n$ and width $m$ is simply a GT pattern of height $n$.

At the geometric level, we consider the torus
\begin{equation}
\label{eq:GT_defn}
\GT_n^{\leq m} = \{\mb{z} = (z_{i,j}) \mid z_{i,j} \in \CC^*, 1 \leq i \leq m, i \leq j \leq n\}.
\end{equation}
We refer to points of $\GT_n^{\leq m}$ as \defn{patterns}, and we define the \defn{shape} of $\mb{z} \in \GT_n^{\leq m}$ to be the vector
\begin{equation}
\label{eq:GT_shape}
\sh(\mb{z}) = (z_{1,n}, \ldots, z_{p,n}) \in (\CC^*)^p,
\end{equation}
where $p = \min(m,n)$.

\subsection{Geometric crystal structure on $\GT_n^{\leq m}$}
\label{sec:GT geom cryst}

In this section we work with $\GL_n$-geometric crystals rather than $\GL_m$-geometric crystals. Let $B^-$ denote the subgroup of lower triangular matrices in $\GL_n(\CC)$, and consider the subsets of $B^-$ defined by
\[
(B^-)^{\leq m} = \{A \in \GL_n(\CC) \mid A_{ij} = 0 \text{ if } i < j \text{ or } i-j > m, \text{ and } A_{ij} = 1 \text{ if } i-j = m\}
\]
(note that $(B^-)^{\leq m} = B^-$ for $m \geq n$). By Example~\ref{ex:unipotent crystal}(a), $(B^-)^{\leq m}$ has the structure of a $\GL_n$-unipotent crystal, and therefore of a $\GL_n$-geometric crystal. The dimension of $(B^-)^{\leq m}$ is equal to the dimension of the torus $\GT_n^{\leq m}$. We will introduce a parametrization $\Phi_n^{\leq m} \colon \GT_n^{\leq m} \rightarrow (B^-)^{\leq m}$, and then obtain a geometric crystal structure on the torus $\GT_n^{\leq m}$ by ``pulling back'' the geometric crystal structure on $(B^-)^{\leq m}$. In \S \ref{sec:dec}, we give explicit positive formulas for this geometric crystal structure that make no reference to $(B^-)^{\leq m}$.

For $i \in [n]$, define an $n \times n$ matrix
\begin{equation}
\label{eq:row matrix}
W^i(z_i, \ldots, z_n) = \sum_{k = 1}^{i-1} E_{kk} + \sum_{k = i}^n z_k E_{kk} + \sum_{k = i}^{n-1} E_{k+1,k},
\end{equation}
where $E_{i,j}$ is a matrix unit as in \S \ref{sec:unipotent}. Set $p = \min(m,n)$. Given $\mb{z} = (z_{i,j}) \in \GT_n^{\leq m}$, define
\[
\Phi_n^{\leq m}(\mb{z}) = W^p\left(z_{p,p}, \frac{z_{p,p+1}}{z_{p,p}}, \ldots, \frac{z_{p,n}}{z_{p,n-1}}\right) \cdots W^1\left(z_{1,1}, \frac{z_{1,2}}{z_{1,1}}, \ldots, \frac{z_{1,n}}{z_{1,n-1}}\right).
\]

\begin{ex}
\label{ex:Phi}
For $n=4$ and $m = 2$, we have
\[
\Phi_4^{\leq 2}(\zz) =
\begin{pmatrix}
1 & 0 & 0 & 0 \\
0 & z_{22} & 0 & 0 \\
0 & 1 & \frac{z_{23}}{z_{22}} & 0 \\
0 & 0 & 1 & \frac{z_{24}}{z_{23}}
\end{pmatrix}
\begin{pmatrix}
z_{11} & 0 & 0 & 0 \\
1 & \frac{z_{12}}{z_{11}} & 0 & 0 \\
0 & 1 & \frac{z_{13}}{z_{12}} & 0 \\
0 & 0 & 1 & \frac{z_{14}}{z_{13}}
\end{pmatrix}
=
\begin{pmatrix}
z_{11} & 0 & 0 & 0 \\
z_{22} & \frac{z_{12}z_{22}}{z_{11}} & 0 & 0 \\
1 & \frac{z_{12}}{z_{11}} + \frac{z_{23}}{z_{22}} & \frac{z_{13}z_{23}}{z_{12}z_{22}} & 0 \\
0 & 1 & \frac{z_{13}}{z_{12}} + \frac{z_{24}}{z_{23}} & \frac{z_{14}z_{24}}{z_{13}z_{23}}
\end{pmatrix}.
\]
\end{ex}

We now introduce a very useful tool for working with the map $\Phi_n^{\leq m}$. Let $\Gamma_n^{\leq m}$ be a planar, edge-weighted, directed network, with
\begin{itemize}
\item vertex set
\[
V = \{(x,y) \mid x \in [0,n], y \in [0,p], x+y \leq n\} \setminus \{(0,0)\};
\]
\item a vertical edge of weight 1 from $(x,y)$ to $(x,y-1)$ whenever $x \geq 1$ and both vertices are in $V$;
\item a diagonal edge of weight $z_{y,x+y}/z_{y,x+y-1}$ from $(x,y)$ to $(x+1,y-1)$ whenever both vertices are in $V$ (with $z_{y,y-1} = 1$).
\end{itemize}
This network has $n$ source vertices $(0,1), \ldots, (0,p), (1,p), \ldots, (n-m,p)$ and $n$ sink vertices $(1,0), \ldots, (n,0)$, which we label $1, \ldots, n$ and $1', \ldots, n'$, respectively. Examples of this network are shown in Figure~\ref{fig:network}.

\begin{figure}
\begin{center}
\begin{tikzpicture}

\foreach \a/\b/\c/\d in {0/1/1/0, 0/2/2/0, 0/3/3/0, 0/4/4/0, 1/3/1/0, 2/2/2/0, 3/1/3/0} {\draw (\a,\b) -- (\c,\d);}
\foreach \a/\b in {0/1,0/2,0/3,0/4} {\filldraw (\a,\b) circle[radius=.04cm] node[left]{$\b$}; \filldraw (\b,\a) circle[radius=.04cm] node[below]{$\b'$};}
\foreach \a in {1,2,3,4} {\draw (0.6,\a-0.2) node{$z_{\a\a}$};}
\foreach \a/\b in {1/2,2/3,3/4} {\draw (1.6,\a-0.2) node{$\frac{z_{\a\b}}{z_{\a\a}}$};}
\foreach \a/\b/\c in {1/2/3,2/3/4} {\draw (2.6,\a-0.2) node{$\frac{z_{\a\c}}{z_{\a\b}}$};}
\draw (3.6,0.8) node{$\frac{z_{14}}{z_{13}}$};

\begin{scope}[xshift=7cm]
\foreach \a/\b/\c/\d in {0/1/1/0, 0/2/2/0, 0/3/3/0, 0/4/4/0, 1/4/5/0, 2/4/6/0, 3/4/7/0, 1/4/1/0, 2/4/2/0, 3/4/3/0, 4/3/4/0, 5/2/5/0, 6/1/6/0} {\draw (\a,\b) -- (\c,\d);}
\foreach \a/\b in {0/1,0/2,0/3,0/4} {\filldraw (\a,\b) circle[radius=.04cm] node[left]{$\b$}; \filldraw (\b,\a) circle[radius=.04cm] node[below]{$\b'$};}
\foreach \a/\b/\c in {1/4/5, 2/4/6, 3/4/7} {\filldraw (\a,\b) circle[radius=.04cm] node[above]{$\c$};}
\foreach \a/\b in {0/4, 0/5, 0/6, 0/7} {\filldraw (\b,\a) circle[radius=.04cm] node[below]{$\b'$};}
\foreach \a in {1,2,3,4} {\draw (0.6,\a-0.2) node{$z_{\a\a}$};}
\foreach \a/\b in {1/2,2/3,3/4,4/5} {\draw (1.6,\a-0.2) node{$\frac{z_{\a\b}}{z_{\a\a}}$};}
\foreach \a/\b/\c in {1/2/3,2/3/4,3/4/5,4/5/6} {\draw (2.6,\a-0.2) node{$\frac{z_{\a\c}}{z_{\a\b}}$};}
\foreach \a/\b/\c in {1/3/4,2/4/5,3/5/6,4/6/7} {\draw (3.6,\a-0.2) node{$\frac{z_{\a\c}}{z_{\a\b}}$};}
\foreach \a/\b/\c in {1/4/5,2/5/6,3/6/7} {\draw (4.6,\a-0.2) node{$\frac{z_{\a\c}}{z_{\a\b}}$};}
\foreach \a/\b/\c in {1/5/6,2/6/7} {\draw (5.6,\a-0.2) node{$\frac{z_{\a\c}}{z_{\a\b}}$};}
\draw (6.6,0.8) node{$\frac{z_{17}}{z_{16}}$};
\end{scope}

\end{tikzpicture}
\end{center}
\caption{Examples of the planar network $\Gamma_n^{\leq m}$. The network on the left is $\Gamma_4$ (\textit{i.e.}, $\Gamma_4^{\leq m}$ for any $m \geq 4$), and the network on the right is $\Gamma_7^{\leq 4}$. Edges are directed to the south and southeast, and all vertical edges have weight 1.}
\label{fig:network}
\end{figure}

Given a planar, edge-weighted, directed network $\Gamma$ with distinguished sources $1, \ldots, n$ and sinks $1', \ldots, n'$, we associate an $n \times n$ matrix $A(\Gamma)$ as follows: the weight of a path in the network is defined to be the sum of the weights of the edges in the path, and the entry $A(\Gamma)_{i,j}$ is defined to be the sum of the weights of all (directed) paths from source $i$ to sink $j'$. Using the fact that concatenation of networks corresponds to multiplication of the associated matrices, one sees that $\Phi_n^{\leq m}(\zz) = A(\Gamma_n^{\leq m})$.

It is clear from inspection of the network $\Gamma_n^{\leq m}$ that $\Phi_n^{\leq m}(\zz) \in (B^-)^{\leq m}$. To show that $\Phi_n^{\leq m}$ is a birational isomorphism, we will give an explicit formula for its (birational) inverse. For an $n \times n$ matrix $A$ and subsets $I,J \subseteq [n]$ of the same size, let $\Delta_{I,J}(A)$ be the determinant of the submatrix of $A$ consisting of the rows in $I$ and the columns in $J$. When $J$ consists of the first several columns, we will often omit it from the notation, so that $\Delta_I(A) = \Delta_{I,[1,|I|]}(A)$. We call the minor $\Delta_I(A)$ a \defn{flag minor}. We use the convention that $\Delta_{\emptyset}(A) = \Delta_{\emptyset,\emptyset}(A) = 1$. Define a rational map $\Psi_n^{\leq m} \colon (B^-)^{\leq m} \rightarrow \GT_n^{\leq m}$ by $A \mapsto \mb{z} = (z_{i,j})$, where
\begin{equation}
\label{eq:Psi def}
z_{i,j} = \frac{\Delta_{[i,j]}(A)}{\Delta_{[i+1,j]}(A)}
\end{equation}
for $1 \leq i \leq m, i \leq j \leq n$.

When $m \geq n$, the dependence on $m$ in the above definitions disappears. In this case, we will sometimes omit the superscript ``$\leq m$'' and write $B^-, \GT_n, \Phi_n, \Psi_n,\Gamma_n$.

The following lemma appears in~\cite{NoumiYamada} but goes back at least to~\cite{BFZ}.

\begin{lemma}
\label{lem:Phi inverse}
The map $\Phi_n^{\leq m}$ is a birational isomorphism from $\GT_n^{\leq m}$ to $(B^-)^{\leq m}$, with birational inverse $\Psi_n^{\leq m}$. In particular, $\Phi_n$ is a birational isomorphism from $\GT_n$ to $B^-$, with birational inverse $\Psi_n$.
\end{lemma}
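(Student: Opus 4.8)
The plan is to show that $\Phi_n^{\leq m}$ and $\Psi_n^{\leq m}$ are mutually inverse rational maps, which it suffices to do in one direction: I will verify that $\Psi_n^{\leq m} \circ \Phi_n^{\leq m} = \id$ on $\GT_n^{\leq m}$, i.e.\ that for $A = \Phi_n^{\leq m}(\zz)$ one recovers $z_{i,j} = \Delta_{[i,j]}(A)/\Delta_{[i+1,j]}(A)$. Since both $\Phi_n^{\leq m}$ and $\Psi_n^{\leq m}$ are rational maps between irreducible varieties of the same dimension, a dominant one-sided inverse is automatically a birational inverse, so this single computation establishes the whole statement. The key tool is the identification $\Phi_n^{\leq m}(\zz) = A(\Gamma_n^{\leq m})$ already recorded in the excerpt, which lets me compute the flag minors $\Delta_{[i,j]}(A)$ combinatorially via the Lindstr\"om/Gessel--Viennot Lemma.

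First I would fix $i$ and $j$ and apply Lindstr\"om/Gessel--Viennot to express $\Delta_{[i,j]}(A) = \Delta_{[i,j],[1,j-i+1]}(A)$ as a sum over families of vertex-disjoint paths from the sources $\{i, i+1, \ldots, j\}$ to the sinks $\{1', 2', \ldots, (j-i+1)'\}$, each family contributing the product of its edge weights (all path families here occur with sign $+1$ because the network is planar and the source/sink orders are compatible, so the determinant is honestly a subtraction-free sum). The central claim is that in $\Gamma_n^{\leq m}$ there is a \emph{unique} such vertex-disjoint path family, and that its weight is the monomial
\[
\Delta_{[i,j]}(A) = \prod_{k=i}^{j} z_{k,j} \quad\text{(suitably interpreted via the telescoping edge weights)}.
\]
This rigidity comes from the staircase geometry of the network: the lowest-indexed sinks force the paths from the highest sources to travel as far southeast as possible, and vertex-disjointness then pins down every path in the family. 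Once this unique-family claim is established, the telescoping ratios $z_{y,x+y}/z_{y,x+y-1}$ along diagonal edges collapse, and taking the ratio $\Delta_{[i,j]}(A)/\Delta_{[i+1,j]}(A)$ leaves exactly the single factor $z_{i,j}$, which is precisely~\eqref{eq:Psi def}.

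The main obstacle I anticipate is the bookkeeping in the unique-family claim: I must argue carefully, probably by induction on the number of rows $j-i+1$ (peeling off the path from the topmost source) or by a direct appeal to planarity and the non-crossing condition, that no alternative vertex-disjoint routing exists and that the $1$-weighted vertical edges and the diagonal edges combine to give the stated monomial. Handling the trapezoidal case $m < n$ requires extra care near the slanted top boundary of $\Gamma_n^{\leq m}$, where the extra sources $(1,p),\ldots,(n-m,p)$ and the constraint $x+y \leq n$ restrict which vertices are available; I would check that the same disjoint-family rigidity persists there, using the $A_{ij}=1$ normalization on the $i-j=m$ diagonal to anchor the computation. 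The convention $\Delta_{\emptyset}(A)=1$ together with $z_{y,y-1}=1$ handles the boundary minors and makes the telescoping uniform across all $1 \leq i \leq m$, $i \leq j \leq n$.
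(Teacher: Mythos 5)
Your proposal is correct, and its first half --- computing the flag minors $\Delta_{[i,j]}\bigl(\Phi_n^{\leq m}(\zz)\bigr)$ via the Lindstr\"om/Gessel--Viennot Lemma, identifying the unique vertex-disjoint path family from sources $[i,j]$ to sinks $[1,j-i+1]'$ whose weight is the monomial $z_{i,j} z_{i+1,j} \cdots$ (in the trapezoidal case $j > m$ only the existing entries $z_{k,j}$, $k \leq m$, appear, as your ``suitably interpreted'' hedge anticipates), and telescoping the ratio to recover $z_{i,j}$ --- is exactly the paper's argument; the paper in fact merely asserts the uniqueness of the path family, so your planned induction supplies detail the paper omits. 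Where you genuinely diverge is the second half. The paper proves $\Phi_n^{\leq m} \circ \Psi_n^{\leq m} = \id$ directly: it takes the open set $V \subset (B^-)^{\leq m}$ where all flag minors $\Delta_{[i,j]}$ are nonvanishing, observes that $\Phi_n^{\leq m} \circ \Psi_n^{\leq m}(A)$ has the same flag minors as $A$, and reconstructs $A \in V$ uniquely from those minors column by column. You instead invoke the general fact that a one-sided rational inverse between irreducible varieties of equal dimension is automatically two-sided: from $\Psi \circ \Phi = \id$ one deduces $\Phi$ is dominant (the closure of its image has dimension $\dim \GT_n^{\leq m} = \dim (B^-)^{\leq m}$, and the target is irreducible), so $\Phi^* \circ \Psi^* = \id$ on function fields forces $\Phi^*$ to be a field isomorphism with inverse $\Psi^*$. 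This is valid, though to make it airtight you should note explicitly that the composition $\Psi \circ \Phi$ is a well-defined rational map --- your own LGV computation shows the denominators in~\eqref{eq:Psi def} are nonvanishing monomials on the torus, so the image of $\Phi$ meets the domain of $\Psi$ --- and record the dimension count, which the paper states just before Lemma~\ref{lem:Phi inverse}. Your route is shorter and more abstract; the paper's reconstruction argument buys a concrete open locus on which $\Phi \circ \Psi$ is literally the identity, together with the independently useful observation that a matrix in $V$ is determined by its flag minors.
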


\begin{proof}
The Lindstr\"om/Gessel--Viennot Lemma~\cite{Lindstrom, GesselViennot} says that the minor $\Delta_{I,J}(A(\Gamma))$ is equal to the the sum of the weights of non-intersecting collections of paths in $\Gamma$ from the sources $I$ to the sinks $J'$ (the weight of a collection of paths is the product of all edges in the union of the paths, and non-intersecting means no two paths share a vertex). For $i,j$ satisfying $1 \leq i \leq m$ and $i \leq j \leq n$, there is exactly one non-intersecting collection of paths in $\Gamma_n^{\leq m}$ from sources $[i,j]$ to sinks $[1,j-i+1]'$, and this collection has weight $z_{i,j} z_{i+1,j} \cdots z_{j,j}$. Thus, the ratio of minors appearing in~\eqref{eq:Psi def} is equal to $z_{i,j}$, so $\Psi_n^{\leq m} \circ \Phi_n^{\leq m} (\zz) = \zz$.

It remains to show that there is a (non-empty) Zariski open subset $V \subset (B^-)^{\leq m}$ such that $\Phi_n^{\leq m} \circ \Psi_n^{\leq m}(A) = A$ for all $A \in V$. Let $V$ be the subset where the flag minors $\Delta_{[i,j]}$ are non-vanishing for all $1 \leq i \leq m, \, i \leq j \leq n$. If $A \in V$, then it follows from the above argument that $\Phi_n^{\leq m} \circ \Psi_n^{\leq m}(A)$ has the same flag minors as $A$. Now one argues that a point $A \in V$ is uniquely determined by its flag minors: the non-zero entries in the first column of $A$ are equal to $\Delta_{[i,i]}(A)$, the non-zero entries in the second column of $A$ are determined (from top to bottom) by $\Delta_{[i,i+1]}(A)$ and the entries in the first column, etc.
\end{proof}

\begin{dfn}
\label{defn:GT geom cryst}
Define $\ov{\gamma} \colon \GT_n^{\leq m} \rightarrow (\CC^*)^n$ and $\ov{\ve}_j, \ov{\vp}_j \colon \GT_n^{\leq m} \rightarrow \CC$ (for $j \in [n-1]$) by
\[
\ov{\gamma}(\mb{z}) = (M_{1,1}, \ldots, M_{n,n}), \qquad\qquad \ov{\ve}_j(\mb{z}) = \frac{M_{j+1,j+1}}{M_{j+1,j}}, \qquad\qquad \ov{\vp}_j(\mb{z}) = \frac{M_{j,j}}{M_{j+1,j}},
\]
where $M = \Phi_n^{\leq m}(\mb{z})$. Define $\ov{e}_j \colon \CC^* \times \GT_n^{\leq m} \rightarrow \GT_n^{\leq m}$ by
\begin{equation}
\label{eq:e def}
\ov{e}_j^c(\mb{z}) =  \Psi_n^{\leq m}\bigg(x_j\left((c-1)\ov{\vp}_j(\mb{z})\right) \cdot \Phi_n^{\leq m}(\mb{z}) \cdot x_j\left((c^{-1}-1) \ov{\ve}_j(\mb{z})\right)\bigg).
\end{equation}
\end{dfn}

It follows from Theorem~\ref{thm:unipotent to geometric}(1) and Corollary~\ref{cor:unipotent property} that these maps make $\GT_n^{\leq m}$ into a $\GL_n$-geometric crystal, which we call the \defn{Gelfand--Tsetlin geometric crystal}.

\begin{remark}
The geometric crystal $\GT_n^{\leq 1}$ is isomorphic to the basic $\GL_n$-geometric crystal defined in \S \ref{sec:basic}, via the map $(z_{1,1}, \ldots, z_{1,n}) \mapsto (z_{1,1}, z_{1,2}/z_{1,1}, \ldots, z_{1,n}/z_{1,n-1})$.
\end{remark}

\medskip

The following result shows that a fundamental property of the combinatorial crystal operators on semistandard tableaux carries over to the geometric setting. This result can be viewed as a special case of~\cite[Claim 2.9]{BK07}.

\begin{lemma}
\label{lem:shape preserved}
The geometric crystal operators on $\GT_n^{\leq m}$ preserve the shape of a pattern.
\end{lemma}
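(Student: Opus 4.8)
The plan is to show that the shape $\sh(\mb{z}) = (z_{1,n}, \ldots, z_{p,n})$ is unchanged under each operator $\ov{e}_j^c$, which amounts to showing that the flag minors $\Delta_{[i,n]}(M)$ determining these coordinates are invariant under the conjugation-like transformation in \eqref{eq:e def}. By \eqref{eq:Psi def}, we have $z_{i,n} = \Delta_{[i,n]}(M)/\Delta_{[i+1,n]}(M)$ for $1 \leq i \leq p$, so it suffices to prove that $\Delta_{[i,n]}$ is $\ov{e}_j^c$-invariant for every $i$, since the shape coordinates are ratios of such minors (and the top one, $\Delta_{[1,n]} = \det M$, is clearly preserved because the Chevalley generators $x_j(a)$ have determinant $1$).

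The key computation is to expand $M' := x_j\bigl((c-1)\ov{\vp}_j\bigr) \cdot M \cdot x_j\bigl((c^{-1}-1)\ov{\ve}_j\bigr)$ and examine its trailing flag minors $\Delta_{[i,n]}$. First I would recall that $\Delta_{[i,n]}(M)$ is the determinant of the bottom-right $(n-i+1)\times(n-i+1)$ block, i.e.\ the minor on rows and columns $[i,n]$. Left-multiplication by $x_j(a) = I + a E_{j,j+1}$ adds $a$ times row $j+1$ to row $j$; right-multiplication by $x_j(b) = I + b E_{j,j+1}$ adds $b$ times column $j$ to column $j+1$. For the minor $\Delta_{[i,n]}$ with $i \leq j$, both rows $j, j+1$ and both columns $j, j+1$ lie inside the index set $[i,n]$, so these are \emph{elementary row/column operations within the submatrix}; each such operation (adding a multiple of one row/column to an adjacent one) leaves the determinant of the submatrix unchanged. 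Hence $\Delta_{[i,n]}(M') = \Delta_{[i,n]}(M)$ for all $i \leq j$. Since the shape coordinates $z_{i,n}$ for $i \in [p]$ are ratios $\Delta_{[i,n]}/\Delta_{[i+1,n]}$ with $i, i+1 \leq n$, and (because $j \leq n-1$) all the relevant indices satisfy $i \leq j$ is \emph{not} automatic---so I would treat the ranges carefully: for $i > j$ the columns $j,j+1$ are no longer both present, but then one checks directly that the row/column operations either act trivially on rows/columns outside $[i,n]$ or still preserve the minor.

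More cleanly, I would package this as: for each $i$ with $1 \le i \le n$, the minor $\Delta_{[i,n]}(M)$ equals (up to the Lindstr\"om/Gessel--Viennot interpretation, Lemma~\ref{lem:Phi inverse}) a sum over non-intersecting path families, but the cleanest route is the linear-algebra observation above combined with the fact that $\Delta_{[i,n]}$ for $i \le j$ is literally invariant under adjacent row and column operations inside the block. The main obstacle is the bookkeeping for the boundary cases $i = j$ and $i = j+1$, where one must verify that adding row $j+1$ to row $j$ (respectively column $j$ to column $j+1$) does not alter $\Delta_{[i,n]}$: for $i = j$ this is adding an in-block row to another in-block row (harmless), while for $i = j+1$ row $j$ lies \emph{outside} the block $[j+1,n]$, so the left multiplication does not affect $\Delta_{[j+1,n]}$ at all, and the right multiplication adds column $j$ (outside the block) into column $j+1$ (inside), which changes one column of the submatrix by a vector supported partly outside---here I would verify by cofactor expansion or by noting the added column's contribution vanishes because the relevant entries of column $j$ restricted to rows $[j+1,n]$ combine to zero in the determinant. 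I expect this boundary analysis to be the only delicate point; once it is settled, invariance of every $\Delta_{[i,n]}$ gives invariance of each $z_{i,n}$, hence of $\sh(\mb{z})$, completing the proof.
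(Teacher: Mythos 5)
There is a genuine gap, and it comes from a misreading of the paper's minor notation. In this paper $\Delta_I(A)$ with one index set means $\Delta_{I,[1,|I|]}(A)$, so the shape coordinates $z_{i,n} = \Delta_{[i,n]}(M)/\Delta_{[i+1,n]}(M)$ from~\eqref{eq:Psi def} are ratios of \emph{bottom-left justified} minors (rows $[i,n]$, columns $[1,n-i+1]$), not the bottom-right principal minors on rows and columns $[i,n]$ that you work with. This is not a cosmetic issue: the boundary case you flag as "the only delicate point" ($i = j+1$, where right multiplication by $x_j(b)$ adds column $j$, outside the block, into column $j+1$, inside it) actually \emph{fails} for bottom-right minors. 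Take $n=2$, $j=1$, $M = \bigl(\begin{smallmatrix} m_{11} & 0 \\ m_{21} & m_{22}\end{smallmatrix}\bigr)$, and $M' = x_1(a)\, M\, x_1(b)$ with the crystal-operator values $a = (c-1)\ov{\vp}_1$, $b = (c^{-1}-1)\ov{\ve}_1 = (c^{-1}-1)m_{22}/m_{21}$: then $\Delta_{\{2\},\{2\}}(M') = m_{22} + b\,m_{21} = c^{-1} m_{22}$, so the bottom-right minor is scaled by $c^{-1}$ rather than preserved. This is exactly what axiom (2) of Definition~\ref{defn:geom crystal} predicts, since $\gamma(\ov{e}_j^c(\mb{z})) = \alpha_j^\vee(c)\gamma(\mb{z})$ and the diagonal of $M$ (hence the bottom-right minors of the lower-triangular $M$) must change by factors of $c^{\pm 1}$. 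So the hoped-for vanishing of the added column's contribution cannot be verified; the quantity you are tracking is genuinely not invariant.

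The repair is to run your own linear-algebra argument on the correct minors, and then it becomes the paper's proof with no boundary cases at all: for $\Delta_{[i,n],[1,n-i+1]}$, left multiplication by any $u \in U$ replaces each row by itself plus a combination of \emph{lower} rows, all of which lie in $[i,n]$ (or the modified row lies outside $[i,n]$ entirely), and right multiplication by $u' \in U$ replaces each column by itself plus a combination of \emph{earlier} columns, all of which lie in $[1,n-i+1]$ (or the modified column lies outside that set). Hence every bottom-left justified minor of $M' = x_j\bigl((c-1)\ov{\vp}_j(\mb{z})\bigr)\,\Phi_n^{\leq m}(\mb{z})\,x_j\bigl((c^{-1}-1)\ov{\ve}_j(\mb{z})\bigr)$ agrees with that of $\Phi_n^{\leq m}(\mb{z})$, and Lemma~\ref{lem:Phi inverse} identifies the resulting ratios with the shape of $\mb{z}$. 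Your underlying mechanism (invariance of flag minors under adjacent row/column operations) is the right idea and is the same as the paper's; it just has to be applied to the left-justified flag minors that actually define the shape, where it works uniformly for all $i$ and $j$ and even for arbitrary elements of $U$, not only Chevalley generators.
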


\begin{proof}
By definition, the shape of $\ov{e}_j^c(\mb{z})$ is given by
\begin{equation}
\label{eq_sh_e}
\left(\dfrac{\Delta_{[1,n]}(M')}{\Delta_{[2,n]}(M')}, \ldots, \dfrac{\Delta_{[p,n]}(M')}{\Delta_{[p+1,n]}(M')}\right),
\end{equation}
where $M'$ is the matrix inside the large parentheses in~\eqref{eq:e def}, and $p = \min(m,n)$. Multiplying a matrix by an element of $U$ (on either side) does not change the bottom-left justified minors, so we may replace $M'$ with $\Phi_n^{\leq m}(\mb{z})$ in~\eqref{eq_sh_e}. By Lemma~\ref{lem:Phi inverse}, the resulting vector is equal to the shape of $\mb{z}$.
\end{proof}

\subsection{Tropicalization of the Gelfand--Tsetlin geometric crystal}
\label{sec:dec}

In order to recover the combinatorial crystals associated to finite-dimensional $\GL_n$-modules from $\GT_n^{\leq m}$, we first introduce a decoration.

\begin{dfn}
\label{defn:GT dec}
For $\mb{z} = (z_{i,j})_{1 \leq i \leq m, i \leq j \leq n} \in \GT_n^{\leq m}$, define
\[
F(\zz) = \sum_{\substack{1 \leq i \leq m \\ \ i \leq j \leq n-1}} \frac{z_{i,j+1}}{z_{i,j}} \quad + \quad \sum_{\substack{1 \leq i \leq m-1 \\ \ i \leq j \leq n-1}} \frac{z_{i,j}}{z_{i+1,j+1}} \quad + \quad \mathbbm{1}_{m < n}z_{n,n},
\]
where $\mathbbm{1}_{m < n}$ is the indicator function for $m < n$.
\end{dfn}

In the case $m \geq n$, this formula appears in~\cite{Lam13} (with a slightly different indexing convention).

\begin{remark}
\label{rem:dec}
An integer array $(a_{i,j})_{1 \leq i \leq m, i \leq j \leq n}$ satisfies $\Trop(F)(a_{i,j}) \geq 0$ if and only if the $a_{i,j}$ satisfy the interlacing inequalities~\eqref{eq:GT def}, and (in the case $m < n$) the coordinate $a_{m,m}$ is nonnegative. The interlacing inequalities imply that $a_{m,m}$ is less than or equal to all other entries $a_{i,j}$, so for $m < n$, $\Trop(F)(a_{i,j}) \geq 0$ if and only if $(a_{i,j})$ is a Gelfand--Tsetlin pattern. For $m \geq n$, the inequality $\Trop(F)(a_{i,j}) \geq 0$ does not require the $a_{i,j}$ to be nonnegative; however, for a fixed partition shape $\la = (a_{1,n} \geq \ldots \geq a_{n,n} \geq 0)$, an array $(a_{i,j})$ of shape $\la$ satisfies $\Trop(F)(a_{i,j}) \geq 0$ if and only if it is a GT pattern. (Adding a constant to all entries $a_{i,j}$ corresponds to tensoring with a power of the determinant representation of $\GL_n$, so nothing fundamentally new is obtained by allowing the shape to have negative entries.)
\end{remark}

Remark~\ref{rem:dec} explains that the tropicalization of $F$ cuts out the underlying sets of finite-dimensional $\GL_n$-crystals, but it is not clear that $F$ satisfies~\eqref{eq:dec}. To prove this, we express $F$ in terms of minors of the matrix $\Phi_n^{\leq m}(\zz)$, and appeal to a general result of Berenstein and Kazhdan.

\begin{lemma}
\label{lem:dec formula}
The decoration on $\GT_n^{\leq m}$ is given by
\begin{multline*}
F(\zz) = \sum_{k = 1}^{\min(m-1,n-1)} \dfrac{\Delta_{\{k\} \cup [k+2,n],[1,n-k]}(M) + \Delta_{[k+1,n],[1,n-k-1] \cup \{n-k+1\}}(M)}{\Delta_{[k+1,n],[1,n-k]}(M)} \\
+ \mathbbm{1}_{m<n} \left(\dfrac{\Delta_{\{m\} \cup [m+2,n],[1,n-m]}(M)}{\Delta_{[m+1,n],[1,n-m]}(M)} + \sum_{j = 1}^{n-m} \frac{\Delta_{[m+1,m+j],[1,j-1] \cup \{j+1\}}(M)}{\Delta_{[m+1,m+j],[1,j]}(M)}\right),
\end{multline*}
where $M = \Phi_n^{\leq m}(\zz)$.
\end{lemma}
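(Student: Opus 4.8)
The plan is to evaluate every minor in the claimed formula by the Lindstr\"om/Gessel--Viennot Lemma applied to the network $\Gamma_n^{\leq m}$, exactly as in the proof of Lemma~\ref{lem:Phi inverse}, and then to match the result term-by-term with the defining expression for $F(\zz)$ in Definition~\ref{defn:GT dec}. Recall that $M = \Phi_n^{\leq m}(\zz) = A(\Gamma_n^{\leq m})$, that each minor $\Delta_{I,J}(M)$ equals the sum of the weights of the non-intersecting families of paths from the sources $I$ to the sinks $J'$, and that the flag minor $\Delta_{[i,j],[1,j-i+1]}(M) = \prod_{\ell=i}^{j} z_{\ell,j}$ was computed there from the unique such family. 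In particular, every denominator in the main sum satisfies $\Delta_{[k+1,n],[1,n-k]}(M) = \prod_{\ell=k+1}^{n} z_{\ell,n}$.

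The heart of the argument is the evaluation of the two ``almost-flag'' numerators. I claim that for $1 \le k \le \min(m-1,n-1)$,
\[
\Delta_{\{k\}\cup[k+2,n],\,[1,n-k]}(M) = \Big(\prod_{\ell=k+1}^{n} z_{\ell,n}\Big)\sum_{j=k}^{n-1}\frac{z_{k,j}}{z_{k+1,j+1}}
\]
and
\[
\Delta_{[k+1,n],\,[1,n-k-1]\cup\{n-k+1\}}(M) = \Big(\prod_{\ell=k+1}^{n} z_{\ell,n}\Big)\sum_{i=1}^{k}\frac{z_{i,i+n-k}}{z_{i,i+n-k-1}}.
\]
I would prove these by enumerating the non-intersecting path families directly. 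In each case the source set (resp.\ sink set) differs from the flag configuration by moving a single boundary source one step up (resp.\ a single boundary sink one step to the right), and I would show that the admissible families are parametrized by one integer recording where this perturbation is absorbed into the tightly packed flag family. The key local fact is that vertical edges exist only when $x \ge 1$, so the first step out of any source $(0,i)$ is forced to be the diagonal of weight $z_{i,i}$; this rigidity is what makes the enumeration finite and explicit. Summing the weights and factoring out the common flag weight $\prod_{\ell=k+1}^{n} z_{\ell,n}$ yields the displayed sums. This path bookkeeping is the step I expect to be the most delicate.

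Granting the two evaluations, dividing by $\prod_{\ell=k+1}^{n} z_{\ell,n}$ turns the $k$th summand of the minor formula into $\sum_{j=k}^{n-1} z_{k,j}/z_{k+1,j+1} + \sum_{i=1}^{k} z_{i,i+n-k}/z_{i,i+n-k-1}$. Summing over $k$ and reindexing then settles the case $m \ge n$: the first group is, under $i = k$, exactly the second sum of Definition~\ref{defn:GT dec} (terms with $i > n-1$ being empty because $i \le j \le n-1$), while the substitution $k = i + n - 1 - j$ is a bijection between the pairs $(i,j)$ with $1 \le i \le j \le n-1$ and the terms in the second group, reproducing the first sum $\sum z_{i,j+1}/z_{i,j}$.

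For $m < n$ the network is truncated ($p = m$, with extra sources along the top edge), so the main sum over $k \le m-1$ reproduces only part of the first sum in Definition~\ref{defn:GT dec}, and I would check that the two terms carrying $\mathbbm{1}_{m<n}$ supply exactly the remainder. The fraction $\Delta_{\{m\}\cup[m+2,n],[1,n-m]}(M)/\Delta_{[m+1,n],[1,n-m]}(M)$ plays the role of the missing $k=m$ numerator, and together with $\sum_{j=1}^{n-m}\Delta_{[m+1,m+j],[1,j-1]\cup\{j+1\}}(M)/\Delta_{[m+1,m+j],[1,j]}(M)$ it should account both for the first-sum terms $z_{i,j+1}/z_{i,j}$ with $j < i + n - m$ (those absent from the main sum) and for the shape term $z_{n,n}$. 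Because the flag minors with top row index exceeding $m$ now arise from the top-edge sources of the truncated network, re-running the Lindstr\"om/Gessel--Viennot enumeration in this regime is where the remaining work, and the main obstacle, lies.
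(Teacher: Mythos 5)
Your proposal is correct and follows essentially the same route as the paper's own proof: the paper likewise evaluates every minor via the Lindstr\"om/Gessel--Viennot Lemma on $\Gamma_n^{\leq m}$, and your two numerator evaluations are exactly its identities (its ``second sum'' at $i=k$ and its ``first sum'' at $j=n-k$), with your one-integer parametrization of the perturbed path families being precisely its diagonal-then-vertical versus vertical-then-diagonal switch position. The only caveat is notational: for $m<n$ your prefactor products and denominators should run to $\min(m,n)$ rather than $n$ (the paper writes $\Delta_{[i+1,n],[1,n-i]}(M)=\prod_{k=i+1}^{\min(m,n)} z_{k,n}$), but this falls within the truncated-network analysis you already defer to, and the paper itself details only one of its three minor computations.
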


The proof appears in Appendix~\ref{app:proofs}. The expression for $F$ appearing in Lemma~\ref{lem:dec formula} is a special case of Berenstein and Kazhdan's general formula for the decoration of a unipotent bicrystal of parabolic type. Indeed, in~\cite[Cor.~1.25]{BK07}, take $P$ corresponding to nodes $\{m+1, \ldots, n-1\}$ in the $A_{n-1}$ Dynkin diagram. This implies that $F$ does in fact satisfy~\eqref{eq:dec}.

Next, one must show that the geometric crystal $(\GT_n^{\leq m}, \Id)$ is positive, as this is not at all obvious from the definition~\eqref{eq:e def} of the geometric crystal operators. We will give explicit positive formulas for the geometric crystal structure on $\GT_n$ (it is straightforward to generalize these formulas to the case of $\GT_n^{\leq m}$, but we focus on the case $m \geq n$ for clarity). In order to state the formulas compactly, we introduce the \defn{diamond ratio}
\[
\phi_{i,j} = \phi_{i,j}(\zz) =
\dfrac{z_{i-1,j}z_{i,j}}{z_{i-1,j-1}z_{i,j+1}}
\]
for $2 \leq i \leq j \leq n-1$. (The entries of $\zz$ that contribute to $\phi_{i,j}$ form the diamond \hspace{-5ex}
$\begin{matrix}
&&&& z_{i-1,j-1} \\
&&& z_{i-1,j} && z_{i,j} \\
&&  && z_{i,j+1} && \\
\end{matrix}$.)
We also use the rational function $\gMax$ defined in~\eqref{eq:gMax}, with the convention that $\gMax(\emptyset) = 1$.

\begin{lemma}
\label{lem:explicit crystal formulas}
The $\GL_n$-geometric crystal structure on $\GT_n$ is given by the explicit formulas
\[
\ov{\gamma}(\mb{z}) = \left(z_{1,1}, \dfrac{z_{1,2}z_{2,2}}{z_{1,1}}, \ldots, \dfrac{\prod_{i=1}^n z_{i,n}}{\prod_{i=1}^{n-1} z_{i,n-1}}\right),
\]
\bigskip
\[
\ov{\varepsilon}_j(\mb{z}) = \dfrac{z_{1,j+1}}{z_{1,j}} \gMax_{1 \leq k \leq j} \left( \prod_{i=2}^k \phi_{i,j}^{-1} \right), \qquad\qquad \ov{\varphi}_j(\mb{z}) = \dfrac{z_{j,j}}{z_{j+1,j+1}} \gMax_{1 \leq k \leq j} \left( \prod_{i=k+1}^j \phi_{i,j} \right),
\]
\bigskip
\[
\ov{e}_j^c(\mb{z}) = (\mb{z}'), \qquad \text{ where } \qquad z_{i,r}' = \begin{cases}
z_{i,j} \dfrac{C_{i,j}}{C_{i+1,j}} & \text{if } r = j, \medskip \\
z_{i,r} & \text{otherwise,}
\end{cases}
\qquad
\text{ and } \qquad
C_{i,j} = \sum_{k=1}^j c^{\mathbbm{1}_{k \geq i}} \prod_{\ell=2}^k \phi_{\ell, j}.
\]
\end{lemma}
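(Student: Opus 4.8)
The plan is to reduce every assertion to the Lindström/Gessel--Viennot (LGV) evaluation of the entries and flag minors of $M = \Phi_n(\zz) = A(\Gamma_n)$ established in \S\ref{sec:GT geom cryst}, and then to recognize the resulting weighted path sums as the stated diamond-ratio and $\gMax$ expressions (we treat $\GT_n$, so $p=n$ and $\Gamma_n$ has only the sources $(0,1),\dots,(0,n)$). The single structural feature of $\Gamma_n$ that drives the whole computation is that vertical edges occur only for $x\ge 1$; hence every directed path leaving a source $(0,i)$ must begin with the diagonal step to $(1,i-1)$, and a path from source $i$ to sink $j'$ uses exactly $j$ diagonal and $i-j$ vertical steps (so $M_{i,j}=0$ unless $i\ge j$, as it must be for $M\in B^-$). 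For $\ov{\gamma}$ I first observe that the path from source $j$ to sink $j'$ is forced to be all-diagonal, so it is unique and $M_{j,j}=\prod_{y=1}^{j} z_{y,j}/z_{y,j-1}$ (using $z_{y,y-1}=1$), which is exactly the $j$th coordinate of $\ov{\gamma}(\zz)$.

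For $\ov{\varepsilon}_j$ and $\ov{\varphi}_j$ I need the subdiagonal entry $M_{j+1,j}$ in addition to the unique all-diagonal entries $M_{j,j}$ and $M_{j+1,j+1}$. A path from source $j+1$ to sink $j'$ uses exactly one vertical step, and since it must start diagonally this step occurs after $k$ diagonal steps for $k=1,\dots,j$; writing $W_k$ for the weight of the corresponding path, one gets $M_{j+1,j}=\sum_{k=1}^{j}W_k$. Cancelling $W_k$ against $M_{j,j}$, after the substitution pairing the two partial products inside $W_k$, gives $W_k/M_{j,j}=\prod_{i=j-k+2}^{j+1}\phi_{i,j}^{-1}$, where the boundary factor $\phi_{j+1,j}^{-1}=z_{j+1,j+1}/z_{j,j}$ is read off using $z_{j+1,j}=z_{j,j-1}=1$. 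Reindexing by $k'=j-k+1$ then turns $\ov{\varphi}_j^{-1}=M_{j+1,j}/M_{j,j}$ into $\tfrac{z_{j+1,j+1}}{z_{j,j}}\sum_{k'=1}^{j}\prod_{i=k'+1}^{j}\phi_{i,j}^{-1}$, which is precisely $\tfrac{z_{j,j}}{z_{j+1,j+1}}\gMax_{1\le k\le j}\bigl(\prod_{i=k+1}^{j}\phi_{i,j}\bigr)$ inverted, by \eqref{eq:gMax}; the formula for $\ov{\varepsilon}_j=M_{j+1,j+1}/M_{j+1,j}$ follows from the identical cancellation against $M_{j+1,j+1}$ instead of $M_{j,j}$.

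For the crystal operator I use \eqref{eq:e def}: since $\Phi_n\Psi_n=\id$, the matrix $M'=\Phi_n(\ov{e}_j^c(\zz))$ equals $x_j(a)\,M\,x_j(b)$ with $a=(c-1)\ov{\varphi}_j(\zz)$ and $b=(c^{-1}-1)\ov{\varepsilon}_j(\zz)$, and $z_{i,r}'=\Delta_{[i,r]}(M')/\Delta_{[i+1,r]}(M')$ by \eqref{eq:Psi def}. The key point is that these two elementary operations move almost no flag minors. Right multiplication by $x_j(b)$ adds $b$ times column $j$ to column $j+1$; since the column set $[1,r-i+1]$ of a flag minor is an initial segment, it either omits column $j+1$ or contains both columns $j$ and $j+1$, so \emph{every} flag minor is unchanged. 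Left multiplication by $x_j(a)$ adds $a$ times row $j+1$ to row $j$, which alters $\Delta_{[i,r]}$ only when $j\in[i,r]$ but $j+1\notin[i,r]$, i.e.\ only when $r=j$. Hence $z_{i,r}'=z_{i,r}$ for $r\ne j$, while for $r=j$ multilinearity in row $j$ gives $z_{i,j}'=\dfrac{D_{i,j}+aE_{i,j}}{D_{i+1,j}+aE_{i+1,j}}$, where $D_{i,j}=\Delta_{[i,j]}(M)$ and $E_{i,j}=\Delta_{[i,j-1]\cup\{j+1\}}(M)$ (with the convention $\Delta_{\emptyset}=1$ and $E_{j+1,j}:=0$, so that the denominator in the boundary case $i=j$ is the unperturbed empty minor $1$).

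It remains to match this with $z_{i,j}\,C_{i,j}/C_{i+1,j}$. Writing $P_{k,j}=\prod_{\ell=2}^{k}\phi_{\ell,j}$, one has $C_{i,j}=\sum_{k=1}^{j}c^{\mathbbm{1}_{k\ge i}}P_{k,j}$, with $C_{i,j}\big|_{c=1}=\sum_{k=1}^{j}P_{k,j}$ independent of $i$ and $C_{i,j}-C_{i,j}\big|_{c=1}=(c-1)\sum_{k=i}^{j}P_{k,j}$; moreover $C_{j+1,j}=\sum_k P_{k,j}$, matching the empty-minor convention above. Cancelling the common factor $\sum_k P_{k,j}$, the claimed operator formula is therefore equivalent to the single identity
\[
\frac{E_{i,j}}{D_{i,j}} \;=\; \frac{z_{j+1,j+1}}{z_{j,j}\,P_{j,j}}\sum_{k=i}^{j}P_{k,j},
\]
where I have used the expression $\ov{\varphi}_j=\tfrac{z_{j,j}}{z_{j+1,j+1}}\,P_{j,j}/\sum_k P_{k,j}$ extracted from the previous paragraph. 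Here $D_{i,j}=\prod_{\ell=i}^{j}z_{\ell,j}$ is the weight of the unique non-intersecting family from sources $[i,j]$ to sinks $[1,j-i+1]'$ (exactly as in the proof of Lemma~\ref{lem:Phi inverse}), and $E_{i,j}$ is the LGV evaluation of the displaced-source minor in which source $j$ is replaced by source $j+1$. \textbf{The main obstacle is precisely this last evaluation}: enumerating the non-intersecting families from $[i,j-1]\cup\{j+1\}$ to $[1,j-i+1]'$ in $\Gamma_n$ and verifying that, after division by $D_{i,j}$, they organize into the partial sum $\sum_{k=i}^{j}P_{k,j}$ of products of diamond ratios. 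Everything else is the two reindexing/telescoping bookkeeping steps above, and the generalization to $\GT_n^{\leq m}$ requires only restricting the source set of the network and carrying along the indicator $\mathbbm{1}_{m<n}$, with no new ideas.
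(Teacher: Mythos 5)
Your overall route is exactly the paper's: compute $M_{j,j}$, $M_{j+1,j}$, $M_{j+1,j+1}$ by Lindstr\"om/Gessel--Viennot path counts in $\Gamma_n$ to get $\ov{\gamma}$, $\ov{\ve}_j$, $\ov{\vp}_j$; then use $\ov{e}_j^c(\zz)=\Psi_n(x_j(a)\,M\,x_j(b))$ together with the observations that right multiplication by $x_j(b)$ preserves all flag minors (initial column segments either omit column $j{+}1$ or contain both columns $j,j{+}1$) and left multiplication by $x_j(a)$ perturbs $\Delta_{[i,r]}$ only when $r=j$, reducing everything to the two minors $D_{i,j}=\Delta_{[i,j]}(M)$ and $E_{i,j}=\Delta_{[i,j-1]\cup\{j+1\}}(M)$. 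All of your bookkeeping checks out: the weight $W_k/M_{j,j}=\prod_{i=j-k+2}^{j+1}\phi_{i,j}^{-1}$ (with the boundary convention $\phi_{j+1,j}^{-1}=z_{j+1,j+1}/z_{j,j}$) is correct, the reindexing recovers the stated $\gMax$ formulas via \eqref{eq:gMax}, your handling of the boundary case $i=j$ via $\Delta_\emptyset=1$ is right, and your reduction of the operator formula to the single identity $E_{i,j}/D_{i,j}=\frac{z_{j+1,j+1}}{z_{j,j}P_{j,j}}\sum_{k=i}^{j}P_{k,j}$ is an exactly equivalent reformulation of the identity the paper verifies.

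The genuine gap is the step you yourself flag as ``the main obstacle'': you never evaluate $E_{i,j}$, and that evaluation is the one substantive combinatorial input to the operator formula --- without it the proof of the $\ov{e}_j^c$ part is incomplete. The identity you isolated is true, and it is proved by the same technique you name: applying Lindstr\"om/Gessel--Viennot to $\Gamma_n$ one finds that the non-intersecting families from sources $[i,j-1]\cup\{j+1\}$ to sinks $[1,j-i+1]'$ are indexed by a single parameter $k\in[i,j]$ (the level at which the one forced extra vertical step occurs; vertex-disjointness forces the paths above the jog onto the shifted diagonal and those below onto the unshifted one, so the choices are nested rather than independent), yielding
\[
\Delta_{[i,j-1]\cup\{j+1\}}(M)\;=\;z_{j+1,j+1}\,\prod_{k=i}^{j-1}z_{k,j-1}\,\sum_{k=i}^{j}\;\prod_{\ell=i}^{k-1}\frac{z_{\ell,j}}{z_{\ell,j-1}}\;\prod_{\ell=k+1}^{j}\frac{z_{\ell,j+1}}{z_{\ell,j}},
\]
which after dividing by $D_{i,j}=\prod_{k=i}^{j}z_{k,j}$ is precisely your right-hand side, since $\prod_{\ell=k+1}^{j}\phi_{\ell,j}^{-1}=\bigl(\prod_{\ell=k}^{j-1}\tfrac{z_{\ell,j-1}}{z_{\ell,j}}\bigr)\bigl(\prod_{\ell=k+1}^{j}\tfrac{z_{\ell,j+1}}{z_{\ell,j}}\bigr)=P_{k,j}/P_{j,j}$. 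The non-intersection argument needed here is not automatic --- it is of exactly the DV/VD type the paper spells out for the analogous displaced-source minor $\Delta_{\{i\}\cup[i+2,n],[1,n-i]}$ in the proof of Lemma~\ref{lem:dec formula} (there the bottom endpoint of the source interval is displaced; here the top one is) --- so to complete your proof you should carry out that enumeration rather than assert it.
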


The proof appears in Appendix~\ref{app:proofs}.

\begin{ex}
\label{ex:explicit crystal formulas}
If $\zz \in \GT_n$ with $n \geq 4$, then we have
\[
\ov{\varepsilon}_3(\zz) = \dfrac{z_{14}}{z_{13}} \dfrac{1}{1 + \phi_{23} + \phi_{23}\phi_{33}}, \qquad \ov{\varphi}_3(\zz) = \dfrac{z_{33}}{z_{44}} \dfrac{1}{\phi_{33}^{-1}\phi_{23}^{-1} + \phi_{33}^{-1} + 1}, \bigskip
\]
and $\ov{e}_3^c$ acts on the third row of $\zz$ by
\[
z_{13} \quad z_{23} \quad z_{33} \quad \mapsto \quad
z_{13}\dfrac{c+ c\phi_{23} + c\phi_{23}\phi_{33}}{1+ c\phi_{23} + c\phi_{23}\phi_{33}} \quad
z_{23} \dfrac{1+ c\phi_{23} + c\phi_{23}\phi_{33}}{1+ \phi_{23} + c\phi_{23}\phi_{33}} \quad
z_{33}\dfrac{1+ \phi_{23} + c\phi_{23}\phi_{33}}{1+ \phi_{23} + \phi_{23}\phi_{33}}, \bigskip
\]
and leaves the other entries unchanged.
\end{ex}

Finally, we use the explicit formulas of Lemma~\ref{lem:explicit crystal formulas} to show the the tropicalization of $(\GT_n, F)$ is a piecewise-linear description of the usual crystal structure on semistandard tableaux (see, \textit{e.g.},~\cite{K02book,BumpSchill}). For a partition $\la$ with at most $n$ parts, let $\mc{B}(\lambda)$ denote the $\GL_n$-combinatorial crystal corresponding to the irreducible $\GL_n$-representation with highest weight $\lambda$.

\begin{prop}
The tropicalization of $(\GT_n, F)$ is isomorphic to $\bigsqcup_{\la} \mc{B}(\lambda)$, where the union is taken over all partitions with at most $n$ parts.
\end{prop}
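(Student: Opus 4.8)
The plan is to match both the underlying set and the crystal structure of $\Trop(\GT_n, F)$ with those of $\bigsqcup_\la \mc{B}(\la)$, realized on semistandard tableaux, by tropicalizing the explicit formulas of Lemma~\ref{lem:explicit crystal formulas}. Since $m \geq n$ here, Theorem~\ref{thm:geom to comb}(1) already guarantees that $\Trop(\GT_n, F) = \mc{B}_F$ is a \emph{regular} $\GL_n$-combinatorial crystal, so the task reduces to identifying its data with the known structure. First I would pin down the underlying set: by Remark~\ref{rem:dec}, $\mc{B}_F = \{(a_{i,j}) : \Trop(F)(a_{i,j}) \geq 0\}$ is exactly the set of Gelfand--Tsetlin patterns of height $n$, which by \S\ref{sec:GT patterns} are in bijection with semistandard tableaux with entries in $[n]$, grouped by shape $\la = (a_{1,n} \geq \cdots \geq a_{n,n})$ (a partition with at most $n$ parts). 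This is precisely the underlying set of $\bigsqcup_\la \mc{B}(\la)$.

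Next I would tropicalize the weight map. The formula for $\ov{\gamma}$ in Lemma~\ref{lem:explicit crystal formulas} tropicalizes to a vector whose $k$-th component is $\sum_{i=1}^k a_{i,k} - \sum_{i=1}^{k-1} a_{i,k-1}$, which is the number of entries equal to $k$ in the corresponding tableau. Hence $\tw{\gamma}$ computes the content (weight) of the tableau, matching the weight function on $\mc{B}(\la)$. I would then turn to the operators: applying $\Trop(\gMax) = \max$ and \eqref{eq:trop is a homom} to the formulas of Lemma~\ref{lem:explicit crystal formulas}, the raising operator $\tw{e}_j$ modifies only the $j$-th row $(a_{1,j}, \ldots, a_{j,j})$ of the pattern, increasing a single entry $a_{k,j}$ by $1$; the index $k$ is governed by the tropicalizations of the $C_{i,j}$, which are $\min$-combinations of partial sums of the tropical diamonds $\Trop(\phi_{\ell,j}) = a_{\ell-1,j} + a_{\ell,j} - a_{\ell-1,j-1} - a_{\ell,j+1}$, while $\Trop(\ov{\ve}_j)$ and $\Trop(\ov{\vp}_j)$ become the corresponding $\max$-formulas. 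By Lemma~\ref{lem:shape preserved}, the geometric operators preserve $\sh(\zz)$; tropicalizing, $\tw{e}_j$ and $\tw{f}_j$ fix the bottom row $\la$, so $\mc{B}_F$ decomposes as a disjoint union over shapes, with the summand of shape $\la$ supported on GT patterns of shape $\la$, i.e.\ on $\SSYT(\la, n)$. (The restriction to $\mc{B}_F$ built into Theorem~\ref{thm:geom to comb} sends the operator to $\zero$ exactly when it would leave the set of GT patterns of a given shape, matching the convention for tableau crystals.)

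The crux, and the step I expect to be the main obstacle, is to check that this tropicalized raising rule coincides with the usual Kashiwara operator $\tw{e}_j$ on tableaux --- equivalently, that selecting $k$ via the tropicalized $C_{i,j}$ reproduces the signature (bracketing) rule governing the action on the $j$-th row of a GT pattern (see~\cite{K02book, BumpSchill}). I would verify this by unwinding both descriptions purely in terms of rows $j-1$, $j$, $j+1$ (note the diamonds $\phi_{\ell,j}$ involve only these three rows), using the interlacing inequalities to rewrite the bracketing rule as exactly such a $\min$/$\max$ of partial sums, and simultaneously confirming $\Trop(\ov{\ve}_j) = \tw{\ve}_j$ and $\Trop(\ov{\vp}_j) = \tw{\vp}_j$; regularity of the resulting crystal is automatic from Theorem~\ref{thm:geom to comb}(1). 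Once the operators are matched on each fixed shape, the summand of $\mc{B}_F$ of shape $\la$ is the $\GL_n$-crystal on $\SSYT(\la, n)$, namely $\mc{B}(\la)$, and taking the disjoint union over all $\la$ with at most $n$ parts yields the claimed isomorphism.

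As a variant should the direct signature-rule comparison prove cumbersome, I would instead argue structurally: having tropicalized $\tw{e}_j,\tw{f}_j,\tw{\ve}_j,\tw{\vp}_j$ and verified shape preservation, I would identify the highest-weight elements of shape $\la$ (those on which every $\tw{e}_j$ is undefined) with the unique Yamanouchi tableau of shape $\la$ of weight $\la$, and then invoke the characterization of $\mc{B}(\la)$ as the connected regular $\GL_n$-crystal with that highest weight, using the content computation above to confirm the characters agree. Either route terminates in the same identification, but the formula-matching route is the one for which Lemma~\ref{lem:explicit crystal formulas} was prepared.
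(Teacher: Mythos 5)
Your proposal is correct and takes essentially the same route as the paper's proof: pin down the underlying set via the tropicalized decoration, tropicalize the formulas of Lemma~\ref{lem:explicit crystal formulas} using the key identity $\Trop(\phi_{i,j}) = \tw{\ov{\vp}}_j(r_{i-1}) - \tw{\ov{\ve}}_j(r_i)$ for the diamond ratios, and match the resulting $\min$/$\max$ expressions against the bracketing rule on the $j$th row. The step you flag as the main obstacle is handled in the paper exactly as you outline, by viewing $T$ as the tensor product $r_n \otimes \cdots \otimes r_1$ of its rows and quoting the piecewise-linear tensor-product formulas of~\cite{KN94} expressing the signature rule through the partial sums $A_{k,j}$ (with an explicit check, in place of your regularity shortcut, that the free operator exits the set of GT patterns precisely when $\tw{\ov{e}}_j(T)$ is undefined).
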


\begin{proof}
In keeping with our usual convention, we write $\tw{\ov{\gamma}}, \tw{\ov{\ve}}_j, \tw{\ov{\vp}}_j, \tw{\ov{e}}_j, \tw{\ov{f}}_j$ for the maps of a $\GL_n$-crystal.

Let $\mb{a} = (a_{i,j})$ be a Gelfand--Tsetlin pattern of height $n$, and let $T$ be the corresponding semistandard Young tableau. Recall that $a_{i,j} - a_{i,j-1}$ is the number of $j$'s in row $i$ of $T$ (with $a_{i,0} := 0$). The tropicalization of $\ov{\gamma}$ sends $\mb{a}$ to the vector $(\mu_1, \ldots, \mu_n)$, where
\[
\mu_j = \sum_{i = 1}^j a_{i,j} - \sum_{i=1}^{j-1} a_{i,j-1}.
\]
Clearly $\mu_j$ is the number of $j$'s in $T$, so $\Trop(\ov{\gamma}) = \tw{\ov{\gamma}}$.

Let $r_i$ denote the $i$th row of $T$. The crystal structure on $T$ can be computed by viewing $T$ as the tensor product $r_n \otimes r_{n-1} \otimes \cdots \otimes r_1$, with each factor having the crystal structure on one-row tableaux described in Example~\ref{ex:one row crystal}. Fix $j \in [n-1]$, and define
\[
A_{k,j} = \sum_{\ell = 2}^k \left(\tw{\ov{\vp}}_j(r_{\ell-1}) - \tw{\ov{\ve}}_j(r_\ell)\right)
\]
for $k \in [n]$. According to~\cite[Prop.~2.1.1]{KN94}, the functions $\tw{\ov{\ve}}_j$ and $\tw{\ov{\vp}}_j$ are given by the piecewise-linear formulas\footnote{We have reversed both the tensor product convention and the labeling of the tensor factors used in~\cite{KN94}, resulting in identical formulas.}
\begin{equation}
\label{eq:PL 1}
\tw{\ov{\ve}}_j(T) = \tw{\ov{\ve}}_j(r_1) - \min_{1 \leq k \leq n}(A_{k,j}), \qquad\qquad \tw{\ov{\vp}}_j(T) = \tw{\ov{\vp}}_j(r_n) + \max_{k \in [n]}\left(\sum_{i=k+1}^n \left(\tw{\ov{\vp}}_j(r_{i-1}) - \tw{\ov{\ve}}_j(r_i)\right) \right),
\end{equation}
and the crystal operators are given by
\begin{equation}
\label{eq:PL 2}
\tw{\ov{e}}_j(T) = r_n \otimes \cdots \otimes \tw{\ov{e}}_j(r_{i^*}) \otimes \cdots \otimes r_1, \qquad\qquad \tw{\ov{f}}_j(T) = r_n \otimes \cdots \otimes \tw{\ov{f}}_j(r_{i^{**}}) \otimes \cdots \otimes r_1,
\end{equation}
where $i^*$ (resp., $i^{**}$) is the smallest (resp., largest) value of $i$ for which $A_{i,j} = \min_{k \in [n]} (A_{k,j})$.

It is straightforward to verify that the tropicalizations of the formulas in Lemma~\ref{lem:explicit crystal formulas} agree with the formulas in \eqref{eq:PL 1} and \eqref{eq:PL 2}. For the reader's convenience, we present the details for the functions $\tw{\ov{\ve}}_j$ and the crystal operators $\tw{\ov{e}}_j$.

The first key observation is that
\begin{equation}
\label{eq:Trop phi}
\Trop(\phi_{i,j}) = (a_{i-1,j} - a_{i-1,j-1}) - (a_{i,j+1} - a_{i,j}) = \tw{\ov{\vp}}_j(r_{i-1}) - \tw{\ov{\ve}}_j(r_i)
\end{equation}
for $i \leq j$. This implies that
\begin{equation}
\label{eq:Trop epsilon}
\Trop(\ov{\ve}_j)(\mb{a}) = (a_{1,j+1} - a_{1,j}) + \max_{1 \leq k \leq j} \left(- \sum_{i = 2}^k \Trop(\phi_{i,j})\right) = \tw{\ov{\ve}}_j(r_1) - \min_{1 \leq k \leq j} (A_{k,j}).
\end{equation}
If $k \geq j+1$, then $A_{k,j} = A_{j,j} + (a_{j,j} - a_{j+1,j+1}) \geq A_{j,j}$, so the last expression in~\eqref{eq:Trop epsilon} is equal to $\tw{\ov{\ve}}_j(T)$.

We will now show that
\[
\tw{\ov{e}}_j(T) = \Trop(\ov{e}_j^c)|_{c = 1}(\mb{a})
\]
if the latter is a Gelfand--Tsetlin pattern, and otherwise $\tw{\ov{e}}_j(T)$ is undefined. By Lemma~\ref{lem:explicit crystal formulas} and~\eqref{eq:Trop phi}, $\Trop(\ov{e}_j^c)|_{c = 1}(\mb{a}) = \mb{a}'$, where $a'_{i,r} = a_{i,r}$ for $r \neq j$, and
\[
a'_{i,j} = a_{i,j} + \min_{1 \leq k \leq j} \left(\mathbbm{1}_{k \geq i} + A_{k,j}\right) - \min_{1 \leq k \leq j} \left(\mathbbm{1}_{k \geq i+1} + A_{k,j}\right).
\]
The second key observation is that the difference of the two minimums appearing on the right-hand side is equal to 1 if $i = i^*$, and 0 otherwise.

Suppose $\tw{\ov{e}}_j(T)$ is defined. By~\eqref{eq:PL 2}, $\tw{\ov{e}}_j(T)$ is obtained by replacing $r_{i^*}$ with $\tw{\ov{e}}_j(r_{i^*})$. In other words, $\tw{\ov{e}}_j$ adds 1 to $a_{i^*,j}$, which is precisely what $\Trop(\ov{e}_j^c)|_{c = 1}(\mb{a})$ does. Since $\tw{\ov{e}}_j(T)$ is known to be a semistandard Young tableau, $\mb{a}'$ must be a Gelfand--Tsetlin pattern in this case (with a bit more work, one can see that $\mb{a}'$ is a Gelfand--Tsetlin pattern using only the piecewise-linear formulas).

Now suppose $\tw{\ov{e}}_j(T)$ is undefined. This means that $\tw{\ov{\ve}}_j(T) = 0$. Since $A_{1,j} = 0$ and $\tw{\ov{\ve}}_j(r_1) \geq 0$, we must have $\tw{\ov{\ve}}_j(r_1) = 0$ and $A_{k,j} \geq 0$ for all $k$. The latter condition implies that $i^* = 1$, and the former condition implies that $a_{1,j+1} = a_{1,j}$. This means that $\mb{a}'$ violates the inequality $a'_{1,j+1} \geq a'_{1,j}$, so it is not a Gelfand--Tsetlin pattern.
\end{proof}

\section{Geometric RSK}
\label{sec:gRSK}

To motivate the definition of geometric RSK, we first give a brief review of the RSK correspondence, and explain how it can be viewed as a bijection from the set of all $m \times n$ nonnegative integer matrices to the set of $m \times n$ nonnegative integer matrices with weakly increasing rows and columns.

Let $\mb{a} = (a_i^j)_{i \in [m], j \in [n]}$ be an $m \times n$ matrix of nonnegative integers, and let $\mb{a}_i = (a_i^1, \ldots, a_i^n)$ denote the $i$th row of the matrix. We interpret $\mb{a}_i$ as the weakly increasing word in the alphabet $[n]$ consisting of $a_i^1$ 1's, $a_i^2$ 2's, etc., as in Example~\ref{ex:one row crystal}. The RSK correspondence sends the matrix $\mb{a}$ to a pair $(P,Q)$ of semistandard tableaux of the same shape, such that $P$ has entries in $[n]$, and $Q$ has entries in $[m]$. Specifically, one takes $P_0$ to be the empty tableau, and then recursively defines $P_i$ to be the tableau formed by row inserting the word $\mb{a}_i$ into $P_{i-1}$. The \defn{insertion tableau} $P$ is defined to be $P_m$, and the \defn{recording tableau} $Q$ is the tableau such that for each $i \in [m]$, the subtableau consisting of entries less than or equal to $i$ has the same shape as $P_i$. (We refer the reader to~\cite{EC2} for more details.)

To go from a pair $(P,Q) \in \SSYT_{\leq n}(\la) \times \SSYT_{\leq m}(\la)$ to an $m \times n$ matrix with weakly increasing rows and columns, one forms the GT patterns of width $\min(m,n)$ and heights $n$ and $m$ for $P$ and $Q$, respectively (see \S\ref{sec:GT patterns}). Then one glues the first GT pattern to the transpose of the second GT pattern along the diagonal specifying their common shape. (In this context, we represent GT patterns as left-justified arrays, with the $i$th row from the bottom containing the entries $a_{i,i}, \ldots, a_{i,n}$.) See Figure~\ref{fig:RSK} for an example.

A fundamental property of RSK is that transposing the matrix $\mb{a}$ corresponds to interchanging the tableaux $P$ and $Q$. In other words, $Q$ is the tableau formed by inserting the words associated to the columns $\mb{a}^j = (a_1^j, a_2^j, \ldots, a_m^j)$, and $P$ records this insertion procedure. If we view RSK as a map of $m \times n$ matrices, this property says that RSK commutes with transposition.

\ytableausetup{centertableaux}

\begin{figure}
\begin{gather*}
\mb{a} = \begin{pmatrix}
1 & 4 \\
2 & 1 \\
1 & 0
\end{pmatrix}
\;
\xrightarrow{\displaystyle \RSK}
\;
(P,Q) = \left( \, \ytableaushort{111122,222} \; , \; \ytableaushort{111112,223} \, \right)
\\
(P,Q) \longleftrightarrow \left(\textcolor{blue}{\begin{matrix}
3 & \\
4 & 6
\end{matrix}} \quad
, \quad
\textcolor{red}{\begin{matrix}
2 & 3 & \\
5 & 6 & 6
\end{matrix}}\right)
\xrightarrow{\displaystyle \text{glue}}
\begin{pmatrix}
\textcolor{red}{2} & \textcolor{red}{5} \\
\textcolor{purple}{3} & \textcolor{red}{6} \\
\textcolor{blue}{4} & \textcolor{purple}{6}
\end{pmatrix}
\end{gather*}
\caption{The first line shows the pair $(P,Q)$ of semistandard tableaux associated to a matrix $\mb{a}$ by the RSK correspondence. The second line shows the Gelfand--Tsetlin patterns of width 2 corresponding to $P$ and $Q$, and the matrix with weakly increasing rows and columns obtained by gluing these patterns together along their common shape $(6,3)$.}
\label{fig:RSK}
\end{figure}

The \defn{geometric RSK correspondence (gRSK)} is a birational isomorphism
\[
\gRSK \colon \Mat_{m \times n}(\CC^*) \rightarrow \Mat_{m \times n}(\CC^*).
\]
By analogy with classical RSK, we identify the output matrix with a pair of patterns $(P,Q) \in \GT_n^{\leq m} * \GT_m^{\leq n}$, where
\[
\GT_n^{\leq m} * \GT_m^{\leq n} = \{(P,Q) \in \GT_n^{\leq m} \times \GT_m^{\leq n} \mid \sh(P) = \sh(Q)\}
\]
(recall the definitions~\eqref{eq:GT_defn} and~\eqref{eq:GT_shape}). As in the combinatorial setting, our convention is that $P$ sits in the bottom-left corner of the matrix, and the transpose of $Q$ sits in the top-right corner.

We will give two equivalent definitions of gRSK. The first definition is modeled on the row insertion definition of RSK, and is due to Noumi and Yamada~\cite{NoumiYamada}; the second is similar in spirit to the growth diagram formulation of RSK, and is due to O'Connell, Sepp\"al\"ainen, and Zygouras~\cite{OSZ}.

\subsection{Row insertion formulation of gRSK}
\label{sec:gRSK row}

Suppose $\xx = (x_a^b)_{a \in [m], b \in [n]} \in \Mat_{m \times n}(\CC^*)$. Let $\xx_a = (x_a^1, \ldots, x_a^n)$ be the $a$th row of $\xx$, and let $p = \min(m,n)$. Recall from \S \ref{sec:unipotent} the map $M \colon (X_n)^k \rightarrow \GL_n$ given by
\[
M(\xx_1, \ldots, \xx_k) = W(\xx_1) \cdots W(\xx_k),
\]
which makes $(X_n)^k$ into a $\GL_n$-unipotent crystal. It is straightforward to show by induction on $k$ that $M$ is a surjection from $(X_n)^k$ onto $(B^-)^{\leq k}$.

\begin{dfn}
\label{defn:gRSK}
Define $\gRSK(\xx) = (P,Q)$, where $P = (z_{i,j}) \in \GT_n^{\leq m}$ and $Q = (z'_{j',i'}) \in \GT_m^{\leq n}$ are given by
\[
z_{i,j} = \dfrac{\Delta_{[i,j]}\bigl( M(\xx_1, \ldots, \xx_m) \bigr)}{\Delta_{[i+1,j]}\bigl( M(\xx_1, \ldots, \xx_m) \bigr)},
\qquad\qquad z'_{j',i'} = \dfrac{\Delta_{[j',n]}\bigl( M(\xx_1, \ldots, \xx_{i'}) \bigr)}{\Delta_{[j'+1,n]}\bigl( M(\xx_1, \ldots, \xx_{i'}) \bigr)},
\]
for $1 \leq i \leq m, \, i \leq j \leq n$ and $1 \leq j' \leq n, \, j' \leq i' \leq m$. We identify $(P,Q)$ with a matrix in $\Mat_{m \times n}(\CC^*)$ by gluing $P$ and $Q$ along their common shape, as described above. We refer to $P$ and $Q$ as the \defn{$P$-pattern} and \defn{$Q$-pattern} of the matrix $\xx$.
\end{dfn}

It is clear that the shape $(z_{1,n}, \ldots, z_{p,n})$ of $P$ is equal to the shape $(z'_{1,m}, \ldots, z'_{p,m})$ of $Q$, as required. Moreover, if we define
\[
P_k = \Psi_n^{\leq k}\bigl(M(\xx_1, \ldots, \xx_k)\bigr) \in \GT_n^{\leq k},
\]
then it follows from Lemma~\ref{lem:Phi inverse} that $P = P_m$, and the $k$th diagonal $(z'_{1,k}, z'_{2,k}, \ldots, z'_{\min(k,n),k})$ of $Q$ is equal to the shape of the pattern $P_k$. We interpret this as saying that $P$ is the result of ``geometrically inserting'' the rows of $\xx$ (starting with the top row $\xx_1$), and $Q$ ``records the growth'' of $P$.

\begin{ex}
\label{ex:gRSK insertion}
We work out the geometric RSK correspondence in the case $m = 3, n=2$. We start with the matrix
$
\xx = \begin{pmatrix}
x_1^1 & x_1^2 \medskip \\
x_2^1 & x_2^2 \medskip \\
x_3^1 & x_3^2
\end{pmatrix}
$,
and compute the matrices $M_1 = M(\xx_1), M_2 = M(\xx_1, \xx_2), M_3 = M(\xx_1, \xx_2, \xx_3)$:
\[
M_1 = 
\begin{pmatrix}
x_1^1 & 0 \medskip \\
1 & x_1^2
\end{pmatrix},
\qquad
M_2 = \begin{pmatrix}
x_1^1x_2^1 & 0 \medskip \\
x_1^2 + x_2^1 & x_1^2x_2^2
\end{pmatrix},
\qquad
M_3 =
\begin{pmatrix}
x_1^1x_2^1x_3^1 & 0 \medskip \\
x_1^2 x_2^2 + x_1^2 x_3^1 + x_2^1x_3^1 & x_1^2x_2^2x_3^2
\end{pmatrix}.
\]
Using these matrices, we compute $P \in \GT_2^{\leq 3} = \GT_2$ and $Q \in \GT_3^{\leq 2}$:
\[
P = \begin{matrix}
z_{22} & \medskip\medskip \\
z_{11} & z_{12} \medskip
\end{matrix}
\; = \;
\begin{matrix}
\Delta_{2}(M_3) & \medskip \\
\Delta_{1}(M_3) & \dfrac{\Delta_{12}(M_3)}{\Delta_{2}(M_3)}
\end{matrix}
\; = \;
\begin{matrix}
x_1^2 x_2^2 + x_1^2 x_3^1 + x_2^1x_3^1 & \medskip \\
x_1^1 x_2^1 x_3^1 & \dfrac{x_1^1 x_2^1 x_3^1 x_1^2 x_2^2 x_3^2}{x_1^2 x_2^2 + x_1^2 x_3^1 + x_2^1x_3^1}
\end{matrix}
\]
\bigskip
\medskip
\[
Q = \begin{matrix}
z'_{22} & z'_{23} & \medskip\medskip \\
z'_{11} & z'_{12} & z'_{13} \medskip
\end{matrix}
\; = \;
\begin{matrix}
\Delta_{2}(M_2) & \Delta_2(M_3) & \medskip \\
\dfrac{\Delta_{12}(M_1)}{\Delta_2(M_1)} & \dfrac{\Delta_{12}(M_2)}{\Delta_{2}(M_2)} & \dfrac{\Delta_{12}(M_3)}{\Delta_{2}(M_3)}
\end{matrix}
\; = \;
\begin{matrix}
x_1^2 + x_2^1 & x_1^2 x_2^2 + x_1^2 x_3^1 + x_2^1x_3^1 & \medskip \\
x_1^1x_1^2 & \dfrac{x_1^1 x_2^1 x_1^2 x_2^2}{x_1^2 + x_2^1} & \dfrac{x_1^1 x_2^1 x_3^1 x_1^2 x_2^2 x_3^2}{x_1^2 x_2^2 + x_1^2 x_3^1 + x_2^1x_3^1}
\end{matrix}.
\]
\bigskip
Thus, we have
\[
\begin{pmatrix}
x_1^1 & x_1^2 \medskip \\
x_2^1 & x_2^2 \medskip \\
x_3^1 & x_3^2
\end{pmatrix}
\xrightarrow{\displaystyle \gRSK}
\begin{pmatrix}
x_1^2 + x_2^1 & x_1^1x_1^2 \bigskip \\
x_1^2 x_2^2 + x_1^2 x_3^1 + x_2^1x_3^1 & \dfrac{x_1^1 x_2^1 x_1^2 x_2^2}{x_1^2 + x_2^1} \bigskip \\
x_1^1 x_2^1 x_3^1 & \dfrac{x_1^1 x_2^1 x_3^1 x_1^2 x_2^2 x_3^2}{x_1^2 x_2^2 + x_1^2 x_3^1 + x_2^1x_3^1}
\end{pmatrix}.
\]
\end{ex}

\subsection{Local move formulation of gRSK}
\label{sec:gRSK local}

As above, denote an element of $\Mat_{m \times n}(\CC^*)$ by $\xx = (x_a^b)$, and set $p = \min(m,n)$. For each $a \in [m]$ and $b \in [n]$, we introduce two rational maps
\[
\eta_a^b, T_a^b \colon \Mat_{m \times n}(\CC^*) \rightarrow \Mat_{m \times n}(\CC^*),
\]
each of which changes only the entry $x_a^b$. These maps are defined by
\begin{equation}
\label{eq:eta T}
\eta_a^b \colon x_a^b \mapsto x_a^b \gMax_a^b(\xx),
\qquad\qquad
T_a^b \colon x_a^b \mapsto \dfrac{1}{x_a^b} \gMax_a^b(\xx) \gMin_a^b(\xx),
\end{equation}
where
\[
\gMax_a^b(\xx) = \begin{cases}
\dfrac{x_a^{b-1}x_{a-1}^b}{x_a^{b-1} + x_{a-1}^b} & \text{if } a,b > 1, \\
x_1^{b-1} & \text{if } a = 1, b > 1, \\
x_{a-1}^1 & \text{if } a > 1, b = 1, \\
1 & \text{if } a = b = 1,
\end{cases}
\qquad\qquad
\gMin_a^b(\xx) = \begin{cases}
x_{a+1}^b + x_a^{b+1} & \text{if } a < m, b < n, \\
x_m^{b+1} & \text{if } a = m, b < n, \\
x_{a+1}^n & \text{if } a < m, b = n, \\
1 & \text{if } a = m, b = n.
\end{cases}
\]
It is easy to see that $T_a^b$ is an involution, and $\eta_a^b$ has inverse given by $x_a^b \mapsto x_a^b \frac{1}{\gMax_a^b(\xx)}$.

\begin{remark}
The function $\gMax_a^b$ tropicalizes to $\max(x_a^{b-1}, x_{a-1}^b)$, and the function $\gMin_a^b$ tropicalizes to $\min(x_{a+1}^b, x_a^{b+1})$. (The various cases correspond to setting $x_0^b, x_a^0 = 0$ and $x_{m+1}^b = x_a^{n+1} = \infty$ in the tropicalized formulas. This rule does not work for $\gMin_m^n$, but we will never use the operator $T_m^n$ so this is not a problem.) Thus, if $(a,b) \neq (m,n)$, the map $T_a^b$ tropicalizes to the \defn{piecewise-linear toggle} at node $(a,b)$ in the coordinate-wise partial order on $[m] \times [n]$~\cite{EP21}. These toggles were first studied by Kirillov and Berenstein~\cite{KB95}.
\end{remark}

For $a \in [m]$ and $b \in [n]$, define $\tau_a^b$ to be the composition of the maps $T_i^j$ at each position of the diagonal strictly to the northwest of $(a,b)$, that is,
\[
\tau_a^b = \begin{cases}
T_1^{b-a+1} \circ \cdots \circ T_{a-2}^{b-2} \circ T_{a-1}^{b-1} & \text { if } a \leq b, \\
T_{a-b+1}^1 \circ \cdots \circ T_{a-2}^{b-2} \circ T_{a-1}^{b-1} & \text{ if } a \geq b.
\end{cases}
\]
Define
\[
\rho_a^b = \tau_a^b \circ \eta_a^b.
\]
Note that $\tau_a^b$ is the identity map if $a = 1$ or $b = 1$, and $\rho_1^1 = \eta_1^1$ is also the identity map. Finally, define
\[
\rho = (\rho_m^n \circ \cdots \circ \rho_m^1) \circ \cdots \circ (\rho_1^n \circ \cdots \circ \rho_1^1),
\]
and define $\eta$ and $\tau$ in the same way, but with $\rho_a^b$ replaced by $\eta_a^b$ and $\tau_a^b$, respectively.

\begin{lemma}
\label{lem:toggles commute}
\
\begin{enumerate}
\item The map $\rho$ (resp., $\eta$, $\tau$) can be computed by composing the maps $\rho_a^b$ (resp., $\eta_a^b$, $\tau_a^b$) along any linear extension of the coordinate-wise partial order on $[m] \times [n]$.
\item The map $\rho$ factors as $\rho = \tau \circ \eta$.
\end{enumerate}
\end{lemma}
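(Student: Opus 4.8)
The plan is to deduce both parts from a single \emph{locality principle}: each elementary map $\eta_a^b$ and $T_a^b$ modifies only the entry $x_a^b$, while depending only on entries at grid-neighbors of $(a,b)$, and two rational maps commute whenever no cell modified by one is touched (read or modified) by the other. So I would first record the read/write ``footprint'' of each operator. The operator $\eta_a^b$ writes $(a,b)$ and reads only the cells $(a-1,b)$ and $(a,b-1)$ immediately north and west. The toggle $T_i^j$ writes $(i,j)$ and reads the four grid-neighbors $(i\pm1,j),(i,j\pm1)$. Since the toggles composing $\tau_a^b$ lie on the diagonal strictly northwest of $(a,b)$ and successive diagonal cells are not grid-adjacent, these toggles pairwise commute and none of them reads $(a,b)$; hence $\tau_a^b$ writes exactly the diagonal cells $(a-k,b-k)$ with $k\ge1$, reads only cells on the two neighboring diagonals, and commutes with $\eta_a^b$. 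Consequently $\rho_a^b=\tau_a^b\circ\eta_a^b$ writes precisely the northwest diagonal ray $\{(a-k,b-k):k\ge0\}$ through $(a,b)$ (on the diagonal $d=a-b$) and reads only cells on the adjacent diagonals $d\pm1$.

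For part (1), I would invoke the standard fact that any two linear extensions of a finite poset are connected by transpositions of adjacent incomparable elements; since the reading-order composition defining $\rho$ (resp.\ $\eta$, $\tau$) is one such linear extension of the coordinatewise order on $[m]\times[n]$, it suffices to show that $\rho_a^b$ and $\rho_{a'}^{b'}$ commute (and likewise for $\eta$, $\tau$) whenever $(a,b)$ and $(a',b')$ are incomparable. The geometric heart of the argument is that incomparability forces the two diagonals to be far apart: writing $d=a-b$ and $d'=a'-b'$, incomparability gives $|d-d'|\ge2$. Because $\rho_a^b$ writes on diagonal $d$ and reads only on diagonals $d\pm1$, while $\rho_{a'}^{b'}$ writes on $d'$ and reads on $d'\pm1$, the gap $|d-d'|\ge2$ guarantees that neither operator writes a cell the other touches, so they commute; the cases of $\eta$ and $\tau$ are identical and in fact easier.

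For part (2), rather than untangle the interleaved product $\prod(\tau_a^b\circ\eta_a^b)$ directly, I would induct on the reading-order prefix, proving $\rho^{(\le P)}=\tau^{(\le P)}\circ\eta^{(\le P)}$ for each position $P$. Given the identity for the prefix before $P$, the inductive step reduces to showing that $\eta_P$ commutes with the composite $\tau^{(\le P-1)}=\tau_{P-1}\circ\cdots\circ\tau_1$, for which it is enough that $\eta_P$ commute with each factor $\tau_{(a',b')}$ indexed by a cell preceding $(a,b)$ in reading order. Here the row structure enters: if $(a',b')$ precedes $(a,b)$ then either $a'<a$, in which case every cell written by $\tau_{(a',b')}$ lies in rows $\le a'-1\le a-2$ and every cell it reads lies in rows $\le a'\le a-1$, both disjoint from the rows $a-1,a$ touched by $\eta_{(a,b)}$; or $a'=a$ and $b'<b$, in which case a short check of the lone row-$a$ cell that $\tau_a^{b'}$ touches, namely $(a,b'-1)$, shows it differs from every cell in $\eta_{(a,b)}$'s footprint. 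Either way the footprints do not conflict, the commutation holds, and the induction closes.

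The main obstacle is the footprint bookkeeping for the composite operators $\tau_a^b$ and $\rho_a^b$: unlike the single-cell operators, these touch a whole band around a diagonal, so I must verify that the only possible overlaps between two such bands occur at cells that are read-only for both operators, which never obstruct commutation. The clean way to dispatch this is the pair of geometric facts isolated above---incomparable cells sit on diagonals at least two apart, and every operator's reads stay within one diagonal of its writes---together with the reading-order row estimate used in part (2). Everything else is the routine verification that disjoint-footprint maps commute and that a map commuting with each factor of a product commutes with the product.
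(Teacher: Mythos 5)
Your proof is correct and takes essentially the same route as the paper, which proves part (1) by noting that the single-cell maps $\eta_a^b$, $T_a^b$ at non-adjacent cells commute (your diagonal-gap estimate $|d-d'|\ge 2$ for incomparable positions being the implicit reason the composite maps $\rho_a^b$, $\rho_{a'}^{b'}$ then commute), and part (2) from the observation that $\eta_a^b$ commutes with $\tau_{a'}^{b'}$ whenever $(a',b')\not\geq(a,b)$, which your prefix induction merely spells out in detail. The only blemish is the clause asserting that the cells read by $\tau_{a'}^{b'}$ with $a'<a$ are disjoint from row $a-1$ (they need not be: the top toggle of $\tau_{a'}^{b'}$ can read a row-$(a-1)$ cell when $a'=a-1$), but since read--read overlaps never obstruct commutation and $\eta_a^b$ writes only the single cell $(a,b)$, your own write-versus-touch criterion closes this harmlessly.
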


\begin{proof}
It is clear that when $(a,b)$ and $(a',b')$ are not adjacent, $\eta_a^b$ commutes with both $\eta_{a'}^{b'}$ and $T_{a'}^{b'}$, and $T_a^b$ commutes with $T_{a'}^{b'}$. This implies that $\rho_a^b$ and $\rho_{a'}^{b'}$ commute (as do $\tau_a^b, \tau_{a'}^{b'}$ and $\eta_a^b, \eta_{a'}^{b'}$) when $(a,b)$ and $(a',b')$ are incomparable in $[m] \times [n]$, \textit{i.e.}, when one position lies strictly to the southeast of the other. This proves part (1). Part (2) follows from the observation that $\eta_a^b$ commutes with $\tau_{a'}^{b'}$ when $(a',b') \not \geq (a,b)$.
\end{proof}

\begin{thm}[O'Connell--Sepp\"al\"ainen--Zygouras~\cite{OSZ}]
\label{thm:local}
The map $\rho$ agrees with the geometric RSK correspondence of Definition~\ref{defn:gRSK}.
\end{thm}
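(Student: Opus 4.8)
The plan is to prove the equality of the two birational maps by induction on the number of rows $m$, matching the block structure
\[
\rho = R_m \circ R_{m-1} \circ \cdots \circ R_1, \qquad R_k = \rho_k^n \circ \cdots \circ \rho_k^1,
\]
where $R_k$ is the block of local moves attached to the $k$th row. The guiding principle is that $R_k$ should implement \emph{geometric row insertion} of $\xx_k$. At the level of the matrices $M(\xx_1,\dots,\xx_k) = W(\xx_1)\cdots W(\xx_k)$ from Definition~\ref{defn:gRSK}, insertion is nothing but right-multiplication $M(\xx_1,\dots,\xx_{k-1}) \mapsto M(\xx_1,\dots,\xx_{k-1})\,W(\xx_k) = M(\xx_1,\dots,\xx_k)$, and the flag-minor ratios $\Delta_{[i,j]}\bigl(M(\xx_1,\dots,\xx_k)\bigr)/\Delta_{[i+1,j]}\bigl(M(\xx_1,\dots,\xx_k)\bigr)$ are exactly the entries of $P_k = \Psi_n^{\leq k}\bigl(M(\xx_1,\dots,\xx_k)\bigr)$, with the new shape $\sh(P_k)$ becoming the $k$th diagonal of $Q$.

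First I would fix the inductive invariant precisely: after applying $R_1, \ldots, R_k$, the matrix encodes the pattern $P_k$ (in the bottom-left region, in the gluing layout described before Definition~\ref{defn:gRSK}), the first $k$ diagonals of $Q$ (in the top-right region), and the still-unprocessed rows $\xx_{k+1}, \ldots, \xx_m$ verbatim. Tracking which entry carries which flag-minor ratio at each stage is the bookkeeping that makes the induction run; because the moves act in place, an entry's meaning changes as the block sweeps across it, so the invariant must be stated carefully in terms of the explicit positions of the $P$- and $Q$-patterns. As $k$ grows, $P_k$ is a larger Gelfand--Tsetlin pattern, and each block extends the encoded pattern by one row while recording one new $Q$-diagonal, mirroring the incremental structure of classical RSK.

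The heart of the argument is a \emph{single-row insertion lemma}: assuming the invariant after $R_1, \ldots, R_{k-1}$, the block $R_k$ produces the invariant after step $k$. I would prove this by giving the flag minors a combinatorial model through the Lindstr\"om/Gessel--Viennot lemma, exactly as in the proof of Lemma~\ref{lem:Phi inverse}: each $\Delta_{[i,j]}\bigl(M(\xx_1,\dots,\xx_k)\bigr)$ is a sum over nonintersecting path families in the network obtained by concatenating the pieces associated to $W(\xx_1), \ldots, W(\xx_k)$. In this model the shifts $\eta_k^b$ build up the source weights contributed by the newly appended layer $W(\xx_k)$, while each toggle $T_i^j$ performs the local rearrangement of path families corresponding to the elementary step of right-multiplying by $W(\xx_k)$ and re-reading the flag minors. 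Using Lemma~\ref{lem:toggles commute}(2) one may also reorganize the whole computation as $\rho = \tau \circ \eta$, separating the shift part (which computes the shape data) from the toggle part (which performs the recording); this reorganization streamlines the verification that the explicit rational formulas in~\eqref{eq:eta T} reproduce the updated minor ratios.

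The main obstacle I anticipate is precisely this single-row lemma: one must check, coordinate by coordinate, that the specific composition of geometric toggles along the diagonal northwest of $(k,b)$, interleaved with the shifts, reproduces the flag-minor ratios of $M(\xx_1,\dots,\xx_{k-1})\,W(\xx_k)$, while keeping the in-place bookkeeping consistent across the entire sweep. Two simplifications are worth exploiting. First, since both maps are subtraction-free, it suffices to establish the identity on the dense subset of positive real matrices, where one argues with honest positive quantities rather than formal rational functions. Second, the $\min$/$\max$ convention flagged in~\S\ref{sec:max min} means the $\gMax$ and $\gMin$ appearing in~\eqref{eq:eta T} are the correct geometric lifts; confirming at the outset that $T_a^b$ tropicalizes to the Kirillov--Berenstein toggle lets one check the geometric identity against the known combinatorial equivalence of the two descriptions of RSK, which serves as a useful consistency test throughout.
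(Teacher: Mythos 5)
Your proposal takes a genuinely different route from the paper. The paper does not re-prove the local-move/insertion equivalence at all: as explained in \S\ref{sec:max min}, it observes that the local maps ${\rho'}_b^a$ of~\cite{OSZ} differ from $\rho_a^b$ only by interchanging $\gMax$ and $\gMin$, so that $\rho = (\rho')^{\dagger}$; it proves Lemma~\ref{lem:H M identity} (via Jacobi's complementary-minor identity) to show that the insertion gRSK of Definition~\ref{defn:gRSK} is likewise the $\dagger$-conjugate of Noumi--Yamada's insertion map; and it then quotes~\cite[\S 4]{OSZ} for the statement that $\rho'$ agrees with Noumi--Yamada insertion. You instead propose a from-scratch induction on rows, which amounts to reconstructing the OSZ argument in the paper's $\min$-convention. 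Your reduction to a single-row lemma is sound: since $\eta_a^b$ reads only positions $(a,b-1)$ and $(a-1,b)$, and the toggles occurring in the $k$th row block $\rho_k^n \circ \cdots \circ \rho_k^1$ sit in rows $\leq k-1$ and read only rows $\leq k$, the composition of the first $k$ blocks reads and writes only the top $k$ rows and agrees there with the $\rho$-map of the $k \times n$ submatrix; so the theorem for $k-1$ rows plus a single-row insertion lemma yields the theorem for $k$ rows. What your route buys is self-containedness; what the paper's transfer buys is brevity and a reusable $\min/\max$ dictionary $f \mapsto f^{\dagger}$ between the two conventions.

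As a proof, however, your attempt stops exactly at its declared crux. The assertion that ``each toggle $T_i^j$ performs the local rearrangement of path families corresponding to right-multiplying by $W(\xx_k)$'' is a slogan, not an argument: at an intermediate stage of the sweep, the entry at a toggled position is a flag-minor ratio of one of the partial products $M(\xx_1,\dots,\xx_{i'})$ (the $P$-region refers to $M(\xx_1,\dots,\xx_{k-1})$, the $Q$-region to varying $i'$), and the content of a single toggle is a concrete three-term determinantal identity relating minors of $M(\xx_1,\dots,\xx_{k-1})$ and of $M(\xx_1,\dots,\xx_{k-1})W(\xx_k)$ --- a Dodgson-condensation/Pl\"ucker-type relation, or in the LGV model an identity of path partition functions that must actually be proved (say, by a weight-preserving swapping argument on intersecting families), not merely named. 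Your in-place invariant must also specify, for every position and every intermediate stage of the sweep, exactly which minor ratio of which partial matrix occupies it; this bookkeeping is where such attempts typically break. One correction worth making: your division of labor for $\rho = \tau \circ \eta$ is off, since $\eta$ does not compute the shape data; rather, one checks by induction (using the recursion $\Delta_{[2,b]}(M_a) = \bigl(\prod_{i<a} x_i^b\bigr)\Delta_{[2,b-1]}(M_a) + \bigl(\prod_{j<b} x_a^j\bigr)\Delta_{[2,b]}(M_{a-1})$, itself an LGV decomposition) that $\eta$ places at position $(a,b)$ the quantity $\Delta_{[1,b]}(M(\xx_1,\dots,\xx_a))/\Delta_{[2,b]}(M(\xx_1,\dots,\xx_a))$, i.e., the top-row entry $z_{1,b}$ of the partial pattern $P_a$ (a geometric Greene invariant). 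Proving that statement and then showing $\tau$ converts this array into the glued $(P,Q)$ would be a clean way to organize the missing half; alternatively, the shortest honest completion of your plan is the paper's own: verify $\rho = (\rho')^{\dagger}$ together with Lemma~\ref{lem:H M identity}, and cite~\cite{OSZ}.
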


As mentioned in~\S \ref{sec:intro outline}, our geometric RSK correspondence is different from the one studied in~\cite{NoumiYamada, OSZ}. In \S\ref{sec:max min}, we discuss the precise connection between the two versions of gRSK, and explain why Theorem~\ref{thm:local} follows from the analogous result proved in~\cite{OSZ}.

\begin{ex}
\label{ex:gRSK local}
We illustrate Theorem~\ref{thm:local} in the case $m = 3, n = 2$. By Lemma~\ref{lem:toggles commute}(2), we have $\rho = \tau \circ \eta$, where
\[
\eta =  \eta_3^2 \circ \eta_3^1 \circ \eta_2^2 \circ \eta_2^1 \circ \eta_1^2 \circ \eta_1^1, \qquad \tau = \tau_3^2 \circ \tau_2^2 = T_2^1 \circ T_1^1.
\]
We compute
\[
\begin{pmatrix}
x_1^1 & x_1^2 \medskip \\
x_2^1 & x_2^2 \medskip \\
x_3^1 & x_3^2
\end{pmatrix}
\overset{\begin{matrix} \eta \end{matrix}} {\longrightarrow}
\begin{pmatrix}
x_1^1 & x_1^1x_1^2 \bigskip \\
x_1^1x_2^1 & \dfrac{x_1^1 x_2^1 x_1^2 x_2^2}{x_1^2 + x_2^1} \bigskip \\
x_1^1 x_2^1 x_3^1 & \dfrac{x_1^1 x_2^1 x_3^1 x_1^2 x_2^2 x_3^2}{x_1^2 x_2^2 + x_1^2 x_3^1 + x_2^1x_3^1}
\end{pmatrix}
\overset{\begin{matrix} \tau \end{matrix}} {\longrightarrow}
\begin{pmatrix}
x_1^2 + x_2^1 & x_1^1x_1^2 \bigskip \\
x_1^2 x_2^2 + x_1^2 x_3^1 + x_2^1x_3^1 & \dfrac{x_1^1 x_2^1 x_1^2 x_2^2}{x_1^2 + x_2^1} \bigskip \\
x_1^1 x_2^1 x_3^1 & \dfrac{x_1^1 x_2^1 x_3^1 x_1^2 x_2^2 x_3^2}{x_1^2 x_2^2 + x_1^2 x_3^1 + x_2^1x_3^1}
\end{pmatrix},
\]

\bigskip
\noindent obtaining the same result as in Example~\ref{ex:gRSK insertion}.
\end{ex}

\begin{cor}
\label{cor:gRSK props}
\
\begin{enumerate}
\item gRSK is a birational isomorphism.
\item gRSK and its inverse are positive. This implies that gRSK restricts to a homeomorphism from $\Mat_{m \times n}(\RR_{> 0})$ to itself.
\item gRSK satisfies the symmetry property
\[
\gRSK(\xx) = (P,Q) \iff \gRSK(\xx^t) = (Q,P),
\]
where $\xx^t$ is the transpose of $\xx$. Thus, the patterns $P = (z_{i,j})$ and $Q = (z'_{j',i'})$ are given by the alternative formulas
\begin{equation}
\label{eq:gRSK transposed}
z_{i,j} = \dfrac{\Delta_{[i,m]}(M(\xx^1, \ldots, \xx^j))}{\Delta_{[i+1,m]}(M(\xx^1, \ldots, \xx^j))},
\qquad\qquad z'_{j',i'} = \dfrac{\Delta_{[j',i']}(M(\xx^1, \ldots, \xx^n))}{\Delta_{[j'+1,i']}(M(\xx^1, \ldots, \xx^n))},
\end{equation}
for $1 \leq i \leq m, \, i \leq j \leq n$ and $1 \leq j' \leq n, \, j' \leq i' \leq m$, where $\xx^j = (x_1^j, \ldots, x_m^j)$ is the $j$th column of~$\xx$.
\end{enumerate}
\end{cor}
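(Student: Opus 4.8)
The plan is to deduce all three parts from the local-move description of gRSK provided by Theorem~\ref{thm:local}, which identifies gRSK with the composite $\rho = \tau \circ \eta$ of the elementary maps in~\eqref{eq:eta T}. The point is that each factor $\eta_a^b$ and $T_a^b$ is, by inspection of~\eqref{eq:eta T}, a positive birational automorphism of $\Mat_{m \times n}(\CC^*)$ with an explicit positive inverse: $T_a^b$ is an involution, while $(\eta_a^b)^{-1}$ sends $x_a^b \mapsto x_a^b / \gMax_a^b(\xx)$. Since $\gMax_a^b$ and $\gMin_a^b$ are manifestly subtraction-free, so is every such factor.

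For part (1), I would simply observe that $\rho$, being a finite composition of birational automorphisms, is itself a birational automorphism; by Theorem~\ref{thm:local} the same holds for gRSK. For part (2), positivity of gRSK is immediate since a composition of positive rational maps is positive. For the inverse, I would write $\rho^{-1} = \eta^{-1} \circ \tau^{-1}$, note that $\tau^{-1}$ is again a composition of the involutions $T_a^b$ and that $\eta^{-1}$ is the composition of the positive maps $(\eta_a^b)^{-1}$, so $\rho^{-1}$ is positive as well. The statement about $\Mat_{m \times n}(\RR_{>0})$ then follows formally: a subtraction-free rational map has denominators that are sums of monomials with positive coefficients, hence is defined, continuous, and positive-valued on positive-real matrices, and the positive inverse supplies a continuous two-sided inverse.

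For the symmetry in part (3), the key observation is that the elementary maps are equivariant under transposition. Writing $t$ for the transpose, a direct check from~\eqref{eq:eta T} shows $\gMax_a^b(\xx) = \gMax_b^a(\xx^t)$ and $\gMin_a^b(\xx) = \gMin_b^a(\xx^t)$ in every case, whence $t \circ \eta_a^b = \eta_b^a \circ t$ and $t \circ T_a^b = T_b^a \circ t$ (with the right-hand factors acting on $n \times m$ matrices). Since transposition carries the diagonal northwest of $(a,b)$ to the diagonal northwest of $(b,a)$, it also intertwines $\tau_a^b$ with $\tau_b^a$, and carries any linear extension of the coordinate-wise order on $[m] \times [n]$ to one on $[n] \times [m]$. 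Invoking the linear-extension independence of Lemma~\ref{lem:toggles commute}(1), I conclude $t \circ \rho^{(m,n)} = \rho^{(n,m)} \circ t$, i.e.\ $\gRSK(\xx^t) = \gRSK(\xx)^t$. Unwinding the gluing convention---$P$ occupies the lower-left triangle and $Q^t$ the upper-right---transposing the output matrix exchanges these two triangles, so $\gRSK(\xx) = (P,Q)$ forces $\gRSK(\xx^t) = (Q,P)$. The alternative formulas~\eqref{eq:gRSK transposed} then follow by applying the defining formulas of Definition~\ref{defn:gRSK} to $\xx^t$ (whose rows are the columns $\xx^j$) and reading off that the $P$- and $Q$-patterns have been interchanged.

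I expect the only delicate point to be the bookkeeping in part (3): one must track the gluing convention and the index ranges carefully when transposing, so that the exchange of the lower-left and upper-right triangles is correctly matched against the roles of $P$ and $Q$ and against the swapped ranges $1 \leq i \leq m,\, i \leq j \leq n$ versus $1 \leq j' \leq n,\, j' \leq i' \leq m$ appearing in~\eqref{eq:gRSK transposed}. Parts (1) and (2) are essentially formal once the local-move picture is in hand.
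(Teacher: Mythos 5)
Your proposal is correct and follows essentially the same route as the paper: all three parts are deduced from the local-move formulation $\rho = \tau \circ \eta$, with (1) and (2) coming from the invertibility and positivity of each $\eta_a^b$ and $T_a^b$, and (3) from the linear-extension independence in Lemma~\ref{lem:toggles commute}(1) (your transposition-equivariance computation for $\gMax_a^b$ and $\gMin_a^b$ is exactly the unpacking of the paper's remark that $\rho$ can be computed column-by-column rather than row-by-row). Your more detailed bookkeeping for the gluing convention and the index ranges in~\eqref{eq:gRSK transposed} is accurate.
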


\begin{proof}
Part (1) follows from the fact that the $\eta_a^b$ and $T_a^b$ are invertible rational maps, and part (2) follows from the fact that these maps and their inverses are positive. Part (3) follows from Lemma~\ref{lem:toggles commute}(1), which implies that the map $\rho$ can be computed column-by-column, rather than row-by-row.
\end{proof}

\begin{ex}
\label{ex:gRSK transposed}
The reader may verify that in the case $m=3, n=2$, the $P$- and $Q$-patterns computed in Example~\ref{ex:gRSK insertion} can be obtained from the matrices
\[
M(\xx^1) =
\begin{pmatrix}
x_1^1 & 0 & 0 \medskip \\
1 & x_2^1 & 0 \medskip \\
0 & 1 & x_3^1
\end{pmatrix},
\qquad
M(\xx^1, \xx^2) = \begin{pmatrix}
x_1^1x_1^2 & 0 & 0 \medskip \\
x_2^1 + x_1^2 & x_2^1x_2^2 & 0 \medskip \\
1 & x_3^1 + x_2^2 & x_3^1x_3^2
\end{pmatrix}
\]
as the ratios of determinants specified by~\eqref{eq:gRSK transposed}. In particular, the common shape $(z_{12}, z_{22}) = (z'_{13}, z'_{23})$ of $P$ and $Q$ is given by
\[
\left(\dfrac{\Delta_{123}\bigl( M(\xx^1, \xx^2) \bigr)}{\Delta_{23}\bigl( M(\xx^1, \xx^2) \bigr)}, \dfrac{\Delta_{23}\bigl( M(\xx^1, \xx^2) \bigr)}{\Delta_3\bigl( M(\xx^1, \xx^2) \bigr)}\right) = \left(\dfrac{x_1^1 x_2^1 x_3^1 x_1^2 x_2^2 x_3^2}{x_1^2 x_2^2 + x_1^2 x_3^1 + x_2^1x_3^1}, x_1^2 x_2^2 + x_1^2 x_3^1 + x_2^1x_3^1\right).
\]
\end{ex}

\begin{remark}
Results of Noumi and Yamada~\cite{NoumiYamada} imply that the geometric row insertion formulation of gRSK tropicalizes to RSK (see \S \ref{sec:max min}). There are also multiple proofs in the literature of the fact that the tropicalization of $\rho$ is a piecewise-linear description of RSK. This was known to Pak~\cite{Pak01}, who used a generalization of $\Trop(\rho)$ to give a simple proof of the hook-length formula for the number of standard Young tableaux of a given shape. An elementary proof that $\Trop(\rho) = \RSK$ can be found in~\cite{Hopkins14}. This result was recently reproved (and substantially generalized) in the context of quiver representations~\cite{GPT18}.
\end{remark}

\subsection{Geometric RSK is an isomorphism of geometric crystals}
\label{sec:gRSK isom}

We have described two $\GL_n$- (resp., $\GL_m$-) geometric crystal structures on the variety $X = \Mat_{m \times n}(\CC^*)$. The first is the \defn{basic geometric crystal structure}, which comes from identifying $X$ with $(X_n)^m$ (resp., $(X_m)^n$); we denote this geometric crystal by $X^{\Mat}$.
The second is the \defn{Gelfand--Tsetlin geometric crystal structure}, which comes from identifying $X$ with $\GT_n^{\leq m} * \GT_m^{\leq n}$, and defining $\ov{e}_j^c(P,Q) = (\ov{e}_j^c(P), Q)$ (resp., $e_i^c(P,Q) = (P,e_i^c(Q))$).
We denote this geometric crystal by $X^{\GT}$.

\begin{thm}
\label{thm:gRSK isom}
The geometric RSK correspondence
\[
\gRSK \colon X^{\Mat} \to X^{\GT}
\]
is an isomorphism of $\GL_n \times \GL_m$-geometric crystals.\footnote{A $\GL_n \times \GL_m$-geometric crystal is a variety equipped with $\GL_n$- and $\GL_m$-geometric crystal structures which commute with each other (\textit{i.e.}, $\ve_i \ov{e}^c_j = \ve_i$, $\ov{\ve}_j e_i^c = \ov{\ve}_j$, $e_i^{c_1} \ov{e}_j^{c_2} = \ov{e}_j^{c_2} e_i^{c_1}$, etc.)}
\end{thm}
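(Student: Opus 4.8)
The plan is to prove $\GL_n$-equivariance of $\gRSK$ directly and then deduce $\GL_m$-equivariance from the transposition symmetry of Corollary~\ref{cor:gRSK props}(3). Since $\gRSK$ is already known to be a birational isomorphism (Corollary~\ref{cor:gRSK props}(1)), the entire content is to show that it intertwines the two families of geometric crystal operators, together with the structure maps $\ov{\gamma}, \ov{\ve}_j, \ov{\vp}_j$.

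The key observation I would build the argument on is that, on both sides, the $\GL_n$-geometric crystal structure is induced from \emph{one and the same} unipotent crystal into $(B^-)^{\leq m} \subseteq \GL_n$. On $X^{\Mat}$, identifying $\Mat_{m \times n}(\CC^*)$ with $(X_n)^m$ via rows, the product unipotent crystal has embedding $\iota_{\Mat}(\xx) = W(\xx_1) \cdots W(\xx_m) = M(\xx_1, \ldots, \xx_m)$. On $X^{\GT}$, the $\GL_n$-structure acts on the $P$-pattern $P \in \GT_n^{\leq m}$, whose unipotent crystal embedding is $\iota_{\GT}(P) = \Phi_n^{\leq m}(P)$. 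By Definition~\ref{defn:gRSK} and Lemma~\ref{lem:Phi inverse}, $P = \Psi_n^{\leq m}\bigl(M(\xx_1, \ldots, \xx_m)\bigr)$, so that
\[
\iota_{\GT}\bigl(\gRSK(\xx)\bigr) = \Phi_n^{\leq m}(P) = M(\xx_1, \ldots, \xx_m) = \iota_{\Mat}(\xx).
\]
Because $\ov{\gamma}, \ov{\ve}_j, \ov{\vp}_j$ are defined by identical formulas in the entries of this matrix on both sides (Theorem~\ref{thm:unipotent to geometric} and Definition~\ref{defn:GT geom cryst}), $\gRSK$ intertwines them automatically.

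For the operators, write $\xx' = \ov{e}_j^c(\xx)$ on $X^{\Mat}$, so that $\xx' = x_j(a).\xx$ is the $U$-action with $a = (c-1)\ov{\vp}_j(\xx)$. Chasing the definition of the product $U$-action through the successive factorizations $x_j(a)W(\xx_1) = W(\xx_1')u_1$, $u_1 W(\xx_2) = W(\xx_2')u_2, \ldots$, one telescopes to find that each partial product satisfies $M(\xx_1', \ldots, \xx_k') = x_j(a) \cdot M(\xx_1, \ldots, \xx_k) \cdot u_k^{-1}$ for some $u_k \in U$. Since left or right multiplication by an element of $U$ leaves the bottom-left-justified flag minors $\Delta_{[j',n]}$ unchanged, the $Q$-pattern of $\xx'$ equals that of $\xx$. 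Meanwhile, Corollary~\ref{cor:unipotent property} gives $M(\xx_1', \ldots, \xx_m') = x_j\bigl((c-1)\ov{\vp}_j\bigr) M(\xx_1, \ldots, \xx_m) x_j\bigl((c^{-1}-1)\ov{\ve}_j\bigr)$, which is exactly the matrix appearing inside $\Psi_n^{\leq m}$ in the definition~\eqref{eq:e def} of $\ov{e}_j^c(P)$ (using that $\ov{\vp}_j, \ov{\ve}_j$ agree on both sides), so the new $P$-pattern equals $\ov{e}_j^c(P)$; that $(\ov{e}_j^c(P), Q)$ remains a valid glued pair is guaranteed by Lemma~\ref{lem:shape preserved}. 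Hence $\gRSK(\ov{e}_j^c(\xx)) = (\ov{e}_j^c(P), Q) = \ov{e}_j^c(\gRSK(\xx))$, establishing $\GL_n$-equivariance.

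Finally, applying this result to $\xx^t$ and invoking Corollary~\ref{cor:gRSK props}(3) (which swaps $m \leftrightarrow n$ and $P \leftrightarrow Q$) yields $\GL_m$-equivariance with the $\GL_m$-operators acting on the $Q$-pattern. The two structures commute on $X^{\Mat}$ by~\cite{LP13II} and trivially commute on $X^{\GT}$ (where $\GL_n$ touches only $P$ and $\GL_m$ only $Q$), so $\gRSK$ is an isomorphism of $\GL_n \times \GL_m$-geometric crystals. I expect the one genuinely delicate step to be the invariance of the $Q$-pattern under the $\GL_n$-operator: it is precisely here that one must track the residual unipotent factors $u_k$ through \emph{every} partial product and apply flag-minor invariance, rather than relying only on the single top-level identity of Corollary~\ref{cor:unipotent property}.
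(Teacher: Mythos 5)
Your proposal is correct and follows essentially the same route as the paper's own proof: reduce to $\GL_n$-equivariance via the transposition symmetry of Corollary~\ref{cor:gRSK props}(3), identify the structure maps using $\Phi_n^{\leq m}(P) = M(\xx_1, \ldots, \xx_m)$, use Corollary~\ref{cor:unipotent property} for the new $P$-pattern, and prove $Q$-invariance by exactly the telescoped partial-product identity $M(\xx_1', \ldots, \xx_k') = x_j(a) \cdot M(\xx_1, \ldots, \xx_k) \cdot x_j(a')$ combined with the invariance of bottom-left justified minors under $U$-multiplication as in Lemma~\ref{lem:shape preserved} (your cascade through the factorizations just makes explicit what the paper asserts tersely). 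The one harmless divergence is that you cite~\cite{LP13II} for the commutation of the two structures on $X^{\Mat}$, whereas the paper derives that commutation from the evident commutation on $X^{\GT}$ together with equivariance --- which is precisely what allows the paper to re-obtain the result of~\cite{LP13II} as a corollary rather than assume it.
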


\begin{proof}
Since geometric RSK is a birational isomorphism of the underlying varieties and the $\GL_n$- and $\GL_m$-geometric crystal structures on $X^{\GT}$ clearly commute, it suffices to prove that $\gRSK$ is an isomorphism of $\GL_m$-geometric crystals and $\GL_n$-geometric crystals.
By Corollary~\ref{cor:gRSK props}(3), it is enough to prove that $\gRSK$ is an isomorphism of $\GL_n$-geometric crystals. To prove this, we must show that if $\gRSK(\xx) = (P,Q)$, then
\begin{equation}
\label{eq:gRSK isom}
\ov{\gamma}(\xx) = \ov{\gamma}(P), \qquad \ov{\ve}_j(\xx) = \ov{\ve}_j(P), \qquad \ov{\vp}_j(\xx) = \ov{\vp}_j(P), \qquad \gRSK(\ov{e}_j^c(\xx)) = (\ov{e}_j^c(P), Q).
\end{equation}

By~\eqref{eq:basic functions M}, $\ov{\gamma}(\xx)$ is the diagonal of the $n \times n$ matrix $M(\xx_1, \ldots, \xx_m)$, and $\ov{\ve}_j(\xx),\ov{\vp}_j(\xx)$ are ratios of certain entries of this matrix. By Definition~\ref{defn:GT geom cryst}, $\ov{\gamma}(P)$, $\ov{\ve}_j(P)$, and $\ov{\vp}_j(P)$ are obtained in the exactly the same way from the matrix $\Phi_n^{\leq m}(P)$. The geometric RSK correspondence is defined in such a way that $P = \Psi_n^{\leq m}(M(\xx_1, \ldots, \xx_m))$, so $\Phi_n^{\leq m}(P) = M(\xx_1, \ldots, \xx_m)$ by Lemma~\ref{lem:Phi inverse}. This proves the first three identities in~\eqref{eq:gRSK isom}.

Suppose $\gRSK(\ov{e}_j^c(\xx)) = (P',Q')$. By~\eqref{eq:basic crystal operators M} and Definitions~\ref{defn:GT geom cryst} and~\ref{defn:gRSK}, we have
\begin{align*}
P' &= \Psi_n^{\leq m}\left(x_j\left((c-1)\ov{\vp}_j(\xx)\right) \cdot M(\xx_1, \ldots, \xx_m) \cdot x_j\left((c^{-1} - 1)\ov{\ve}_j(\xx)\right)\right) \\
&= \Psi_n^{\leq m}\left(x_j\left((c-1)\ov{\vp}_j(P)\right) \cdot \Phi_n^{\leq m}(P) \cdot x_j\left((c^{-1} - 1)\ov{\ve}_j(P)\right)\right) \\
&= \ov{e}_j^c(P).
\end{align*}
It remains to show that $Q' = Q$. Suppose $\ov{e}_j^c(\xx_1, \ldots, \xx_m) = (\xx_1', \ldots, \xx_m')$. The formula~\eqref{eq:basic crystal operators M} implies that for each $k \in [m]$, there are rational functions $a,a'$ in the entries of $\xx$ and $c$ such that
\[
M(\xx'_1, \ldots, \xx'_k) = x_j(a)M(\xx_1, \ldots \xx_k) x_j(a').
\]
As observed in the proof of Lemma~\ref{lem:shape preserved}, this means that $M(\xx_1, \ldots \xx_k)$ and $M(\xx'_1, \ldots, \xx'_k)$ have the same bottom-left justified minors. By definition, the entries of $Q$ and $Q'$ are ratios of bottom-left justified minors of these matrices, so we are done.
\end{proof}

Theorem~\ref{thm:gRSK isom} gives a new proof of the following result from~\cite{LP13II}.

\begin{cor}[{\cite{LP13II}}]
The basic $\GL_n$- and $\GL_m$-geometric crystal structures on $\Mat_{m \times n}(\CC^*)$ commute.
\end{cor}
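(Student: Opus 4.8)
The plan is to transport the commutation relations through the isomorphism of Theorem~\ref{thm:gRSK isom}, since there is nothing left to prove once that theorem is in hand. The key observation is that the statement ``the $\GL_n$- and $\GL_m$-structures commute'' is precisely the defining property of a $\GL_n \times \GL_m$-geometric crystal (as recorded in the footnote to Theorem~\ref{thm:gRSK isom}), and this property is manifest for $X^{\GT}$: there the $\GL_n$-operators $\ov{e}_j^c$ act only on the $P$-pattern while the $\GL_m$-operators $e_i^c$ act only on the $Q$-pattern, so the two families modify disjoint sets of coordinates and commute trivially.

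First I would set $\phi = \gRSK$ and recall that, being an isomorphism of $\GL_n \times \GL_m$-geometric crystals, $\phi$ simultaneously intertwines both the $\GL_n$- and $\GL_m$-structures. Using the same symbols for the operators on $X^{\Mat}$ and on $X^{\GT}$, this means
\[
\phi \circ e_i^c = e_i^c \circ \phi, \qquad \phi \circ \ov{e}_j^c = \ov{e}_j^c \circ \phi, \qquad \ve_i = \ve_i \circ \phi, \qquad \ov{\ve}_j = \ov{\ve}_j \circ \phi,
\]
together with the analogous identities for $\vp_i, \ov{\vp}_j, \gamma, \ov{\gamma}$.

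The commutation on $X^{\Mat}$ then follows by a one-line diagram chase against the known commutation on $X^{\GT}$. For the crystal operators,
\[
\phi \circ e_i^{c_1} \circ \ov{e}_j^{c_2} = e_i^{c_1} \circ \phi \circ \ov{e}_j^{c_2} = e_i^{c_1} \circ \ov{e}_j^{c_2} \circ \phi = \ov{e}_j^{c_2} \circ e_i^{c_1} \circ \phi = \phi \circ \ov{e}_j^{c_2} \circ e_i^{c_1},
\]
where the central equality is the commutation on $X^{\GT}$; cancelling the invertible map $\phi$ gives $e_i^{c_1} \ov{e}_j^{c_2} = \ov{e}_j^{c_2} e_i^{c_1}$ on $X^{\Mat}$. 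Running the same chase with $\ve_i$ in place of $e_i^{c_1}$, and using $\ve_i = \ve_i \circ \phi$ on the outside and $\ve_i \circ \ov{e}_j^c = \ve_i$ on $X^{\GT}$ in the middle, yields $\ve_i \circ \ov{e}_j^c = \ve_i$ on $X^{\Mat}$; the symmetric relation $\ov{\ve}_j \circ e_i^c = \ov{\ve}_j$ and the remaining identities follow identically. This establishes all the relations in the footnote to Theorem~\ref{thm:gRSK isom} on the matrix side.

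There is essentially no obstacle to surmount here: all of the content lives in Theorem~\ref{thm:gRSK isom}, and this corollary merely observes that a property which is obvious on the $X^{\GT}$ side (disjointness of the coordinates acted on by the two families) pulls back along the birational isomorphism $\gRSK$ to become the commutation statement of~\cite{LP13II} on the $X^{\Mat}$ side.
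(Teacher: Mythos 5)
Your proposal is correct and is essentially the paper's own argument: the paper derives this corollary directly from Theorem~\ref{thm:gRSK isom}, having already noted in that theorem's proof that the two structures on $X^{\GT}$ commute trivially (they act on the disjoint $P$- and $Q$-coordinates), so that the commutation on $X^{\Mat}$ follows by conjugating with the birational isomorphism $\gRSK$ exactly as in your diagram chase. Your write-up just makes that transport step explicit.
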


\subsection{Connection to Noumi and Yamada's geometric RSK}
\label{sec:max min}

Let $f \colon (\CC^*)^d \rightarrow (\CC^*)^k$ be a positive rational map. Define a piecewise-linear map $\Trop^{\dagger}(f)$ in the same way as $\Trop(f)$ was defined in \S \ref{sec:trop}, but using $\max$ instead of $\min$. By definition, we have $\Trop(x + y) = \min(x,y)$ and $\Trop^{\dagger}(x+y) = \max(x,y)$. In addition, the identity $\max(x,y) = x + y - \min(x,y)$ implies that
\[
\max(x,y) = \Trop\left(\dfrac{xy}{x+y}\right), \qquad \min(x,y) = \Trop^{\dagger}\left(\dfrac{xy}{x+y}\right).
\]

Following~\cite[\S 1.3]{NoumiYamada}, define an involution $f \mapsto f^{\dagger}$ of positive rational functions by
\[
f^{\dagger}(z_1, \ldots, z_n) = \frac{1}{f\left(\frac{1}{z_1}, \ldots, \frac{1}{z_n}\right)}.
\]
This involution interchanges the two tropicalizations; that is, $\Trop(f^{\dagger}) = \Trop^{\dagger}(f)$. This follows from the simple observation that
\[
\left(\dfrac{1}{x} + \dfrac{1}{y}\right)^{-1} = \dfrac{xy}{x+y}.
\]

The version of geometric RSK studied in~\cite{Kir01,NoumiYamada,OSZ} tropicalizes to RSK using $\Trop^{\dagger}$. We will now show that our definition of gRSK is obtained from Noumi and Yamada's by the transformation $f \mapsto f^{\dagger}$. Since the $Q$-pattern in both definitions consists of the shapes of the $P$-pattern after inserting the first several rows, it suffices to consider the $P$-patterns. Let $y_{i,j}(\xx)$ be the entries of the $P$-pattern associated to $\xx$ by the Noumi--Yamada version of gRSK. Noumi and Yamada define the $P$-pattern by a recursive ``geometric row insertion'' procedure that tropicalizes (using $\Trop^{\dagger}$) to combinatorial row insertion, and then they prove~\cite[Thm.~2.4]{NoumiYamada} that the $y_{i,j}$ are given by the formula\footnote{Thm.~2.4 in \cite{NoumiYamada} is stated in terms of coordinates $p_i^j$, which are related to our $y_{i,j}$ coordinates by $y_{i,j} = p^i_i p^i_{i+1} \cdots p^i_j$.}
\[
y_{i,j}(\xx) = \dfrac{\Delta_{[1,i], [j-i+1,j]}(H(\xx_1) \cdots H(\xx_m))}{\Delta_{[1,i-1], [j-i+2,j]}(H(\xx_1) \cdots H(\xx_m))},
\]
where $H(a^1, \ldots, a^n)$ is the upper-triangular $n \times n$ matrix defined by
\[
H(a^1, \ldots, a^n) = \sum_{i \leq j} a^i a^{i+1} \cdots a^j E_{ij}.
\]
The following result shows that our definition of the $P$-pattern is related to Noumi and Yamada's definition by the transformation $f \mapsto f^{\dagger}$, and therefore our definition of geometric RSK tropicalizes to combinatorial RSK using $\Trop$.

\begin{lemma}
\label{lem:H M identity}
Let $\xx_i^{-1} = (\frac{1}{x_i^1}, \ldots, \frac{1}{x_i^n})$. For $1 \leq i \leq m$ and $i \leq j \leq n$, we have
\begin{equation}
\label{eq:H M identity}
\dfrac{\Delta_{[1,i-1], [j-i+2,j]}(H(\xx_1^{-1}) \cdots H(\xx_n^{-1}))}{\Delta_{[1,i], [j-i+1,j]}(H(\xx_1^{-1}) \cdots H(\xx_n^{-1}))} = \dfrac{\Delta_{[i,j],[1,j-i+1]}(W(\xx_1) \cdots W(\xx_m))}{\Delta_{[i+1,j],[1,j-i]}(W(\xx_1) \cdots W(\xx_m))}.
\end{equation}
\end{lemma}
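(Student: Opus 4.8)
The plan is to reduce the claimed ratio-identity to a single matrix identity relating the two products, and then to extract minors using Jacobi's complementary-minor formula together with the lower-triangular shape of the $W$-product. Throughout write $M_W := W(\xx_1)\cdots W(\xx_m)$ and $M_H := H(\xx_1^{-1})\cdots H(\xx_m^{-1})$, both $n\times n$ matrices.

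First I would establish a factorwise relation between $H$ and $W$. Let $S = \mathrm{diag}\bigl((-1)^1,\ldots,(-1)^n\bigr)$. A direct computation of the inverse of the lower-bidiagonal matrix $W(\xx_i)$ gives $\bigl(W(\xx_i)^{-1}\bigr)_{a,b} = (-1)^{a-b}/(x_i^b\cdots x_i^a)$ for $a\geq b$, and comparing entries with $H(\xx_i^{-1})_{a,b} = 1/(x_i^a\cdots x_i^b)$ for $a\leq b$ shows
\[
H(\xx_i^{-1}) = S\,\bigl(W(\xx_i)^T\bigr)^{-1}\,S.
\]
Since $S^2 = I$ and $(A^T)^{-1}(B^T)^{-1} = \bigl((AB)^T\bigr)^{-1}$, multiplying over $i = 1,\ldots,m$ telescopes to the clean global identity
\[
M_H = S\,\bigl(M_W^{-1}\bigr)^T S.
\]

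Next I would convert minors of $M_H$ into minors of $M_W$. For index sets $I,J$ of equal size, conjugation by the diagonal sign matrix contributes a factor $(-1)^{\sigma(I)+\sigma(J)}$ (with $\sigma(I) = \sum_{a\in I}a$), transposition swaps the index sets, and Jacobi's identity $\Delta_{J,I}(M_W^{-1}) = (-1)^{\sigma(I)+\sigma(J)}\,\Delta_{I^c,J^c}(M_W)/\det M_W$ contributes a second factor $(-1)^{\sigma(I)+\sigma(J)}$. These cancel, so for all $I,J$ one gets the sign-free formula $\Delta_{I,J}(M_H) = \Delta_{I^c,J^c}(M_W)/\det M_W$. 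Applying this to $(I,J) = ([1,i-1],[j-i+2,j])$ and to $(I,J) = ([1,i],[j-i+1,j])$ produces minors of $M_W$ on rows $[i,n]$ (resp.\ $[i+1,n]$) and columns $[1,j-i+1]\cup[j+1,n]$ (resp.\ $[1,j-i]\cup[j+1,n]$), each over $\det M_W$. The final step uses that $M_W$ is lower triangular: in each such submatrix the rows indexed $\leq j$ are identically zero on the columns indexed $\geq j+1$, so the submatrix is block lower triangular and its determinant factors as the desired bottom-left minor ($\Delta_{[i,j],[1,j-i+1]}$ or $\Delta_{[i+1,j],[1,j-i]}$) times the common corner minor $\Delta_{[j+1,n],[j+1,n]}(M_W)$. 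Forming the ratio of the left-hand side's numerator and denominator, the factors $\det M_W$ and $\Delta_{[j+1,n],[j+1,n]}(M_W)$ cancel, leaving precisely the right-hand side; the boundary cases $i=1$ and $j=n$ are covered by the convention $\Delta_\emptyset = 1$.

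I expect the main obstacle to be the sign bookkeeping in the middle step: one must verify that the sign incurred by conjugating with $S$ exactly cancels the Jacobi sign for \emph{arbitrary} $I,J$, so that the formula $\Delta_{I,J}(M_H) = \Delta_{I^c,J^c}(M_W)/\det M_W$ holds with no residual sign. Once this cancellation is confirmed, the block-triangular factorization and the cancellation in the ratio are routine.
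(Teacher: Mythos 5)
Your proposal is correct and takes essentially the same route as the paper: your factorwise relation $H(\xx_i^{-1}) = S\,\bigl(W(\xx_i)^T\bigr)^{-1}S$ is exactly the paper's identity $H(\xx_i^{-1}) = W(\xx_i)^{\dagger}$ (your $S$ differs from the paper's sign matrix only by a global sign, which cancels under conjugation), your sign-free formula $\Delta_{I,J}(M_H) = \Delta_{I^c,J^c}(M_W)/\det M_W$ is the paper's cited identity for minors of $A^{\dagger}$, and both arguments finish with the same block-lower-triangular factorization and cancellation of $\det M_W$ and $\Delta_{[j+1,n],[j+1,n]}(M_W)$. The sign cancellation you flag as the main obstacle does work out exactly as you computed; the paper absorbs it into the definition of $\dagger$ and quotes the complementary-minor formula as well known.
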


\begin{proof}
For an invertible $n \times n$ matrix $A$, define $A^{\dagger}$ to be the matrix obtained from the transposed inverse $(A^{-1})^t$ by multiplying the $i$th row and column by $(-1)^{i-1}$. It is well-known that the minors of $A$ and $A^{\dagger}$ are related by
\begin{equation}
\label{eq:Jacobi}
\Delta_{I,J}(A^{\dagger}) = \dfrac{\Delta_{[n] \setminus I, [n] \setminus J}(A)}{\det(A)}.
\end{equation}

Set $H = H(\xx_1^{-1}) \cdots H(\xx_m^{-1})$ and $M = W(\xx_1) \cdots W(\xx_m)$. It is easy to see that $H(\xx_i^{-1}) = W(\xx_i)^{\dagger}$, so $H = M^{\dagger}$, and~\eqref{eq:H M identity} asserts that
\[
\dfrac{\Delta_{[1,i-1], [j-i+2,j]}(M^{\dagger})}{\Delta_{[1,i], [j-i+1,j]}(M^{\dagger})} = \dfrac{\Delta_{[i,j],[1,j-i+1]}(M)}{\Delta_{[i+1,j],[1,j-i]}(M)}.
\]
Using~\eqref{eq:Jacobi}, this is equivalent to the identity
\begin{equation}
\label{eq:H M to prove}
\dfrac{\Delta_{[i,n], [1,j-i+1] \cup [j+1,n]}(M)}{\Delta_{[i+1,n], [1,j-i] \cup [j+1,n]}(M)} = \dfrac{\Delta_{[i,j],[1,j-i+1]}(M)}{\Delta_{[i+1,j],[1,j-i]}(M)}.
\end{equation}
Since $M$ is lower-triangular, the submatrices $M_{[i,j],[j+1,n]}$ and $M_{[i+1,j],[j+1,n]}$ consist entirely of zeroes, so we have
\[
\dfrac{\Delta_{[i,n], [1,j-i+1] \cup [j+1,n]}(M)}{\Delta_{[i+1,n], [1,j-i] \cup [j+1,n]}(M)} = \dfrac{\Delta_{[i,j],[1,j-i+1]}(M) \Delta_{[j+1,n],[j+1,n]}(M)}{\Delta_{[i+1,j],[1,j-i]}(M) \Delta_{[j+1,n],[j+1,n]}(M)},
\]
proving~\eqref{eq:H M to prove}.
\end{proof}

Next, we consider the local move formulation. In~\cite{OSZ}, geometric RSK is written as a composition
\[
\rho' = ({\rho'}_n^m \circ \cdots \circ {\rho'}_1^m) \circ \cdots \circ ({\rho'}_n^1 \circ \cdots \circ {\rho'}_1^1)
\]
(the roles of $m$ and $n$ are reversed in that paper). It is clear from the discussion following~\cite[Eq.~(3.4)]{OSZ} that ${\rho'}_b^a$ differs from the map $\rho_a^b$ defined in \S\ref{sec:gRSK local} by interchanging the functions $\gMax$ and $\gMin$ in~\eqref{eq:eta T}. The $\dagger$-involution interchanges $\gMax$ and $\gMin$, so we have ${\rho'}_b^a = (\rho_a^b)^{\dagger}$; since the $\dagger$-involution commutes with composition of rational functions, we have $\rho = ({\rho'})^{\dagger}$. It is shown in~\cite[\S 4]{OSZ} that the map $\rho'$ agrees with Noumi and Yamada's definition of geometric RSK in terms of geometric row insertion; together with the preceding discussion, this proves Theorem~\ref{thm:local}.

\subsection{Decorations and gRSK}
\label{sec:dec again}

Recall that for $\xx \in \Mat_{m \times n}(\CC^*)$, the decoration $F(\xx)$ is defined to be the sum of the $x_i^j$ (whether we view $\xx$ as a point of $(X_n)^m$ or of $(X_m)^n$). We now show that $F(\xx)$ is (almost) the sum of the decorations of the $P$- and $Q$-patterns associated to $\xx$ by $\gRSK$. This result is essentially a special case of~\cite[Thm.~3.2]{OSZ}, but since they use a different definition of $\gRSK$, we provide a proof below.

\begin{thm}[\cite{OSZ}]
\label{thm:dec}
Suppose $\xx \in \Mat_{m \times n}(\CC^*)$. If $\gRSK(\xx) = (P,Q)$, then
\[
F(\xx) = F(P) + F(Q) + \delta_{m,n} z_{n,n},
\]
where $\delta_{m,n}$ is the Kronecker delta, and $z_{n,n}$ is the last part of the shape of $P$ and $Q$ (when $m = n$).
\end{thm}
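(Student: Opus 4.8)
The plan is to interpret both sides as decorations on the two geometric crystal structures that $\Mat_{m\times n}(\CC^*)$ carries, and then to compare them through the isomorphism gRSK. Write $F_{\Mat}(\xx)=\sum_{i,j}x_i^j$ for the sum of all matrix entries, and $G(P,Q)=F(P)+F(Q)$ for the sum of the Gelfand--Tsetlin decorations of Definition~\ref{defn:GT dec}. By Example~\ref{ex:basic dec}, $F_{\Mat}$ is a decoration for the $\GL_m$-structure on $X^{\Mat}=(X_m)^n$; applying the same example after interchanging the roles of $m$ and $n$ (and using that $\sum_{i,j}x_i^j$ is transpose-invariant) shows it is simultaneously a decoration for the $\GL_n$-structure on $(X_n)^m$.

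First I would check that $G$ is likewise a decoration on $X^{\GT}$ for both structures. Since the $\GL_n$-operators act by $\ov e_j^c(P,Q)=(\ov e_j^c P, Q)$ with $Q$ a spectator, the summand $F(Q)$ is $\GL_n$-invariant while $F(P)$ satisfies the decoration identity~\eqref{eq:dec} on $\GT_n^{\leq m}$ (this is exactly the content of §\ref{sec:dec} and Lemma~\ref{lem:dec formula}); the $\GL_m$-case is symmetric. Because gRSK is an isomorphism of $\GL_n\times\GL_m$-geometric crystals (Theorem~\ref{thm:gRSK isom}) and the decoration identity is phrased solely in terms of $\gamma$, $\ve_i$, $\vp_i$, $e_i^c$, the pullback $G\circ\gRSK$ is again a decoration on $X^{\Mat}$ for both structures. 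Hence the difference $D:=F_{\Mat}-G\circ\gRSK$ obeys the decoration identity with vanishing right-hand side for every $e_i^c$ and every $\ov e_j^c$; in other words, $D$ is invariant under both families of geometric crystal operators.

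The crux is then to identify this invariant with $\delta_{m,n}z_{n,n}$. Transported to $X^{\GT}$, $D$ is unchanged when the $\GL_n$-operators move $P$ inside its shape-fibre and when the $\GL_m$-operators move $Q$ inside its shape-fibre, so once one knows these operators sweep out a dense orbit on each fibre (as established in §\ref{sec:e}, with no circularity since the proof of the main theorem does not invoke the present section), $D$ depends only on the common shape $\sh(P)=\sh(Q)=(z_{1,n},\dots,z_{p,n})$. It then remains to compute $D$ as a function of the shape variables. I would do this by restricting to a slice on which gRSK is explicit---for example a family in the spirit of Example~\ref{ex:gRSK insertion}---and tracking which terms of $F(P)$ and $F(Q)$ fail to cancel against $F_{\Mat}$; the surviving contributions are precisely the corner terms governed by the width/height indicators $\mathbbm{1}_{m<n}$ and $\mathbbm{1}_{n<m}$ of Definition~\ref{defn:GT dec}, and bookkeeping them yields $D=0$ for $m\ne n$ and $D=z_{n,n}$ for $m=n$.

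The main obstacle is exactly this last identification: the Gelfand--Tsetlin decorations carry asymmetric boundary terms at the corner $z_{n,n}$ whose indicators behave differently in the three regimes $m<n$, $m>n$, and $m=n$, so getting the factor $\delta_{m,n}$ right requires careful control of these corner contributions rather than any new idea. A fully self-contained alternative that bypasses the invariant-theoretic input of §\ref{sec:e} is to argue directly through the local moves $\rho=\tau\circ\eta$ of Lemma~\ref{lem:toggles commute}(2): one checks that each $\eta_a^b$ and each toggle $T_a^b$ alters the running sum of entries by a telescoping amount, so that summing over all positions reorganizes $\sum_{i,j}x_i^j$ into $F(P)+F(Q)$ up to the single unremovable contribution at position $(m,n)$. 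This is the strategy of O'Connell--Sepp\"al\"ainen--Zygouras~\cite{OSZ}, and it is the route I would ultimately write out in full.
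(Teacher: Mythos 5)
Your first route is genuinely different from the paper's, and its invariant-theoretic core is sound: $F(\xx)=\sum x_i^j$ is a decoration for both families by Example~\ref{ex:basic dec} and transpose symmetry; $F(P)+F(Q)$ satisfies~\eqref{eq:dec} on $X^{\GT}$ for both families (Lemma~\ref{lem:dec formula} plus the spectator role of the other factor); and since $\gRSK$ preserves $\ve_i,\vp_i,\ov{\ve}_j,\ov{\vp}_j$ and intertwines the operators (Theorem~\ref{thm:gRSK isom}), the difference $D=F-\bigl(F(P)+F(Q)\bigr)\circ\gRSK$ is an $e\ov{e}$-invariant, hence by Corollary~\ref{cor:Z generators} (or Theorem~\ref{thm:connected} applied fibrewise) a rational function of the shape alone --- and you are right that there is no circularity, since \S\ref{sec:e} nowhere uses Theorem~\ref{thm:dec}. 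The genuine gap is the final evaluation: ``tracking which terms fail to cancel'' on a slice is not mere bookkeeping. Since $D$ is a priori an arbitrary rational function of $(z_{1,n},\ldots,z_{p,n})$, you must compute $F(\xx)-F(P)-F(Q)$ on a family of matrices whose shapes sweep out a Zariski-dense subset of $(\CC^*)^p$ and on which both $\xx$ and $\gRSK(\xx)$ are explicit; producing such a family (e.g.\ by inverting $\gRSK$ on a canonical point of each shape fibre) is essentially as hard as the direct proof, and you never carry it out. Checking instances like Example~\ref{ex:gRSK insertion} pins down $D$ only for those particular $(m,n)$.

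Your fallback is indeed the paper's proof (following \cite{OSZ}), but as sketched it omits the one real idea and misplaces the corner. The sum of entries is not itself tracked through the local moves: the paper introduces the hybrid statistic $G(\xx)=x_1^1+\sum_{j\geq 2} x_i^j/x_i^{j-1}+\sum_{i\geq 2} x_i^j/x_{i-1}^j$ and proves two separate identities, \eqref{eq:dec_first} and \eqref{eq:dec_second}: first $G\circ\eta=F$, via the interpolating family $G_S$ indexed by order ideals $S\subseteq[m]\times[n]$ with $G_{S'}\circ\eta_a^b=G_S$; and second that $G$ is \emph{exactly} invariant under every toggle actually used --- not corrected by a telescoping amount --- because the $x_a^b$-dependent part of $G$ is $x_a^b/\gMax_a^b(\xx)+\gMin_a^b(\xx)/x_a^b$, which $T_a^b$ fixes for $(a,b)\neq(m,n)$. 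Moreover, the discrepancy $\delta_{m,n}z_{n,n}$ does not arise at position $(m,n)$ as you claim (the toggle $T_m^n$ simply never occurs in $\tau$, so nothing is ``left unremoved'' there); it arises at the opposite corner: the term $y_1^1$ in $G(\yy)$ equals $z_{m,m}$ if $m<n$ and $z'_{n,n}$ if $n<m$, where it is absorbed by the indicator terms of Definition~\ref{defn:GT dec}, but equals the unabsorbed shape entry $z_{n,n}$ exactly when $m=n$ --- whence the Kronecker delta. Without the statistic $G$ and this corner analysis, the ``telescoping'' claim is a restatement of the theorem rather than a proof of it.
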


\begin{proof}
Let $\yy = (y_i^j)$ be the $m \times n$ matrix formed by gluing $P$ and $Q$ along their common shape. By our convention, $P$ occupies the lower left corner of $\yy$ and the transpose of $Q$ occupies the upper right corner, with the common shape $(z_1, \ldots, z_p)$ given by the diagonal $(y_m^n, y_{m-1}^{n-1}, \ldots, y_{m-p+1}^{n-p+1})$. Define
\[
G(\xx) = x_1^1 + \sum_{1 \leq i \leq m, 2 \leq j \leq n} \dfrac{x_i^j}{x_i^{j-1}} + \sum_{2 \leq i \leq m, 1 \leq j \leq n} \dfrac{x_i^j}{x_{i-1}^j},
\]
so that
\[
G(\yy) = F(P) + F(Q) + \delta_{m,n} z_{n,n}.
\]
By Theorem~\ref{thm:local} and Lemma~\ref{lem:toggles commute}, we have $\yy = (\tau \circ \eta)(\xx)$, so it suffices to show that $G((\tau \circ \eta)(\xx)) = F(\xx)$. In fact, we will show that for any $m \times n$ matrix $\xx$, 
\begin{subequations}
\label{eq:dec thm to prove}
\begin{align}
G(\eta(\xx)) & = F(\xx),  \label{eq:dec_first}
\\
G(\tau(\xx)) & = G(\xx).  \label{eq:dec_second}
\end{align}
\end{subequations}

We first prove~\eqref{eq:dec_second}. The map $\tau$ is a composition of the local maps $T_a^b$ with $a < m$ and $b < n$; we will show that each of these maps is $f$-invariant. By definition, $T_a^b$ replaces $x_a^b$ with
\[
\wh{x}_a^b = \dfrac{1}{x_a^b} \gMax_a^b(\xx) \gMin_a^b(\xx).
\]
If $(a,b) \neq (m,n)$, then the terms of $G(\xx)$ which depend on $x_a^b$ can be written as
\[
\dfrac{x_a^b}{\gMax_a^b(\xx)} + \dfrac{\gMin_a^b(\xx)}{x_a^b},
\]
and it is clear that replacing $x_a^b$ with $\wh{x}_a^b$ does not change this sum. (Note that this argument would fail for $a = b = 1$ if $G(\xx)$ did not contain the term $x_1^1$.)

Now we prove~\eqref{eq:dec_first}. For an order ideal $S$ of the coordinate-wise partial order on $[m] \times [n]$, define
\[
G_S(\xx) = \sum_{(a,b) \in S} \dfrac{x_a^b}{\gMax_a^b(\xx)} + \sum_{(a,b) \not \in S} x_a^b.
\]
It is easy to see that $G_\emptyset(\xx) = F(\xx)$ and $G_{[m] \times [n]}(\xx) = G(\xx)$. Suppose $S'$ is an order ideal obtained by adding a single element $(a,b)$ to $S$. The map $\eta_a^b$ replaces $x_a^b$ with $x_a^b \gMax_a^b(\xx)$. The expressions $\gMax_i^j(\xx)$ for $(i,j) \in S$ do not depend on $x_a^b$, so we have
\[
G_{S'}(\eta_a^b(\xx)) = G_S(\xx).
\]
Since $\eta$ is the composition of the maps $\eta_a^b$ along a linear extension of the lattice $[m] \times [n]$, we conclude that $G_{[m] \times [n]}(\eta(\xx)) = G_\emptyset(\xx)$.
\end{proof}

\subsection{Central charge}
\label{sec:central charge}

Berenstein and Kazhdan defined a function called the \defn{central charge} on products of decorated unipotent crystals\footnote{Technically, the central charge is only defined on products of unipotent crystals that come from \defn{unipotent bicrystals}.}~\cite{BK07}. This function is invariant under the geometric crystal operators, so if it is positive, its tropicalization is constant on each connected component of the corresponding tensor product of combinatorial crystals, and thus defines a $q$-analogue of tensor product multiplicity. It is unknown in general whether the central charge is positive, and even in the cases where it is known to be positive (such as the product of ``full'' Gelfand--Tsetlin geometric crystals $\GT_n$), this $q$-analogue does not seem to have received much attention.

For the product $(X_n)^m$ of the $\GL_n$-unipotent crystal $X_n$, the central charge is given by
\[
\Delta(\xx) = F(\xx) - F(P),
\]
where $\gRSK(\xx) = (P,Q)$. Theorem~\ref{thm:dec} and Corollary~\ref{cor:gRSK props}(2) yield the following.

\begin{prop}
\label{prop:cc}
The central charge of the $\GL_n$-geometric crystal $(X_n)^m$ is given by
\begin{equation}
\label{eq:Delta}
\Delta(\xx) = F(Q) + \delta_{m,n} z_{n,n},
\end{equation}
where $z_{n,n}$ is the last part of the shape of $P$ and $Q$ (when $m=n$). In particular, the central charge is positive.
\end{prop}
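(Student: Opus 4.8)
The plan is to obtain the formula by a one-line substitution and then read off positivity from the positivity of gRSK; the substance of the argument is already contained in Theorem~\ref{thm:dec}, so the present statement is essentially a corollary.

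First I would recall that the central charge is defined by $\Delta(\xx) = F(\xx) - F(P)$, where $\gRSK(\xx) = (P,Q)$. The formula~\eqref{eq:Delta} is then immediate from Theorem~\ref{thm:dec}, which asserts $F(\xx) = F(P) + F(Q) + \delta_{m,n} z_{n,n}$. Substituting this into the definition and cancelling the $F(P)$ terms gives
\[
\Delta(\xx) = \bigl(F(P) + F(Q) + \delta_{m,n} z_{n,n}\bigr) - F(P) = F(Q) + \delta_{m,n} z_{n,n},
\]
which is exactly~\eqref{eq:Delta}.

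For the positivity claim, the point to emphasize is that, although the defining expression $F(\xx) - F(P)$ superficially involves a subtraction, the right-hand side $F(Q) + \delta_{m,n} z_{n,n}$ is manifestly subtraction-free. By Corollary~\ref{cor:gRSK props}(2), gRSK is a positive rational map, so each entry of the $Q$-pattern is a positive rational function of the matrix entries $x_i^j$. By Definition~\ref{defn:GT dec}, the decoration $F(Q)$ is a sum of ratios of these entries, and is therefore itself positive; similarly, the shape coordinate $z_{n,n}$ is positive. A sum of positive rational functions is positive, so $\Delta(\xx)$ is positive.

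I do not expect any genuine obstacle here: the real work was carried out in Theorem~\ref{thm:dec}, and what remains is bookkeeping combined with the already-established positivity of gRSK. The only subtlety worth flagging explicitly is that positivity is invisible in the defining formula $F(\xx) - F(P)$ and becomes apparent only after the rewriting afforded by Theorem~\ref{thm:dec}.
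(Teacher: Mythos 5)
Your proposal is correct and is exactly the paper's argument: the paper derives Proposition~\ref{prop:cc} directly from Theorem~\ref{thm:dec} (substituting $F(\xx)=F(P)+F(Q)+\delta_{m,n}z_{n,n}$ into $\Delta(\xx)=F(\xx)-F(P)$) and obtains positivity from the positivity of gRSK via Corollary~\ref{cor:gRSK props}(2), just as you do.
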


The positivity of the central charge in this case was not previously known.

\medskip

The components of the tensor product of one-row $\GL_n$-crystals are labeled by the RSK recording tableau $Q$, and~\eqref{eq:Delta} gives a simple description of the $q$-weight assigned to $Q$ by the tropicalization of the central charge. Interestingly, this $q$-weight is completely different from---and much simpler than---the charge statistic (or closely related energy function), which appears in the ``usual'' $q$-analogue of tensor product multiplicity for the product of one-row tableaux~\cite{LS78, NY97}.

\begin{ex}
\label{ex:central charge}
Let $m=n=2$, and consider the $\GL_2$-representation $\Sym^{\mu_1}(\CC^2) \otimes \Sym^{\mu_2}(\CC^2)$, with $\mu_1 \geq \mu_2$. The irreducible components of this representation are labeled by the semistandard Young tableaux of content $(\mu_1, \mu_2)$, or equivalently, by the Gelfand--Tsetlin patterns
\[
\begin{array}{ccccccc}
& \mu_1 \\
\mu_1 + k && \mu_2-k
\end{array},
\qquad 0 \leq k \leq \mu_2.
\]
By Proposition~\ref{prop:cc}, the tropicalization of the central charge evaluates to
\[
\min(k, \mu_1-\mu_2+k,\mu_2-k) = \min(k,\mu_2-k)
\]
on these Gelfand--Tsetlin patterns. Thus, the $q$-analogue of the character of $\Sym^{\mu_1}(\CC^2) \otimes \Sym^{\mu_2}(\CC^2)$ coming from the tropicalization of the central charge is
\[
s_{\mu_1,\mu_2} + q s_{\mu_1+1, \mu_2-1} + q^2 s_{\mu_1+2, \mu_2-2} + \ldots + q^2 s_{\mu_1+\mu_2-2,2} + q s_{\mu_1+\mu_2-1,1} + s_{\mu_1+\mu_2},
\]
where $s_\la$ denotes a Schur polynomial in two variables. By contrast, the charge-graded analogue of the character of $\Sym^{\mu_1}(\CC^2) \otimes \Sym^{\mu_2}(\CC^2)$ is the modified Hall--Littlewood polynomial
\[
Q'_{(\mu_1,\mu_2)}(x_1,x_2;q) = \sum_{k=0}^{\mu_2} q^k s_{\mu_1+k,\mu_2-k}.
\]
\end{ex}

\begin{ex}
For $m > 2$, the difference between the central charge-graded $q$-analogue and the modified Hall--Littlewood polynomial is more pronounced. For example, when $m=3,n=2,$ and $(\mu_1,\mu_2,\mu_3) = (4,3,2)$, the central charge-graded $q$-analogue of the character of $\Sym^{\mu_1}(\CC^2) \otimes \Sym^{\mu_2}(\CC^2) \otimes \Sym^{\mu_3}(\CC^2)$ is
\[
s_9 + (1+1)s_{81} + (1+1+q)s_{72} + (1+1+q)s_{63} + (1+1)s_{54},
\]
whereas
\[
Q'_{432}(x_1,x_2;q) = q^7 s_9 + (q^5 + q^6) s_{81} + (q^3+q^4+q^5) s_{72} + (q^2+q^3+q^4) s_{63} + (q^2+q^3) s_{54}.
\]
\end{ex}

\section{Invariants of the geometric crystal operators}
\label{sec:e}

For the remainder of the paper, we work in the field $\CC(X)$ of rational functions in the $mn$ indeterminates $X = \{x_i^j \mid i \in [m], j \in [n]\}$. The actions of the $\GL_m$-geometric crystal operators $e_i^c$ (resp., the $\GL_n$-geometric crystal operators $\ov{e}_j^c$) on $(X_m)^n$ (resp., $(X_n)^m$) induce birational actions on $\CC(X)$, which we denote by the same symbols. We write $\Inv_e$ (resp., $\Inv_{\ov{e}}$) for the subfield of $\CC(X)$ consisting of fixed points of the $e_i^c$ (resp., $\ov{e}_j^c$), and we refer to these fixed points as \defn{$e$-invariants} (resp., \defn{$\ov{e}$-invariants}). We also write $\Inv_{e \ov{e}}$ for the intersection $\Inv_e \cap \Inv_{\ov{e}}$, whose elements we call \defn{$e \ov{e}$-invariants}.

Set $Z = \{z_{i,j}, z'_{j,i}\} \subset \CC(X)$, where $z_{i,j}$ and $z'_{j,i}$ denote the rational functions in the $x_i^j$ which give the entries of the $P$-pattern and $Q$-pattern, respectively (note that $Z$ contains only one copy of the entries $z_{k,n} = z'_{k,m}$ of the common shape). Since geometric RSK is a birational isomorphism, it induces an automorphism $\gRSK \colon \CC(X) \rightarrow \CC(X)$ which sends the elements of $X$ to the elements of $Z$. It follows that the elements of $Z$ are algebraically independent generators of $\CC(X)$, so we may view this field alternatively as the field $\CC(Z)$ of rational functions in indeterminates $Z$.

In \S \ref{sec:generators}, we describe two algebraically independent generating sets for each of the fields $\Inv_e$, $\Inv_{\ov{e}}$, and $\Inv_{e \ov{e}}$, one consisting of polynomials in the $X$ variables, and one consisting of polynomials in the $Z$ variables. We reduce the proof that these are indeed generating sets to Theorem~\ref{thm:main Z}, which is proved in \S \ref{sec:connectedness}.

\subsection{Generating sets}
\label{sec:generators}
Since the $\GL_m$-geometric crystal operators act only on the $Q$-pattern and geometric RSK commutes with the geometric crystal operators, one see immediately that the $z_{i,j}$ are $e$-invariants. The crucial step in proving Theorem~\ref{thm:main}(a) is the following result.

\begin{thm}
\label{thm:main Z}
The subfield $\Inv_e \subset \CC(Z)$ is equal to $\CC(z_{i,j})$.
\end{thm}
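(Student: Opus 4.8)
Since the $z_{i,j}$ are already known to be $e$-invariant, we have $\CC(z_{i,j}) \subseteq \Inv_e$, and the whole content of the statement is the reverse inclusion $\Inv_e \subseteq \CC(z_{i,j})$. The plan is to read off the invariants from the geometry of the $X^{\GT}$ picture. Under the isomorphism $\gRSK \colon X^{\Mat} \to X^{\GT}$ of Theorem~\ref{thm:gRSK isom}, the $\GL_m$-geometric crystal operators $e_i^c$ fix the entire $P$-pattern and act only on the $Q$-pattern, and (by the analogue of Lemma~\ref{lem:shape preserved} for $\GT_m^{\leq n}$) they preserve its shape. Hence the group $G = \langle e_i^c : i \in [m-1],\, c \in \CC^* \rangle$ acts along the fibers of the projection $\pi \colon \GT_n^{\leq m} * \GT_m^{\leq n} \to \GT_n^{\leq m}$ remembering only $P$: the fiber over a pattern $P$ of shape $\la$ is the shape-slice $\{Q \in \GT_m^{\leq n} : \sh(Q) = \la\}$, which is an irreducible coordinate subtorus, and $G$ acts on it through the Gelfand--Tsetlin geometric crystal structure on $\GT_m^{\leq n}$.

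The reduction rests on two ingredients. The first is the elementary fact that if an irreducible variety carries a rational action of a group with a dense orbit, then every invariant rational function is constant, since such a function takes a single value on the dense orbit. The second, and genuinely substantive, ingredient is the dense-orbit statement (Theorem~\ref{thm:connected}): for each fixed shape $\la$, the operators $e_i^c$ act on the slice $\{Q : \sh(Q) = \la\}$ with a dense orbit. Granting these, I would argue as follows. Writing $\CC(Z) = \CC(z_{i,j})(w_1, \ldots, w_r)$, where $w_1, \ldots, w_r$ are the non-shape $Q$-pattern coordinates, take $f \in \Inv_e$ and specialize the $z_{i,j}$ to a generic point $P_0$ of shape $\la_0$. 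Because $G$ fixes $P$, the restriction $f|_{P = P_0}$ is a $G$-invariant rational function on the irreducible slice $\{Q : \sh(Q) = \la_0\}$, hence is a constant by the two ingredients above. Thus $f$ is constant along a Zariski-dense family of fibers of $\pi$; since we work in characteristic $0$, each partial derivative $\partial f / \partial w_k$ vanishes on the dense set $\{P \text{ generic}\} \times \{Q\}$ and is therefore identically zero, so $f$ involves none of the $w_k$ and lies in $\CC(z_{i,j})$.

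I expect the main obstacle to be Theorem~\ref{thm:connected}; the field-theoretic reduction above is essentially formal. To establish the dense orbit, I would first observe via a dimension count that the shape-slice has dimension at most $\ell(w_0)$, where $w_0$ is the longest element of $S_m$, and then produce an explicit reduced-word parametrization: for a suitable base point, the map $(c_1, \ldots, c_{\ell(w_0)}) \mapsto e_{i_1}^{c_1} \cdots e_{i_{\ell(w_0)}}^{c_{\ell(w_0)}}(Q_0)$ associated to a reduced word $i_1 \cdots i_{\ell(w_0)}$ for $w_0$ traces out a single $G$-orbit inside the slice, and one must show this orbit map is dominant. The tropical shadow of the statement is simply the connectedness of the combinatorial $\GL_m$-crystal $\mc{B}(\la)$, which is classical; the real work is to upgrade this connectedness into a dense geometric orbit, and it is precisely here that the explicit geometric-crystal parametrizations of Kanakubo--Nakashima~\cite{KanNak} and the structure results of Kashiwara--Nakashima--Okado~\cite{KNO10} are needed. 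This dominance argument, rather than the extraction of invariants, is the technical heart of the proof.
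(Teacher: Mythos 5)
Your proposal is correct, and it rests on exactly the same two substantive inputs as the paper: Theorem~\ref{thm:gRSK isom} (so that the $e_i^c$ fix the $P$-pattern and act shape-preservingly on the $Q$-pattern) and the dense-orbit statement of Theorem~\ref{thm:connected}. Where you diverge is in the formal mechanism for extracting $\Inv_e = \CC(z_{i,j})$ from the dense orbit. The paper runs a transcendence-degree count: the $z_{i,j}$ give $\operatorname{trdeg} \Inv_e \geq \abs{P}$; Lemma~\ref{lem:orbits} (every point lies in an orbit closure of dimension $mn - \abs{P}$, and generic fibers of a dominant morphism built from independent invariants must contain these orbit closures) gives $\operatorname{trdeg} \Inv_e \leq \abs{P}$; and Lemma~\ref{lem:no finite extensions} (a purely transcendental extension admits no proper intermediate algebraic extension of the base) upgrades ``algebraic over $\CC(z_{i,j})$'' to equality. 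You instead specialize $P$ to a generic point, observe that $f|_{P=P_0}$ is an invariant rational function on an irreducible slice with a dense orbit and hence constant, and then kill the residual $Q$-variables $w_k$ by the characteristic-zero derivation argument $\partial f/\partial w_k \equiv 0$. Both are valid; your route is arguably more direct and avoids the two auxiliary field-theory lemmas entirely, while the paper's dimension count is stated so as to sidestep the point-set delicacies of rational actions (your phrase ``takes a single value on the dense orbit'' needs the standard care that the orbit of a composition of rational maps may partially avoid the domain of $f$; one argues on the dense constructible part where all compositions are defined, or works with orbit closures as in Lemma~\ref{lem:orbits} --- though the paper's own proof of that lemma invokes constancy on orbits at the same level of rigor). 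One small correction to your closing remarks: the tropical shadow of Theorem~\ref{thm:connected} is not the classical connectedness of the highest-weight crystal $\mc{B}(\la)$, but the connectedness of the \emph{free} crystal $B_{\mb{i}} = B_{i_1} \otimes \cdots \otimes B_{i_\ell}$ obtained by tropicalizing the reduced-word parametrization of the fixed-shape slice; this is precisely the nontrivial theorem of Kanakubo--Nakashima~\cite{KanNak}, which Kashiwara--Nakashima--Okado~\cite{KNO10} then convert into a dense geometric orbit, so your instinct about where the real work lies is right even though the cited combinatorial statement is the wrong one. Since Theorem~\ref{thm:connected} is available as a prior result, this does not affect the validity of your proof of the theorem at hand.
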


We will prove Theorem~\ref{thm:main Z} in \S\ref{sec:connectedness}.

\begin{cor}
\label{cor:Z generators}
$\Inv_{\ov{e}}$ is generated by the $z'_{j,i}$, and $\Inv_{e \ov{e}}$ is generated by the entries $z_{1,n}, \ldots, z_{\min(m,n),n}$ of the common shape.
\end{cor}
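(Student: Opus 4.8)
The plan is to obtain both statements from Theorem~\ref{thm:main Z} by exploiting the transpose symmetry of $\gRSK$. First I would prove $\Inv_{\ov{e}} = \CC(z'_{j,i})$ by applying Theorem~\ref{thm:main Z} to the transposed matrix $\xx^t \in \Mat_{n \times m}(\CC^*)$. Under the identification of $\Mat_{n \times m}$ with $\Mat_{m \times n}$ by transposition, the row ($\GL_n$) operators of $\xx^t$ are exactly the column operators $\ov{e}_j$ of $\xx$, and by Corollary~\ref{cor:gRSK props}(3) we have $\gRSK(\xx^t) = (Q,P)$, so the $P$-pattern of $\xx^t$ is the $Q$-pattern of $\xx$, whose entries are the $z'_{j,i}$. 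Theorem~\ref{thm:main Z}, read in this transposed setting, then asserts precisely that the field of invariants of the $\ov{e}_j$ is generated by the $z'_{j,i}$, giving $\Inv_{\ov{e}} = \CC(z'_{j,i})$.

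For the second statement I would combine the two descriptions just obtained. By Theorem~\ref{thm:main Z} and the first part,
\[
\Inv_{e\ov{e}} = \Inv_e \cap \Inv_{\ov{e}} = \CC(z_{i,j}) \cap \CC(z'_{j,i}).
\]
Recall from the text that $\gRSK$ carries the algebraically independent set $X$ onto $Z$, so $Z$ is an algebraically independent generating set of $\CC(X) = \CC(Z)$. The $P$-variable set $A = \{z_{i,j}\}$ and the $Q$-variable set $B = \{z'_{j,i}\}$ are subsets of $Z$ whose intersection is exactly the common-shape set $\{z_{1,n}, \ldots, z_{p,n}\}$ with $p = \min(m,n)$, since $z_{k,n} = z'_{k,m}$ and no other entries are shared. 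Thus the statement reduces to the elementary fact that, for a set $Z$ of algebraically independent indeterminates and subsets $A, B \subseteq Z$, one has $\CC(A) \cap \CC(B) = \CC(A \cap B)$.

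I would prove this last fact using the standard criterion, valid over a field of characteristic zero, that an element $f \in \CC(Z)$ lies in $\CC(S)$ for $S \subseteq Z$ if and only if $\partial f/\partial t = 0$ for every $t \in Z \setminus S$. If $f \in \CC(A) \cap \CC(B)$, then its partial derivatives vanish for every $t \notin A$ and for every $t \notin B$, hence for every $t \in (Z \setminus A) \cup (Z \setminus B) = Z \setminus (A \cap B)$, so $f \in \CC(A \cap B)$; applying this yields $\Inv_{e\ov{e}} = \CC(z_{1,n}, \ldots, z_{p,n})$. The main obstacle here is bookkeeping rather than mathematics: one must check carefully that transposition matches the $\GL_n$-operators correctly and that the $P$- and $Q$-variable sets overlap in precisely the $p$ common-shape coordinates, so that $A \cap B$ is exactly the claimed generating set. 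Once Theorem~\ref{thm:main Z} is granted, no further geometric-crystal input is needed, and the only genuine lemma is the intersection fact above, whose proof via vanishing partial derivatives is routine over $\CC$.
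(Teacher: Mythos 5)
Your proposal is correct and takes essentially the same route as the paper: the first assertion via Theorem~\ref{thm:main Z} combined with the transpose symmetry of $\gRSK$ (Corollary~\ref{cor:gRSK props}(3)), and the second via the identity $\CC(A) \cap \CC(B) = \CC(A \cap B)$ for subsets $A,B$ of the algebraically independent set $Z$, with $A \cap B$ being exactly the $\min(m,n)$ common-shape entries. The only difference is that you supply an explicit (and valid, since $Z$ is finite and we are in characteristic zero) partial-derivative proof of that intersection fact, which the paper simply asserts as a consequence of algebraic independence.
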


\begin{proof}
The first assertion follows immediately from Theorem~\ref{thm:main Z} and the symmetry of $\gRSK$ (Corollary~\ref{cor:gRSK props}(3)). Since the set $Z$ is algebraically independent, we have
\[
\Inv_e \cap \Inv_{\ov{e}} = \CC(z_{i,j}) \cap \CC(z'_{j,i}) = \CC(\{z_{i,j}\} \cap \{z'_{j,i}\}) = \CC(z_{k,n}),
\]
which proves the second assertion.
\end{proof}

Before giving our generating sets consisting of polynomials in the $X$ variables, we explicitly describe the entries of the $n \times n$ matrix $M(\xx) = W(\xx_1) \cdots W(\xx_m)$ defined in \S \ref{sec:unipotent}. For $k \in [m]$ and $r \in [n]$ such that $k+r \geq m+1$, define
\[
\lE{k}{r}(\xx) = \sum_{1 \leq i_1 < i_2 < \cdots < i_k \leq m} x_{i_1}^{r+1-i_1} x_{i_2}^{r+2-i_2} \cdots x_{i_k}^{r+k-i_k}.
\]
The assumptions on $k$ and $r$ guarantee that the superscripts lie in $[n]$. Note that if we ignore superscripts, each $\lE{k}{r}(\xx)$ is just the $k$th elementary symmetric polynomial in $m$ variables. We call $\lE{k}{r}$ a \defn{$P$-type loop elementary symmetric function} (this terminology will be explained in \S \ref{sec:R}).

It is straightforward to prove by induction on $m$ that
\[
M(\xx)_{ij} = \begin{cases}
\lE{m+j-i}{i}(\xx) & \text{ if } i \geq j \text{ and } m+j-i > 0 \\
1 & \text{ if } m+j-i = 0 \\
0 & \text{ otherwise}
\end{cases}.
\]
This shows that $M(\xx)$ lies in $(B^-)^{\leq m}$, and its non-trivial entries (\textit{i.e.}, entries not equal to $0$ or $1$) are the $P$-type loop elementary symmetric functions. Interchanging the roles of $m$ and $n$, we see that the $m \times m$ matrix $M(\xx^t) = W(\xx^1) \cdots W(\xx^n)$ lies in $(B^-)^{\leq n}$, and has non-trivial entries
\[
\lE{k'}{r'}(\xx^t) = \sum_{1 \leq j_1 < j_2 < \cdots < j_{k'} \leq n} x_{r'+1-j_1}^{j_1} x_{r'+2-j_2}^{j_2} \cdots x_{r'+k'-j_{k'}}^{j_{k'}}
\]
for $k' \in [n], r' \in [m]$, and $k'+r' \geq n+1$.
We call $\lE{k'}{r'}(\xx^t)$ a \defn{$Q$-type barred loop elementary symmetric function} (these polynomials will be further examined in the sequel to this paper~\cite{BFPSII}).

For $k = 1, \ldots, p = \min(m,n)$, define the \defn{shape invariant}
\[
S_k(\xx) = \Delta_{[k,n],[1,n-k+1]}(M(\xx)) = \det\left(\bigl(\lE{m-k+1+j-i}{k+i-1}(\xx)\bigr)_{i,j \in [n-k+1]}\right),
\]
where in the last expression we set $\lE{0}{r} = 1$, and $\lE{k}{r} = 0$ if $k < 0$ or $k > m$.

\begin{remark}
\label{rem:Schur}
A reader familiar with symmetric functions will recognize that if we replace $\lE{k}{r}(\xx)$ with the elementary symmetric polynomial $e_k(x_1, \ldots, x_m)$, the last expression becomes the (dual) Jacobi--Trudi formula for the Schur function in $m$ variables associated to the rectangular partition $\bigl((n-k+1)^{m-k+1}\bigr)$ (see, \textit{e.g.},~\cite[Ch.~7]{EC2}). In fact, we will see in \S \ref{sec:Schur} that $S_k(\xx)$ is a generating function for semistandard tableaux of this shape.
\end{remark}

\begin{cor}
\label{cor:X generators}
\
\begin{enumerate}
\item The $P$-type loop elementary symmetric functions are algebraically independent generators of the field $\Inv_e$ (this is the content of Theorem~\ref{thm:main}(1)).
\item The $Q$-type barred loop elementary symmetric functions are algebraically independent generators of the field $\Inv_{\ov{e}}$.
\item The shape invariants are algebraically independent generators of the field $\Inv_{e\ov{e}}$.
\end{enumerate}
\end{cor}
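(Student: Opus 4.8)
The strategy is to leverage the $Z$-variable descriptions already in hand (Theorem~\ref{thm:main Z} and Corollary~\ref{cor:Z generators}) by showing that each proposed $X$-variable generating set generates exactly the same subfield of $\CC(X)$, and then to extract algebraic independence from a transcendence-degree count. For part~(1), I would proceed as follows. The non-trivial entries of $M(\xx) = W(\xx_1)\cdots W(\xx_m)$ are precisely the $P$-type functions $\lE{k}{r}(\xx)$. On one hand, Definition~\ref{defn:gRSK} writes each $P$-pattern coordinate $z_{i,j}$ as a ratio of flag minors of $M(\xx)$, so every $z_{i,j}$ lies in $\CC(\lE{k}{r})$. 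On the other hand, $M(\xx) = \Phi_n^{\leq m}(P)$, and the explicit product defining $\Phi_n^{\leq m}$ expresses every entry of $M(\xx)$ as a subtraction-free rational function of the $z_{i,j}$ (as in Example~\ref{ex:Phi}), so each $\lE{k}{r}$ lies in $\CC(z_{i,j})$. Hence $\CC(\lE{k}{r}) = \CC(z_{i,j})$, which equals $\Inv_e$ by Theorem~\ref{thm:main Z}.

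For algebraic independence in part~(1): since the set $Z$ is algebraically independent over $\CC$, the coordinates $z_{i,j}$ are algebraically independent, so $\Inv_e$ has transcendence degree equal to $\dim \GT_n^{\leq m}$. Because $\Phi_n^{\leq m}$ is a birational isomorphism onto $(B^-)^{\leq m}$ (Lemma~\ref{lem:Phi inverse}), this dimension equals $\dim (B^-)^{\leq m}$, which is exactly the number of non-trivial matrix entries (these entries are unconstrained coordinates on $(B^-)^{\leq m}$). A generating set whose cardinality matches the transcendence degree of the field it generates must be algebraically independent, so the $\lE{k}{r}$ are algebraically independent. Part~(2) is identical after exchanging the roles of $m$ and $n$: the symmetry of $\gRSK$ (Corollary~\ref{cor:gRSK props}(3)) identifies the $Q$-pattern coordinates $z'_{j,i}$ with flag-minor ratios of $M(\xx^t) = W(\xx^1)\cdots W(\xx^n) \in (B^-)^{\leq n}$, whose non-trivial entries are the $Q$-type functions $\lE{k'}{r'}(\xx^t)$; the same $\Phi/\Psi$ argument (now for $\Phi_m^{\leq n}$) gives $\CC(\lE{k'}{r'}(\xx^t)) = \CC(z'_{j,i}) = \Inv_{\ov{e}}$ via Corollary~\ref{cor:Z generators}, with algebraic independence by the identical count.

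The one computation specific to part~(3) is the relation between the shape invariants and the common-shape coordinates. With the flag-minor convention, $S_k(\xx) = \Delta_{[k,n],[1,n-k+1]}(M(\xx)) = \Delta_{[k,n]}(M(\xx))$, and since $z_{i,n} = \Delta_{[i,n]}(M(\xx))/\Delta_{[i+1,n]}(M(\xx))$ these telescope to $S_k(\xx) = z_{k,n}z_{k+1,n}\cdots z_{p,n}$, provided the bottom relation $z_{p,n} = S_p(\xx)$ holds. That relation requires $\Delta_{[p+1,n]}(M(\xx)) = 1$: when $m \geq n$ this is the empty minor, while when $m < n$ the relevant submatrix of $M(\xx) \in (B^-)^{\leq m}$ is upper-unitriangular (the band structure forces $1$'s on its diagonal and $0$'s below), so the minor is $1$ in either case. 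Consequently $\CC(S_1, \ldots, S_p) = \CC(z_{1,n}, \ldots, z_{p,n}) = \Inv_{e\ov{e}}$ by Corollary~\ref{cor:Z generators}; and since the $p$ coordinates $z_{1,n}, \ldots, z_{p,n}$ are algebraically independent, so are the $p$ generators $S_1, \ldots, S_p$. I expect the main obstacle to be purely organizational, namely setting up the dictionary between the two generating sets cleanly and verifying the identity $\Delta_{[p+1,n]}(M(\xx)) = 1$; once the fields are matched, every algebraic-independence assertion follows from the soft transcendence-degree argument.
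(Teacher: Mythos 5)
Your proof is correct and takes essentially the same route as the paper's: both arguments establish $\CC\bigl(\lE{k}{r}(\xx)\bigr) = \CC(z_{i,j})$ by combining the minor-ratio formulas of Definition~\ref{defn:gRSK} with Lemma~\ref{lem:Phi inverse}, identify the fields via Theorem~\ref{thm:main Z} and Corollary~\ref{cor:Z generators} (with part~(2) by the $\gRSK$ symmetry), and deduce algebraic independence from the matching cardinality/transcendence-degree count. The only difference is cosmetic: in part~(3) you spell out the telescoping $S_k(\xx) = z_{k,n} z_{k+1,n} \cdots z_{p,n}$, including the verification that $\Delta_{[p+1,n]}(M(\xx)) = 1$ via the band structure of $(B^-)^{\leq m}$ when $m < n$, a check the paper leaves implicit in its displayed relation $(z_{1,n}, \ldots, z_{p,n}) = (S_1/S_2, \ldots, S_{p-1}/S_p, S_p)$.
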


\begin{proof}
By definition, the $P$-pattern $(z_{i,j})$ associated to $\xx$ is the image of $M(\xx)$ under $\Psi_n^{\leq m} \colon (B^-)^{\leq m} \rightarrow \GT_n^{\leq m}$. This means that each $z_{i,j}$ is a ratio of minors of the matrix $M(\xx)$, so $\CC(z_{i,j}) \subseteq \CC\bigl(\lE{k}{r}(\xx)\bigr)$. On the other hand, we have $M(\xx) = \Phi_n^{\leq m} (z_{i,j})$ by Lemma \ref{lem:Phi inverse}, so each entry of $M(\xx)$ is a Laurent polynomial in the $z_{i,j}$, and we have the opposite inclusion $\CC\bigl(\lE{k}{r}(\xx)\bigr) \subseteq \CC(z_{i,j})$. Thus, $\Inv_e = \CC\bigl(\lE{k}{r}(\xx)\bigr)$ by Theorem~\ref{thm:main Z}. The sets $\{\lE{k}{r}(\xx)\}$ and $\{z_{i,j}\}$ have the same cardinality (the dimension of $\GT_n^{\leq m}$), so the algebraic independence of the $z_{i,j}$ implies the algebraic independence of the $\lE{k}{r}(\xx)$. This proves part (1), and part (2) follows by symmetry.

The common shape of the $P$- and $Q$-patterns is, by definition, given by
\[
(z_{1,n}, \ldots, z_{p-1,n}, z_{p,n}) = \left(\dfrac{S_1}{S_2}, \ldots, \dfrac{S_{p-1}}{S_p}, S_p\right).
\]
This means the shape invariants generate the same field as the $z_{k,n}$, so part (3) follows from Corollary~\ref{cor:Z generators}.
\end{proof}

\begin{remark}
The symmetry of gRSK implies that the shape invariant $S_k$ is also equal to the $(m-k+1) \times (m-k+1)$ minor in the bottom-left corner of $M(\xx^t)$, which can be viewed as a generalization of the Schur function in $n$ variables associated to the partition $\bigl((m-k+1)^{n-k+1}\bigr)$.
\end{remark}

\subsection{Connectedness and proof of Theorem~\ref{thm:main Z}}
\label{sec:connectedness}

A crystal is \defn{connected} if one can get between any two elements by a sequence of crystal raising and lowering operators. An important property of crystals coming from representations is that a representation is irreducible if and only if its crystal is connected; thus, one can identify the irreducible components of a representation simply by looking at connected components of the crystal. As an example, the set of Gelfand--Tsetlin patterns with fixed shape $\la$ form a connected crystal, corresponding to the Schur module indexed by $\la$.

We now address the analogous question for Gelfand--Tsetlin geometric crystals. The following result is based on combining the results of Kashiwara--Nakashima--Okado~\cite{KNO10} and Kanakubo--Nakashima~\cite{KanNak}.

\begin{thm}
\label{thm:connected}
Fix $\omega = (z_{1,n},\ldots,z_{p,n}) \in (\CC^*)^p$, where $p = \min(m,n)$, and let $\GT_n^{\leq m}(\omega)$ be the subvariety of $\GT_n^{\leq m}$ consisting of patterns of shape $\omega$. The action of the geometric crystal operators $\ov{e}_1^c, \ldots, \ov{e}_{n-1}^c$ on $\GT_n^{\leq m}(\omega)$ has a dense orbit.
\end{thm}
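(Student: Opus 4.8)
The plan is to reduce the statement to a dimension count and then to a dominance (prehomogeneity) statement that is imported from the cited works. Because the shape of a pattern is recorded by the coordinates $z_{1,n},\dots,z_{p,n}$ themselves (see~\eqref{eq:GT_shape}), the subvariety $\GT_n^{\leq m}(\omega)$ is simply the coordinate subtorus of $\GT_n^{\leq m}$ obtained by setting $z_{k,n}=\omega_k$ for $k\in[p]$. In particular it is irreducible and smooth, isomorphic to $(\CC^*)^{N-p}$, where $N=\#\{(i,j):1\le i\le m,\ i\le j\le n\}$ is the dimension of $\GT_n^{\leq m}$; a direct count gives $\dim\GT_n^{\leq m}(\omega)=\binom{n}{2}$ when $m\ge n$ and $\dim\GT_n^{\leq m}(\omega)=mn-\binom{m+1}{2}$ when $m<n$. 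By Lemma~\ref{lem:shape preserved} the operators $\ov{e}_1^c,\dots,\ov{e}_{n-1}^c$ restrict to birational maps of $\GT_n^{\leq m}(\omega)$, so their orbits are constructible subsets of an irreducible variety; such an orbit is dense if and only if it has full dimension. Thus it suffices to produce a single point whose orbit has dimension $\dim\GT_n^{\leq m}(\omega)$.

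To this end, I would fix a reduced word $w_0=s_{j_1}\cdots s_{j_\ell}$ for the longest element of $S_n$, where $\ell=\binom{n}{2}$, choose a base point $\mb{z}_0\in\GT_n^{\leq m}(\omega)$, and consider the rational map
\[
\Theta\colon(\CC^*)^\ell\to\GT_n^{\leq m}(\omega),\qquad (c_1,\dots,c_\ell)\mapsto \ov{e}_{j_1}^{c_1}\cdots\ov{e}_{j_\ell}^{c_\ell}(\mb{z}_0).
\]
Its image lies in the orbit of $\mb{z}_0$, so if $\Theta$ is dominant then that orbit is dense and we are done. Since the target has dimension $\dim\GT_n^{\leq m}(\omega)$, the whole problem reduces to showing that the differential of $\Theta$ attains this rank at a generic point. (For $m\ge n-1$ one has $\ell=\dim\GT_n^{\leq m}(\omega)$, so dominance of $\Theta$ is the same as its being a birational isomorphism onto $\GT_n^{\leq m}(\omega)$; for $m\le n-2$ the map $\Theta$ has strictly more parameters than the dimension of its target.)

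The dominance of $\Theta$ is precisely the assertion that the $\GL_n$-geometric crystal $\GT_n^{\leq m}$ is \emph{prehomogeneous} along its shape fibration, and this is where the external inputs enter. The plan is to match the geometric crystal structure on the fiber $\GT_n^{\leq m}(\omega)$---given explicitly by Lemma~\ref{lem:explicit crystal formulas} in the case $m\ge n$, and by its trapezoidal analogue otherwise---with the geometric crystal on the relevant unipotent cell, for which the existence of a Zariski-dense $\{\ov{e}_j^c\}$-orbit on each fiber is established by Kanakubo--Nakashima~\cite{KanNak}; the explicit positive structure and the structural identification required to carry out this matching are provided by~\cite{KNO10}. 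Granting the match, one obtains a base point and parameters at which $\Theta$ has full rank, which proves the theorem.

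I expect the main obstacle to be this final transport of the cited prehomogeneity to our precise setting. The dense-orbit statements in~\cite{KNO10,KanNak} are most naturally about the ``full'' crystal, where the fiber has the critical dimension $\binom{n}{2}=\ell(w_0)$ and $\Theta$ is birational; pushing this down to the truncated patterns $\GT_n^{\leq m}$ with $m\le n-2$---where $\Theta$ is no longer generically finite---requires either realizing $\GT_n^{\leq m}(\omega)$ as the image of a full shape fiber under a shape-preserving, crystal-equivariant dominant map (so that a dense orbit upstairs maps to a dense orbit downstairs), or checking directly from the explicit formulas that the surplus parameters of $\Theta$ do not cut its generic rank below $\dim\GT_n^{\leq m}(\omega)$. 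A secondary, bookkeeping difficulty is reconciling the conventions of the cited papers (Langlands duality, the $\ve_i/\vp_i$ inversion flagged in Definition~\ref{defn:geom crystal}, and left/right versus row/column indexing) with ours.
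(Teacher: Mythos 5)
Your high-level strategy---identify the shape fiber with a parametrized geometric crystal on a unipotent cell and import the dense-orbit machinery of~\cite{KNO10,KanNak}---is indeed the route the paper takes, but your proposal stops exactly at the step that carries all the content, and the way you have set that step up creates an obstacle the paper never encounters. By building $\Theta$ from a reduced word for $w_0$, you manufacture the surplus-parameter problem for $m\le n-2$ that you then flag as unresolved. The paper avoids $w_0$ entirely: after reducing to trivial shape, it verifies via the identity $Y_i(z_i)Y_{i+1}(z_{i+1})\cdots Y_{n-1}(z_{n-1}) = W^i\bigl(\tfrac{1}{z_i},\tfrac{z_i}{z_{i+1}},\ldots,z_{n-1}\bigr)$ (with $W^i$ as in~\eqref{eq:row matrix}) that the restriction of $\Phi_n^{\leq m}$ to $\GT_n^{\leq m}(1,\ldots,1)$ coincides, up to a change of variables, with the parametrization $\Phi_{\mb{i}(m)}$ for a specific reduced word $\mb{i}(m)$ whose length equals $\dim \GT_n^{\leq m}(1,\ldots,1)$---and $\mb{i}(m)$ is a reduced word for $w_0$ only when $m\ge n-1$. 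The three cited inputs then apply verbatim to an \emph{arbitrary} reduced word: \cite[\S 4.4, Rem.~1]{BK00} identifies the tropicalization with respect to $\Phi_{\mb{i}}$ as the free crystal $B_{\mb{i}}$; \cite[Thm.~8.7]{KanNak} says $B_{\mb{i}}$ is connected for every reduced word $\mb{i}$; and \cite[Thm.~3.3]{KNO10} converts connectedness of the tropicalization into a dense orbit for the geometric crystal. In particular no rank or differential computation for your $\Theta$ is needed, and the truncated case $m\le n-2$ requires no separate argument.

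Two further concrete problems. First, you work over an arbitrary $\omega$ directly, but the identification with $\Phi_{\mb{i}(m)}$ forces shape $(1,\ldots,1)$: the diagonal entries of the $Y_j$-product telescope to $1$, so the fiber it parametrizes has trivial shape. A reduction to that case is therefore a necessary first step; the paper supplies it by proving the equivariance $\ov{e}_j^c(\omega\cdot\zz)=\omega\cdot\ov{e}_j^c(\zz)$ via a short computation with the diagonal matrix $T_\omega$ and the relations $\Phi(\omega\cdot\zz)=T_\omega\Phi(\zz)$, $\Psi(T_\omega M)=\omega\cdot\Psi(M)$, which your proposal omits. Second, you misassign the roles of the citations: \cite{KanNak} contains no Zariski-dense-orbit statement about unipotent cells---its contribution is purely combinatorial (connectedness of $B_{\mb{i}}$)---while the bridge from a connected tropicalization to a dense geometric orbit is exactly what \cite{KNO10} provides; since your plan hinges on ``granting the match'' to these results, getting their content right is not mere bookkeeping. (A minor slip in the same vein: when $\ell$ equals the target dimension, dominance of $\Theta$ makes it generically finite, not necessarily birational.) As written, then, the proposal reduces the theorem to an unproven prehomogeneity assertion and leaves the case $m\le n-2$ open, which is precisely the gap the paper's choice of $\mb{i}(m)$ closes.
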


\begin{proof}
We first show that it suffices to prove the result for $\GT_n^{\leq m}(1, \ldots, 1)$. Given $\omega \in (\CC^*)^p$ and $\zz \in \GT_n^{\leq m}$, let $\omega \cdot \zz$ be the pattern obtained by replacing each $z_{i,j}$ with $\omega_i z_{i,j}$. We claim that
\begin{equation}
\label{eq:omega zz}
\ov{e}_j^c(\omega \cdot \zz) = \omega \cdot \ov{e}_j^c(\zz).
\end{equation}
Multiplication by $(\omega_1^{-1}, \ldots, \omega_p^{-1})$ gives a bijection from $\GT_n^{\leq m}(\omega)$ to $\GT_n^{\leq m}(1, \ldots, 1)$, so~\eqref{eq:omega zz} implies that the orbit structure under the geometric crystal operators is independent of $\omega$.

To prove~\eqref{eq:omega zz}, let $T_\omega$ be the diagonal $n \times n$ matrix with entries $(\omega_1, \ldots, \omega_p, 1, \ldots, 1)$. It is easy to see that $\Phi(\omega \cdot \zz) = T_\omega \Phi(\zz)$ and $\Psi(T_\omega M) = \omega \cdot \Psi(M)$ (here we write $\Phi,\Psi$ instead of $\Phi_n^{\leq m}, \Psi_n^{\leq m}$). Set $M = \Phi(\zz)$. Working through the various definitions, we compute
\begin{align*}
\ov{e}_j^c(\omega \cdot \zz) &= \Psi\left(x_j\left((c-1)\ov{\varphi}_j(T_\omega M)\right) \cdot T_\omega M \cdot x_j\left((c^{-1} - 1)\ov{\varepsilon}_j(T_\omega M)\right)\right) \\
&= \Psi\left(x_j\left((c-1)\frac{\omega_j}{\omega_{j+1}}\ov{\varphi}_j(M)\right) \cdot T_\omega M \cdot x_j\left((c^{-1} - 1)\ov{\varepsilon}_j(M)\right)\right) \\
&= \Psi\left(T_\omega x_j\left(c-1)\ov{\varphi}_j(M)\right) \cdot M \cdot x_j\left((c^{-1} - 1)\ov{\varepsilon}_j(M)\right)\right) \\
&= \omega \cdot \ov{e}_j^c(\zz).
\end{align*}

Let $\mb{i} = (i_1, \ldots, i_\ell) \in [n-1]^\ell$ be a reduced word for an element of the Weyl group $S_n$, and consider the map $\Phi_{\mb{i}} \colon (\CC^*)^\ell \rightarrow B^- \subset \GL_n$ defined by
\[
\Phi_{\mb{i}} \colon (z_1, \ldots, z_\ell) \mapsto Y_{i_1}(z_1) \cdots Y_{i_\ell}(z_\ell), \quad \text{ where } \quad Y_j(z) = I + (z^{-1}-1)E_{j,j} + (z-1)E_{j+1,j+1} + E_{j+1,j}.
\]
One easily verifies that
\[
Y_i(z_i) Y_{i+1}(z_{i+1}) \cdots Y_{n-1}(z_{n-1}) = W^i\left(\dfrac{1}{z_i}, \dfrac{z_i}{z_{i+1}}, \ldots, \dfrac{z_{n-2}}{z_{n-1}}, z_{n-1}\right),
\]
where $W^i$ is defined by~\eqref{eq:row matrix}. This implies that the restriction of the parametrization $\Phi_n^{\leq m}$ to $\GT_n^{\leq m}(1, \ldots, 1)$ is the same (up to a simple change of variables) as the parametrization $\Phi_{\mb{i}(m)}$, where
\[
\mb{i}(m) = (p',p'+1,\ldots, n-1, p'-1,p',p'+1,\ldots,n-1, \ldots, 1, 2, \ldots, n-1), \qquad\quad p' = \min(m,n-1).
\]
(Note that $\mb{i}(m)$ is a reduced word for the longest element $w_0$ if $m \geq n-1$.)

One of the first results in the theory of geometric crystals is that for any reduced word ${\mb{i}}$, the tropicalization, with respect to $\Phi_{\mb{i}}$, of the geometric crystal induced from the unipotent crystal structure on $B^-$ is the free combinatorial crystal $B_{\mb{i}} = B_{i_1} \otimes \cdots \otimes B_{i_\ell}$~\cite[\S 4.4, Rem.~1]{BK00}. Kanakubo and Nakashima proved that for any reduced word $\mb{i}$, the crystal $B_{\mb{i}}$ is connected~\cite[Thm.~8.7]{KanNak}. Kashiwara, Nakashima, and Okado proved that if the tropicalization of a geometric crystal is connected, then the geometric crystal has a dense orbit under the action of the geometric crystal operators~\cite[Thm.~3.3]{KNO10}.
\end{proof}

The proof of Theorem~\ref{thm:main Z} requires two additional general lemmas. For the first lemma, let $X$ be an irreducible algebraic variety of dimension $N$ over a field $\fieldalt$, and let $G = \{G_i\}_{i \in I}$ be a collection of algebraic groups over $\fieldalt$, each acting rationally on $X$. Define a \defn{$G$-orbit} in $\fieldalt$ to be a subset obtained by starting with a point $x_0 \in X$, and repeatedly acting by elements of the groups $G_i$. Let $\fieldalt(X)^G = \bigcap_{i \in I} \fieldalt(X)^{G_i}$ denote the subfield of the fraction field of $X$ consisting of invariants for all the $G_i$.

\begin{lemma}
\label{lem:orbits}
If every point of $X$ is contained in a $G$-orbit closure of dimension at least $N-d$, then $\fieldalt(X)^G$ has transcendence degree at most $d$ over $\fieldalt$.
\end{lemma}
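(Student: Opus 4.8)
The plan is to bound the transcendence degree by converting the hypothesis on orbit dimensions into a lower bound on the dimension of a generic fiber of the map cut out by a maximal family of invariants. Set $t := \operatorname{trdeg}_{\fieldalt} \fieldalt(X)^G$ and choose $f_1, \dots, f_t \in \fieldalt(X)^G$ algebraically independent over $\fieldalt$. These assemble into a dominant rational map $\pi = (f_1, \dots, f_t) \colon X \dashrightarrow Y$, where $Y \subseteq \mathbb{A}^t$ is the closure of the image; since the $f_i$ are algebraically independent we have $\dim Y = t$. By the theorem on the dimension of fibers, there is a dense open $V \subseteq Y$ such that every fiber of $\pi$ over $V$ has pure dimension $N - t$. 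It therefore suffices to exhibit, inside one such fiber, a subvariety of dimension at least $N - d$, for this forces $N - t \geq N - d$, i.e.\ $t \leq d$, which is the claim.

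The two inputs are as follows. First, each $f_i$ is $G$-invariant, hence constant along every $G$-orbit; as a rational function that is constant on the (dense) orbit of a given orbit closure, it is constant on that orbit closure wherever it is regular, so $\pi$ collapses each orbit closure to a single point of $Y$. Second, let $U \subseteq X$ be the dense open locus on which $\pi$ is a morphism, and put $U' = U \cap \pi^{-1}(V)$, again dense open. Pick $x \in U'$; by hypothesis $x$ lies in an orbit closure $O$ with $\dim O \geq N - d$, and $\pi$ is the constant $\pi(x) = c \in V$ on $O$. In the \emph{irreducible} case the argument closes immediately: since $O$ is irreducible and $U'$ meets $O$ (it contains $x$), the dense orbit meets $U'$ in a set dense in $O$, whence $O \subseteq \overline{\pi^{-1}(c)}$, a set of dimension $N - t$; combined with $\dim O \geq N - d$ this gives $t \leq d$.

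The main obstacle is that an orbit closure $O$ need not be irreducible: generated by several connected groups $G_i$, it is only a finite (or countable) union of irreducible pieces through a common point, so $\dim O \geq N-d$ only guarantees some top-dimensional component $C \subseteq O$ of dimension $\geq N-d$, and $\pi$ collapses each component $C$ to $c$, but a priori all top-dimensional $C$ could lie inside the proper closed indeterminacy locus $X \setminus U$, in which case $C \cap U' = \emptyset$ and the dimension estimate is lost. Reconciling the two senses of ``generic'' here — that $\pi$ be regular near $x$, yet that a \emph{full-dimensional} slice of the orbit closure survive into the regular locus and hence into a single generic fiber — is the technical heart, and is presumably where Levinson's input entered. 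I would handle it by first recording, using that the $G_i$ are groups (so the orbit relation is symmetric and every point of an orbit shares its orbit closure), that the locus $W = \{x : \dim \overline{Gx} \geq N-d\}$ is dense in $X$; then noting that the orbit closures entirely contained in $X \setminus U$ have union contained in $X \setminus U$, hence of dimension $< N$, so a general $x \in W \cap U'$ admits a top-dimensional irreducible component $C$ of $\overline{Gx}$ meeting $U'$. For such $C$ one has $C \cap U' \subseteq \pi^{-1}(c)$ with $\dim(C \cap U') = \dim C \geq N-d$, yielding $N - t \geq N - d$ and therefore $t \leq d$, as required.
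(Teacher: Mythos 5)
Your overall route is the paper's own proof: choose algebraically independent invariants $f_1,\dots,f_t \in \fieldalt(X)^G$, observe that $(f_1,\dots,f_t)$ is dominant onto a $t$-dimensional target, invoke the fiber-dimension theorem to get generic fibers of dimension $N-t$, and trap an orbit closure of dimension at least $N-d$ inside a single generic fiber to force $N-d \le N-t$. Your ``irreducible case'' paragraph is exactly the paper's argument, and you correctly located its pressure point: the paper's assertion that $\ov{\mc{O}} \cap V$ has the same dimension as $\ov{\mc{O}}$ tacitly uses irreducibility of the orbit closure.

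However, your patch for the reducible case is a non sequitur, so as written the proof has a gap. The scenario you must exclude is: $x \in U'$, so $\overline{Gx}$ meets $U$ (at $x$ itself), yet every component of $\overline{Gx}$ of dimension $\geq N-d$ lies in $X \setminus U$, with $x$ sitting on a lower-dimensional component. Your observation that orbit closures \emph{entirely} contained in $X \setminus U$ sweep out a set of dimension $< N$ does not touch this: the offending closures are not contained in $X\setminus U$ (they contain $x \in U'$); only their top-dimensional components are, and the union of those components being inside $X \setminus U$ --- a set that may perfectly well have dimension $N-1 \geq N-d$ --- produces no contradiction, nor does density of your locus $W$ (which is itself correctly justified) yield a point whose big component meets $U'$. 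The clean resolutions are: (a) the dichotomy never arises, because an orbit as defined in the paper is a \emph{directed} union of irreducible constructible sets through the base point --- each $G_i$ contains the identity and is connected (in the application, $G_i = \CC^*$), so the reachable set of a concatenation of two words contains, up to closure, the reachable sets of both --- and a directed union of irreducible sets has irreducible closure; or (b) note that an invariant rational function satisfies $f = f \circ g$, so its maximal domain of definition is orbit-saturated; hence an orbit whose closure meets $V$ itself meets $V$, and therefore lies in $V$, which makes $\overline{\mc{O}} \cap V$ dense in \emph{every} component and preserves dimension even without irreducibility (modulo the generic caveats inherent to rational actions, at which level the paper also works). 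With either remark in place, your first two paragraphs already constitute the paper's proof.
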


\begin{proof}
Suppose $f_1, \ldots, f_k \in \fieldalt(X)^G$ are algebraically independent over $K$. Let $V$ be the open subset of $X$ where all the $f_j$ are defined. Consider the morphism
\[
f = (f_1, \ldots, f_k) \colon V \rightarrow \bb{A}^k.
\]
Since the $f_j$ are algebraically independent, the pullback $f^*$ is injective, so $f$ is dominant. A standard result in algebraic geometry (\textit{e.g.},~\cite[Ex. II.3.22]{Hart}) says that the general fiber of a dominant morphism $f \colon X \rightarrow Y$, where $X$ and $Y$ are irreducible, has dimension $\dim X - \dim Y$. In our setting, this means that there is a non-empty open subset $V' \subseteq V$ such that for all $z \in f(V')$, the fiber $f^{-1}(z)$ has dimension $N-k$.

Choose $z \in f(V')$, and let $\ov{\mc{O}}$ be a $G$-orbit closure of dimension at least $N-d$ which intersects $f^{-1}(z)$. The irreducibility of $V$ ensures that $\ov{\mc{O}} \cap U$ has the same dimension as $\ov{\mc{O}}$. The $f_j$ are constant on each $G$-orbit, so we must have $\ov{\mc{O}} \cap V \subseteq f^{-1}(z)$, and thus $k \leq d$.
\end{proof}

The second lemma is a straightforward exercise in field theory.

\begin{lemma}
\label{lem:no finite extensions}
If $x_1, \ldots, x_\ell$ are independent transcendentals over a field $\fieldalt$, then $\fieldalt$ has no proper algebraic extensions inside the field $\fieldalt(x_1, \ldots, x_\ell)$.
\end{lemma}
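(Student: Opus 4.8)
The plan is to restate the lemma in the equivalent form that $\fieldalt$ is \emph{relatively algebraically closed} in $\fieldalt(x_1,\ldots,x_\ell)$: every element of $\fieldalt(x_1,\ldots,x_\ell)$ that is algebraic over $\fieldalt$ already lies in $\fieldalt$. This is equivalent to the stated assertion, since a proper algebraic extension $L$ with $\fieldalt \subsetneq L \subseteq \fieldalt(x_1,\ldots,x_\ell)$ exists precisely when some element algebraic over $\fieldalt$ fails to lie in $\fieldalt$. I would then prove relative algebraic closedness by induction on $\ell$, with the single-variable case as the engine and the transitivity of relative algebraic closure supplying the inductive step.

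First I would establish the single-variable base fact: if $t$ is transcendental over a field $E$ and $\alpha \in E(t)$ is algebraic over $E$, then $\alpha \in E$. Suppose not, and write $\alpha = p(t)/q(t)$ with $p,q \in E[t]$ coprime and $q \neq 0$. Then $t$ is a root of the polynomial $p(T) - \alpha\, q(T) \in E(\alpha)[T]$, and this polynomial is nonzero: inspecting the coefficient of $T^{\max(\deg p,\deg q)}$ shows it can vanish only if $\alpha \in E$, contrary to assumption. Hence $t$ is algebraic over $E(\alpha)$. But $\alpha$ is algebraic over $E$, so $E(\alpha)/E$ is a finite extension; chaining the two algebraic extensions forces $t$ to be algebraic over $E$, contradicting the transcendence of $t$. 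Therefore $\alpha \in E$.

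Next I would run the induction. The case $\ell = 0$ is vacuous, since $\fieldalt(\emptyset) = \fieldalt$. For the inductive step, set $E = \fieldalt(x_1,\ldots,x_{\ell-1})$, so that $\fieldalt(x_1,\ldots,x_\ell) = E(x_\ell)$ with $x_\ell$ transcendental over $E$ by the algebraic independence of the $x_i$. The base fact gives that $E$ is relatively algebraically closed in $E(x_\ell)$, while the inductive hypothesis gives that $\fieldalt$ is relatively algebraically closed in $E$. Transitivity then finishes the argument: any $\alpha \in E(x_\ell)$ algebraic over $\fieldalt$ is a fortiori algebraic over $E$, hence lies in $E$ by the base fact; being an element of $E$ that is algebraic over $\fieldalt$, it lies in $\fieldalt$ by the inductive hypothesis.

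The argument is entirely standard field theory, so I do not anticipate a serious obstacle. The only point demanding a moment's care is the nonvanishing of $p(T) - \alpha\, q(T)$ in the base case — equivalently, the statement that a non-constant rational function of a transcendental element is again transcendental — which is exactly where the hypothesis $\alpha \notin E$ enters.
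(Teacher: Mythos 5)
Your proof is correct. There is in fact nothing in the paper to compare it against: the lemma is stated with only the remark that it is ``a straightforward exercise in field theory,'' and no proof is given. Your write-up supplies exactly the standard details one would want: the reformulation as relative algebraic closedness of $\fieldalt$ in $\fieldalt(x_1,\ldots,x_\ell)$, the single-variable case via the polynomial $p(T)-\alpha\,q(T)$ over $E(\alpha)$ (whose top coefficient in degree $\max(\deg p,\deg q)$ can vanish only if $\alpha\in E$ --- correctly identified as the one point where the hypothesis $\alpha\notin E$ is used), and the induction on $\ell$ closed by transitivity of algebraicity. This is also sufficient for the paper's application, where $\fieldalt=\CC(z_{i,j})$ and the ambient field $\CC(Z)$ is generated over it by the remaining independent transcendentals $z'_{j,i}$.
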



\begin{proof}[Proof of Theorem~\ref{thm:main Z}]
Let $\abs{P} = \dim \GT_n^{\leq m}$ and $\abs{Q} = \dim \GT_m^{\leq n}$ be the number of entries in the $P$- and $Q$-patterns, respectively. We observed at the beginning of \S \ref{sec:generators} that $\Inv_e$ contains the $z_{i,j}$, so it has transcendence degree at least $\abs{P}$ over $\CC$. On the other hand, Theorem~\ref{thm:connected} implies that every point of $\GT_n^{\leq m} * \GT_m^{\leq n}$ is contained in a $G$-orbit closure of dimension $\abs{Q} - \min(m,n) = mn - \abs{P}$, where $G = \{G_i\}_{i \in [m-1]}$, with $G_i = \CC^*$ acting by the geometric crystal operator $e_i$. Thus, $\Inv_e$ has transcendence degree at most $|P|$ by Lemma~\ref{lem:orbits}, so it is an algebraic extension of $\CC(z_{i,j})$. By Lemma~\ref{lem:no finite extensions}, $\CC(z_{i,j})$ has no proper algebraic extensions inside $\CC(Z)$, so we must have $\Inv_e = \CC(z_{i,j})$.
\end{proof}

\section{Invariants of the Weyl group action}
\label{sec:R}

\subsection{Weyl group action and geometric \texorpdfstring{$R$}{R}-matrix}
\label{sec:geometric R}

Let $X$ be a $\GL_m$-geometric crystal. For $i \in [m-1]$, define a rational map $s_i \colon X \to X$ by
\[
s_i(x) = e_i^{\frac{1}{\alpha_i(\gamma(x))}}(x) = e_i^{\frac{\ve_i(x)}{\vp_i(x)}}(x).
\]
Berenstein and Kazhdan proved that the maps $s_i$ generate a birational action of the Weyl group $S_m$ on $X$~\cite[Prop. 2.3]{BK00} (in fact, the purpose of axiom (3) in Definition~\ref{defn:geom crystal} is to ensure that the $s_i$ satisfy the braid relations). We will show that the birational Weyl group action on $(X_m)^n$ agrees with the action of the geometric $R$-matrix, a map coming from the representation theory of quantum groups of type $A_{n-1}^{(1)}$.

The \defn{geometric $R$-matrix (of type $A_{n-1}^{(1)}$)} is the rational map
\begin{align*}
R \colon (\CC^*)^n \times (\CC^*)^n &\rightarrow (\CC^*)^n \times (\CC^*)^n \\
\bigl((x_1, \ldots, x_n), (y_1, \ldots, y_n)\bigr) &\mapsto \bigl((y'_1, \ldots, y'_n), (x'_1, \ldots, x'_n)\bigr)
\end{align*}
given by
\[
y'_j = y_j \dfrac{\kappa_{j+1}}{\kappa_j}, \quad x'_j = x_j\dfrac{\kappa_j}{\kappa_{j+1}}, \qquad \kappa_r = \kappa_r(\xx,\mb{y}) = \sum_{k=0}^{n-1} y_r \cdots y_{r+k-1} x_{r+k+1} \cdots x_{r+n-1},
\]
with subscripts interpreted modulo $n$. For $i \in [m-1]$, define $R_i \colon (X_n)^m \rightarrow (X_n)^m$ by
\[
R_i(\xx_1, \ldots, \xx_i, \xx_{i+1}, \ldots, \xx_m) = (\xx_1, \ldots, \xx'_{i+1}, \xx'_i, \ldots, \xx_m),
\]
where $(\xx'_{i+1}, \xx'_i) = R(\xx_i, \xx_{i+1})$.

The following result is a variant of~\cite[Lem.~3.2]{KajNouYam}.

\begin{prop}
\label{prop:R=Weyl}
If $(\xx^1, \ldots, \xx^n) \in (X_m)^n$ are the columns of a matrix $\xx \in \Mat_{m \times n}(\CC^*)$, and $(\xx_1, \ldots, \xx_m) \in (X_n)^m$ are the rows of $\xx$, then for $i \in [m-1]$ we have
\[
s_i(\xx^1, \ldots, \xx^n) = R_i(\xx_1, \ldots, \xx_m).
\]
\end{prop}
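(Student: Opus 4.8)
The plan is to compare the two maps entry by entry. Both $s_i$ (acting on the columns through the $\GL_m$-crystal operator $e_i$) and $R_i$ (acting on the rows) fix every row other than $i$ and $i+1$, so it suffices to check that they agree on these two rows. Using Lemma~\ref{lem:explicit formulas basic}, the scaling parameter for $s_i$ is
\[
c = \frac{\ve_i(\xx)}{\vp_i(\xx)} = \frac{\pi_{i+1}(\xx)}{\pi_i(\xx)} = \frac{\prod_{s=1}^n x_{i+1}^s}{\prod_{s=1}^n x_i^s},
\]
and $e_i^c$ sends $x_i^j \mapsto \frac{\sigma^j(\xx_i,\xx_{i+1};c)}{\sigma^{j-1}(\xx_i,\xx_{i+1};c)}\,x_i^j$ and $x_{i+1}^j \mapsto \frac{\sigma^{j-1}(\xx_i,\xx_{i+1};c)}{\sigma^j(\xx_i,\xx_{i+1};c)}\,x_{i+1}^j$. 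On the other side, $R_i$ places $x_{i+1}^j\,\frac{\kappa_{j+1}}{\kappa_j}$ into row $i$ and $x_i^j\,\frac{\kappa_j}{\kappa_{j+1}}$ into row $i+1$, where $\kappa_r = \kappa_r(\xx_i,\xx_{i+1})$. The row-$i$ and row-$(i+1)$ formulas are reciprocal to one another, so the entire statement reduces to the single identity
\[
\frac{\sigma^j(\xx_i,\xx_{i+1};c)}{\sigma^{j-1}(\xx_i,\xx_{i+1};c)} = \frac{x_{i+1}^j}{x_i^j}\cdot\frac{\kappa_{j+1}}{\kappa_j}, \qquad j \in [n].
\]

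The engine driving this is a closed form I would isolate as the key lemma: writing $a^s = x_i^s$ and $b^s = x_{i+1}^s$, for every $j \in \{0,1,\dots,n\}$ one has
\[
\kappa_{j+1} = \frac{a^1 \cdots a^j}{b^1 \cdots b^j}\,\sigma^j(\xx_i,\xx_{i+1};c),
\]
with empty products equal to $1$ and $\kappa$-subscripts read mod $n$ (so $\kappa_{n+1} = \kappa_1$). Granting this, dividing the $j$ instance by the $(j-1)$ instance yields $\kappa_{j+1}/\kappa_j = (a^j/b^j)\,\sigma^j/\sigma^{j-1}$, which is exactly the displayed identity. I would also record the two boundary checks confirming internal consistency: at $j=0$ it reads $\kappa_1 = \sigma^0 = \sigma(\xx_i,\xx_{i+1})$, and at $j=n$ it reads $\kappa_1 = \frac{1}{c}\,\sigma^n = \frac{1}{c}\,c\,\sigma^0$, both of which hold.

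The main work, and the step I expect to be the real obstacle, is proving the key lemma, since it requires reconciling the \emph{cyclic} sum $\kappa_{j+1}$ (indices mod $n$) with the genuinely non-cyclic sums $\sigma^j$. Here I would write $\sigma^j = c\sum_{r\le j} t_r + \sum_{r>j} t_r$ with $t_r = b^1 \cdots b^{r-1} a^{r+1} \cdots a^n$, expand $\kappa_{j+1} = \sum_{k=0}^{n-1}(b_{j+1}\cdots b_{j+k})(a_{j+k+2}\cdots a_{j+n})$, and re-index by the skipped position $s = j+1+k \pmod n$. Splitting into non-wrapping terms ($s>j$) and wrapping terms ($s\le j$) and factoring each against $t_s$ gives
\[
\kappa_{j+1} = \frac{a^1\cdots a^j}{b^1\cdots b^j}\sum_{s>j} t_s + \frac{b^{j+1}\cdots b^n}{a^{j+1}\cdots a^n}\sum_{s\le j} t_s;
\]
the crucial observation is that the wrap-around contributes exactly the factor $\frac{b^1\cdots b^n}{a^1\cdots a^n} = c$ relative to the non-wrapping terms, so the second coefficient equals $\frac{a^1\cdots a^j}{b^1\cdots b^j}\,c$ and the expression collapses to $\frac{a^1\cdots a^j}{b^1\cdots b^j}\sigma^j$. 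Careful mod-$n$ bookkeeping of which factors wrap, and of how the natural anchor at position $1$ in $\sigma^j$ interacts with the anchor at position $j+1$ in $\kappa_{j+1}$, is where the argument is most delicate; as a guard against index errors I would use the weight-matching check that the product of the new row-$i$ entries is $\pi_{i+1}(\xx)$, which follows from the telescoping $\prod_{j=1}^n \kappa_{j+1}/\kappa_j = 1$.
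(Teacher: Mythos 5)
Your proposal is correct and follows essentially the same route as the paper: the paper's proof likewise reduces to rows $i$ and $i+1$ via Lemma~\ref{lem:explicit formulas basic} with $c = \pi_{i+1}/\pi_i$, and then establishes exactly your key lemma, $\sigma^j\bigl(\xx_i, \xx_{i+1}; \pi_{i+1}/\pi_i\bigr) = \frac{x_{i+1}^1 \cdots x_{i+1}^j}{x_i^1 \cdots x_i^j}\,\kappa_{j+1}(\xx_i,\xx_{i+1})$ for $j = 0, \ldots, n$, by the same splitting of the sum at position $j$ in which the wrapped terms pick up precisely the factor $c$. Your reciprocity observation, mod-$n$ bookkeeping, and boundary checks at $j=0$ and $j=n$ all match the paper's computation (the paper runs the factorization in the opposite direction, pulling the monomial ratio out of $\sigma^j$ rather than expanding $\kappa_{j+1}$, but this is a cosmetic difference).
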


\begin{proof}
By Lemma~\ref{lem:explicit formulas basic}, $s_i$ fixes row $\xx_k$ for $k \neq i,i+1$, and acts on rows $\xx_i, \xx_{i+1}$ by
\[
x_i^j \mapsto x_i^j \dfrac{\sigma^j\left(\xx_i, \xx_{i+1}; \frac{\pi_{i+1}}{\pi_i}\right)}{\sigma^{j-1}\left(\xx_i, \xx_{i+1}; \frac{\pi_{i+1}}{\pi_i}\right)}, \qquad x_{i+1}^j \mapsto x_{i+1}^j \dfrac{\sigma^{j-1}\left(\xx_i, \xx_{i+1}; \frac{\pi_{i+1}}{\pi_i}\right)}{\sigma^j\left(\xx_i, \xx_{i+1}; \frac{\pi_{i+1}}{\pi_i}\right)},
\]
where $\pi_i = x_i^1 \cdots x_i^n$, and $\sigma^j(\xx_i,\xx_{i+1};c)$ is defined by~\eqref{eq:sigma^j}. For $j = 0, \ldots, n$, we compute
\begin{align*}
\sigma^j\left(\xx_i, \xx_{i+1}; \frac{\pi_{i+1}}{\pi_i}\right) &= \dfrac{\pi_{i+1}}{\pi_i} \sum_{k=1}^j x_{i+1}^1 \cdots x_{i+1}^{k-1} x_i^{k+1} \cdots x_i^n + \sum_{k=j+1}^n x_{i+1}^1 \cdots x_{i+1}^{k-1} x_i^{k+1} \cdots x_i^n \\
&= \dfrac{x_{i+1}^1 \cdots x_{i+1}^j}{x_i^1 \cdots x_i^j} \left(\sum_{k=1}^j x_{i+1}^{j+1} \cdots x_{i+1}^n x_{i+1}^1 \cdots x_{i+1}^{k-1} x_i^{k+1} \cdots x_i^j\right. \\
& \hspace{80pt} + \left.\sum_{k=j+1}^n x_{i+1}^{j+1} \cdots x_{i+1}^{k-1} x_i^{k+1} \cdots x_i^n x_i^1 \cdots x_i^j\right) \\
&= \dfrac{x_{i+1}^1 \cdots x_{i+1}^j}{x_i^1 \cdots x_i^j} \kappa_{j+1}(\xx_i, \xx_{i+1}).
\end{align*}
The proposition follows.
\end{proof}

\begin{remark}
Yamada~\cite{Yamada01} defined the geometric (or birational) $R$-matrix as a geometric lift of the \defn{combinatorial $R$-matrix}, which is the unique crystal isomorphism that permutes adjacent factors in a tensor product of one-row affine crystals (see, \textit{e.g.},~\cite{HKOTY99,HKOTT02}). Several different proofs of the fact that the maps $R_i$ generate a birational $S_m$-action have appeared in the literature~\cite{Yamada01,KajNouYam,Etingof,LP12,LP13II}.
\end{remark}

\subsection{Loop symmetric functions}
\label{sec:LSym}

The geometric $R$-matrices $R_1, \ldots, R_{m-1}$ generate a group of automorphisms of the field $\CC(x_i^j)$ isomorphic to $S_m$. The fixed field $\Inv_R$ of this group of automorphisms has been studied in several papers, starting with~\cite{LP12}, under the name of loop symmetric functions. It is more convenient to describe loop symmetric functions in terms of variables $\lx{i}{r}$, where the superscript $r$ is considered modulo $n$, and $x_i^j = \lx{i}{j+i-1}$. In other words, we make the identification
\[
\xx_i = (x_i^1, x_i^2, \ldots, x_i^n) = (\lx{i}{i}, \lx{i}{i+1}, \ldots \lx{i}{i+n-1}).
\]

\begin{dfn}
For $k \in [m]$ and $r \in [n]$, define the \defn{loop elementary symmetric function}
\[
\lE{k}{r} = \lE{k}{r}(\xx_1, \ldots, \xx_m) = \sum_{1 \leq i_1 < i_2 < \cdots < i_k \leq m} \lx{i_1}{r} \lx{i_2}{r+1} \cdots \lx{i_k}{r+k-1}.
\]
It is convenient to allow arbitrary integers $r$ in the superscript by setting $\lE{k}{r} = \lE{k}{r \mod n}$, and to define $\lE{0}{r} = 1$ and $\lE{k}{r} = 0$ if $k < 0$ or $k > m$.

Define $\LSym = \LSym_m(n)$ to be the ring generated (over $\CC$) by the $\lE{k}{r}$. We call this the \defn{ring of loop symmetric functions in $m$ variables and $n$ colors}.
\end{dfn}

When $k+r \geq m+1$, $\lE{k}{r}$ is the $P$-type loop elementary symmetric function that appeared in \S \ref{sec:generators} as an entry of the $n \times n$ matrix $M(\xx)$. We now describe an infinite ``cyclic extension'' of the matrix $M(\xx)$ that has all the loop elementary symmetric functions as entries.

Define an \defn{$n$-periodic matrix} to be a $\mathbb{Z} \times \mathbb{Z}$ array $(A_{ij})_{i,j \in \mathbb{Z}}$ such that $A_{ij} = A_{i+n,j+n}$ for all $i,j$, and $A_{ij} = 0$ if $j-i$ is sufficiently large. The second hypothesis ensures that multiplication of these matrices is well-defined. Given an $n$-tuple of variables $(x^1, \ldots, x^n)$, let $\widetilde{W} = \widetilde{W}(x^1, \ldots, x^n)$ be the $n$-periodic matrix with $\widetilde{W}_{j,j-1} = 1$, $\widetilde{W}_{jj} = x^j$ (the superscript of $x^j$ is interpreted mod $n$), and all other entries zero. This matrix is called a \defn{whirl} in~\cite{LP12}. For example, when $n=3$,
\begin{equation*}
\widetilde{W}(x^1, x^2, x^3) =
\begin{pmatrix}
x^1 & 0 & 0 & 0 \\
1 & x^2 & 0 & 0  \\
0 & 1 & x^3 & 0 & \hdots \\
0 & 0 & 1 & x^1  \\
0 & 0 & 0 & 1 \\
&\vdots &&& \ddots
\end{pmatrix}.
\end{equation*}
Note that we are depicting only the quadrant of the matrix with $i,j \geq 1$.

For $\xx_1, \ldots, \xx_m \in (\CC^*)^n$, define $\widetilde{M}(\xx_1, \ldots, \xx_m) = \widetilde{W}(\xx_1) \cdots \widetilde{W}(\xx_m)$. It is straightforward to prove by induction that the entries of this matrix are the loop elementary symmetric functions:
\begin{equation}
\label{eq:tilde M entries}
\widetilde{M}(\xx_1, \ldots, \xx_m)_{ij} = \lE{m+j-i}{i}(\xx_1, \ldots, \xx_m).
\end{equation}
An example of the matrix $\widetilde{M}(\xx_1, \ldots, \xx_m)$ appears in Figure \ref{fig:M unfolded}. As one can see in this example, $\widetilde{M}(\xx_1, \ldots, \xx_m)$ consists of $n \times n$ blocks that repeat along (block) diagonals, and the top nonzero block is the matrix $M(\xx)$ that has appeared throughout the paper. The $P$-type loop elementary symmetric functions are exactly the loop elementary symmetric functions appearing in this block.

\begin{figure}
\[
\left( \begin{array}{cc|cc|cc|c}
\lx{1}{1}\lx{2}{2}\lx{3}{1}  = \lE{3}{1} & 0 & 0 & 0 & 0 & 0 & \hdots \\
\lx{1}{2}\lx{2}{1} + \lx{1}{2}\lx{3}{1} + \lx{2}{2}\lx{3}{1} = \lE{2}{2} & \lx{1}{2}\lx{2}{1}\lx{3}{2} = \lE{3}{2} & 0 & 0 & 0 & 0 \\ \hline
\lx{1}{1} + \lx{2}{1} + \lx{3}{1} = \lE{1}{1} & \lx{1}{1}\lx{2}{2} + \lx{1}{1}\lx{3}{2} + \lx{2}{1}\lx{3}{2} = \lE{2}{1} & \lE{3}{1} & 0 & 0 & 0 \\
1 & \lx{1}{2} + \lx{2}{2} + \lx{3}{2} = \lE{1}{2} & \lE{2}{2} & \lE{3}{2} & 0 & 0 \\ \hline
0 & 1 & \lE{1}{1} & \lE{2}{1} & \lE{3}{1} & 0 \\
0 & 0 & 1 & \lE{1}{2} & \lE{2}{2} & \lE{3}{2} \\ \hline
0 & 0 & 0 & 1 & \lE{1}{1} & \lE{2}{1} \\
0 & 0 & 0 & 0 & 1 & \lE{1}{2} \\ \hline
\vdots &  &  & & & & \ddots
\end{array} \right)
\]
\caption{The $2$-periodic matrix $\widetilde{M}(\xx_1, \xx_2, \xx_3)$, with $2 \times 2$ blocks indicated. We depict only the quadrant of the matrix with coordinates $i,j \geq 1$.}
\label{fig:M unfolded}
\end{figure}

\begin{lemma}[{\cite[Thm.~6.2]{LP12}}]
\label{lem:E is R invariant}
The entries of $\widetilde{M}(\xx_1, \ldots, \xx_m)$ are $R_i$-invariant for $i = 1, \ldots, m-1$. Thus, $\LSym$ is a subring of the invariant field $\Inv_R$.
\end{lemma}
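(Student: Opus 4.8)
The plan is to show that each geometric $R$-matrix $R_i$ fixes the entire $n$-periodic matrix $\widetilde{M}(\xx_1,\ldots,\xx_m)=\widetilde{W}(\xx_1)\cdots\widetilde{W}(\xx_m)$, not merely its finite top block $M(\xx)$. Since $R_i$ alters only the rows $\xx_i,\xx_{i+1}$, replacing them by $\xx'_{i+1},\xx'_i$ where $(\xx'_{i+1},\xx'_i)=R(\xx_i,\xx_{i+1})$, it changes $\widetilde{M}$ only by replacing the two adjacent factors $\widetilde{W}(\xx_i)\widetilde{W}(\xx_{i+1})$ with $\widetilde{W}(\xx'_{i+1})\widetilde{W}(\xx'_i)$. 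Thus the whole lemma reduces to the single \emph{whirl commutation relation}
\[
\widetilde{W}(\xx_i)\,\widetilde{W}(\xx_{i+1}) \;=\; \widetilde{W}(\xx'_{i+1})\,\widetilde{W}(\xx'_i),
\qquad (\xx'_{i+1},\xx'_i)=R(\xx_i,\xx_{i+1}).
\]
Once this is proved, every $\lE{k}{r}$ is an entry of the $R_i$-invariant matrix $\widetilde{M}$ by~\eqref{eq:tilde M entries}, so each $\lE{k}{r}\in\Inv_R$ and hence $\LSym\subseteq\Inv_R$.

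Next I would verify the whirl relation by comparing matrix entries. Writing the color components of $\xx_i,\xx_{i+1}$ as $a_1,\ldots,a_n$ and $b_1,\ldots,b_n$ (subscripts mod $n$, as in the definition of $R$), both products are $n$-periodic and supported on the diagonals $j-i\in\{0,-1,-2\}$, so by periodicity there are just three scalar identities to check for each residue $j$. A direct multiplication gives $(\widetilde{W}(a)\widetilde{W}(b))_{jj}=a_jb_j$, $(\widetilde{W}(a)\widetilde{W}(b))_{j,j-1}=a_j+b_{j-1}$, and $(\widetilde{W}(a)\widetilde{W}(b))_{j,j-2}=1$, and likewise for the primed product. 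Using $b'_j=b_j\kappa_{j+1}/\kappa_j$ and $a'_j=a_j\kappa_j/\kappa_{j+1}$, the diagonal identity $a_jb_j=b'_ja'_j$ is immediate because the $\kappa$-factors cancel, and the $(j,j-2)$ identity is $1=1$. The content of the relation is therefore the subdiagonal identity
\[
(a_j+b_{j-1})\,\kappa_j \;=\; b_j\,\kappa_{j+1} \;+\; a_{j-1}\,\kappa_{j-1}.
\]

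This recurrence for the $\kappa_r$ is the heart of the argument and the step I expect to be most delicate. I would rewrite it as $b_j\kappa_{j+1}-b_{j-1}\kappa_j = a_j\kappa_j-a_{j-1}\kappa_{j-1}$, which is equivalent to the assertion that $b_{j-1}\kappa_j - a_{j-1}\kappa_{j-1}$ is independent of $j$. To prove this, expand each $\kappa_r$ as the cyclic sum over the single ``skipped'' color: the monomial in $b_{j-1}\kappa_j$ having $s$ leading $b$-factors coincides with the monomial in $a_{j-1}\kappa_{j-1}$ having $s+1$ leading $b$-factors (both carry $b$'s on the cyclic interval $[j-1,j+s-1]$, a skip at color $j+s$, and $a$'s on $[j+s+1,j-1]$). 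Hence all terms cancel in matched pairs except the two extreme ones, leaving $b_{j-1}\kappa_j - a_{j-1}\kappa_{j-1}=b_1b_2\cdots b_n - a_1a_2\cdots a_n$, which is manifestly independent of $j$. The main obstacle is purely bookkeeping: tracking the mod-$n$ indices and correctly handling the wraparound terms $s=n-1$ (contributing the full $b$-product) and $s=0$ on the other side (contributing the full $a$-product).

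Finally, I note a consistency check that is not logically needed but clarifies the result. The entries of the top block $M(\xx)$ are exactly the $P$-type loop elementary symmetric functions, which are shown in \S\ref{sec:generators} to be $e$-invariant; since $\Inv_e\subseteq\Inv_R$ (the maps $s_i=R_i$ of Proposition~\ref{prop:R=Weyl} are the special operators $e_i^c$ with $c=\ve_i/\vp_i$), their $R$-invariance is recovered as a special case. The whirl relation above is what lets us treat all of the remaining $\lE{k}{r}$ simultaneously.
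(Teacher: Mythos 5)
Your proof is correct. The paper gives no argument for this lemma at all---it is quoted from \cite[Thm.~6.2]{LP12}---and your reduction to the whirl commutation relation $\widetilde{W}(\xx_i)\widetilde{W}(\xx_{i+1})=\widetilde{W}(\xx'_{i+1})\widetilde{W}(\xx'_i)$, verified entrywise via the identity $(a_j+b_{j-1})\kappa_j=b_j\kappa_{j+1}+a_{j-1}\kappa_{j-1}$ and the telescoping computation $b_{j-1}\kappa_j-a_{j-1}\kappa_{j-1}=b_1\cdots b_n-a_1\cdots a_n$, is precisely the content of that cited theorem; the three diagonal comparisons and the matched-pair cancellation (including the extreme wraparound terms) all check out.
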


\begin{thm}[{\cite[Thm.~4.1]{LPaff}}]
\label{thm:LSym}
The loop elementary symmetric functions are algebraically independent, and they generate $\Inv_R$. That is, we have
\[
\Inv_R = \Frac(\LSym) = \CC(\lE{k}{r}).
\]
\end{thm}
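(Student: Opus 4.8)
The plan is to treat this as the ``loop/colored'' analogue of the fundamental theorem of symmetric functions (the case $n=1$, where $\lE{k}{1}=e_k(x_1,\dots,x_m)$, $\Inv_R=\CC(x_i^j)^{S_m}$, and $[\CC(x_i^j):\CC(e_k)]=m!$), and to prove it by a Galois-theoretic degree comparison. One inclusion $\CC(\lE{k}{r})\subseteq\Inv_R$ is Lemma~\ref{lem:E is R invariant}. Since there are exactly $mn$ loop elementary symmetric functions and $\CC(x_i^j)$ has transcendence degree $mn$ over $\CC$, both assertions will follow once I control the generic fibre of the \emph{whirl--product map}
\[
\mu\colon (\CC^*)^{mn}\to\CC^{mn},\qquad (\xx_1,\dots,\xx_m)\mapsto \bigl(\lE{k}{r}\bigr)_{k\in[m],\,r\in[n]},
\]
equivalently $(\xx_1,\dots,\xx_m)\mapsto \widetilde M(\xx_1,\dots,\xx_m)=\widetilde W(\xx_1)\cdots\widetilde W(\xx_m)$, whose entries are the $\lE{k}{r}$ by~\eqref{eq:tilde M entries}.

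First I would record the easy ingredients. Algebraic independence of the $mn$ functions $\lE{k}{r}$ is equivalent to dominance of $\mu$, a map between $mn$-dimensional varieties, which I would establish by exhibiting a single point at which the Jacobian $\partial(\lE{k}{r})/\partial(\lx{i}{r})$ is nonzero---the loop analogue of the Vandermonde Jacobian of the ordinary elementary symmetric polynomials. This gives the first assertion and $[\CC(x_i^j):\CC(\lE{k}{r})]=\deg\mu$. Next, Proposition~\ref{prop:R=Weyl} identifies the $R_i$ with the Weyl-group generators $s_i=e_i^{\ve_i/\vp_i}$, and the formulas of Lemma~\ref{lem:explicit formulas basic} give $\ve_i/\vp_i=\pi_{i+1}/\pi_i$ with $\pi_k=\prod_j x_k^j$; by axiom~(2) of Definition~\ref{defn:geom crystal}, $s_i$ then acts on $\gamma(\xx)=(\pi_1,\dots,\pi_m)$ by the transposition $\pi_i\leftrightarrow\pi_{i+1}$. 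Since the $\pi_k$ are algebraically independent, the $S_m$-action is faithful with generic orbits of size $m!$, so Artin's theorem yields $[\CC(x_i^j):\Inv_R]=m!$. Because the $R$-action preserves $\mu$ (Lemma~\ref{lem:E is R invariant}), every generic fibre of $\mu$ contains a full $m!$-point orbit, whence $\deg\mu\ge m!$.

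The crux is the reverse bound $\deg\mu\le m!$, i.e.\ that the generic fibre of $\mu$ is \emph{exactly} one $R$-orbit: if $\widetilde W(\xx_1)\cdots\widetilde W(\xx_m)=\widetilde W(\yy_1)\cdots\widetilde W(\yy_m)$ for generic data, then $(\yy_i)$ lies in the $R$-orbit of $(\xx_i)$. This is a uniqueness statement for factorizations of an $n$-periodic matrix into $m$ whirls, up to the commutation-type moves implemented by the birational $R$-matrix, and it is the genuinely hard input; I would draw it from the whirl/curl factorization theory of Lam and the third author (\cite{LP12}, and especially \cite[Thm.~4.1]{LPaff}), which is precisely where the affine-crystal and energy-function structure enters. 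Granting it, $\deg\mu=m!=[\CC(x_i^j):\Inv_R]$, and since $\CC(\lE{k}{r})\subseteq\Inv_R\subseteq\CC(x_i^j)$, the two outer degrees over $\CC(\lE{k}{r})$ coincide, forcing $\Inv_R=\CC(\lE{k}{r})=\Frac(\LSym)$.

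I expect the factorization-uniqueness step to be the main obstacle, and it is why the statement is quoted from \cite{LPaff} rather than re-derived inside the paper. The paper's own toolkit does not reach it: the extension $\CC(x_i^j)/\CC(\lE{k}{r})$ is \emph{algebraic} of degree $m!$, not purely transcendental, so the ``no proper algebraic extension'' shortcut of Lemma~\ref{lem:no finite extensions} that closed the proof of Theorem~\ref{thm:main Z} is unavailable here. This contrast also explains why the sharper \emph{polynomial} statement $\Inv_R\cap\CC[x_i^j]=\LSym$ is separated off into the appendix.
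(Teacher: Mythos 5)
First, be aware that the paper contains no internal proof of this statement to compare against: Theorem~\ref{thm:LSym} is imported verbatim from \cite[Thm.~4.1]{LPaff}. The only related argument actually carried out in the paper is the \emph{polynomial} fundamental theorem (Theorem~\ref{thm:FTLSF}) in Appendix~\ref{app:FTLSF}, and that statement is logically independent of the field statement: the $R_i$ act birationally, not biregularly, so they do not act on $\CC[x_i^j]$, an $R$-invariant rational function need not be a ratio of $R$-invariant polynomials, and the appendix says explicitly that it does not assume Theorem~\ref{thm:LSym}. Your closing remark gets this slightly wrong --- Theorem~\ref{thm:FTLSF} is not a ``sharper'' version of Theorem~\ref{thm:LSym} from which the latter could be recovered; neither implies the other without additional work. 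So your proposal must be judged on its own merits rather than against an in-paper proof.

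On those merits, your Galois/degree skeleton is sound as far as it goes: the inclusion $\CC(\lE{k}{r}) \subseteq \Inv_R$ is Lemma~\ref{lem:E is R invariant}; your computation that $s_i$ transposes $\pi_i \leftrightarrow \pi_{i+1}$ (correct, by Lemma~\ref{lem:explicit formulas basic} and axiom (2) of Definition~\ref{defn:geom crystal}) shows the $S_m$-action on $\CC(x_i^j)$ is faithful, so Artin's theorem gives $[\CC(x_i^j) : \Inv_R] = m!$; and, granted algebraic independence, the theorem is indeed equivalent to $\deg \mu = m!$, i.e., to transitivity of the $R$-action on the generic fibre of the whirl-product map. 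But that last statement is the whole theorem in disguise, and your proposal does not prove it: you ``draw it from'' \cite[Thm.~4.1]{LPaff}, which is precisely the result being proved, so as written the argument is circular. To close it you would need an independent factorization-uniqueness theorem --- that two generic factorizations of a fixed $n$-periodic matrix into $m$ whirls are related by whirl moves --- valid for generic complex data and not only in the totally nonnegative setting of \cite{LP12}; supplying that is the genuine mathematical content, and nothing in this paper's toolkit produces it, which is exactly why the theorem is quoted rather than re-derived. Two smaller points: the Jacobian computation you gesture at for algebraic independence is unnecessary, since Lemma~\ref{lem:leading term} in the appendix gives independence immediately from the triangularity fact that $E_{\mb{p}}$ has leading monomial $\mb{x}^{\mb{p}}$; and your bound $\deg\mu \geq m!$ needs the (easy, but worth stating) observation that generic orbits are free, which follows because each nontrivial $w \in S_m$ moves some $\pi_k$ and hence has a proper closed fixed locus.
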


\begin{thm}[Fundamental theorem of loop symmetric functions]
The loop elementary symmetric functions generate the ring $\Inv_R \cap \CC[x_i^j]$ of polynomial $R$-invariants.
\label{thm:FTLSF}
\end{thm}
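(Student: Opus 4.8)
The plan is to deduce the theorem from the field-theoretic statement (Theorem~\ref{thm:LSym}) together with the normality of $\LSym$. One inclusion is immediate: each $\lE{k}{r}$ is by definition a polynomial in the $x_i^j$ and is $R$-invariant by Lemma~\ref{lem:E is R invariant}, so $\LSym \subseteq \Inv_R \cap \CC[x_i^j]$. For the reverse inclusion I would regard the loop elementary symmetric functions as a dominant morphism $\phi \colon \bb{A}^{mn} \to \bb{A}^{mn}$, $x \mapsto \bigl(\lE{k}{r}(x)\bigr)_{k \in [m],\, r \in [n]}$. By Theorem~\ref{thm:LSym} the $\lE{k}{r}$ are algebraically independent, so $\phi^*$ identifies the coordinate ring of the target with $\LSym$ and its fraction field with $\Inv_R$, and $\phi$ is generically finite of degree $m!$. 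Consequently any $f \in \Inv_R \cap \CC[x_i^j]$ may be written uniquely as $f = \phi^*\bar f$ for some $\bar f \in \Frac(\LSym) = \Inv_R$, and the theorem becomes the assertion that $\bar f \in \LSym$.

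Since $\LSym$ is a polynomial ring in the $mn$ generators $\lE{k}{r}$, it is a factorial, hence normal, domain, so it equals the intersection of its localizations at height-one primes. Thus it suffices to show $v_\pi(\bar f) \geq 0$ for every irreducible $\pi \in \LSym$, where $v_\pi$ is the associated valuation on $\Frac(\LSym)$. The main step is the geometric claim that the image of $\phi$ omits only a subset of codimension $\geq 2$; equivalently, every prime divisor $D = V(\pi)$ of the target is dominated by a prime divisor $E = V(P)$ of the source, with $P$ irreducible in $\CC[x_i^j]$. Granting this, fix $\pi$ and choose such a divisor $E$. Standard valuation theory shows that the divisorial valuation $v_P$ on $\CC(x_i^j)$ restricts on $\Frac(\LSym)$ to a positive multiple $e\cdot v_\pi$ with $e \geq 1$ (the ramification index), because the valuation ring of $v_P|_{\Frac(\LSym)}$ is an overring of the discrete valuation ring $\LSym_{(\pi)}$ with the same fraction field, hence equal to it. Since $f = \phi^*\bar f$ lies in $\CC[x_i^j]$ we have $e\cdot v_\pi(\bar f) = v_P(f) \geq 0$, so $v_\pi(\bar f) \geq 0$. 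As $\pi$ was arbitrary, normality gives $\bar f \in \LSym$, completing the reverse inclusion.

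The crux, and the step I expect to be hardest, is the geometric claim that $\phi$ dominates every prime divisor of the target, i.e.\ that the complement of its image has codimension at least two (equivalently, in algebraic language, that every height-one prime of $\LSym$ is contracted from $\CC[x_i^j]$, namely $\pi\,\CC[x_i^j]\cap\LSym = \pi\,\LSym$). I emphasize that this cannot be reduced to a naive integrality argument: because the birational $S_m$-action of the geometric $R$-matrices is not linear, $\CC[x_i^j]$ is \emph{not} integral over $\LSym$ (already for $m = n = 2$ the minimal polynomial of $x_1^1$ over $\Frac(\LSym)$ has coefficients with denominators, and the fibers of $\phi$ fail to be uniformly finite), so integral closure alone does not apply.

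To prove the geometric claim I would analyze the fibers of $\phi$ directly, showing that the locus of the target over which the fiber is positive-dimensional or empty is contained in a subvariety of codimension $\geq 2$. The natural tool is the explicit description of the $\lE{k}{r}$ as the entries of the product of whirl matrices $\widetilde M(\xx_1, \ldots, \xx_m)$ in~\eqref{eq:tilde M entries}: for the generic point of each divisor $V(\pi)$ one exhibits an explicit preimage point of $\bb{A}^{mn}$ by solving for the $x_i^j$ from the prescribed values of the loop elementary symmetric functions, or, dually, one controls the indeterminacy locus of the birational inverse of $\phi$ and shows it has codimension $\geq 2$ in the normal target $\bb{A}^{mn}$. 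Once this codimension bound is established, the valuative argument above yields the theorem.
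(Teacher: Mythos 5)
Your reduction of the theorem to the statement that every height-one prime of $\LSym$ is contracted from $\CC[x_i^j]$ is sound as bookkeeping: granting that each prime divisor $V(\pi)$ of the target is dominated by a divisor $V(P)$ of the source, you correctly get $P \cap \LSym = (\pi)$, the DVR-maximality argument does give $v_P|_{\Frac(\LSym)} = e\,v_\pi$, and normality of the polynomial ring $\LSym$ finishes. But the proof has a genuine gap exactly where you flag it: the claim that the image of $\phi$ omits only a set of codimension $\geq 2$ (equivalently, $\pi\,\CC[x_i^j] \cap \LSym = \pi\,\LSym$ for \emph{every} irreducible $\pi \in \LSym$) is asserted with only a plan of attack --- ``analyze the fibers directly,'' ``exhibit an explicit preimage,'' ``control the indeterminacy locus.'' No mechanism is offered for carrying this out over the infinitely many height-one primes of a polynomial ring in $mn$ variables; the whirl-matrix description~\eqref{eq:tilde M entries} lets you solve for preimages generically, but controlling what happens over an \emph{arbitrary} divisor is essentially as hard as the theorem itself. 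You also correctly observe that integrality fails, which removes the one standard tool that would make the claim automatic; what remains is not a deferrable technicality but the entire content of the result.

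It is instructive to compare with the paper's proof (Appendix~\ref{app:FTLSF}), which sidesteps the need for the contraction property at all primes. The paper first symmetrizes: $f \in \Pol_R$ iff $f = \tfrac{1}{2}(f + Rf)$, and Lemma~\ref{lem:symmetrization} shows $f + Rf \in \LSym[\kappa_1^{-1}, \ldots, \kappa_n^{-1}]$, so the only denominators that ever arise are the \emph{specific} elements $\kappa_i = \lH{n-1}{i-1}$. The contraction/divisibility property is then proved only for these (Lemmas~\ref{lem:kappa divisor} and~\ref{lem:H divisor}: if $g \in \LSym$ and $g/\kappa_i \in \CC[\mb{a},\mb{b}]$, then $g/\kappa_i \in \LSym$), via an induction on dominant monomials, after Proposition~\ref{prop:reduction to m=2} reduces everything to $m = 2$. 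Your valuative frame would close if you established your geometric claim just for the primes dividing the $\kappa_i$ after a similar symmetrization --- but at that point you would be reproving Lemma~\ref{lem:H divisor}, which is where the real work lives. One further difference worth noting: your argument imports Theorem~\ref{thm:LSym} from~\cite{LPaff} (both for $\Inv_R = \Frac(\LSym)$ and for algebraic independence), whereas the paper's appendix is deliberately independent of that theorem and obtains algebraic independence for free from Lemma~\ref{lem:leading term}.
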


\begin{ex}
\
\begin{enumerate}
\item[(a)] When $n = 1$, $\lE{i}{1}$ is the elementary symmetric polynomial in the $m$ variables $\lx{1}{1}, \ldots, \lx{m}{1}$, and $R_i$ simply swaps the variables $\lx{i}{1}$ and $\lx{i+1}{1}$. Theorem~\ref{thm:FTLSF} reduces in this case to Newton's fundamental theorem of symmetric functions, which says that the ring of symmetric polynomials in $m$ variables is the polynomial ring in the elementary symmetric polynomials $e_1, \ldots, e_m$.

\item[(b)] When $m = 1$, there are no maps $R_i$, and $\LSym$ is the polynomial ring in the $n$ variables $\lx{1}{r} = \lE{1}{r}$, $r \in [n]$, so Theorem~\ref{thm:FTLSF} is trivial in this case.
\end{enumerate}
\end{ex}

Theorem~\ref{thm:FTLSF} is proved in Appendix~\ref{app:FTLSF}; the proof is elementary, but considerably more involved than in the classical $(n=1)$ case. The algebraic independence of the $\lE{k}{r}$ is also proved in the course of the argument.

\begin{cor}[Theorem~\ref{thm:main}(2)]
\label{cor:polynomial e invariants}
The $P$-type loop elementary symmetric functions generate the ring $\Inv_e \cap \CC[x_i^j]$ of polynomial $e$-invariants.
\end{cor}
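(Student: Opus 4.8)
The plan is to deduce this from the field-level statement (Corollary~\ref{cor:X generators}(1), equivalently Theorem~\ref{thm:main}(1)), the fundamental theorem of loop symmetric functions (Theorem~\ref{thm:FTLSF}), and the inclusion $\Inv_e \subseteq \Inv_R$. One containment is immediate: each $P$-type loop elementary symmetric function $\lE{k}{r}$ (those with $k+r \geq m+1$) is a polynomial in the $x_i^j$ and lies in $\Inv_e$ by Corollary~\ref{cor:X generators}(1), so the subring they generate is contained in $\Inv_e \cap \CC[x_i^j]$. The work is the reverse containment, which I would obtain by a short ``descent'' argument.

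First I would record that $\Inv_e \subseteq \Inv_R$. By Proposition~\ref{prop:R=Weyl} the generator $R_i$ of the $S_m$-action equals $s_i = e_i^{\ve_i/\vp_i}$; if $f$ is fixed by $e_i^c$ for every constant $c \in \CC^*$, then the rational-function identity $f(e_i^c(\xx)) = f(\xx)$ remains valid after the substitution $c = \ve_i(\xx)/\vp_i(\xx)$, so $f$ is $R_i$-invariant for each $i$, hence $R$-invariant. Now take $f \in \Inv_e \cap \CC[x_i^j]$. Then $f \in \Inv_R \cap \CC[x_i^j]$, so Theorem~\ref{thm:FTLSF} expresses $f$ as a polynomial in the full family of loop elementary symmetric functions $\{\lE{k}{r} : k \in [m],\, r \in [n]\}$; by Theorem~\ref{thm:LSym} these $mn$ functions are algebraically independent, so this expression is unique. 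Writing $y = (y_1, \ldots, y_s)$ for the $P$-type functions and $w = (w_1, \ldots, w_t)$ for the remaining loop elementary symmetric functions, we have $f \in \CC[y,w]$, while Corollary~\ref{cor:X generators}(1) gives $f \in \Inv_e = \CC(y)$.

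The key step is then the elementary identity $\CC(y) \cap \CC[y,w] = \CC[y]$ (valid because $y, w$ are algebraically independent): viewing $\CC[y,w] \subseteq \CC(y)[w]$, an element of $\CC(y)[w]$ lies in $\CC(y)$ exactly when its $w$-degree is zero, so intersecting with $\CC[y,w]$ forces it into $\CC[y]$. Concretely, writing $f = a(y)/b(y)$ with $b \neq 0$, the relation $b(y) f = a(y)$ holds in the domain $\CC[y,w]$, and comparing degrees in $w$ shows $\deg_w f = 0$. Applying this to our $f$ yields $f \in \CC[y]$, i.e.\ $f$ is a polynomial in the $P$-type loop elementary symmetric functions, which is precisely the claim.

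The genuine difficulty is entirely contained in Theorem~\ref{thm:FTLSF}, proved separately in Appendix~\ref{app:FTLSF}; granting that result, the argument above is a clean descent. The only points requiring care are the passage $\Inv_e \subseteq \Inv_R$, which is an identity of rational functions rather than of constants, and the algebraic-independence lemma $\CC(y) \cap \CC[y,w] = \CC[y]$; both are routine. I expect no essential obstacle beyond correctly invoking the polynomial (as opposed to merely field-theoretic) strength of Theorems~\ref{thm:FTLSF} and~\ref{thm:LSym}.
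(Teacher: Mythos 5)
Your proposal is correct and follows the paper's own proof essentially verbatim: both deduce $\Inv_e \subseteq \Inv_R$ from Proposition~\ref{prop:R=Weyl}, apply Theorem~\ref{thm:FTLSF} to write a polynomial $e$-invariant as a polynomial in all loop elementary symmetric functions, and intersect with Theorem~\ref{thm:main}(1) to conclude. Your explicit descent step $\CC(y) \cap \CC[y,w] = \CC[y]$ merely spells out the algebraic-independence argument the paper compresses into ``The result follows.''
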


\begin{proof}
It follows from Proposition~\ref{prop:R=Weyl} that $\Inv_e \subset \Inv_R$, so Theorem~\ref{thm:FTLSF} implies that each polynomial $e$-invariant is a polynomial in the loop elementary symmetric functions. On the other hand, Theorem~\ref{thm:main} says that each polynomial $e$-invariant is a ratio of polynomials in the $P$-type loop elementary symmetric functions. The result follows.
\end{proof}

\subsection{Loop Schur functions}
\label{sec:Schur}

For our next definition, we need to recall some notions from tableau combinatorics. We identity a partition with its Young diagram, which we view, following the English convention, as a northwest-justified collection of unit cells in the plane. For partitions $\mu$ and $\la$, we write $\mu \subseteq \la$ if the Young diagram of $\mu$ is contained in that of $\la$, and if $\mu \subseteq \la$, we define the skew diagram $\la/\mu$ to be the cells in $\la$ which are not in $\mu$. For a cell $s = (i, j)$ in the $i$th row and $j$th column (using matrix coordinates), the \defn{content} of $s$ is $c(s) = i - j$ (this is the opposite of the usual definition). Let $\lambda'$ denote the conjugate or transpose partition of $\lambda$. A \defn{semistandard Young tableau of shape $\la/\mu$} is a filling $T \colon \lambda/\mu \to \ZZ_{>0}$ of the cells of $\la/\mu$ with positive integers, such that rows are weakly increasing, and columns are strictly increasing. Let $\SSYT_{\leq m}(\la/\mu)$ be the set of semistandard Young tableaux of shape $\la/\mu$ with entries at most $m$.

For partitions $\mu \subseteq \la$ and a color $r \in [n]$, define the \defn{loop skew Schur function}
\[
\ls{\lambda/\mu}{r} = \ls{\la/\mu}{r}(\xx_1, \ldots, \xx_m) = \sum_{T \in \SSYT_{\leq m}(\la)} \xx^T,
\]
where $\xx^T = \prod_{s \in \lambda/\mu} \lx{T(s)}{c(s)+r}$. If $\mu = \emptyset$, then $\ls{\la/\emptyset}{r} = \ls{\la}{r}$ is a \defn{loop Schur function}. We refer to $c(s)+r$ (considered modulo $n$) as the \defn{color} of the cell $s$.

It is clear that $\ls{(1^k)}{r} = \lE{k}{r}$. For $k > 0$ and $r \in [n]$, define the \defn{loop homogeneous symmetric function}
\[
\lH{k}{r} = \ls{(k)}{r} = \sum_{1 \leq i_1 \leq \cdots \leq i_k \leq m} \lx{i_1}{r} \lx{i_2}{r-1} \cdots \lx{i_k}{r-k+1}.
\]
It is convenient to set $\lH{0}{r} = 1$ and $\lH{k}{r} = 0$ if $k < 0$.

The following analogue of the Jacobi--Trudi formula shows that the loop skew Schur functions lie in $\LSym$.
\begin{prop}[{\cite[Thm.~7.6]{LP12}}]
\label{prop:JT}
Suppose $\mu \subseteq \la$, and the conjugate partition $\la'$ has length at most $\ell$. Then
\[
\ls{\lambda/\mu}{r} = \det\left( \lE{\lambda'_i - \mu'_j + j - i}{r + \mu'_j - j + 1} \right)_{i,j=1}^{\ell}.
\]
\end{prop}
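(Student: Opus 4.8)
The plan is to realize the Jacobi--Trudi determinant as a minor of the $n$-periodic matrix $\widetilde{M}(\xx_1, \ldots, \xx_m)$ from \S\ref{sec:LSym}, and then to evaluate that minor by the Lindström/Gessel--Viennot Lemma applied to the network of whirls, exactly as in the proof of Lemma~\ref{lem:Phi inverse}. The point is that the loop elementary symmetric functions are \emph{already} the entries of $\widetilde{M}$, so the determinantal statement should come from the banded structure of this matrix together with LGV, rather than from a fresh combinatorial argument.

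First I would rewrite each matrix entry using~\eqref{eq:tilde M entries}. Since $\widetilde{M}_{pq} = \lE{m+q-p}{p}$, the identity $\lE{k}{s} = \widetilde{M}_{s,\,k-m+s}$ lets me check that
\[
\lE{\la'_i - \mu'_j + j - i}{r + \mu'_j - j + 1} = \widetilde{M}_{\,p_j,\, q_i}, \qquad p_j = r + \mu'_j - j + 1, \quad q_i = \la'_i - i + r + 1 - m,
\]
where crucially the first index depends only on the determinant column $j$ and the second only on the determinant row $i$. As $\mu'$ and $\la'$ are partitions, the $p_j$ are distinct and the $q_i$ are distinct, so $P = \{p_j\}$ and $Q = \{q_i\}$ are genuine index sets. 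Hence the matrix in the statement is the transpose of the submatrix of $\widetilde{M}$ on rows $P$ and columns $Q$, and up to the simultaneous reordering of its rows and columns into increasing order (which preserves the determinant), the Jacobi--Trudi determinant equals the minor $\Delta_{P,Q}(\widetilde{M})$.

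Second, I would evaluate $\Delta_{P,Q}(\widetilde{M})$ by LGV. Writing $\widetilde{M} = \widetilde{W}(\xx_1) \cdots \widetilde{W}(\xx_m)$ gives a planar (periodic) network with one slice per whirl, in which a directed path either ``stays'' on wire $w$ in slice $t$, contributing the weight $\lx{t}{w}$, or drops to wire $w-1$ with weight $1$. The LGV Lemma identifies $\Delta_{P,Q}(\widetilde{M})$ with the weighted sum over non-intersecting families of paths from the sources $P$ to the sinks $Q$; since $\widetilde{M}$ is lower-triangular and banded, planarity forces the source--sink matching to be order-preserving, so no signs survive and the minor is subtraction-free. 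I would then set up the weight-preserving bijection between non-intersecting path families and $T \in \SSYT_{\leq m}(\la/\mu)$: each path records one column of the skew shape, the slice indices $t \in [m]$ at which it stays give the strictly increasing column entries, and non-intersection is equivalent to weak increase along rows.

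The one delicate point — and the step I expect to be the main obstacle — is the color bookkeeping. I must verify that the wire $w$ on which a path sits when it picks up $\lx{t}{w}$ is exactly $c(s)+r$ for the corresponding cell $s$; this amounts to checking that the shifts $p_j, q_i$ translate the content $c(s) = (\text{row}) - (\text{column})$ into the wire coordinate consistently across \emph{all} columns simultaneously, using that colors are read modulo $n$ while each individual path travels only a bounded distance and so has its colors unambiguously determined by position. In the classical $n=1$ case there is no color and this reduces to the standard LGV proof of the dual Jacobi--Trudi identity; the work here is entirely in confirming that these two independent shifts assemble into the single content-based color. Once this matching is established, the family weight equals $\prod_{s \in \la/\mu} \lx{T(s)}{c(s)+r} = \xx^T$, and summing over all non-intersecting families yields $\ls{\la/\mu}{r}$.
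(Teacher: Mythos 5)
Your proposal is sound, but note that the paper itself gives no proof of this proposition: it is quoted verbatim from \cite[Thm.~7.6]{LP12}, so your argument must stand on its own --- and it does, modulo one convention slip. The entry identification is exactly right: with $p_j = r+\mu'_j-j+1$ and $q_i = \la'_i-i+r+1-m$, equation~\eqref{eq:tilde M entries} gives $\widetilde{M}_{p_j,q_i} = \lE{m+q_i-p_j}{p_j} = \lE{\la'_i-\mu'_j+j-i}{r+\mu'_j-j+1}$; both index sequences are strictly decreasing because $\mu'$ and $\la'$ are partitions, and transposing plus simultaneously reversing rows and columns preserves the determinant, so the Jacobi--Trudi determinant is $\Delta_{P,Q}\bigl(\widetilde{M}\bigr)$ --- this is the anti-diagonal reflection the paper itself alludes to just after Example~\ref{ex:Schur}. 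The Lindstr\"om/Gessel--Viennot Lemma applies verbatim to the periodic whirl network (minors involve finitely many indices, paths stay in a bounded region of a locally finite acyclic graph), and since every step weakly decreases the wire index, only the order-preserving matching $p_j \to q_j$ survives; path $j$ then has exactly $m + q_j - p_j = \la'_j - \mu'_j$ stays, one per cell of column $j$ of $\la/\mu$.

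The slip is your stated stay weight. Under the paper's identification $\xx_t = (\lx{t}{t},\ldots,\lx{t}{t+n-1})$, the diagonal of the $t$-th whirl $\widetilde{W}(\xx_t)$ is $x_t^j = \lx{t}{j+t-1}$, so a stay on wire $w$ in slice $t$ contributes $\lx{t}{w+t-1}$, \emph{not} $\lx{t}{w}$ (with your convention, the $(1,1)$ entry of Figure~\ref{fig:M unfolded} would come out as $\lx{1}{1}\lx{2}{1}\lx{3}{1}$ rather than the correct $\lx{1}{1}\lx{2}{2}\lx{3}{1}$). This is precisely the color bookkeeping you flagged as the delicate point, and it closes cleanly: at the $a$-th stay of path $j$, occurring at slice $t_a$, the path has previously made $t_a - a$ drops, so its wire is $p_j - t_a + a$ and the color picked up is $(p_j - t_a + a) + t_a - 1 = p_j + a - 1 = r + \mu'_j + a - j$, which is exactly $c(s)+r$ for the $a$-th cell $s = (\mu'_j + a,\, j)$ of column $j$ --- notably independent of $t_a$, so no case analysis over drop positions is needed. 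With that correction, the family weight is $\xx^T$, strict column increase comes from $t_1 < \cdots < t_{k}$, non-intersection corresponds to weak row increase as in the classical dual Jacobi--Trudi argument, and the proof is complete; this is, as far as one can tell, the same LGV-on-whirls route taken in the cited source.
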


\begin{ex}
\label{ex:Schur}
We compute the loop Schur function $\ls{(4,2)}{1}(\xx_1, \xx_2)$ in the case $n=4$. The colors $c(s) + 1$ of the cells of $(4,2)$ are shown below, where Orange is $1$, Blue is $2$, Red is $3$, and Green is $4$:

\begin{center}
\begin{tikzpicture}[baseline=-40,scale=0.5]
\foreach \x/\y in {3/1} {
  \fill[green!40] (\x,-\y) rectangle (\x+1,-\y-1);
  \draw (\x+0.5,-\y-0.5) node {$G$};
}
\foreach \x/\y in {2/1,3/2} {
  \fill[orange!40] (\x,-\y) rectangle (\x+1,-\y-1);
  \draw (\x+0.5,-\y-0.5) node {$O$};
}
\foreach \x/\y in {4/1} {
  \fill[red!40] (\x,-\y) rectangle (\x+1,-\y-1);
  \draw (\x+0.5,-\y-0.5) node {$R$};
}
\foreach \x/\y in {2/2,5/1} {
  \fill[blue!40] (\x,-\y) rectangle (\x+1,-\y-1);
  \draw (\x+0.5,-\y-0.5) node {$B$};
}
\draw[thick] (2,-1) -- (6,-1) -- (6,-2) -- (4,-2) -- (4,-3) -- (2,-3) -- (2,-1);
\draw (3,-1) -- (3,-3);
\draw (4,-1) -- (4,-2);
\draw (5,-1) -- (5,-2);
\draw (2,-2) -- (6,-2);
\end{tikzpicture}\ .
\end{center}

\noindent There are three semistandard tableaux of shape $\lambda$ with entries in $\{1,2\}$:

\begin{center}
\begin{tikzpicture}[baseline=-40,scale=0.5]
\foreach \x/\y in {3/1} {
  \fill[green!40] (\x,-\y) rectangle (\x+1,-\y-1);
  \draw (\x+0.5,-\y-0.5) node {$1$};
}
\foreach \x/\y in {2/1} {
  \fill[orange!40] (\x,-\y) rectangle (\x+1,-\y-1);
  \draw (\x+0.5,-\y-0.5) node {$1$};
}
\foreach \x/\y in {3/2} {
  \fill[orange!40] (\x,-\y) rectangle (\x+1,-\y-1);
  \draw (\x+0.5,-\y-0.5) node {$2$};
}
\foreach \x/\y in {4/1} {
  \fill[red!40] (\x,-\y) rectangle (\x+1,-\y-1);
  \draw (\x+0.5,-\y-0.5) node {$$1};
}
\foreach \x/\y in {2/2} {
  \fill[blue!40] (\x,-\y) rectangle (\x+1,-\y-1);
  \draw (\x+0.5,-\y-0.5) node {$2$};
}
\foreach \x/\y in {5/1} {
  \fill[blue!40] (\x,-\y) rectangle (\x+1,-\y-1);
  \draw (\x+0.5,-\y-0.5) node {$1$};
}
\draw[thick] (2,-1) -- (6,-1) -- (6,-2) -- (4,-2) -- (4,-3) -- (2,-3) -- (2,-1);
\draw (3,-1) -- (3,-3);
\draw (4,-1) -- (4,-2);
\draw (5,-1) -- (5,-2);
\draw (2,-2) -- (6,-2);

\begin{scope}[xshift=8cm]
\foreach \x/\y in {3/1} {
  \fill[green!40] (\x,-\y) rectangle (\x+1,-\y-1);
  \draw (\x+0.5,-\y-0.5) node {$1$};
}
\foreach \x/\y in {2/1} {
  \fill[orange!40] (\x,-\y) rectangle (\x+1,-\y-1);
  \draw (\x+0.5,-\y-0.5) node {$1$};
}
\foreach \x/\y in {3/2} {
  \fill[orange!40] (\x,-\y) rectangle (\x+1,-\y-1);
  \draw (\x+0.5,-\y-0.5) node {$2$};
}
\foreach \x/\y in {4/1} {
  \fill[red!40] (\x,-\y) rectangle (\x+1,-\y-1);
  \draw (\x+0.5,-\y-0.5) node {$1$};
}
\foreach \x/\y in {2/2} {
  \fill[blue!40] (\x,-\y) rectangle (\x+1,-\y-1);
  \draw (\x+0.5,-\y-0.5) node {$2$};
}
\foreach \x/\y in {5/1} {
  \fill[blue!40] (\x,-\y) rectangle (\x+1,-\y-1);
  \draw (\x+0.5,-\y-0.5) node {$2$};
}
\draw[thick] (2,-1) -- (6,-1) -- (6,-2) -- (4,-2) -- (4,-3) -- (2,-3) -- (2,-1);
\draw (3,-1) -- (3,-3);
\draw (4,-1) -- (4,-2);
\draw (5,-1) -- (5,-2);
\draw (2,-2) -- (6,-2);
\end{scope}

\begin{scope}[xshift=16cm]
\foreach \x/\y in {3/1} {
  \fill[green!40] (\x,-\y) rectangle (\x+1,-\y-1);
  \draw (\x+0.5,-\y-0.5) node {$1$};
}
\foreach \x/\y in {2/1} {
  \fill[orange!40] (\x,-\y) rectangle (\x+1,-\y-1);
  \draw (\x+0.5,-\y-0.5) node {$1$};
}
\foreach \x/\y in {3/2} {
  \fill[orange!40] (\x,-\y) rectangle (\x+1,-\y-1);
  \draw (\x+0.5,-\y-0.5) node {$2$};
}
\foreach \x/\y in {4/1} {
  \fill[red!40] (\x,-\y) rectangle (\x+1,-\y-1);
  \draw (\x+0.5,-\y-0.5) node {$2$};
}
\foreach \x/\y in {2/2} {
  \fill[blue!40] (\x,-\y) rectangle (\x+1,-\y-1);
  \draw (\x+0.5,-\y-0.5) node {$2$};
}
\foreach \x/\y in {5/1} {
  \fill[blue!40] (\x,-\y) rectangle (\x+1,-\y-1);
  \draw (\x+0.5,-\y-0.5) node {$2$};
}
\draw[thick] (2,-1) -- (6,-1) -- (6,-2) -- (4,-2) -- (4,-3) -- (2,-3) -- (2,-1);
\draw (3,-1) -- (3,-3);
\draw (4,-1) -- (4,-2);
\draw (5,-1) -- (5,-2);
\draw (2,-2) -- (6,-2);
\end{scope}

\end{tikzpicture}\ .
\end{center}

\noindent Thus, we have
\[
\ls{(4,2)}{1}(\xx_1, \xx_2) = \lx{1}{1}\lx{1}{2}\lx{1}{3}\lx{1}{4}\lx{2}{1}\lx{2}{2} + \lx{1}{1}\lx{1}{3}\lx{1}{4}\lx{2}{1}(\lx{2}{2})^2 + \lx{1}{1}\lx{1}{4}\lx{2}{1}(\lx{2}{2})^2\lx{2}{3}.
\]
The reader may verify that
\[
\ls{(4,2)}{1}(\xx_1, \xx_2) = \det \begin{pmatrix}
\lE{2}{1} & 0 & 0 & 0 \\
\lE{1}{1} & \lE{2}{4} & 0 & 0\\
0 & 1 & \lE{1}{3} & \lE{2}{2} \\
0 & 0 & 1 & \lE{1}{2}
\end{pmatrix}.
\]
\end{ex}

The Jacobi--Trudi formula implies that the minors of the $n$-periodic matrix $\widetilde{M}(\xx_1, \ldots, \xx_m)$ are precisely the loop skew Schur functions (one must reflect a submatrix of $\widetilde{M}(\xx_1, \ldots, \xx_m)$ over the anti-diagonal to get a matrix in ``Jacobi--Trudi form,'' but this does not change the determinant). In particular, the entries of the $\gRSK$ $P$-pattern are ratios of loop Schur functions of rectangular shape. For $1 \leq i \leq m$ and $i \leq j \leq n$, let
\[
\Box(i, j) = \ls{(\underbrace{j-i+1, \ldots, j-i+1}_{m-i+1 \text{ times}})}{j}(\xx_1, \ldots, \xx_m)
\]
be the loop Schur function associated with an $(m-i+1) \times (j-i+1)$ rectangle, where the unique northwest corner has color $j$. Set $\Box(i+1,j) = 1$ if $i = j$ or $i = m$. Since $M(\xx)$ is the block of $\widetilde{M}(\xx_1, \ldots, \xx_m)$ consisting of rows and columns $1, \ldots, n$, the Jacobi--Trudi formula implies that
\begin{equation}
\label{eq:P loop formula}
z_{i,j} = \dfrac{\Delta_{[i,j],[1,j-i+1]}(M(\xx))}{\Delta_{[i+1,j],[1,j-i]}(M(\xx))} = \frac{\Box(i, j)}{\Box(i+1, j)}.
\end{equation}
The shape invariants are given by
\[
S_k(\xx) = \Box(k,n) = \ls{(\underbrace{n-k+1, \ldots, n-k+1}_{m-k+1 \text{ times}})}{n}(\xx_1, \ldots, \xx_m)
\]
for $k = 1, \ldots, \min(m,n)$, as promised in Remark~\ref{rem:Schur}.

\begin{conj}
The shape invariants generate the ring $\Inv_{e\ov{e}} \cap \CC[x_i^j]$.
\end{conj}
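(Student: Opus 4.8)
The plan is to mimic the proof of Corollary~\ref{cor:polynomial e invariants} (that is, Theorem~\ref{thm:main}(2)), but using both families of geometric crystal operators at once. That proof had two ingredients: a \emph{polynomial} fundamental theorem (the $\lE{k}{r}$ generate $\Inv_R \cap \CC[x_i^j]$, Theorem~\ref{thm:FTLSF}), supplying an ambient polynomial invariant ring together with a free generating set; and a \emph{field}-level result (Theorem~\ref{thm:main}(1)) showing that an $e$-invariant uses only the sub-family of generators indexed by $P$-type pairs. The conclusion then followed from the elementary fact that a polynomial in algebraically independent generators $g_1, \ldots, g_t$ which happens to lie in the subfield $\CC(g_1, \ldots, g_s)$ generated by some of them must already be a polynomial in $g_1, \ldots, g_s$.

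First I would record the field-level input, which is already available: by Corollary~\ref{cor:X generators}(3) we have $\Inv_{e \ov{e}} = \CC(S_1, \ldots, S_p)$ with the shape invariants algebraically independent, and by Proposition~\ref{prop:R=Weyl} together with its transpose $\Inv_{e \ov{e}} = \Inv_e \cap \Inv_{\ov{e}} \subseteq \Inv_R \cap \Inv_{\ov{R}}$. Intersecting with $\CC[x_i^j]$ gives $\Inv_{e \ov{e}} \cap \CC[x_i^j] \subseteq \Inv_R \cap \Inv_{\ov{R}} \cap \CC[x_i^j]$. The crucial ingredient I would then invoke is a fundamental theorem for the polynomial $R \cap \ov{R}$-invariants: that $\Inv_R \cap \Inv_{\ov{R}} \cap \CC[x_i^j]$ is a polynomial ring on an algebraically independent generating set among which the rectangular loop Schur functions $S_1, \ldots, S_p$ appear. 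Granting this, the proof finishes exactly as before: any $f \in \Inv_{e \ov{e}} \cap \CC[x_i^j]$ is a polynomial in the full generating set, and simultaneously (being an $e\ov{e}$-invariant) a rational function in the sub-family $S_1, \ldots, S_p$, so algebraic independence of the generators forces $f \in \CC[S_1, \ldots, S_p]$.

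The hard part is supplying that ambient polynomial fundamental theorem. It is precisely the assertion that the (conjectural) algebraically independent generators of $\Inv_R \cap \Inv_{\ov{R}}$ studied in the sequel~\cite{BFPSII} may be chosen to contain the $S_k$ — a fact not established in this paper, and the reason the present statement is only a conjecture. I would attempt it either by adapting the loop-symmetric-function machinery of Appendix~\ref{app:FTLSF} to the doubly-symmetric setting, or, absent a full description of $\Inv_R \cap \Inv_{\ov{R}} \cap \CC[x_i^j]$, by isolating the minimal algebraic input actually needed. Concretely, combining Corollary~\ref{cor:polynomial e invariants} with its transpose and the algebraic independence of the $Z$-coordinates (as in Corollary~\ref{cor:Z generators}) one can already show that every $f \in \Inv_{e \ov{e}} \cap \CC[x_i^j]$ is a \emph{Laurent} polynomial in $S_1, \ldots, S_p$ (pass from the $P$-type $\lE{k}{r}(\xx)$ to the $z_{i,j}$, in which these are Laurent polynomials, intersect with $\Inv_{\ov e}$ to land in the shape coordinates $z_{k,n}$, and apply the monomial change $S_k = \prod_{j \geq k} z_{j,n}$). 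The entire remaining difficulty is to rule out negative exponents, i.e.\ to prove the saturation statement $\CC[S_1^{\pm 1}, \ldots, S_p^{\pm 1}] \cap \CC[x_i^j] = \CC[S_1, \ldots, S_p]$; this would follow if each $S_k$ is prime in $\CC[x_i^j]$ and the remaining $S_j$ stay algebraically independent modulo $S_k$, a question about the boundary geometry of the loci $\{S_k = 0\}$ that I expect to be the genuine obstacle.
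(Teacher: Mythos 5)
There is no proof in the paper to compare against: the statement you were given is stated there only as a conjecture (the final conjecture of \S\ref{sec:Schur}), and the authors prove nothing beyond the field-level result, Corollary~\ref{cor:X generators}(3), that the $S_k$ are algebraically independent generators of $\Inv_{e\ov{e}}$. So the right way to judge your proposal is as partial progress, and as such it is correct and genuinely useful. Your Laurent-polynomial reduction goes through exactly as you sketch it: by Corollary~\ref{cor:polynomial e invariants} any $f \in \Inv_{e\ov{e}} \cap \CC[x_i^j]$ is a polynomial in the $P$-type $\lE{k}{r}(\xx)$; these are Laurent polynomials in the $z_{i,j}$ (this is observed verbatim in the proof of Corollary~\ref{cor:X generators}, via $M(\xx) = \Phi_n^{\leq m}(z_{i,j})$); intersecting with $\Inv_{\ov{e}} = \CC(z'_{j,i})$ and using the algebraic independence of $Z$ forces $f \in \CC[z_{1,n}^{\pm 1}, \ldots, z_{p,n}^{\pm 1}]$ (a Laurent polynomial in independent variables that is rational in a sub-family of them is a Laurent polynomial in that sub-family, by a coprimality argument in the ambient UFD); and the unimodular monomial substitution $S_k = \prod_{j \geq k} z_{j,n}$ identifies this ring with $\CC[S_1^{\pm 1}, \ldots, S_p^{\pm 1}]$. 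You have thus correctly shown that the conjecture is \emph{equivalent} to the saturation statement $\CC[S_1^{\pm 1}, \ldots, S_p^{\pm 1}] \cap \CC[x_i^j] = \CC[S_1, \ldots, S_p]$, and your sufficient condition for it --- each $S_k$ prime in $\CC[x_i^j]$, with the remaining $S_j$ algebraically independent modulo $S_k$ --- is the right formulation of what is needed.

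The one caveat concerns your first proposed route: an ambient ``fundamental theorem'' asserting that $\Inv_R \cap \Inv_{\ov{R}} \cap \CC[x_i^j]$ is a polynomial ring on free generators including the $S_k$ is itself only conjectural (the generating sets for $\Inv_R \cap \Inv_{\ov{R}}$ in the sequel~\cite{BFPSII} are explicitly conjectural, and even granting them one would still need the $S_k$ to sit inside a \emph{free} generating set of the polynomial invariant ring, which does not follow from the field-level statement). So that route assumes something at least as hard as the target. Your second route is the honest one, and you correctly flag the primality/irreducibility of the rectangular loop Schur functions $S_k$ as the genuine open obstacle; since this is precisely why the statement remains a conjecture in the paper, your proposal cannot be faulted for stopping there.
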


\appendix

\section{Proofs of technical lemmas}
\label{app:proofs}

\begin{proof}[Proof of Lemma~\ref{lem:explicit formulas basic}]
The formula for $\gamma(\xx)$ is immediate from~\eqref{eq:basic geometric crystal} and~\eqref{eq:functions prod}. For the other assertions, suppose $Y_1, \ldots, Y_n$ are $\GL_m$-geometric crystals, and $y_j \in Y_j$. Set
\[
A_k = \prod_{r=1}^{k-1} \ve_i(y_r) \prod_{r=k+1}^n \vp_i(y_r), \qquad A'_k = \prod_{r=1}^{k-1} \ve_i(y_r) \prod_{r=k+1}^{n-1} \vp_i(y_r)
\]
for $k \in [n]$ and $k \in [n-1]$, respectively. Using~\eqref{eq:functions prod} and the identity
\begin{equation}
\label{eq:rec id}
\prod_{r=1}^{n-1} \ve_i(y_r) + \sum_{k=1}^{n-1} A'_k \vp_i(y_n) = \sum_{k=1}^n A_k,
\end{equation}
one shows by induction on $n$ that
\begin{equation}
\label{eq:functions multi prod}
\ve_i(y_1, \ldots, y_n) = \dfrac{\prod_{r=1}^n \ve_i(y_r)}{\sum_{k=1}^n A_k}, \qquad \vp_i(y_1, \ldots, y_n) =  \dfrac{\prod_{r=1}^n \vp_i(y_r)}{\sum_{k=1}^n A_k}.
\end{equation}
The formulas for $\ve_i(\xx)$ and $\vp_i(\xx)$ are obtained by taking $Y_j = X_m$ for all $j$, and using~\eqref{eq:basic geometric crystal}.

Now we prove by induction that
\begin{equation}
\label{eq:e multi prod}
e_i^c(y_1, \ldots, y_n) = (e_i^{c_1}(y_1), \ldots, e_i^{c_n}(y_n)), \quad \text{where} \quad c_j = \dfrac{\ds c\sum_{k = 1}^j A_k + \sum_{k=j+1}^n A_k}{\ds c\sum_{k=1}^{j-1} A_k + \sum_{k=j}^n A_k},
\end{equation}
from which the stated formula for $e_i^c(\xx)$ follows. For $n=2$, this is just the definition~\eqref{eq:e prod}. For $n > 2$, we have $e_i^c(y_1, \ldots, y_n) = (e_i^{c^+}(y_1, \ldots, y_{n-1}), e_i^{c/c^+}(y_n))$, where
\[
c^+ = \dfrac{\ds c \vp_i(y_n) + \frac{\prod_{r=1}^{n-1} \ve_i(y_r)}{\sum_{k=1}^{n-1} A'_k}}{\ds \vp_i(y_n) + \frac{\prod_{r=1}^{n-1} \ve_i(y_r)}{\sum_{k=1}^{n-1} A'_k}} = \dfrac{\displaystyle c \sum_{k=1}^{n-1}A_k + A_n}{\displaystyle \sum_{k=1}^n A_k}
\]
by~\eqref{eq:rec id} and~\eqref{eq:functions multi prod}. This shows that $c/c^+ = c_n$. For the other $c_j$, induction gives $e_i^{c^+}(y_1, \ldots, y_{n-1}) = (e_i^{c^+_1}(y_1), \ldots, e_i^{c^+_{n-1}}(y_{n-1}))$, where
\begin{equation}
\label{eq:e proof}
c^+_j = \dfrac{\ds c^+\sum_{k = 1}^j A'_k + \sum_{k=j+1}^{n-1} A'_k}{\ds c^+\sum_{k=1}^{j-1} A'_k + \sum_{k=j}^{n-1} A'_k} = \dfrac{\ds \dfrac{c \sum_{k=1}^{n-1}A_k + A_n}{\sum_{k=1}^n A_k}\sum_{k = 1}^j A'_k + \sum_{k=j+1}^{n-1} A'_k}{\ds \dfrac{c \sum_{k=1}^{n-1}A_k + A_n}{\sum_{k=1}^n A_k}\sum_{k=1}^{j-1} A'_k + \sum_{k=j}^{n-1} A'_k}.
\end{equation}
The numerator of~\eqref{eq:e proof} can be rewritten as
\[
\dfrac{\ds c \sum_{k=1}^{n-1}A_k \sum_{k = 1}^j A'_k + \sum_{k=1}^{n-1} A_k \sum_{k=j+1}^{n-1} A'_k + A_n \sum_{k=1}^{n-1}A_k'}{\ds \sum_{k=1}^n A_k} = \dfrac{\ds \left(c\sum_{k = 1}^j A_k + \sum_{k=j+1}^n A_k\right) \sum_{k = 1}^{n-1} A'_k}{\ds \sum_{k=1}^n A_k},
\]
where the second expression is obtained by using $A_k = A'_k \vp_i(y_n)$ for $k \leq n-1$. The denominator of~\eqref{eq:e proof} can be rewritten in the same way with $j$ replaced by $j-1$; this proves~\eqref{eq:e multi prod}.
\end{proof}

\begin{proof}[Proof of Lemma~\ref{lem:dec formula}]
We first rewrite the decoration as
\[
F(\zz) = \sum_{j = 1}^{n-1} \sum_{i = 1}^{\min(m,n-j)} \frac{z_{i,i+j}}{z_{i,i+j-1}} \quad + \quad \sum_{i = 1}^{\min(m-1,n-1)} \sum_{j = 1}^{n-i} \frac{z_{i,i+j-1}}{z_{i+1,i+j}} \quad + \quad \mathbbm{1}_{m < n}z_{m,m}.
\]
It suffices to show that
\begin{equation}
\label{eq:first sum}
\sum_{i=1}^{\min(m,n-j)} \dfrac{z_{i,i+j}}{z_{i,i+j-1}} = \begin{cases}
\dfrac{\Delta_{[m+1,m+j],[1,j-1] \cup \{j+1\}}(M)}{\Delta_{[m+1,m+j],[1,j]}(M)} & \text{ if } j \leq n-m, \bigskip \\
\dfrac{\Delta_{[n-j+1,n],[1,j-1] \cup \{j+1\}}(M)}{\Delta_{[n-j+1,n],[1,j]}(M)} & \text{ if } j \geq n-m+1,
\end{cases}
\end{equation}
for $j = 1, \ldots, n-1$,
\begin{equation}
\label{eq:second sum}
\sum_{j = 1}^{n-i} \frac{z_{i,i+j-1}}{z_{i+1,i+j}} = \frac{\Delta_{\{i\} \cup [i+2,n],[1,n-i]}(M)}{\Delta_{[i+1,n],[1,n-i]}(M)}
\end{equation}
for $i = 1, \ldots, \min(m-1,n-1)$, and
\begin{equation}
\label{eq:i=m}
z_{m,m} = \frac{\Delta_{\{m\} \cup [m+2,n],[1,n-m]}(M)}{\Delta_{[m+1,n],[1,n-m]}(M)}
\end{equation}
if $m < n$.

Each of these equations is verified by applying the Lindstr\"om/Gessel--Viennot Lemma to the network $\Gamma_n^{\leq m}$ introduced in the proof of Lemma~\ref{lem:Phi inverse}. We explain only~\eqref{eq:second sum}; the arguments for the other two equations are a bit easier. The denominator of the right-hand side of~\eqref{eq:second sum} is given by
\begin{equation}
\label{eq:denominator}
\Delta_{[i+1,n],[1,n-i]}(M) = \prod_{k=i+1}^{\min(m,n)} z_{k,n},
\end{equation}
since this is the weight of the unique non-intersecting collection of paths from $[i+1,n]$ to $[1,n-i]$.

\begin{figure}
\begin{center}
\begin{tikzpicture}

\foreach \a/\b/\c/\d in {0/1/1/0, 0/2/2/0, 0/3/3/0, 0/4/4/0, 0/5/5/0, 1/5/6/0, 2/5/7/0, 1/5/1/0, 2/5/2/0, 3/4/3/0, 4/3/4/0, 5/2/5/0, 6/1/6/0} {\draw (\a,\b) -- (\c,\d);}
\foreach \a/\b in {0/1,0/2,0/3,0/4,0/5} {\filldraw (\a,\b) circle[radius=.04cm] node[left]{$\b$}; \filldraw (\b,\a) circle[radius=.04cm] node[below]{$\b'$};}
\foreach \a/\b/\c in {1/5/6, 2/5/7} {\filldraw (\a,\b) circle[radius=.04cm] node[above]{$\c$};}
\foreach \a/\b in {0/6, 0/7} {\filldraw (\b,\a) circle[radius=.04cm] node[below]{$\b'$};}
\foreach \a in {1,2,3,4,5} {\draw (0.6,\a-0.2) node{$z_{\a\a}$};}
\foreach \a/\b in {1/2,2/3,3/4,4/5,5/6} {\draw (1.6,\a-0.2) node{$\frac{z_{\a\b}}{z_{\a\a}}$};}
\foreach \a/\b/\c in {1/2/3,2/3/4,3/4/5,4/5/6,5/6/7} {\draw (2.6,\a-0.2) node{$\frac{z_{\a\c}}{z_{\a\b}}$};}
\foreach \a/\b/\c in {1/3/4,2/4/5,3/5/6,4/6/7} {\draw (3.6,\a-0.2) node{$\frac{z_{\a\c}}{z_{\a\b}}$};}
\foreach \a/\b/\c in {1/4/5,2/5/6,3/6/7} {\draw (4.6,\a-0.2) node{$\frac{z_{\a\c}}{z_{\a\b}}$};}
\foreach \a/\b/\c in {1/5/6,2/6/7} {\draw (5.6,\a-0.2) node{$\frac{z_{\a\c}}{z_{\a\b}}$};}
\draw (6.6,0.8) node{$\frac{z_{17}}{z_{16}}$};
\foreach \a/\b/\c/\d in {0/2/1/1, 0/4/1/3, 0/5/2/3, 1/5/3/3, 2/5/4/3, 1/1/1/0, 2/1/2/0, 3/1/3/0, 4/1/4/0, 5/1/5/0} {\draw[blue,ultra thick] (\a,\b) -- (\c,\d);}
\foreach \a/\b/\c/\d in {1/3/1/2, 2/3/2/2, 3/3/4/2, 4/3/5/2, 1/2/2/1, 2/2/3/1, 4/2/4/1, 5/2/5/1} {\draw[red,ultra thick] (\a,\b) -- (\c,\d);}

\end{tikzpicture}
\end{center}
\caption{A non-intersecting collection of paths in the network $\Gamma_7^{\leq 5}$ which contributes to the minor $\Delta_{24567, 12345}(M)$. The blue edges appear in all non-intersecting collections of paths from sources $\{2,4,5,6,7\}$ to sinks $\{1',2',3',4',5'\}$; the red edges appear in the collection corresponding to the choice $j = 3$ in the proof of Lemma~\ref{lem:dec formula}. The weight of this collection is $z_{2,4} \frac{z_{3,7}}{z_{3,5}} z_{4,7} z_{5,7}$.}
\label{fig:network paths}
\end{figure}

Now consider non-intersecting collections of paths from $\{i\} \cup [i+2,n]$ to $[1,n-i]$. Since $i \leq m-1$, there is a unique path from $i$ to $1'$. For $r = 2, \ldots, n-i$, the path from source $i+r-1$ to sink $r'$ must start by taking diagonal steps down to level $i+1$ (the height of source $i+1$), and it must end with vertical steps from level $i-1$ down to the bottom of the network. This leaves two choices for each path: to get from level $i+1$ to level $i-1$, it can travel diagonally and then vertically (DV), or vertically and then diagonally (VD). If the path ending in sink $r'$ chooses DV, then the path ending in sink $(r+1)'$ must also choose DV to avoid a collision. Thus, there must be some $j \in [1,n-i]$ such that the paths ending at $2', \ldots, j'$ choose VD, and the paths ending at $(j+1)', \ldots, (n-i)'$ choose DV (see Figure~\ref{fig:network paths} for an illustration). We conclude that
\[
\Delta_{\{i\} \cup [i+2,n],[1,n-i]}(M) = \prod_{k = i+2}^{\min(m,n)} z_{k,n} \; \times \; \sum_{j=1}^{n-i} z_{i,i+j-1} \frac{z_{i+1,n}}{z_{i+1,i+j}}.
\]
Dividing by~\eqref{eq:denominator}, we obtain~\eqref{eq:second sum}.
\end{proof}

\begin{proof}[Proof of Lemma~\ref{lem:explicit crystal formulas}]
Let $M = \Phi_n(\mb{z})$. By considering paths in the network $\Gamma_n$ from source $j$ (resp., $j+1$) to sink $j'$, one sees that
\[
M_{j,j} = \dfrac{z_{1,j} \cdots z_{j,j}}{z_{1,j-1} \cdots z_{j-1,j-1}},
\qquad\qquad
M_{j+1,j} = z_{j+1,j+1} \sum_{k=1}^j \dfrac{z_{1,j} \cdots z_{k-1,j}}{z_{1,j-1} \cdots z_{k-1,j-1}} \dfrac{z_{k+1,j+1} \cdots z_{j,j+1}}{z_{k+1,j} \cdots z_{j,j}}.
\]
The formula for $\ov{\gamma}(\mb{z})$ follows immediately. For $\ov{\varepsilon}_j(\mb{z})$, we compute
\begin{align*}
\ov{\varepsilon}_j(\mb{z}) = \dfrac{M_{j+1,j+1}}{M_{j+1,j}} &= \dfrac{\ds z_{j+1,j+1} \prod_{i=1}^j \dfrac{z_{i,j+1}}{z_{i,j}}}{\ds z_{j+1,j+1} \sum_{k=1}^j \prod_{i=1}^{k-1} \dfrac{z_{i,j}}{z_{i,j-1}} \prod_{i=k+1}^j \dfrac{z_{i,j+1}}{z_{i,j}}} \\
&= \dfrac{1}{\ds \sum_{k=1}^j \dfrac{z_{1,j}}{z_{1,j+1}} \prod_{i=1}^{k-1} \dfrac{z_{i,j}z_{i+1,j}}{z_{i,j-1}z_{i+1,j+1}}} = \dfrac{z_{1,j+1}}{z_{1,j}} \gMax_{1 \leq k \leq j} \left(\prod_{i=2}^{k} \phi_{i,j}^{-1}\right).
\end{align*}
A similar computation gives the formula for $\ov{\varphi}_j(\mb{z})$.

It remains to consider the geometric crystal operators. Set
\[
M' = x_j\left((c-1)\ov{\vp}_j(\mb{z})\right) \cdot M \cdot x_j\left((c^{-1}-1) \ov{\ve}_j(\mb{z})\right).
\]
By definition, the entries of the GT pattern $\mb{z}' = \ov{e}_j^c(\mb{z})$ are given by
\[
z_{i,j}' = \left(\frac{\Delta_{[i,j]}(M')}{\Delta_{[i+1,j]}(M')}\right)_{1 \leq i \leq j \leq n}.
\]
The matrix $M'$ is obtained from $M$ by adding a multiple of row $j+1$ to row $j$, and a multiple of column $j$ to column $j+1$. The latter operation does not affect flag minors, and the former operation only affects minors of the form $\Delta_{[s,r]}$ if $r = j$. This implies that $z_{i,r}' = z_{i,r}$ if $r \neq j$, and
\[
z_{i,j}' = \dfrac{\Delta_{[i,j]}(M) + (c-1)\ov{\varphi}_j(\mb{z}) \Delta_{[i,j-1] \cup \{j+1\}}(M)}{\Delta_{[i+1,j]}(M) + (c-1)\ov{\varphi}_j(\mb{z}) \Delta_{[i+1,j-1] \cup \{j+1\}}(M)}
\]
(when $i = j$, the denominator is equal to 1). We will show that
\[
\Delta_{[i,j]}(M) + (c-1)\ov{\varphi}_j(\mb{z}) \Delta_{[i,j-1] \cup \{j+1\}}(M) = \prod_{k=i}^j z_{k,j} \dfrac{\ds \sum_{k=1}^j c^{\mathbbm{1}_{k \geq i}} \prod_{\ell=k+1}^{j} \phi_{\ell, j}^{-1}}{\ds \sum_{k=1}^j \prod_{\ell=k+1}^{j} \phi_{\ell, j}^{-1}} = \prod_{k=i}^j z_{k,j} \; \times \; \dfrac{C_{i,j}}{C_{j+1,j}}
\]
for $i = 1, \ldots, j$, from which it follows that $z_{i,j}' = z_{i,j}\dfrac{C_{i,j}}{C_{i+1,j}}$.

Applying the Lindstr\"om/Gessel--Viennot Lemma to $\Gamma_n$, we obtain
\[
\Delta_{[i,j]}(M) = \prod_{k=i}^j z_{k,j}, \qquad\qquad
\Delta_{[i,j-1] \cup \{j+1\}}(M) = z_{j+1,j+1} \prod_{k=i}^{j-1} z_{k,j-1} \sum_{k=i}^j \prod_{\ell=i}^{k-1} \dfrac{z_{\ell,j}}{z_{\ell,j-1}} \prod_{\ell=k+1}^j \dfrac{z_{\ell,j+1}}{z_{\ell,j}}.
\]
Using these expressions and the formula $\ov{\varphi}_j(\mb{z}) = \dfrac{z_{j,j}}{z_{j+1,j+1}} \gMax_{1 \leq k \leq j} \left( \prod_{\ell=k+1}^{j} \phi_{\ell, j} \right)$, we compute
\begin{align*}
\Delta_{[i,j]}(M) + (c-1)\ov{\varphi}_j(\mb{z}) \Delta_{[i,j-1] \cup \{j+1\}}(M) &=
\prod_{k=i}^j z_{k,j} \left(1 + (c-1) \dfrac{\ds \prod_{\ell=i}^{j-1} \dfrac{z_{\ell,j-1}}{z_{\ell,j}} \sum_{k=i}^j \prod_{\ell=i}^{k-1} \dfrac{z_{\ell,j}}{z_{\ell,j-1}} \prod_{\ell=k+1}^j \dfrac{z_{\ell,j+1}}{z_{\ell,j}}}{\ds \sum_{k=1}^j \prod_{\ell=k+1}^{j} \phi_{\ell, j}^{-1}}\right) \\
&= \prod_{k=i}^j z_{k,j} \left(1 + (c-1) \dfrac{\ds \sum_{k=i}^j \prod_{\ell=k}^{j-1} \dfrac{z_{\ell,j-1}}{z_{\ell,j}} \prod_{\ell=k+1}^j \dfrac{z_{\ell,j+1}}{z_{\ell,j}}}{\ds \sum_{k=1}^j \prod_{\ell=k+1}^{j} \phi_{\ell, j}^{-1}}\right) \\
&= \prod_{k=i}^j z_{k,j} \left(1 + (c-1) \dfrac{\ds \sum_{k=i}^j \prod_{\ell=k+1}^{j} \phi_{\ell, j}^{-1}}{\ds \sum_{k=1}^j \prod_{\ell=k+1}^{j} \phi_{\ell, j}^{-1}}\right) \\
&= \prod_{k=i}^j z_{k,j} \; \times \; \dfrac{C_{i,j}}{C_{j+1,j}},
\end{align*}
as claimed.
\end{proof}

\section{The fundamental theorem of loop symmetric functions (with Thomas Lam)}
\label{app:FTLSF}

As in~\S \ref{sec:LSym}, we consider the polynomial ring $\CC[\lx{i}{j}]$ in an $m \times n$ matrix of variables. We denote by $\LSym$ the subring generated by the loop elementary symmetric functions $\lE{i}{j}$ ($i \in [m], j \in [n]$). Let
\[
\Pol_R = \Inv_R \cap \CC[\lx{i}{j}]
\]
denote the ring of polynomial invariants of the geometric $R$-matrices $R_1, \ldots, R_{m-1}$. We know from Lemma~\ref{lem:E is R invariant} that $\LSym \subseteq \Pol_R$; our goal is to prove that in fact $\LSym = \Pol_R$.

In~\S \ref{sec:dominant}, we reformulate the desired result in terms of the notion of dominant monomials, and we reduce to the case $m=2$ (when $m=1$, we have $\lE{1}{j} = \lx{1}{j}$, and there are no $R$-matrices, so there is nothing to prove). We deal with the $m=2$ case in~\S \ref{sec:m=2}. Our argument does not assume Theorem~\ref{thm:LSym}, which states that $\Frac(\LSym) = \Inv_R$.

\subsection{Dominant monomials}
\label{sec:dominant}

Let $\mb{x}^\mb{p} = \prod (\lx{i}{j})^{p_i^{(j)}}$ be a monomial in the $\lx{i}{j}$. Arrange the exponents $p_i^{(j)}$ into the $m \times n$ matrix
\begin{equation}
\label{eq:p matrix}
\begin{pmatrix}
p_1^{(1)} & p_1^{(2)} & \cdots & p_1^{(n)} \\
p_2^{(2)} & p_2^{(3)} & \cdots & p_2^{(n+1)} \\
\vdots & \vdots & & \vdots \\
p_m^{(m)} & p_m^{(m+1)} & \cdots & p_m^{(m+n-1)} \\
\end{pmatrix}
\end{equation}
(here the entry in position $(a,b)$ is the exponent of $x_a^b = x_a^{(a+b-1)}$). Say that $\mb{x}^\mb{p}$ is \defn{dominant} if each column of the matrix \eqref{eq:p matrix} is weakly decreasing. Note that the monomial $1$ is dominant.

Given a dominant monomial $\mb{x}^{\mb{p}}$, define
\[
E_{\mb{p}} = \prod_{j = 1}^n \prod_{k \geq 1} \lE{\lambda_k^{(j)}}{j},
\]
where $\lambda^{(j)} = (\lambda_1^{(j)}, \lambda_2^{(j)}, \ldots)$ is the conjugate of the partition $(p_1^{(j)}, p_2^{(j+1)}, \ldots, p_m^{(j+m-1)})$ appearing in the $j$th column of $\mb{p}$. The correspondence $\mb{x}^{\mb{p}} \longleftrightarrow E_{\mb{p}}$ is a bijection between dominant monomials in the $\lx{i}{j}$ and monomials in the $\lE{i}{j}$.

\begin{ex}
Let $m=3, n=2$. The monomial $(\lx{1}{1})^3 \lx{2}{2} (\lx{1}{2})^2 (\lx{2}{1})^2 \lx{3}{2}$ has exponent matrix
\[
\mb{p} = \begin{pmatrix}
3 & 2 \\
1 & 2 \\
0 & 1
\end{pmatrix},
\]
so it is dominant. The columns of $\mb{p}$ have conjugate partitions $\la^{(1)} = (2,1,1)$ and $\la^{(2)} = (3,2)$, so
\[
E_{\mb{p}} = \lE{2}{1} (\lE{1}{1})^2 \lE{3}{2} \lE{2}{2}.
\]
\end{ex}

Order monomials in the $\lx{i}{j}$ lexicographically with respect to the variable order
\[
\lx{1}{1} > \cdots > \lx{1}{n} > \lx{2}{2} > \cdots > \lx{2}{n+1} > \cdots > \lx{m}{m} > \cdots > \lx{m}{m+n-1}.
\]
It is clear that with respect to this monomial order, the leading term of $\lE{i}{j}$ is $\lx{1}{j} \lx{2}{j+1} \cdots \lx{i}{j+i-1}$. This implies the following result.

\begin{lemma}
\label{lem:leading term}
With respect to the above monomial ordering, the leading term of $E_{\mb{p}}$ is $x^{\mb{p}}$.
\end{lemma}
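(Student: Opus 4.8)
The plan is to use the fact that the lexicographic order is a monomial order, so that the leading monomial of a product equals the product of the leading monomials of the factors; here no cancellation can occur among leading terms because every $\lE{i}{j}$ has strictly positive coefficients. Consequently the leading term of $E_{\mb{p}} = \prod_{j=1}^{n} \prod_{k \geq 1} \lE{\lambda_k^{(j)}}{j}$ is the product over all $j$ and $k$ of the leading terms of the factors $\lE{\lambda_k^{(j)}}{j}$, each of which is $\lx{1}{j} \lx{2}{j+1} \cdots \lx{\lambda_k^{(j)}}{j + \lambda_k^{(j)} - 1}$ by the observation recorded immediately before the statement.

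First I would fix a color $j$ and compute the leading term of the inner product $\prod_{k \geq 1} \lE{\lambda_k^{(j)}}{j}$. The leading term of $\lE{\lambda_k^{(j)}}{j}$ contributes exactly one factor $\lx{a}{j+a-1}$ for each $a \in \{1, \ldots, \lambda_k^{(j)}\}$, so in the inner product the exponent of $\lx{a}{j+a-1}$ equals the number of parts of $\lambda^{(j)}$ that are at least $a$, i.e.\ the $a$th part of the conjugate partition $(\lambda^{(j)})'$. Since $\lambda^{(j)}$ was defined as the conjugate of the $j$th column $(p_1^{(j)}, p_2^{(j+1)}, \ldots, p_m^{(j+m-1)})$ of the matrix \eqref{eq:p matrix}, and conjugation is an involution, this exponent is precisely $p_a^{(j+a-1)}$. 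Thus the leading term of the inner product is $\prod_{a=1}^{m} (\lx{a}{j+a-1})^{p_a^{(j+a-1)}}$, which is exactly the contribution of the $j$th column of \eqref{eq:p matrix}.

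It remains to combine these contributions. For distinct $j$ the variables $\lx{a}{j+a-1}$ with $a \in [m]$ appearing above are disjoint---they are exactly the variables indexing the $j$th column of \eqref{eq:p matrix}---so the columns do not interact and the exponents simply aggregate. Taking the product over $j = 1, \ldots, n$ therefore yields $\prod_{j=1}^{n} \prod_{a=1}^{m} (\lx{a}{j+a-1})^{p_a^{(j+a-1)}} = x^{\mb{p}}$, as desired. The only step requiring real care is the conjugate-partition bookkeeping in the second paragraph---checking that stacking the leading terms of the factors $\lE{\lambda_k^{(j)}}{j}$ along diagonals of constant color reconstructs, via a single conjugation, the original column of exponents; the remaining ingredients (multiplicativity of the lexicographic order, positivity of coefficients ruling out cancellation, and disjointness of the variable sets across colors) are routine.
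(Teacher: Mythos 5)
Your proof is correct and follows exactly the route the paper intends: the paper states the leading term of $\lE{i}{j}$ and declares the lemma immediate, and your argument is precisely the omitted verification---multiplicativity of the lexicographic leading term, the conjugate-partition count showing the exponent of $\lx{a}{j+a-1}$ in $\prod_{k}\lE{\lambda_k^{(j)}}{j}$ equals $p_a^{(j+a-1)}$, and the disjointness of the variable sets across colors $j$. All three steps check out (including the implicit use of dominance, which makes each column of \eqref{eq:p matrix} a partition so that conjugation is an involution), so there is nothing to add.
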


Lemma~\ref{lem:leading term} gives a simple proof that the $\lE{i}{j}$ are algebraically independent. In addition, this lemma allows us to reduce the result we are trying to prove to the case $m=2$.

\begin{prop}
\label{prop:reduction to m=2}
The following statements are equivalent:
\begin{enumerate}
\item $\Pol_R = \LSym$.
\item The leading monomial of any nonzero element of $\Pol_R$ is dominant.
\item Every nonzero element of $\Pol_R$ has a dominant term.
\item $\Pol_R = \LSym$ in the case $m=2$.
\end{enumerate}
\end{prop}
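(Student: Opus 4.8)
The plan is to establish $(1)\Rightarrow(2)\Rightarrow(3)\Rightarrow(2)\Rightarrow(1)$ together with the trivial implication $(1)\Rightarrow(4)$ and the substantive reduction $(4)\Rightarrow(2)$; since $(2)\Rightarrow(1)$, this closes all four equivalences. Two structural facts will be used throughout. First, each $R_i$ multiplies the variables it moves by degree-$0$ rational functions (the ratios $\kappa_{j+1}/\kappa_j$), and hence preserves total degree; it follows that $\Pol_R$ is a graded subring of $\CC[\lx{i}{j}]$, so it suffices to treat homogeneous $f$. Second, within a fixed degree there are only finitely many monomials, so the monomial order of \S\ref{sec:dominant} restricts to a well-order on each graded piece, which legitimizes the leading-term inductions below.

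For $(1)\Rightarrow(2)$: an element of $\LSym$ is a $\CC$-linear combination of the monomials $E_{\mb{p}}$, whose leading monomials $x^{\mb{p}}$ are distinct dominant monomials by Lemma~\ref{lem:leading term}, so no cancellation among these leading terms is possible and the overall leading monomial is dominant. The converse $(2)\Rightarrow(1)$ is the usual straightening: given homogeneous $f\in\Pol_R$ with dominant leading monomial $x^{\mb{q}}$ of coefficient $c$, the element $f-cE_{\mb{q}}\in\Pol_R$ has strictly smaller leading monomial, and iterating (finitely, by the well-ordering) writes $f$ as a combination of the $E_{\mb{p}}$, i.e. $f\in\LSym$. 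The implication $(2)\Rightarrow(3)$ is immediate. For $(3)\Rightarrow(2)$ I would argue by contradiction: if the leading monomial $x^{\mb{q}}$ of some homogeneous $f\in\Pol_R$ is not dominant, let $x^{\mb{p}}$ be the largest dominant monomial occurring in $f$ (which exists by $(3)$), so that $x^{\mb{p}}<x^{\mb{q}}$. Passing from $f$ to $f-c_{\mb{p}}E_{\mb{p}}\in\Pol_R$ leaves the coefficient of $x^{\mb{q}}$ and all monomials in the range $(x^{\mb{p}},x^{\mb{q}}]$ unchanged while deleting $x^{\mb{p}}$, so the largest dominant monomial strictly decreases. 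Finiteness of the graded piece forces this to terminate at a nonzero element of $\Pol_R$ with no dominant term at all, contradicting $(3)$.

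The heart of the matter is $(4)\Rightarrow(2)$, reducing general $m$ to two rows. Fix $i\in[m-1]$ and a nonzero homogeneous $f\in\Pol_R$, and group $f=\sum_\beta P_\beta(\xx_i,\xx_{i+1})\,\mb{n}_\beta$, where the $\mb{n}_\beta$ are the distinct monomials in the variables of the rows other than $i,i+1$, and $P_\beta\in\CC[\xx_i,\xx_{i+1}]$. Since $R_i$ fixes every variable outside rows $i,i+1$, it fixes each $\mb{n}_\beta$, and the $\mb{n}_\beta$ are linearly independent over $\CC(\xx_i,\xx_{i+1})$; thus $R_i f=f$ forces $R_i P_\beta=P_\beta$ for every $\beta$. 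Each $P_\beta$ is therefore an $R_i$-invariant polynomial in the two rows $\xx_i,\xx_{i+1}$, to which the hypothesis $(4)$ applies. I then match leading monomials: writing $x^{\mb{q}}$ for the leading monomial of $f$ and $\mb{n}_{\beta^\ast}$ for its component in the outside rows, the set of monomials of $f$ agreeing with $x^{\mb{q}}$ on all rows other than $i,i+1$ is exactly $\mb{n}_{\beta^\ast}$ times the monomials of $P_{\beta^\ast}$, and on this set the monomial order reduces to the $(\text{row }i,\text{row }i+1)$ order, because the ambient order lists the rows in increasing order. Hence the restriction of $x^{\mb{q}}$ to rows $i,i+1$ is the leading monomial of $P_{\beta^\ast}$, which is dominant for the two-row system by $(4)$ (via the equivalence $(1)\Leftrightarrow(2)$ in the case $m=2$). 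As dominance of a monomial is precisely the conjunction over all adjacent row pairs of this two-row condition, letting $i$ range over $[m-1]$ shows $x^{\mb{q}}$ is dominant, which is $(2)$.

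The main obstacle is the bookkeeping in $(4)\Rightarrow(2)$: one must verify carefully that restricting the leading monomial of $f$ to rows $i,i+1$ yields exactly the leading monomial of the two-row invariant $P_{\beta^\ast}$, which hinges on the variable order listing the rows in increasing order, so that, after the outside rows are fixed, comparison is decided first on row $i$ and then on row $i+1$. A secondary point to check is that the two-row notion of dominance used in $(4)$ agrees on the nose with the adjacent-pair condition defining dominance for the full $m\times n$ exponent matrix \eqref{eq:p matrix}; this is immediate from the definition but should be stated. All the genuine algebraic difficulty is deferred to the $m=2$ case treated in \S\ref{sec:m=2}; the present proposition is a formal transfer once the leading-term dictionary of Lemma~\ref{lem:leading term} and the grading of $\Pol_R$ are in place.
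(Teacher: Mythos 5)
Your proof is correct and follows essentially the same route as the paper's: the leading-term dictionary of Lemma~\ref{lem:leading term} for $(1)\Rightarrow(2)$, straightening against the $E_{\mb{p}}$ for the converse direction (the paper runs this as $(3)\Rightarrow(1)$ directly, you as $(3)\Rightarrow(2)\Rightarrow(1)$, which is the same argument), and for $(4)\Rightarrow(2)$ the identical grouping of the terms of $f$ by their monomial part outside rows $i,i+1$ (your $P_{\beta^\ast}\mb{n}_{\beta^\ast}$ is the paper's $f_i$). Your extra justifications---linear independence of the $\mb{n}_\beta$ over $\CC(\xx_i,\xx_{i+1})$ to deduce $R_iP_\beta=P_\beta$, the compatibility of the lexicographic order with the two-row restriction, and the grading of $\Pol_R$---merely make explicit steps the paper leaves implicit.
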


\begin{proof}
The implications $(1) \implies (4)$ and $(2) \implies (3)$ are obvious, and $(1) \implies (2)$ follows from Lemma~\ref{lem:leading term}.

$(3) \implies (1)$: Suppose $f \in \Pol_R$ is nonzero. Let $\mb{x}^{\mb{p}}$ be the largest dominant monomial in $f$. By subtracting a multiple of $E_{\mb{p}}$, we obtain another element of $\Pol_R$ whose largest dominant term is strictly less than $\mb{x}^{\mb{p}}$ (by Lemma~\ref{lem:leading term}). Continuing in this manner, we eventually obtain zero, so $f \in \LSym$.

$(4) \implies (2)$: Let $f(\mb{x}_1, \ldots, \mb{x}_m)$ be a nonzero element of $\Pol_R$. Let $\mb{x}^\mb{p}$ be the leading monomial of $f$. Fix $i \in [m-1]$, and let $f_i$ be the sum of all terms in $f$ whose exponent matrices agree with $\mb{p}$ outside of rows $i$ and $i+1$. Since $R_i$ only affects the sets of variables $\mb{x}_i$ and $\mb{x}_{i+1}$, we must have $R_i(f_i) = f_i$. We have already shown that (1) implies (2), so the leading term of $f_i$ must be dominant with respect to $\mb{x}_i$ and $\mb{x}_{i+1}$. This holds for all $i$, so $\mb{x}^{\mb{p}}$ is dominant.
\end{proof}

\subsection{The case $m=2$}
\label{sec:m=2}

We will now show that $\Pol_R = \LSym$ in the case of two sets of variables $\mb{x}_1 = (x_1^1, \ldots, x_1^n), \mb{x}_2 = (x_2^1, \ldots, x_2^n)$. By Proposition~\ref{prop:reduction to m=2}, this implies that $\Pol_R = \LSym$ in general.

Set $a_i = x_1^i$ and $b_i = x_2^i$. The subscripts of $a_i$ and $b_i$ are interpreted modulo $n$. In this notation, the loop elementary symmetric functions are given by
\[
\lE{1}{i} = a_i + b_{i-1}, \qquad \lE{2}{i} = a_ib_i,
\]
the loop homogeneous symmetric functions are given by
\[
\lH{k}{i} = \sum_{j=0}^k a_i a_{i-1} \cdots a_{i-j+1} b_{i-j-1} \cdots b_{i-k+1} b_{i-k},
\]
and the geometric $R$-matrix is given by
\[
a_i \mapsto b_i \frac{\kappa_{i+1}}{\kappa_i}, \qquad b_i \mapsto a_i \frac{\kappa_i}{\kappa_{i+1}}, \qquad
\text{ where } \quad
\kappa_i = \lH{n-1}{i-1}.
\]
For example, when $n=5$, we have
\[
\kappa_3 = \lH{4}{2} = a_2a_1a_5a_4 + a_2a_1a_5b_3 + a_2a_1b_4b_3 + a_2b_5b_4b_3 + b_1b_5b_4b_3.
\]
The Jacobi--Trudi formula (Proposition~\ref{prop:JT}) implies that the $\lH{k}{i}$ (and in particular, the $\kappa_i$) are in $\LSym$.

The first step in the proof is the simple observation that a polynomial $f \in \CC[\mb{a},\mb{b}]$ is $R$-invariant if and only if $f = \frac{1}{2}(f + Rf)$. Thus, to prove that $\Pol_R = \LSym$, it suffices to show that if $f \in \Pol_R$, then $f+Rf \in \LSym$. The next step is the following lemma.

\begin{lemma}
\label{lem:symmetrization}
For any $f \in \CC[\mb{a},\mb{b}]$, we have $f + Rf \in \LSym[\kappa_1^{-1}, \ldots, \kappa_n^{-1}]$.
\end{lemma}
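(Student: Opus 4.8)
The plan is to realize $\CC[\mb{a},\mb{b}]$ as a rank-two module over the localized ring $L := \LSym[\kappa_1^{-1}, \ldots, \kappa_n^{-1}]$ and then symmetrize. Two preliminary observations make $L$ a convenient coefficient ring. First, $R$ is an involution: $R^2(a_i) = R(b_i\kappa_{i+1}/\kappa_i) = a_i$ using only $R\kappa_j = \kappa_j$, so $f + Rf$ is automatically $R$-invariant. Second, every element of $L$ is $R$-invariant, since $\LSym \subseteq \Inv_R$ by Lemma~\ref{lem:E is R invariant} and each $\kappa_j$ is an $R$-invariant unit of $L$ (the $\kappa_j = \lH{n-1}{j-1}$ lie in $\LSym$ by Jacobi--Trudi). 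Thus $L$ is an $R$-stable subring on which $R$ acts as the identity, and it suffices to control $f$ modulo $L$.

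Next I would reduce every variable to the $L$-span of $1$ and the single element $a_1$. The generator relations $\lE{2}{i} = a_ib_i$ and $\lE{1}{i} = a_i + b_{i-1}$ give
\[
b_i = \lE{1}{i+1} - a_{i+1}, \qquad a_ia_{i+1} = \lE{1}{i+1}a_i - \lE{2}{i},
\]
so it is enough to handle the $a_i$. The crux is the identity
\[
\Lambda_i := a_i\kappa_i + b_i\kappa_{i+1} \in \LSym \qquad (i \in [n]),
\]
which is equivalent to $a_i + Ra_i = \Lambda_i/\kappa_i \in L$. Granting this, substituting $b_i = \lE{1}{i+1}-a_{i+1}$ yields the telescoping relation $a_i\kappa_i - a_{i+1}\kappa_{i+1} = \Lambda_i - \lE{1}{i+1}\kappa_{i+1} \in \LSym$; iterating shows that all $a_i\kappa_i$ are congruent modulo $\LSym$, whence $a_i \in L + La_1$ for every $i$. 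The quadratic relation then forces $a_1^2 \in L + La_1$ (the leading coefficient $\kappa_1/\kappa_2$ is a unit of $L$), so by induction $L + La_1$ is a subring containing every $a_i$ and $b_i$; that is, $\CC[\mb{a},\mb{b}] \subseteq L + La_1$. Writing $f = g + h\,a_1$ with $g,h \in L$ and using $Rg=g$, $Rh=h$ gives
\[
f + Rf = 2g + h\,(a_1 + Ra_1) = 2g + h\,\Lambda_1/\kappa_1 \in L,
\]
which is the claim.

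The one substantial step is the identity $\Lambda_i \in \LSym$, and I expect it to be the main obstacle. I would prove it by expanding $a_i\kappa_i + b_i\kappa_{i+1}$ directly from $\kappa_j = \lH{n-1}{j-1}$ and collecting terms using the $m=2$ loop analogue of Newton's identity, $\lH{k}{r} - \lE{1}{r}\lH{k-1}{\ast} + \lE{2}{r}\lH{k-2}{\ast} = 0$ for $k \geq 1$ (with the appropriate color shifts, itself a consequence of $\widetilde{M}\,\widetilde{M}^{-1} = I$). Concretely, the top and bottom terms of the two $\kappa$'s contribute the full-row products, and one checks the clean sub-identity that $a_ia_{i-1}\cdots + b_ib_{i-1}\cdots$ (the sum of the two row-products) lies in $\LSym$, while the loop Newton recursion rewrites the remaining mixed terms as explicit combinations of the $\lE{k}{r}$; a small case such as $n=3$ already exhibits the mechanism, with $\Lambda_1 = \lE{1}{1}\lE{1}{2}\lE{1}{3} - \lE{2}{2}\lE{1}{1} + \lE{2}{1}\lE{1}{3} - \lE{2}{3}\lE{1}{2}$. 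The delicate part of this computation is the bookkeeping of the cyclic color indices (read modulo $n$), which is where I anticipate the real work lies.
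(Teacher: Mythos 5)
Your argument is correct, and it takes a genuinely different route from the paper's. The paper reduces immediately to a monomial $f = \prod_i a_i^{\alpha_i} b_i^{\beta_i}$, writes $f + Rf$ as an explicit ratio whose numerator is $\prod_i (a_i\kappa_i)^{\alpha_i}(b_i\kappa_{i+1})^{\beta_i}$ plus its swap, and then uses the one-line expansions $a_i\kappa_i = \lE{2}{i}\lH{n-2}{i-1} + \pi_1$ and $b_i\kappa_{i+1} = \lE{2}{i}\lH{n-2}{i-1} + \pi_2$ (where $\pi_1 = \prod_j a_j$, $\pi_2 = \prod_j b_j$): the numerator is then a polynomial in $\LSym[\pi_1,\pi_2]$ symmetric under $\pi_1 \leftrightarrow \pi_2$, hence lies in $\LSym$ because $\pi_1+\pi_2 = \lH{n}{i} - \lE{2}{i}\lH{n-2}{i-1}$ and $\pi_1\pi_2 = \lE{2}{1}\cdots\lE{2}{n}$ do. You instead prove the structural statement $\CC[\mb{a},\mb{b}] \subseteq L + La_1$ with $L = \LSym[\kappa_1^{-1}, \ldots, \kappa_n^{-1}]$, so that symmetrization becomes the trace of a module of rank at most two over an $R$-fixed coefficient ring; I verified your supporting steps (the telescoping $a_i\kappa_i - a_{i+1}\kappa_{i+1} = \Lambda_i - \lE{1}{i+1}\kappa_{i+1} \in \LSym$, the quadratic relation forcing $a_1^2 \in L + La_1$ since $\kappa_1/\kappa_2$ is a unit, and the final computation $f + Rf = 2g + h\,\Lambda_1/\kappa_1$), and they all go through. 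Your approach buys a quadratic-extension/trace-form picture invisible in the paper's per-monomial computation, and in fact proves more than the lemma (that $1$ and $a_1$ generate $\CC[\mb{a},\mb{b}]$ over $L$, with an explicit trace element $\Lambda_1/\kappa_1$); the paper's route buys brevity, avoiding your chain of preparatory identities. Finally, the one step you leave as a sketch and anticipate as the main obstacle, $\Lambda_i = a_i\kappa_i + b_i\kappa_{i+1} \in \LSym$, is easier than you fear: adding the two expansions above gives $\Lambda_i = 2\lE{2}{i}\lH{n-2}{i-1} + \pi_1 + \pi_2 = \lH{n}{i} + \lE{2}{i}\lH{n-2}{i-1}$, a closed form consistent with your $n=3$ formula, so no loop Newton recursion or cyclic-index bookkeeping is actually needed and your proposal closes completely.
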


\begin{proof}
It suffices to consider a monomial $f = \prod_{i=1}^n a_i^{\alpha_i} b_i^{\beta_i}$. By definition,
\begin{equation}
\label{eq:f+Rf}
f+Rf = \dfrac{\prod_{i=1}^n (a_i\kappa_i)^{\alpha_i} (b_i \kappa_{i+1})^{\beta_i} + \prod_{i=1}^n (b_i \kappa_{i+1})^{\alpha_i} (a_i\kappa_i)^{\beta_i}}{\prod_{i=1}^n \kappa_i^{\alpha_i+\beta_{i-1}}}.
\end{equation}
Observe that
\[
a_i\kappa_i = \lE{2}{i} \lH{n-2}{i-1} + \pi_1, \qquad b_i\kappa_{i+1} = \lE{2}{i} \lH{n-2}{i-1} + \pi_2,
\]
where $\pi_1 = \prod_{i=1}^n a_i$ and $\pi_2 = \prod_{i=1}^n b_i$. Observe also that
\[
\pi_1 + \pi_2 = \lH{n}{i} - \lE{2}{i}\lH{n-2}{i-1}, \qquad \pi_1\pi_2 = \lE{2}{1} \lE{2}{2} \cdots \lE{2}{n}, 
\]
so $\pi_1+\pi_2$ and $\pi_1\pi_2$ are in $\LSym$. This implies that any polynomial in $\LSym[\pi_1,\pi_2]$ which is symmetric in $\pi_1,\pi_2$ is in $\LSym$. The numerator of~\eqref{eq:f+Rf} is symmetric in $\pi_1$ and $\pi_2$, so we are done.
\end{proof}

Now suppose $f \in \Pol_R$. Lemma~\ref{lem:symmetrization} says that $f = \dfrac{g}{\prod_{j=1}^N \kappa_{i_j}}$ for some $g \in \LSym$ and $i_1, \dotsc, i_N \in [n]$. The following result shows that $f$ is in fact in $\LSym$ by induction on $N$ (note that $\dfrac{g}{\prod_{j=1}^M \kappa_{i_j}} \in \CC[\mb{a}, \mb{b}]$ for any $0 \leq M \leq N$ since $f \in \CC[\mb{a},\mb{b}]$).

\begin{lemma}
\label{lem:kappa divisor}
If $g$ is a nonzero element of $\LSym$ and $g/\kappa_i \in \CC[\mb{a},\mb{b}]$, then $g/\kappa_i \in \LSym$.
\end{lemma}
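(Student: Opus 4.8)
The idea is to recast the statement as an equality of ideals in the polynomial ring $\LSym$ and to settle it by a dimension count; in particular no use of Theorem~\ref{thm:LSym} is required. Observe first that the hypotheses say precisely that $g$ lies in the ideal $\mathfrak p = \LSym \cap \kappa_i\,\CC[\mb a,\mb b]$, while the desired conclusion is exactly that $g \in \kappa_i\,\LSym$. Thus the lemma is equivalent to the assertion
\[
\LSym \cap \kappa_i\,\CC[\mb a,\mb b] \;=\; \kappa_i\,\LSym,
\]
a purely ring-theoretic statement about the inclusion $\LSym \subseteq \CC[\mb a,\mb b]$ of polynomial rings (recall that the $\lE{i}{j}$ are algebraically independent by Lemma~\ref{lem:leading term}, so $\LSym$ is itself a polynomial ring in $2n$ variables).

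I would first establish that $\kappa_i = \lH{n-1}{i-1}$ is irreducible in $\CC[\mb a,\mb b]$, which can be checked directly from its explicit monomial expansion (e.g.\ the staircase sum displayed for $\kappa_3$ in the $n=5$ case): $\kappa_i$ has degree one in several of the variables $a_j,b_j$, and this pins down any would-be factorization. Since the units of $\CC[\mb a,\mb b]$ and of $\LSym$ are both just the nonzero constants, irreducibility of $\kappa_i$ in the larger ring forces irreducibility in $\LSym$ as well; hence $\kappa_i\LSym$ is a prime of height one, and $\CC[\mb a,\mb b]/(\kappa_i)$ is a domain. Consequently $\mathfrak p$, being the kernel of the composite $\LSym \hookrightarrow \CC[\mb a,\mb b] \twoheadrightarrow \CC[\mb a,\mb b]/(\kappa_i)$, is a prime ideal of $\LSym$ containing $\kappa_i\LSym$.

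It therefore suffices to show that $\mathfrak p$ and $\kappa_i\LSym$ have the same codimension, for then the surjection of finitely generated $\CC$-domains $\LSym/\kappa_i\LSym \twoheadrightarrow \LSym/\mathfrak p$ between varieties of equal dimension must be an isomorphism, giving $\mathfrak p = \kappa_i\LSym$. Concretely I must show that the dominant morphism $\varphi\colon \CC^{2n} = \operatorname{Spec}\CC[\mb a,\mb b]\to \operatorname{Spec}\LSym$ (dominant since the inclusion is injective, and generically finite since both rings have Krull dimension $2n$) does not contract the hypersurface $Z = V(\kappa_i)$; note that $Z = \varphi^{-1}\bigl(V(\kappa_i)\bigr)$ because $\kappa_i\in\LSym$ pulls back to $\kappa_i$. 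If $\dim\overline{\varphi(Z)} = \dim Z = 2n-1$, then $\overline{\varphi(Z)}$ lies densely in the irreducible hypersurface $V(\kappa_i)\subseteq\operatorname{Spec}\LSym$ of the same dimension and must equal it, whence $\dim\LSym/\mathfrak p = 2n-1$.

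The crux — and the step I expect to be the main obstacle — is thus to show that $\varphi|_Z$ has finite general fibres. For this I would use the identities recorded in the proof of Lemma~\ref{lem:symmetrization}, namely $a_i\kappa_i = \mu_i + \pi_1$ and $b_i\kappa_{i+1} = \mu_i + \pi_2$ with $\mu_i = \lE{2}{i}\lH{n-2}{i-1}\in\LSym$ and $\pi_1+\pi_2,\ \pi_1\pi_2\in\LSym$: on $Z$ the relation $a_i\kappa_i = \mu_i+\pi_1$ forces $\pi_1 = -\mu_i$, so that $\pi_1$, and then $\pi_2$, is determined by the image point; the remaining coordinates are recovered as $a_j = (\mu_j+\pi_1)/\kappa_j$ and $b_j = (\mu_j+\pi_2)/\kappa_{j+1}$ for $j\neq i$, while the two coordinates $a_i,b_{i-1}$ neighbouring the vanishing $\kappa_i$ are pinned down by $a_ib_i = \lE{2}{i}$ and $a_{i-1}b_{i-1} = \lE{2}{i-1}$. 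This exhibits a single point in a general fibre. The delicate part is to carry this out over the open locus where the denominators $\kappa_j$ ($j\neq i$) are nonzero and to confirm that this locus meets $Z$ densely, thereby making the dominance of $\varphi|_Z$ onto $V(\kappa_i)$ fully rigorous; together with the elementary but slightly tedious verification that $\kappa_i$ is irreducible in $\CC[\mb a,\mb b]$, this would complete the proof.
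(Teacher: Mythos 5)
Your proposal is correct, and it takes a genuinely different route from the paper. The paper argues combinatorially: it reduces the lemma to showing that $g/\kappa_i$ has a \emph{dominant term} (after which the subtraction algorithm from the proof of Proposition~\ref{prop:reduction to m=2} reduces $g$ to zero), and it proves that claim as the case $k=n-1$, $r=0$ of the strengthened inductive statement Lemma~\ref{lem:H divisor}, concerning division by $\lH{k}{i+1}$ after specializing some $b$-variables to zero; the induction on $k$ there, with the specializations $b_i=0$ and $a_i=0$, is the delicate part. You instead recast the lemma as the ideal identity $\LSym\cap\kappa_i\,\CC[\mb a,\mb b]=\kappa_i\,\LSym$ and settle it geometrically, and your two flagged obstacles do close: (i) $\kappa_i$ is irreducible, since writing $\kappa_i=a_{i-1}\lH{n-2}{i-2}+b_{i-2}b_{i-3}\cdots b_{i-n}$ shows it has degree $1$ in $a_{i-1}$, so in any factorization the factor of degree $0$ in $a_{i-1}$ would divide both the monomial $b_{i-2}\cdots b_{i-n}$ and $\lH{n-2}{i-2}$, whose pure-$a$ term $a_{i-2}\cdots a_{i-n+1}$ rules out any $b$-divisor; and (ii) the good locus is dense in $Z=V(\kappa_i)$, because $Z$ is irreducible and none of $\kappa_j$ ($j\neq i$), $b_i$, $a_{i-1}$ is divisible by $\kappa_i$ (for instance $\kappa_j$ involves $a_i$ while $\kappa_i$ does not), so each bad hypersurface meets $Z$ properly. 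Given these, your coordinate recovery — $\pi_1=-\lE{2}{i}\lH{n-2}{i-1}$ on $Z$, then the identities from the proof of Lemma~\ref{lem:symmetrization} — shows $\varphi$ is injective on a dense open subset of $Z$, hence $\dim\overline{\varphi(Z)}=2n-1$, and the containment of height-one primes collapses exactly as you say; note that the inputs you need (algebraic independence of the $\lE{k}{r}$, so that $\LSym$ is a polynomial ring of dimension $2n$ and a UFD) are supplied by Lemma~\ref{lem:leading term}, so there is no circularity, and Theorem~\ref{thm:LSym} is indeed never invoked. Comparing the two: your argument is shorter and conceptual, replacing the induction of Lemma~\ref{lem:H divisor} by primality plus a dimension count, at the cost of importing the Nullstellensatz and fibre-dimension theory; the paper's argument is longer but entirely elementary, and it stays inside the dominant-monomial framework that powers the rest of the appendix — in particular it produces the dominant term of $g/\kappa_i$ explicitly, which is precisely what the global reduction algorithm consumes.
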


Recall that a monomial $\prod_{i=1}^n a_i^{\alpha_i} b_i^{\beta_i}$ is dominant if and only if $\alpha_i \geq \beta_i$ for all $i$. To prove Lemma~\ref{lem:kappa divisor}, it suffices to show that if $g$ is a nonzero element of $\LSym$ and $g/\kappa_i \in \CC[\mb{a},\mb{b}]$, then $g/\kappa_i$ has a dominant term. Indeed, we can then reduce $g$ to $0$ by repeatedly subtracting $\kappa_i$ times a monomial in the $\lE{k}{r}$, as in the proof of Proposition~\ref{prop:reduction to m=2}. Lemma~\ref{lem:kappa divisor} is therefore proved by taking $k = n-1$ and $r = 0$ in the following result.

\begin{lemma}
\label{lem:H divisor}
Fix $k \in [0,n-1]$ and $i \in [n]$. Suppose $g$ is a nonzero polynomial in $\LSym|_{b_{i+1} = \cdots = b_{i+r} = 0}$ for some $r \in [0,n-k]$ (that is, $g$ is obtained from an element of $\LSym$ by setting $b_{i+1} = \cdots = b_{i+r} = 0$). If $g/\lH{k}{i+1} \in \CC[\mb{a},\mb{b}]$, then $g/\lH{k}{i+1}$ has a dominant monomial.
\end{lemma}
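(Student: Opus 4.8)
The plan is to induct on $k$, with an auxiliary induction on $\deg g$ to dispose of degenerate specializations. Throughout I use that a monomial $\prod_c a_c^{\alpha_c} b_c^{\beta_c}$ is dominant iff $\alpha_c \geq \beta_c$ for every $c$, together with the elementary recursion
\[
\lH{k}{i+1}\big|_{b_i = 0} = a_{i+1}\,\lH{k-1}{i} \qquad (k \geq 1),
\]
which follows at once from the expansion $\lH{k}{i+1} = \sum_{j=0}^k (a_{i+1}\cdots a_{i-j+2})(b_{i-j}\cdots b_{i+1-k})$ (only the $j=0$ term contains $b_i$). I also record that $q := g/\lH{k}{i+1}$ involves none of $b_{i+1},\dots,b_{i+r}$, since neither $g$ nor $\lH{k}{i+1}$ does; hence every monomial of $q$ automatically has $\beta_{i+1} = \cdots = \beta_{i+r} = 0$.

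For the base case $k=0$ we have $\lH{0}{i+1} = 1$, so I must show that a nonzero $g \in \LSym|_{b_{i+1}=\cdots=b_{i+r}=0}$ has a dominant monomial. Writing a lift of $g$ as $\sum_{\mb{p}} c_{\mb{p}} E_{\mb{p}}$, note that for $m=2$ one has $E_{\mb{p}} = \prod_c (\lE{2}{c})^{p_2^{(c)}}(\lE{1}{c})^{p_1^{(c)}-p_2^{(c)}}$, so $E_{\mb{p}}$ survives the substitution precisely when $p_2^{(c)}=0$ for every $c$ in the window, and when it survives its leading monomial in the order of \S\ref{sec:dominant} is exactly $\mb{x}^{\mb{p}}$, which is dominant (as $\mb{p}$ is dominant) and window-avoiding. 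Since distinct $\mb{p}$ give distinct leading monomials, the leading monomial of $g$ is $\mb{x}^{\mb{p}^*}$ for the lex-largest surviving $\mb{p}^*$; it is dominant and is not cancelled, which establishes the base case.

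For the inductive step ($k \geq 1$), suppose first $r \geq 1$. Setting $b_i = 0$ and using the recursion, $g|_{b_i=0}/\lH{k-1}{i} = a_{i+1}\,q|_{b_i=0}$ is a polynomial. If $g|_{b_i=0} \neq 0$, the inductive hypothesis applied to the instance $(k-1,\,i-1,\,r+1)$ (legitimate since $r+1 \le n-(k-1)$) produces a dominant monomial $a_{i+1}\nu$ of $a_{i+1}q|_{b_i=0}$; because $q$ has no $b_{i+1}$ (as $r \geq 1$) and $\nu$ has no $b_i$, the monomial $\nu$ is dominant at the colors $i,i+1$ and, by construction, everywhere else, so $\nu$ is the desired dominant monomial of $q$. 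The case $r = 0$ is reduced to $r=1$ by setting $b_{i+1}=0$: if $g|_{b_{i+1}=0} \neq 0$, the $(k,i,1)$ instance yields a dominant monomial with $\beta_{i+1}=0$, which is again a dominant monomial of $q$.

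The main obstacle is the degenerate possibility that the chosen specialization annihilates $g$, and this is exactly where the auxiliary induction enters. If $g|_{b_c=0}=0$ (with $c=i$ or $c=i+1$ as above), then since $\LSym|_W$ is a polynomial ring in the restricted generators and the map $b_c=0$ sends $\lE{2}{c}\mapsto 0$ while keeping the remaining generators algebraically independent, the kernel of this specialization is the principal ideal $(\lE{2}{c})$; hence $g = a_c b_c\,g'$ with $g' \in \LSym|_W$ nonzero of strictly smaller degree. As $\lH{k}{i+1}$ is coprime to $a_c b_c$, we get $q = a_c b_c\,q'$ with $q' = g'/\lH{k}{i+1}$ a polynomial, and the degree induction furnishes a dominant monomial $\mu'$ of $q'$. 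The crucial point is that the factor $\lE{2}{c} = a_c b_c$ is \emph{balanced}: multiplying $\mu'$ by it raises $\alpha_c$ and $\beta_c$ by the same amount, so $a_c b_c\,\mu'$ remains dominant and is the required monomial of $q$. Verifying the polynomial-ring structure of $\LSym|_W$ and the identification of its specialization kernel is the one genuinely technical ingredient; once that balanced-factor observation is in place, everything else is bookkeeping with the recursion and the index ranges.
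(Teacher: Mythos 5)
Your proof is correct, and in the hardest sub-case it takes a genuinely different route from the paper's. The shared skeleton is the same: induction on $k$ driven by the identity $\lH{k}{i+1}\big|_{b_i=0} = a_{i+1}\lH{k-1}{i}$, an equivalent base case (the paper verifies leading-monomial triangularity for the restricted generator set $S$ directly, you do it via the $E_{\mb{p}}$ basis and Lemma~\ref{lem:leading term} --- same content, and note your exponent should be $p_2^{(c+1)}$ on $\lE{2}{c}$, a harmless indexing slip), and the same dichotomy in the inductive step: the paper's split $g = a_ib_i p + q$ with branch $q=0$ is exactly your branch $g|_{b_i=0}=0$, since, as you verify, the kernel of the specialization $b_c \mapsto 0$ on the polynomial ring $\LSym|_W$ is the principal ideal $(a_cb_c)$. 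The divergence is at $r=0$: the paper sets $a_i=0$ in $\lH{k}{i+1}h = g$ to deduce that $a_{i+1}+b_i$ divides $q$ \emph{inside} $\LSym$, writes $q = (a_{i+1}+b_i)q'$, and then applies the $(k-1,i-1,1)$ instance to $h'' = (q'|_{b_i=0})/\lH{k-1}{i}$; this factorization step is exactly where the paper needs the parenthetical caveat that no $b_j$ variable has yet been set to zero. You avoid that trick entirely by pre-specializing $b_{i+1}=0$ to reduce $(k,i,0)$ to $(k,i,1)$, letting your principal-kernel/balanced-factor device ($g = a_{i+1}b_{i+1}g'$, with $a_{i+1}b_{i+1}$ dominance-preserving) absorb the degenerate case --- so one mechanism handles every problematic specialization uniformly. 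The price is bookkeeping: your stated scheme (``induct on $k$ with auxiliary induction on $\deg g$'') does not literally license the $r=0 \to r=1$ reduction, which preserves $k$ and possibly the degree; you should make the well-ordering explicit, e.g.\ lexicographic induction on $(k, \deg g,\, n-r)$, noting that every other recursion either drops $k$ (the $(k-1,i-1,r+1)$ step) or drops the degree (the balanced-factor step). With that order fixed, all instance checks go through: $r+1 \leq n-(k-1)$ holds since $r \leq n-k$, the quotients $a_{i+1}q|_{b_i=0}$ and $q|_{b_{i+1}=0}$ are polynomials because $\lH{k}{i+1}$ involves neither $b_{i+1}$ nor (after the specialization) an obstruction at $b_i$, and coprimality of $a_cb_c$ with $\lH{k}{i+1}$ holds since its $j=0$ term is pure in the $b$'s and its $j=k$ term pure in the $a$'s.
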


In the proof, the words ``monomial'' and ``term'' will always refer to a product of the variables $a_i,b_i$ (multiplied by a nonzero element of $\CC$). If $S$ is a subset of $\CC[\mb{a},\mb{b}]$, an \defn{$S$-monomial} is a product of elements of $S$ (multiplied by a nonzero scalar).

\begin{proof}[Proof of Lemma~\ref{lem:H divisor}]
We proceed by induction on $k$. The base case $k=0$ asserts that every nonzero polynomial $g \in \LSym|_{b_{i+1} = \ldots = b_{i+r} = 0}$ has a dominant monomial. By definition, $g$ is a sum of $S$-monomials, where
\[
S = \{a_jb_j, a_{j+1}+b_j \mid j \neq i+1,\ldots,i+r\} \cup \{a_{i+2}, \ldots, a_{i+r+1}\}.
\]
Every $S$-monomial has a distinct leading term with respect to the variable order $a_1 > \cdots > a_n > b_1 > \cdots > b_n$. This implies that the elements of $S$ are algebraically independent, and that the leading term of $g$ is dominant.

Now suppose $k > 0$. Group together the $S$-monomials of $g$ which are divisible by $a_ib_i$, obtaining the expression
\[
g = a_ib_i p + q,
\]
where $p$ is a sum of $S$-monomials and $q$ is a sum of $(S \setminus \{a_ib_i\})$-monomials. If  $q = 0$, then since $\lH{k}{i+1}$ is not divisible by $a_i$ or $b_i$, $\lH{k}{i+1}$ must divide $p$. By induction on degree, we conclude that $p/\lH{k}{i+1}$ has a dominant term, so $g/\lH{k}{i+1}$ has a dominant term as well.

We may therefore assume that $q \neq 0$. Set $h = g/\lH{k}{i+1}$, and write $h = b_i h' + h''$, where $h'' = h|_{b_i=0}$. Setting $b_i=0$ in the equation $\lH{k}{i+1} h = g$, we obtain
\begin{equation}
\label{eq:b_i=0}
a_{i+1} \lH{k-1}{i} h'' = q|_{b_i=0}.
\end{equation}
The elements of $S \setminus \{a_ib_i\}$ remain algebraically independent after setting $b_i = 0$, so $q \neq 0$ implies that $q|_{b_i=0} \neq 0$. Thus, $a_{i+1} h''$ has a dominant term by the inductive hypothesis. If we knew that $h''$ had a dominant term, then this dominant term would appear in $h$, and we would be done. The only way $h''$ could fail to have a dominant term is if every dominant term in $a_{i+1}h''$ had the same number of $b_{i+1}$'s and $a_{i+1}$'s. If $r > 0$, then $g$ does not involve the variable $b_{i+1}$, so this cannot happen.

It remains to consider the case where $r = 0$, that is, where $g \in \LSym$. In this case, setting $a_i=0$ in the equation $\lH{k}{i+1} h = g$, we see that $a_{i+1} + b_i$ divides $q|_{a_i=0}$. By the same reasoning as in the previous paragraph, the assumption $q \neq 0$ implies that $q|_{a_i=0} \neq 0$ (here it is necessary that we have not set any of the $b_j$ variables equal to $0$). Group together the $(S \setminus \{a_ib_i\})$-monomials of $q$ which are divisible by $a_{i+1}+b_i$ to obtain the expression
\[
q = (a_{i+1} + b_i) q' + q'',
\]
where $q'$ is a sum of $(S \setminus \{a_ib_i\})$-monomials and $q''$ is a sum of $(S \setminus \{a_ib_i, a_{i+1}+b_i\})$-monomials. Since $a_{i+1}+b_i$ divides $q|_{a_i=0}$, it also divides $q''|_{a_i=0}$. But $q''$ does not involve the variable $b_i$, so we must have $q''|_{a_i=0} = 0$, which means $q'' = 0$. Thus, $q = (a_{i+1}+b_i) q'$, with $q' \in \LSym$. Going back to~\eqref{eq:b_i=0}, we see that
\[
a_{i+1} \lH{k-1}{i} h'' = a_{i+1} q'|_{b_i=0},
\]
so $h'' = (q'|_{b_i=0})/\lH{k-1}{i}$ has a dominant term by induction.
\end{proof}


\bibliographystyle{alpha}
\bibliography{CIT}

\newcommand{\etalchar}[1]{$^{#1}$}
\begin{thebibliography}{HKO{\etalchar{+}}02b}

\bibitem[BFPS]{BFPSII}
B.~Brubaker, G.~Frieden, P.~Pylyavskyy, and T.~Scrimshaw.
\newblock Crystal invariant theory {II}: {P}seudo-energies.
\newblock In preparation.

\bibitem[BFZ96]{BFZ}
A.~Berenstein, S.~Fomin, and A.~Zelevinsky.
\newblock Parametrizations of canonical bases and totally positive matrices.
\newblock {\em Adv. Math.}, 122(1):49--149, 1996.

\bibitem[BK00]{BK00}
Arkady Berenstein and David Kazhdan.
\newblock Geometric and unipotent crystals.
\newblock {\em Geom. Funct. Anal.}, Special Volume, Part I:188--236, 2000.
\newblock GAFA 2000 (Tel Aviv, 1999).

\bibitem[BK07]{BK07}
Arkady Berenstein and David Kazhdan.
\newblock Geometric and unipotent crystals. {II}. {F}rom unipotent bicrystals
  to crystal bases.
\newblock In {\em Quantum groups}, volume 433 of {\em Contemp. Math.}, pages
  13--88. Amer. Math. Soc., Providence, RI, 2007.

\bibitem[BS17]{BumpSchill}
D.~Bump and A.~Schilling.
\newblock {\em Crystal Bases: Combinatorics and Representation Theory}.
\newblock World Sci. Publ., New Jersey, 2017.

\bibitem[Dev21]{sage}
The~Sage Developers.
\newblock {\em {S}age {M}athematics {S}oftware ({V}ersion 9.3)}.
\newblock The Sage Development Team, 2021.
\newblock \url{https://www.sagemath.org}.

\bibitem[DK05]{DanilovKoshevoy05}
V.~I. Danilov and G.~A. Koshevoy.
\newblock Massifs and the combinatorics of {Y}oung tableaux.
\newblock {\em Uspekhi Mat. Nauk}, 60(2(362)):79--142, 2005.
\newblock English translation: \textit{Russian Math. Surveys} 60(2):269--334,
  2005.

\bibitem[EP21]{EP21}
David Einstein and James Propp.
\newblock Combinatorial, piecewise-linear, and birational homomesy for products
  of two chains.
\newblock {\em Algebr. Comb.}, 4(2):201--224, 2021.

\bibitem[Eti03]{Etingof}
P.~Etingof.
\newblock Geometric crystals and set-theoretical solutions to the quantum
  {Y}ang-{B}axter equation.
\newblock {\em Comm. Algebra}, 31(4):1961--1973, 2003.

\bibitem[Ful97]{Fulton}
W.~Fulton.
\newblock {\em Young Tableaux}.
\newblock Cambridge Univ. Press, Cambridge, 1997.

\bibitem[GPT18]{GPT18}
A.~Garver, R.~Patrias, and H.~Thomas.
\newblock Minuscule reverse plane partitions via quiver representations.
\newblock \texttt{arXiv:1812.08345}, 2018.

\bibitem[GV85]{GesselViennot}
I.~Gessel and G.~Viennot.
\newblock Binomial determinants, paths, and hook length formulae.
\newblock {\em Adv. in Math.}, 58(3):300--321, 1985.

\bibitem[GW00]{GoodWall00}
Roe Goodman and Nolan~R Wallach.
\newblock {\em Representations and invariants of the classical groups}.
\newblock Cambridge University Press, 2000.

\bibitem[Har77]{Hart}
R.~Hartshorne.
\newblock {\em Algebraic geometry}.
\newblock Graduate Texts in Mathematics, No. 52. Springer-Verlag, New
  York-Heidelberg-Berlin, 1977.

\bibitem[HHI{\etalchar{+}}01]{HHIKTT01}
G.~Hatayama, K.~Hikami, R.~Inoue, A.~Kuniba, T.~Takagi, and T.~Tokihiro.
\newblock The {$A^{(1)}_M$} automata related to crystals of symmetric tensors.
\newblock {\em J. Math. Phys.}, 42(1):274--308, 2001.

\bibitem[HKO{\etalchar{+}}99]{HKOTY99}
G.~Hatayama, A.~Kuniba, M.~Okado, T.~Takagi, and Y.~Yamada.
\newblock Remarks on fermionic formula.
\newblock In {\em Recent developments in quantum affine algebras and related
  topics ({R}aleigh, {NC}, 1998)}, volume 248 of {\em Contemp. Math.}, pages
  243--291. Amer. Math. Soc., Providence, RI, 1999.

\bibitem[HKO{\etalchar{+}}02a]{HKOTY02}
G.~Hatayama, A.~Kuniba, M.~Okado, T.~Takagi, and Y.~Yamada.
\newblock Scattering rules in soliton cellular automata associated with crystal
  bases.
\newblock In {\em Recent developments in infinite-dimensional {L}ie algebras
  and conformal field theory ({C}harlottesville, {VA}, 2000)}, volume 297 of
  {\em Contemp. Math.}, pages 151--182. Amer. Math. Soc., Providence, RI, 2002.

\bibitem[HKO{\etalchar{+}}02b]{HKOTT02}
Goro Hatayama, Atsuo Kuniba, Masato Okado, Taichiro Takagi, and Zengo Tsuboi.
\newblock Paths, crystals and fermionic formulae.
\newblock In {\em Math{P}hys odyssey, 2001}, volume~23 of {\em Prog. Math.
  Phys.}, pages 205--272. Birkh\"auser Boston, Boston, MA, 2002.

\bibitem[HKT00]{HKT00}
Goro Hatayama, Atsuo Kuniba, and Taichiro Takagi.
\newblock Soliton cellular automata associated with crystal bases.
\newblock {\em Nuclear Phys. B}, 577(3):619--645, 2000.

\bibitem[HKT01]{HKT01}
Goro Hatayama, Atsuo Kuniba, and Taichiro Takagi.
\newblock Simple algorithm for factorized dynamics of the
  {$\mathfrak{g}_n$}-automaton.
\newblock volume~34, pages 10697--10705. 2001.
\newblock Symmetries and integrability of difference equations (Tokyo, 2000).

\bibitem[Hop14]{Hopkins14}
Sam Hopkins.
\newblock R{SK} via local transformations.
\newblock Preprint (available at: \texttt{https://www-users.cse.umn.edu/
  \textasciitilde{}shopkins/docs/rsk.pdf}), 2014.

\bibitem[How89]{Howe89}
Roger Howe.
\newblock Remarks on classical invariant theory.
\newblock {\em Transactions of the American Mathematical Society},
  313(2):539--570, 1989.

\bibitem[Kas90]{K90}
Masaki Kashiwara.
\newblock Crystalizing the {$q$}-analogue of universal enveloping algebras.
\newblock {\em Comm. Math. Phys.}, 133(2):249--260, 1990.

\bibitem[Kas91]{K91}
Masaki Kashiwara.
\newblock On crystal bases of the $q$-analogue of universal enveloping
  algebras.
\newblock {\em Duke Math. J.}, 63(2):465--516, 1991.

\bibitem[Kas02]{K02book}
Masaki Kashiwara.
\newblock {\em Bases cristallines des groupes quantiques}, volume~9 of {\em
  Cours Sp\'ecialis\'es [Specialized Courses]}.
\newblock Soci\'et\'e Math\'ematique de France, Paris, 2002.
\newblock Edited by Charles Cochet.

\bibitem[KB95]{KB95}
A.~Kirillov and A.~Berenstein.
\newblock Groups generated by involutions, {G}el'fand-{T}setlin patterns, and
  combinatorics of {Y}oung tableaux.
\newblock {\em Algebra i Analiz}, 7(1):92--152, 1995.
\newblock English translation: \textit{St. Petersburg Math. J.} 7(1):77--127,
  1996.

\bibitem[Kir01]{Kir01}
A.~Kirillov.
\newblock Introduction to tropical combinatorics.
\newblock In {\em Physics and combinatorics, 2000 (Nagoya)}, pages 82--150.
  World Sci. Publ., River Edge, NJ, 2001.

\bibitem[KKM{\etalchar{+}}92]{KKMMNN91}
Seok-Jin Kang, Masaki Kashiwara, Kailash~C. Misra, Tetsuji Miwa, Toshiki
  Nakashima, and Atsushi Nakayashiki.
\newblock Affine crystals and vertex models.
\newblock In {\em Infinite analysis, {P}art {A}, {B} ({K}yoto, 1991)},
  volume~16 of {\em Adv. Ser. Math. Phys.}, pages 449--484. World Sci. Publ.,
  River Edge, NJ, 1992.

\bibitem[KN94]{KN94}
Masaki Kashiwara and Toshiki Nakashima.
\newblock Crystal graphs for representations of the {$q$}-analogue of classical
  {L}ie algebras.
\newblock {\em J. Algebra}, 165(2):295--345, 1994.

\bibitem[KN19]{KanNak}
Y.~Kanakubo and T.~Nakashima.
\newblock Half potential on geometric crystals and connectedness of cellular
  crystals.
\newblock Preprint, \arxiv{1910.06182}, 2019.

\bibitem[KNO10]{KNO10}
Masaki Kashiwara, Toshiki Nakashima, and Masato Okado.
\newblock Tropical {$R$} maps and affine geometric crystals.
\newblock {\em Represent. Theory}, 14:446--509, 2010.

\bibitem[Knu70]{Knuth}
D.~E. Knuth.
\newblock Permutations, matrices, and generalized {Y}oung tableaux.
\newblock {\em Pacific J. Math.}, 34:709--727, 1970.

\bibitem[KNY02]{KajNouYam}
K.~Kajiwara, M.~Noumi, and Y.~Yamada.
\newblock Discrete dynamical systems with {$W(A_{m-1}^{(1)} \times
  A_{n-1}^{(1)})$} symmetry.
\newblock {\em Lett. Math. Phys.}, 60(3):211--219, 2002.

\bibitem[KOTY03]{KOTY03}
A.~Kuniba, M.~Okado, T.~Takagi, and Y.~Yamada.
\newblock Geometric crystal and tropical {$R$} for {$D_n^{(1)}$}.
\newblock {\em Int. Math. Res. Not.}, (48):2565--2620, 2003.

\bibitem[KSY07]{KSY07}
Atsuo Kuniba, Reiho Sakamoto, and Yasuhiko Yamada.
\newblock Tau functions in combinatorial {B}ethe ansatz.
\newblock {\em Nuclear Phys. B}, 786(3):207--266, 2007.

\bibitem[Lam12]{Lamloop}
Thomas Lam.
\newblock Loop symmetric functions and factorizing matrix polynomials.
\newblock In {\em Fifth International Congress of Chinese Mathematicians.
  Part}, volume~1, pages 609--627, 2012.

\bibitem[Lam16]{Lam13}
Thomas Lam.
\newblock Whittaker functions, geometric crystals, and quantum {S}chubert
  calculus.
\newblock In {\em Schubert calculus---{O}saka 2012}, volume~71 of {\em Adv.
  Stud. Pure Math.}, pages 211--250. Math. Soc. Japan, [Tokyo], 2016.

\bibitem[Las03]{Lascoux03}
Alain Lascoux.
\newblock Double crystal graphs.
\newblock In {\em Studies in memory of {I}ssai {S}chur ({C}hevaleret/{R}ehovot,
  2000)}, volume 210 of {\em Progr. Math.}, pages 95--114. Birkh\"{a}user
  Boston, Boston, MA, 2003.

\bibitem[Lin73]{Lindstrom}
B.~Lindstr\"{o}m.
\newblock On the vector representations of induced matroids.
\newblock {\em Bull. London Math. Soc.}, 5:85--90, 1973.

\bibitem[LP11]{LPaff}
T.~Lam and P.~Pylyavskyy.
\newblock Affine geometric crystals in unipotent loop groups.
\newblock {\em Represent. Theory}, 15:719--728, 2011.

\bibitem[LP12]{LP12}
Thomas Lam and Pavlo Pylyavskyy.
\newblock Total positivity in loop groups, {I}: {W}hirls and curls.
\newblock {\em Adv. Math.}, 230(3):1222--1271, 2012.

\bibitem[LP13a]{LP13II}
Thomas Lam and Pavlo Pylyavskyy.
\newblock Crystals and total positivity on orientable surfaces.
\newblock {\em Selecta Math. (N.S.)}, 19(1):173--235, 2013.

\bibitem[LP13b]{LP13}
Thomas Lam and Pavlo Pylyavskyy.
\newblock Intrinsic energy is a loop {S}chur function.
\newblock {\em J. Comb.}, 4(4):387--401, 2013.

\bibitem[LPS18]{LPS15}
Thomas Lam, Pavlo Pylyavskyy, and Reiho Sakamoto.
\newblock Rigged configurations and cylindric loop {S}chur functions.
\newblock {\em Ann. Inst. Henri Poincar\'e D}, 4(5):513--555, 2018.

\bibitem[LS78]{LS78}
Alain Lascoux and Marcel-Paul Sch{\"u}tzenberger.
\newblock Sur une conjecture de {H.} {O.} {F}oulkes.
\newblock {\em C. R. Acad. Sci. Paris S\'er. A--B}, 286(7):A323--A324, 1978.

\bibitem[LS19]{LS19}
Xuan Liu and Travis Scrimshaw.
\newblock A uniform approach to soliton cellular automata using rigged
  configurations.
\newblock {\em Ann. Henri Poincar\'e}, 20(4):1175--1215, 2019.

\bibitem[NY97]{NY97}
Atsushi Nakayashiki and Yasuhiko Yamada.
\newblock {K}ostka polynomials and energy functions in solvable lattice models.
\newblock {\em Selecta Math. (N.S.)}, 3(4):547--599, 1997.

\bibitem[NY04]{NoumiYamada}
M.~Noumi and Y.~Yamada.
\newblock {T}ropical {R}obinson-{S}chensted-{K}nuth correspondence and
  birational {W}eyl group actions.
\newblock In {\em Representation theory of algebraic groups and quantum
  groups}, volume~40 of {\em Adv. Stud. Pure Math.}, pages 371--442. Math. Soc.
  Japan, Tokyo, 2004.

\bibitem[OSZ14]{OSZ}
N.~O'Connell, T.~Sepp\"{a}l\"{a}inen, and N.~Zygouras.
\newblock Geometric {RSK} correspondence, {W}hittaker functions and symmetrized
  random polymers.
\newblock {\em Invent. Math.}, 197(2):361--416, 2014.

\bibitem[Pak02]{Pak01}
Igor Pak.
\newblock Hook length formula and geometric combinatorics.
\newblock {\em S\'{e}m. Lothar. Combin.}, 46:Art. B46f, 13, 2001/02.

\bibitem[PV94]{PopVin94}
Vladimir~L Popov and Ernest~B Vinberg.
\newblock Invariant theory.
\newblock In {\em Algebraic geometry IV}, pages 123--278. Springer, 1994.

\bibitem[SCc08]{combinat}
The {S}age-{C}ombinat community.
\newblock {S}age-{C}ombinat: enhancing {S}age as a toolbox for computer
  exploration in algebraic combinatorics, 2008.
\newblock \url{https://combinat.sagemath.org}.

\bibitem[Sch61]{Schensted}
C.~Schensted.
\newblock Longest increasing and decreasing subsequences.
\newblock {\em Canadian J. Math.}, 13:179--191, 1961.

\bibitem[Sta99]{EC2}
R.~P. Stanley.
\newblock {\em Enumerative combinatorics. {V}ol. 2}, volume~62 of {\em
  Cambridge Studies in Advanced Mathematics}.
\newblock Cambridge University Press, Cambridge, 1999.
\newblock With a foreword by Gian-Carlo Rota and appendix 1 by Sergey Fomin.

\bibitem[Tak05]{Takagi05}
Taichiro Takagi.
\newblock Inverse scattering method for a soliton cellular automaton.
\newblock {\em Nuclear Phys. B}, 707(3):577--601, 2005.

\bibitem[TNS99]{TNS99}
T.~Tokihiro, A.~Nagai, and J.~Satsuma.
\newblock Proof of solitonical nature of box and ball systems by means of
  inverse ultra-discretization.
\newblock {\em Inverse Problems}, 15(6):1639--1662, 1999.

\bibitem[TS90]{TS90}
Daisuke Takahashi and Junkichi Satsuma.
\newblock A soliton cellular automaton.
\newblock {\em J. Phys. Soc. Japan}, 59(10):3514--3519, 1990.

\bibitem[vL06]{vanLeeuwen06}
Marc A.~A. van Leeuwen.
\newblock Double crystals of binary and integral matrices.
\newblock {\em Electron. J. Combin.}, 13(1):Research Paper 86, 93, 2006.

\bibitem[Wey46]{Weyl46}
Hermann Weyl.
\newblock {\em The classical groups: their invariants and representations},
  volume~45.
\newblock Princeton University Press, 1946.

\bibitem[Yam01]{Yamada01}
Yasuhiko Yamada.
\newblock A birational representation of {W}eyl group, combinatorial
  {$R$}-matrix and discrete {T}oda equation.
\newblock In {\em Physics and combinatorics, 2000 ({N}agoya)}, pages 305--319.
  World Sci. Publ., River Edge, NJ, 2001.

\bibitem[Yam04]{Yamada04}
Daisuke Yamada.
\newblock Box ball system associated with antisymmetric tensor crystals.
\newblock {\em J. Phys. A}, 37(42):9975--9987, 2004.

\end{thebibliography}

\end{document}